\title[RFR$p$ groups]{Residually finite rationally $p$ groups} 
\author{Thomas Koberda}
\address{Department of Mathematics, University of Virginia, 
Charlottesville, VA 22904-4137, USA}
\email{thomas.koberda@gmail.com}
\urladdr{\href{http://faculty.virginia.edu/Koberda/}%
{http://faculty.virginia.edu/Koberda/}}
\author[Alexander I. Suciu]{Alexander I. Suciu}
\address{Department of Mathematics, Northeastern 
University, Boston, MA 02115, USA}
\email{a.suciu@northeastern.edu}
\urladdr{\href{http://web.northeastern.edu/suciu/}%
{http://web.northeastern.edu/suciu/}}
\let\@@enum@org\@@enum@
\def\@@enum@[#1]{\@@enum@org[\normalfont #1]}
\newtheorem{thm}{Theorem}[section]
\newtheorem{lem}[thm]{Lemma}
\newtheorem{lemma}[thm]{Lemma}
\newtheorem{cor}[thm]{Corollary}
\newtheorem{prop}[thm]{Proposition}
\newtheorem{que}[thm]{Question}
\newtheorem{prob}[thm]{Problem}
\theoremstyle{definition}
\newtheorem{example}[thm]{Example}
\newtheorem{remark}[thm]{Remark}
\newcommand\R{\mathbb{R}}
\newcommand\Q{\mathbb{Q}}
\newcommand\Z{\mathbb{Z}}
\newcommand\C{\mathbb{C}}
\newcommand\CP{\mathbb{CP}}
\newcommand\bZ{\mathbb{Z}}
\newcommand\bH{\mathbb{H}}
\newcommand\bP{\mathbb{P}}
\newcommand\bC{\mathbb{C}}
\newcommand\bQ{\mathbb{Q}}
\newcommand\bR{\mathbb{R}}
\newcommand\mC{\mathcal{C}}
\newcommand\mL{\mathcal{L}}
\newcommand\A{\mathcal{A}}
\renewcommand\L{\mathcal{L}}
\renewcommand\P{\mathcal{P}}
\newcommand\mP{\mathcal{P}}
\newcommand\mX{\mathcal{X}}
\newcommand{\abs}[1]{\left| #1 \right|}
\newcommand\Aut{\operatorname{Aut}}
\newcommand\GL{\operatorname{GL}}
\newcommand\lk{\operatorname{Lk}}
\newcommand\rk{\operatorname{rk}}
\newcommand\rad{\operatorname{rad}}
\newcommand\st{\operatorname{St}}
\newcommand\PSL{\operatorname{PSL}}
\newcommand\Nil{\operatorname{Nil}}
\newcommand\Sol{\operatorname{Sol}}
\newcommand\cay{\operatorname{Cayley}}
\newcommand\TF{\operatorname{TFr}}
\newcommand\T{\operatorname{Tors}}
\newcommand{\ab}{\operatorname{ab}}
\newcommand\gam{\Gamma}
\newcommand\yh{\widehat}
\newcommand{\surj}{\twoheadrightarrow}
\newcounter{tmpc}
\begin{document}

\date{\today}

\subjclass[2010]{Primary
20E26.  %% Residual properties and generalizations; residually finite groups
Secondary 
14H50,  %% Plane and space curves
20F65,  %% Geometric group theory 
52C35, %% Arrangements of points, flats, hyperplanes 
55N25,  %% Homology with local coefficients, equivariant cohomology
57M10, %% Covering spaces
57N10.  %% Topology of general $3$-manifolds
}

\keywords{Residually finite rationally $p$ group, graph of groups, 
$3$-manifold, plane algebraic curve, boundary manifold, Alexander 
varieties, BNS invariant.}

\begin{abstract}
In this article we develop the theory of residually finite rationally $p$ (RFR$p$) 
groups, where $p$ is a prime. We first prove a series of results about the structure 
of finitely generated RFR$p$ groups (either for a single prime $p$, or for infinitely 
many primes), including torsion-freeness, a Tits alternative, and a restriction on 
the BNS invariant.  Furthermore, we show that many groups which occur naturally 
in group theory, algebraic geometry, and in $3$-manifold topology enjoy this residual 
property.  We then prove a combination theorem for RFR$p$ groups, which we use 
to study the boundary manifolds of algebraic curves $\CP^2$ and in $\C^2$. 
We show that boundary manifolds of a large class of curves in $\C^2$ (which 
includes all line arrangements) have RFR$p$ fundamental groups, whereas 
boundary manifolds of curves in $\CP^2$ may fail to do so.  
\end{abstract}
\maketitle

\maketitle
\setcounter{tocdepth}{1}
\tableofcontents

\section{Introduction}
\label{sect:intro}

In this paper, we develop a group theoretic property called \emph{residually finite 
rationally $p$}.  We study the class of finitely generated groups with this property 
for its own sake, and we study this property among several classes of groups which 
occur in algebraic geometry and in $3$-manifold topology. Most notably, we show 
that this property is enjoyed by the boundary manifold of a curve arrangement with 
only type $A$ singularities in $\C^2$, but that the analogous property for boundary 
manifolds of curve arrangements in $\C\bP^2$ does not necessarily hold.

\subsection{The class of RFR$p$ groups}
\label{subsec:rfrp}
Let $p$ be a prime. A finitely generated group $G$ is called 
\emph{residually finite rationally $p$} of \emph{RFR$p$} if there 
exists a sequence of nested, finite index subgroups 
$\{G_i\}_{i\geq 1}$ of $G$ such that:
\begin{enumerate}
\item
$G=G_1$.
\item
The intersection of the groups $\{G_i\}$ is trivial.
\item
Each quotient $G_i/G_{i+1}$ is an elementary abelian $p$-group.
\item
Every element $g\in G_i\setminus G_{i+1}$ represents a nonzero 
class in $H_1(G_i,\Q)$.
\end{enumerate}

Let us define the \emph{RFR$p$ topology}\/ on $G$ as the weakest topology on $G$ for which 
 the standard RFR$p$ filtration 
of $G$ (cf.~Lemma \ref{l:normal}) consists of open sets. The group $G$ is RFR$p$ precisely 
when this topology is Hausdorff, 
or equivalently, the trivial group is a closed subgroup.

Many finitely generated groups which occur naturally in 
geometric group theory and in topology are RFR$p$.  
For instance, we have the following result, which is a 
combination of Propositions \ref{prop:free rfrp}, \ref{prop:free surf}, 
and \ref{prop:raag rfrp}.

\begin{prop}
\label{prop:ex rfrp}
The following groups are RFR$p$, for all primes $p$:
\begin{enumerate}
\item \label{free}
Finitely generated free groups.
\item \label{surf}
Closed, orientable surface groups.
\item \label{raag}
Right-angled Artin groups.
\end{enumerate}
\end{prop}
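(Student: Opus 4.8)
Since the statement is the union of Propositions~\ref{prop:free rfrp}, \ref{prop:free surf}, and \ref{prop:raag rfrp}, the plan is to handle the three classes in turn, in each case exhibiting an explicit filtration that witnesses the RFR$p$ condition. In every case I would use the standard RFR$p$ filtration: set $G_1=G$ and, given $G_i$, let $G_{i+1}$ be the preimage in $G_i$ of $p\cdot T_i$, where $T_i:=H_1(G_i;\Z)/(\text{torsion})$ and $G_i\surj H_1(G_i;\Z)\surj T_i$ is the natural map. For this filtration axioms (1) and (3) are immediate, since $G_i/G_{i+1}\cong(\Z/p)^{b_1(G_i)}$, and axiom (4) is automatic: an element of $G_i\setminus G_{i+1}$ maps to an element of $T_i\cong\Z^{b_1(G_i)}$ not divisible by $p$, hence nonzero, hence nonzero in $H_1(G_i;\Q)$. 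Thus in all three cases the entire content is axiom (2), that $\bigcap_i G_i=\{1\}$; by the Hausdorff reformulation recorded just after the definition, verifying this for the standard filtration is precisely what it means for $G$ to be RFR$p$.

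For finitely generated free groups, and for closed orientable surface groups, I would use the fact that finite-index subgroups stay within the class. By Nielsen--Schreier (respectively, by covering space theory), each $G_i$ is again free of finite rank (respectively, again a closed orientable surface group), so $H_1(G_i;\Z)$ is torsion-free at every stage, and the standard filtration collapses to the iterated ``mod-$p$ Frattini'' series $G_{i+1}=[G_i,G_i]\,G_i^{\,p}$. One then shows by induction on $i$ that $G_i\subseteq\gamma_i^p(G)$, where $\gamma_1^p(G)=G$ and $\gamma_{i+1}^p(G)=[\gamma_i^p(G),G]\,(\gamma_i^p(G))^p$; the inductive step needs only the inclusions $[\gamma_i^p,\gamma_i^p]\subseteq[\gamma_i^p,G]$ and $(\gamma_i^p)^p\subseteq\gamma_{i+1}^p$. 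Hence $\bigcap_iG_i\subseteq\bigcap_i\gamma_i^p(G)$, and the latter is trivial because free groups and closed orientable surface groups are residually finite-$p$ for every prime $p$: for free groups this is Magnus's embedding into formal power series over $\bF_p$, and for surfaces of genus $\ge 2$ one may invoke residual freeness (the genus-one case being $\Z^2$). This settles parts \ref{free} and \ref{surf}.

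For right-angled Artin groups the same skeleton is in place --- $A_\Gamma$ is residually finite-$p$ for every $p$, being even residually torsion-free nilpotent with torsion-free lower central quotients by the work of Duchamp--Krob --- but there is a real obstacle, which I expect to be the main difficulty: a finite-index subgroup of a RAAG need not be a RAAG, so $H_1(G_i;\Z)$ might a priori acquire $p$-torsion, and such torsion makes the standard filtration descend too slowly to satisfy axiom (2). (This is exactly the mechanism by which the discrete Heisenberg group fails to be RFR$p$.) There are two natural ways to get around it. One is to prove directly that the terms of the standard filtration of $A_\Gamma$, i.e.\ the level-$p^k$ congruence subgroups, have torsion-free integral first homology --- using the cell structure of finite covers of the Salvetti complex, or a transfer argument --- after which the comparison with $\{\gamma_i^p\}$ from the previous paragraph applies verbatim. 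The other is to induct on the number of vertices of $\Gamma$ via the splitting $A_\Gamma=A_{\st(v)}*_{A_{\lk(v)}}A_{\Gamma\setminus v}$ with $A_{\st(v)}\cong\Z\times A_{\lk(v)}$, using the (easily verified) closure of the RFR$p$ class under direct products together with a combination theorem for amalgams over RFR$p$ subgroups. Either way, once $\bigcap_iG_i=\{1\}$ is in hand, part \ref{raag}, and with it the proposition, follows.
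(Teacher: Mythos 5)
Your algebraic route for free groups and closed surface groups is correct and genuinely different from the paper's. The paper argues geometrically: it puts the graph metric (resp.\ a hyperbolic metric) on a wedge of circles (resp.\ a surface) and shows that the shortest nontrivial loop is simple, hence primitive in homology, hence fails to lift to the next mod-$p$ homology cover; thus the girth (resp.\ systole, using discreteness of the length spectrum) grows without bound along the tower, forcing $\bigcap_i G_i=\{1\}$. Your argument instead notes that because $H_1(G_i;\Z)$ is torsion-free at every stage (Nielsen--Schreier, resp.\ covering space theory), the standard filtration is $G_{i+1}=[G_i,G_i]G_i^{\,p}$, and then by a short induction $G_i\subseteq\gamma_i^p(G)$, so residual $p$-finiteness (Magnus for free groups, residual freeness for surface groups) forces $\bigcap_i G_i=\{1\}$. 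The induction is right: given $G_i\subseteq\gamma_i^p$, one has $[G_i,G_i]\subseteq[\gamma_i^p,\gamma_i^p]\subseteq[\gamma_i^p,G]$ and $G_i^p\subseteq(\gamma_i^p)^p$, both contained in $\gamma_{i+1}^p$. This is a clean, purely algebraic replacement for the paper's metric argument, and it makes the role of residual $p$-finiteness explicit. Your observations that axioms (1), (3) are automatic and axiom (4) is automatic for the standard filtration are also both correct.

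For right-angled Artin groups, you correctly identify the real obstruction (a finite-index subgroup of a RAAG need not be a RAAG, so torsion in $H_1(G_i;\Z)$ could in principle stall the filtration), and your second sketched route --- induction on $\abs{V(\Gamma)}$ via $A_\Gamma\cong A_{\Gamma\setminus v}*_{A_{\lk(v)}}A_{\st(v)}$ with $A_{\st(v)}\cong\Z\times A_{\lk(v)}$, feeding into a combination theorem --- is exactly the paper's strategy (Proposition \ref{prop:raag rfrp} via Theorem \ref{thm:combination}). However, as written your sketch leaves two substantive gaps. First, the combination theorem the paper proves does not merely require the edge and vertex groups to be RFR$p$: it requires that the RFR$p$ topology on the ambient group restrict to the RFR$p$ topology on each vertex group, and that each edge group be closed in the RFR$p$ topology on its vertex groups. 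In the RAAG case these hypotheses are not free --- they are verified via the retraction structure of RAAGs (Corollaries \ref{cor:retract} and \ref{cor:closed retract}), which you do not invoke. Second, your alternative route --- proving that the level-$p^k$ congruence subgroups of $A_\Gamma$ have torsion-free $H_1$ so that the free/surface comparison with $\{\gamma_i^p\}$ carries over --- is not obviously true and is not established anywhere in the paper; you would need to supply that homological fact independently before that route could stand. As it is, part \ref{raag} of the Proposition is only sketched in your proposal, while parts \ref{free} and \ref{surf} are complete.
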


\subsection{Properties of RFR$p$ groups}
\label{subsec:props rfrs}
The groups we study here enjoy many useful properties. We present 
some of these properties in the following theorem, which summarizes 
Proposition \ref{prop:rprp props} and Theorems \ref{thm:rtfpoly} and 
\ref{thm:separable}, as well as Corollary \ref{c:rfrpfg} and 
Theorems \ref{t:rfrplarge} and \ref{t:beauville}.

\begin{thm}
\label{t:rfrpsummary}
Let $G$ be a finitely generated group which is RFR$p$ for some 
prime $p$.  Then:
\begin{enumerate}
\item \label{res1}
$G$ is residually finite.  In particular, if $G$ is finitely presented then $G$ 
has a solvable word problem.
\item \label{res2}
$G$ is torsion-free.
\item \label{res3}
$G$ is residually torsion-free polycyclic.
\item \label{res8}
For each $n$, the maximal abelian subgroups of $G$ of rank $n$ are separable.
\setcounter{tmpc}{\theenumi}
\end{enumerate}
If, moreover, the group $G$ is RFR$p$ for infinitely many primes, 
finitely presented, and nonabelian, then:
\begin{enumerate}
\setcounter{enumi}{\thetmpc}
\item \label{res4}
$G$ is large,  i.e., $G$ virtually surjects 
to a nonabelian free group.
\item \label{res5}
The maximal $k$-step solvable quotients $G/G^k$ are not finitely presented, 
for any $k\geq 2$.
\item \label{res6}
The derived subgroup $G'$ is not finitely generated.
\item \label{res7}
The complement of the BNS invariant $\Sigma^1(G)$ is not empty. 
\item \label{res9}
The group $G$ is bi-orderable.
\end{enumerate}
\end{thm}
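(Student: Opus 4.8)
The plan is as follows. Throughout, fix the standard RFR$p$ filtration $G=G_1\supseteq G_2\supseteq\cdots$ supplied by Lemma~\ref{l:normal}: each $G_i$ is normal in $G$, each $G/G_i$ is a finite $p$-group, $\bigcap_i G_i=\{1\}$, and --- this is a reformulation of property~(4) --- $G_{i+1}$ contains the full preimage in $G_i$ of the torsion subgroup of $H_1(G_i;\Z)$. Items (1) and (2) are then immediate. The finite quotients $G/G_i$ separate the elements of $G$, so $G$ is residually finite, and a finitely presented residually finite group has solvable word problem by the classical argument of running in parallel an enumeration of its relators and an enumeration of its finite quotients; and if $g\neq 1$ had finite order, it would lie in some $G_i\setminus G_{i+1}$ and hence represent a nontrivial \emph{torsion} class in $H_1(G_i;\Z)$, against property~(4). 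Item (8) is almost as quick: a maximal abelian subgroup $A\le G$ equals its own centralizer, so for $g\notin A$ there is some $a\in A$ with $[g,a]\neq1$; choosing, by residual finiteness, a finite quotient $\pi\colon G\to Q$ with $\pi([g,a])\neq1$ forces $\pi(g)\notin\pi(A)$ because $\pi(A)$ is abelian, and then $\pi^{-1}(\pi(A))$ is a finite-index subgroup containing $A$ but not $g$. (For the sharper statement that $A$ is closed in the RFR$p$ topology one arranges $Q$ to be a $p$-group using the filtration, and here torsion-freeness and the rank hypothesis come into play.)

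For item (3) I would work with the \emph{rational derived series} $N_1=G$, $N_{i+1}=\ker\!\bigl(N_i\twoheadrightarrow H_1(N_i;\Z)/\mathrm{tors}\bigr)$. Each $N_i$ is characteristic in $G$ and each successive quotient $N_i/N_{i+1}$ is torsion-free abelian; since an extension of a torsion-free group by a torsion-free group is torsion-free, every $G/N_i$ is torsion-free and is an iterated extension of torsion-free abelian groups, hence torsion-free polycyclic as soon as the $N_i$ are finitely generated. The crucial comparison is that $N_i\le G_i$ for every $i$: this holds for $i=1$, and if $x\in N_{i+1}$ then $x\in N_i\le G_i$ and $[x]$ is torsion in $H_1(N_i;\Z)$, hence maps to torsion in $H_1(G_i;\Z)$ by functoriality of $H_1$, hence lies in $G_{i+1}$ by property~(4). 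Therefore $\bigcap_i N_i\le\bigcap_i G_i=\{1\}$, so $G$ is residually torsion-free polycyclic; the one real point, and the main content of this item, is that the $N_i$ need not be finitely generated, which I would repair by pushing a given $1\neq g$ forward along a further quotient of $H_1(N_i;\Z)/\mathrm{tors}$ onto $\Z$ on which $g$ survives and reassembling from this an honest torsion-free polycyclic quotient of $G$ detecting $g$. For item (9) I would push the same idea further, aiming to show that under the hypothesis of infinitely many primes $G$ is in fact residually torsion-free \emph{nilpotent} --- whence bi-orderable via the torsion-free graded Lie ring it determines --- or, failing that, constructing a bi-invariant order directly from the filtration; the hypothesis on the primes is genuinely needed here, since the Klein bottle group is RFR$2$, finitely presented, and nonabelian, yet is not bi-orderable.

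Items (4)--(7) all genuinely require the hypothesis of infinitely many primes --- each fails already for the Klein bottle group --- and I expect them to rest on homological inputs extracted from the residual hypothesis, which I regard as the main obstacle of the theorem: that a suitable finite-index subgroup of $G$ has superlinearly growing $\mathbb{F}_p$-Betti numbers along a $p$-tower for some prime $p$ in the given infinite set, and that $G$ itself carries a rational character whose Alexander polynomial is non-constant. Converting ``RFR$p$ for infinitely many $p$'' into such statements --- the point being that over infinitely many primes homological growth is forced to propagate down the $p$-towers --- is where the real work lies. Granting these inputs: item (4) follows from a Lackenby-type largeness criterion for $p$-towers, the superlinear growth of $b_1$ against the index yielding a finite-index subgroup that surjects onto a nonabelian free group; item (7), that $\Sigma^1(G)^c\neq\emptyset$, follows from the Bieri--Strebel inequality, since a non-constant Alexander polynomial keeps $\Sigma^1(G)$ away from part of the character sphere; item (6) is then equivalent to item (7) for finitely generated $G$, because $G'$ is finitely generated if and only if $\Sigma^1(G)$ is the entire character sphere; and item (5), the non-finite-presentability of the solvable quotients $G/G^{k}$ for $k\geq 2$, follows from (7) via the symmetry of the Alexander polynomial, which forces $\Sigma^1(G)^c$ to contain an antipodal pair and so obstructs finite presentability of the metabelian quotient $G/G''$ through Bieri--Strebel's classification of finitely presented metabelian groups, the higher cases being handled analogously.
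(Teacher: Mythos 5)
Your treatment of items (1) and (2) matches the paper. The remaining items, however, contain several genuine gaps, some of which cannot be repaired along the lines you suggest.

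\textbf{Item (9) is the most serious gap.} Your primary route --- showing that RFR$p$ for infinitely many primes implies residual torsion-free nilpotence --- would prove more than the authors themselves know: this is explicitly left open in the paper as Question~\ref{quest:rtfn}. The fallback of ``constructing a bi-invariant order directly from the filtration'' is not an argument. What the paper actually does is invoke a theorem of Rhemtulla (cited as \cite{Rhemtulla1973}): a group that is residually (locally residually $p$) for infinitely many primes $p$ is bi-orderable. Since RFR$p$ implies residually $p$, this applies directly. Without this citation, item (9) is not proved.

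\textbf{Items (4)--(7): you correctly identify where the work is, but leave it undone, and your proposed inputs differ from what actually goes through.} The paper's chain is concrete and shorter than you anticipate. First, Lemma~\ref{lem:tpoints}: if $G$ is nonabelian and RFR$p$ for infinitely many $p$, then $b_1$ must strictly increase along the standard $p$-congruence tower for each such $p$ (else the RFR$p$ filtration would stabilize at $G'$, contradicting trivial intersection), and the jump formula \eqref{eq:b1cover} then produces a $p$-torsion point in $V_1(G)$, hence infinitely many torsion points overall. Second, $V_1(G)=V_1(G/G'')$ (Lemma~\ref{lem:metacv}), so $V_1(G/G'')$ also has infinitely many torsion points; if $G/G''$ were finitely presented, Theorem~\ref{t:KoThesis} would force it to be large, which is absurd for a solvable group --- this gives (5) and (6). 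Third, Theorem~\ref{t:KoThesis} applied to $G$ itself gives (4). Finally, (7) follows from (6) via the Bieri--Neumann--Strebel theorem (Theorem~\ref{t:bns}). Your appeal to a Lackenby-type criterion and superlinear Betti-number growth is not how the paper proceeds, and you do not actually establish those inputs. Your observation that (6) and (7) are equivalent for finitely generated groups is correct, but the direction you need is (6)$\Rightarrow$(7), and (6) is what requires proof.

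\textbf{Item (3): your construction has a gap you flag but do not close.} The rational derived series $N_{i+1}=\ker(N_i\to H_1(N_i;\Z)/\mathrm{tors})$ indeed satisfies $N_i\le G_i$, but the subquotients $N_i/N_{i+1}$ need not be finitely generated, so $G/N_i$ need not be polycyclic; your suggested repair (``reassembling... an honest torsion-free polycyclic quotient'') is not spelled out and is not obviously sound. The paper avoids the issue by intersecting with the RFR$p$ filtration itself: set $K_1=G_1$ and $K_{i+1}=(\ker\{G_i\to\TF H_1(G_i,\Z)\})\cap K_i$. Since each $G_i$ has finite index, $\TF H_1(G_i,\Z)$ is finitely generated, and the Second Isomorphism Theorem exhibits $K_i/K_{i+1}$ as a subgroup of it. This gives finitely generated torsion-free abelian factors, hence torsion-free polycyclic quotients $G/K_i$, and $K_i\le G_i$ forces $\bigcap K_i=1$.

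\textbf{Item (8): your centralizer argument addresses only half the problem.} The hypothesis is that $A$ is maximal among abelian subgroups \emph{of the same rank}, not maximal abelian outright. Thus $A$ need not equal its centralizer; some $g\notin A$ may centralize $A$, in which case $\langle g, A\rangle$ is abelian of strictly higher rank. The non-centralizing case is disposed of exactly as you say, but the centralizing case is the substantive one, and the paper's proof of Theorem~\ref{thm:separable} handles it by an induction on rank, tracking the rank of the image of $\langle g,A\rangle$ in $\TF(G_j^{\ab})$ and its mod-$p^m$ reductions. You gesture at this parenthetically but do not carry it out.
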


We remark that questions around the finite presentability of 
metabelian quotients of finitely presented groups (such as
item~\ref{res5} above) have been recently investigated by 
R.~Strebel~\cite{StrebelPreprint}.

\subsection{A combination theorem}
\label{subsec:props}
Our main result about RFR$p$ groups is a combination theorem which 
allows us to construct many new RFR$p$ groups from old ones:

\begin{thm}[Theorem \ref{thm:combination}]
\label{t:comb}
Fix a prime $p$. Let $G=G_{\gam}$ be a finite graph of finitely generated 
groups with vertex groups $\{G_v\}_{v\in V(\gam)}$ and groups 
$\{G_e\}_{e\in E(\gam)}$ satisfying the following conditions:
\begin{enumerate}
\item
For each $v\in V(\gam)$, the group $G_v$ is RFR$p$.
\item
For each $v\in V(\gam)$, the RFR$p$ topology on $G$ induces 
the RFR$p$ topology on $G_v$.
\item
For each $e\in E(\gam)$ and each $v\in e$, we have that the image 
of $G_e$ in $G_v$ given by the graph of groups structure of $G$ is 
closed in the RFR$p$ topology on $G_v$.
\end{enumerate}
Then $G$ is RFR$p$.
\end{thm}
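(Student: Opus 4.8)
The plan is to build the \emph{standard} RFR$p$ filtration $\{\Gamma_k(G)\}_{k\ge 1}$ of $G$ (with $\Gamma_1(G)=G$ and $\Gamma_{k+1}(G)=\ker\!\big(\Gamma_k(G)\surj (H_1(\Gamma_k(G);\Z)/\mathrm{tors})\otimes\bF_p\big)$) and to prove $\bigcap_k\Gamma_k(G)=1$; by Lemma~\ref{l:normal} and the remarks following it this is equivalent to $G$ being RFR$p$, and it is the cleanest reformulation since $\{\Gamma_k(G)\}$ is the finest filtration obeying conditions (1),(3),(4). I would first record the features of this filtration used throughout: it is characteristic (so conjugation is a self-homeomorphism of the RFR$p$ topology, and each $G/\Gamma_k(G)$ is a finite $p$-group), it is self-similar ($\Gamma_{k+j}(G)=\Gamma_{j+1}(\Gamma_k(G))$), and — as a preliminary lemma — the hypotheses propagate downward: the finite-index normal subgroup $\Gamma_k(G)$ acts on the Bass--Serre tree $T$ of $G_\gam$ with a finite quotient graph of groups whose vertex groups $\Gamma_k(G)\cap xG_vx^{-1}$ are again RFR$p$ (finite-index subgroups of RFR$p$ groups, by Proposition~\ref{prop:rprp props}), still carry the induced RFR$p$ topology, and still have closed edge groups. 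This propagation is routine granted that ``inducing the RFR$p$ topology'' and ``being closed in the RFR$p$ topology'' are transitive along chains of subgroups, each of which is finite-index or satisfies hypothesis~(2).

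By Bass--Serre theory a nontrivial $g\in G$ is either \emph{elliptic}, i.e.\ conjugate into some $G_v$, or \emph{hyperbolic}, with an axis $L_g\subseteq T$ on which it translates by $\ell(g)\ge1$. The elliptic case is immediate: using that $\Gamma_k(G)$ is characteristic we may assume $g\in G_v$; since $G_v$ is RFR$p$ some open subgroup of $G_v$ omits $g$ in the RFR$p$ topology of $G_v$, and by hypothesis~(2) this open subgroup is the trace on $G_v$ of some $\Gamma_k(G)$, whence $g\notin\Gamma_k(G)$.

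The heart of the matter is the hyperbolic case. Fix $v_0\in L_g$ and consider one period $v_0,f_1,v_1,\dots,f_\ell,v_\ell=gv_0$ of the axis, an embedded edge path in $T$. The plan is to choose $k$ large enough that its image in the finite graph of groups $\Gamma_k(G)\backslash T$ is an \emph{embedded loop} $c$. Granting this, $[c]$ is a simple cycle in the underlying graph $|\Gamma_k(G)\backslash T|$, hence a primitive class in $H_1(|\Gamma_k(G)\backslash T|;\Z)$; since the edge homomorphisms of a graph of groups are injective, a Mayer--Vietoris computation exhibits $H_1(|\Gamma_k(G)\backslash T|;\Z)$ as a direct summand of $H_1(\Gamma_k(G);\Z)$, and reading $g$ off its axis shows that the image of $g$ in this summand is exactly $[c]$. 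Therefore $g$ maps to a nonzero element of $(H_1(\Gamma_k(G);\Z)/\mathrm{tors})\otimes\bF_p$, i.e.\ $g\notin\Gamma_{k+1}(G)$, and we are done.

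The main obstacle is the claim that a period of $L_g$ embeds in $\Gamma_k(G)\backslash T$ once $k$ is large. The \emph{local} part — that the path does not backtrack at any $v_i$ in the quotient — is fine: it amounts to separating two edges incident to $v_i$ inside $G_{v_i}$, which follows from hypotheses~(2),(3) (closedness of edge groups in vertex groups). The delicate \emph{global} part is separating two vertices or two edges of the period that lie in the same $G$-orbit but far apart along $L_g$; this forces one to know that the vertex and edge stabilizers are closed in the RFR$p$ topology of $G$ \emph{itself}, not merely in the vertex groups. I would establish this by induction on the number of edges of $\gam$: collapsing one edge replaces a pair of adjacent vertex groups by their amalgam (an HNN extension if the edge is a loop), which is RFR$p$ by the one-edge case, and one checks the hypotheses persist for the smaller graph of groups; the base case is a single amalgamation $A\ast_C B$ or HNN extension $A\ast_C$, where the closedness of $C$ (and of $A,B$) in the ambient group is proved directly from (2),(3) by normal-form arguments on $T$. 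This interlocking of ``edge and vertex groups are closed in $G$'' with the conclusion of Theorem~\ref{thm:combination} is, I expect, where essentially all the work lies.
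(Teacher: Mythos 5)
Your strategy diverges from the paper's in the hyperbolic case, and you have put your finger on exactly where your version breaks---but you have not filled the gap. The paper does not try to embed a full period of the axis in the quotient graph of groups at once. Instead, it performs a finer induction on the pair $(C(\gamma),B(\gamma))$, combinatorial complexity and backtracking number, ordered lexicographically: if the image of $\gamma$ under the collapsing map $\kappa_i\colon X_i\to\gam_i$ is non-null-homotopic, then $\pi_1(\gam_i)$ (a free group, hence RFR$p$) already separates $\gamma$ by Proposition~\ref{prop:radical}; if it is null-homotopic it backtracks at some vertex, and the closedness of the \emph{edge} group in the \emph{vertex} group (hypothesis (3)) suffices to either push $\gamma$ off that vertex space (reducing $C$ by two) or, after passing to a deeper term of the RFR$p$ tower, to destroy the backtracking at that vertex (reducing $B$ by one). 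Crucially, this local operation never needs to compare two far-apart vertices or edges of the axis, so no separability of vertex or edge stabilizers in $G$ itself is ever invoked.

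In your version, by contrast, making a whole axis period inject into $\Gamma_k(G)\backslash T$ does require separating $G$-translates of a vertex or edge inside $G$, i.e.\ closedness of vertex and edge stabilizers in the RFR$p$ topology of $G$. You acknowledge this, and propose to supply it by induction on $|E(\gam)|$, collapsing one edge and replacing two adjacent vertex groups by their amalgam $A=G_v\ast_{G_e}G_w$. But this is where the argument is genuinely incomplete: to apply the one-edge case to $A$ you need to know that the intrinsic RFR$p$ topology on $A$ induces the RFR$p$ topology on $G_v$ and $G_w$, and to feed $A$ into the smaller graph of groups you need to know the RFR$p$ topology on $G$ induces the RFR$p$ topology on $A$. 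Neither is among the hypotheses, and neither follows from them in any evident way; ``inducing the RFR$p$ topology'' is not obviously transitive or interpolating along a chain $G_v<A<G$ when the middle term is not itself one of the given vertex groups. You flag this as ``where essentially all the work lies,'' which is accurate: as written, the hyperbolic case of your proof has no argument, and the inductive framework you sketch would need a nontrivial lemma of exactly the kind the paper was designed to avoid. The elliptic case and the homological/direct-summand reading of an embedded cycle are fine, but the core separability claim is left open.
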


The reader is directed to Section \ref{sect:graph of groups} for the relevant 
technical definitions.

\subsection{$3$-manifold topology}
\label{subsec:3mfd}
The RFR$p$ property also produces a new invariant of $3$-manifold groups 
which is finer than previously studied residual properties enjoyed by $3$-manifolds:

\begin{thm}
\label{thm:geometric}
Let $G=\pi_1(M)$ be a geometric $3$-manifold group, possibly with toroidal boundary. 
Then there is a finite index subgroup $G_0<G$ which is RFR$p$ for every prime $p$ 
if and only if $M$ admits one of the following geometries: 
$\{S^3,S^2\times\R,\R^3,\bH^2\times\R,\bH^3\}$. Otherwise, no finite index 
subgroup of $G$ is RFR$p$ for any prime.
\end{thm}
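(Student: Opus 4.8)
\emph{Overall plan.} The plan is to run through Thurston's eight geometries and to show that a finite-volume geometric $3$-manifold $M$ (closed, or with toroidal boundary) has a finite-index subgroup of $\pi_1(M)$ that is RFR$p$ for every prime $p$ precisely when $M$ carries one of the geometries $S^3$, $S^2\times\R$, $\R^3$, $\bH^2\times\R$, $\bH^3$, and that for the remaining geometries $\Nil$, $\Sol$, $\widetilde{\mathrm{SL}}_2$ no finite-index subgroup of $\pi_1(M)$ is RFR$p$ for any single prime. Two elementary auxiliary facts, both read off from the standard RFR$p$ filtration of Lemma~\ref{l:normal}, will be used repeatedly: (i) if $A$ is RFR$p$ then every finitely generated subgroup $H\le A$ is RFR$p$, since the terms $A_i$ of the standard filtration of $A$ are characteristic in $A$ (so the $H\cap A_i$ have finite index in $H$ and satisfy conditions (1)--(3)), and condition (4) holds because $g\in(H\cap A_i)\setminus(H\cap A_{i+1})$ implies $g\in A_i\setminus A_{i+1}$, so the class of $g$ is nonzero in $H_1(A_i;\Q)$ and hence, by functoriality, nonzero in $H_1(H\cap A_i;\Q)$; in particular RFR$p$ passes to finite-index subgroups; and (ii) a finite direct product of RFR$p$ groups is RFR$p$ (use the product filtration, noting $H_1(G\times G';\Q)=H_1(G;\Q)\oplus H_1(G';\Q)$). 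I will also use Proposition~\ref{prop:raag rfrp} (right-angled Artin groups, and in particular finitely generated free abelian groups, are RFR$p$ for every $p$) and Proposition~\ref{prop:ex rfrp} (free and closed surface groups are RFR$p$).

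\emph{The five good geometries.} For $S^3$, $\pi_1(M)$ is finite and the trivial subgroup is (vacuously) RFR$p$ of finite index. For $S^2\times\R$ and $\R^3$, $\pi_1(M)$ is virtually free abelian of finite rank, hence virtually a RAAG, hence virtually RFR$p$. For $\bH^2\times\R$, $M$ is finitely covered by $\Sigma\times S^1$ with $\Sigma$ a hyperbolic surface (closed if $\partial M=\emptyset$, otherwise with boundary), so $\pi_1(M)$ has a finite-index subgroup isomorphic to $\pi_1(\Sigma)\times\Z$, a direct product of RFR$p$ groups by Proposition~\ref{prop:ex rfrp}, hence RFR$p$ by (ii). For $\bH^3$, the group $\pi_1(M)$ is virtually special (Agol, Wise), so it has a finite-index subgroup embedding in a RAAG, which is then RFR$p$ for every $p$ by (i) and Proposition~\ref{prop:raag rfrp}. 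In every case the subgroup produced is RFR$p$ for all $p$ simultaneously.

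\emph{The three bad geometries.} Since $\Nil$, $\Sol$, and $\widetilde{\mathrm{SL}}_2$ have contractible model space, $\pi_1(M)$ and all its finite-index subgroups are torsion-free, and every finite-index subgroup contains a finite-index subgroup $G$ of one of the three standard shapes below; by (i) it suffices to show that no such $G$ is RFR$p$, which I do by producing a nontrivial element lying in every term of the standard RFR$p$ filtration $\{G_i\}$ of $G$. (a) $\Nil$: $G$ is finitely generated torsion-free nilpotent of class exactly $2$, so $[G,G]$ is nontrivial and central; each $G_i$ being normal of finite index, one gets $[G,G]^{N_i^2}\subseteq[G_i,G_i]$ with $N_i=[G:G_i]$ (from $[a^N,b^N]=[a,b]^{N^2}$ in class $2$ and the centrality of $[G,G]$), so $[G,G]$ maps to torsion in $H_1(G_i)$ and hence $[G,G]\subseteq G_{i+1}$; inductively $[G,G]\subseteq\bigcap_i G_i$. (b) $\Sol$: $G\cong\Z^2\rtimes_B\Z$ with $B\in\mathrm{SL}_2(\Z)$ Anosov, so $\det(B-I)=2-\operatorname{tr}(B)\ne0$ and $[G,G]=\im(B-I)$ has finite index in the $\Z^2$ fibre; a direct computation shows $G_i=\Z^2\rtimes_{B^{p^{i-1}}}(p^{i-1}\Z)$, so the $\Z^2$ fibre lies in every $G_i$. (c) $\widetilde{\mathrm{SL}}_2$: $G$ is a central extension $1\to\Z\to G\to\pi_1(\Sigma_g)\to1$ with $g\ge2$ and nonzero Euler number, and $\Z=\langle c\rangle$ is central; if $c\in G_i$ then $G_i$, being normal of finite index, is itself a central extension of a finite-index subgroup $Q\cong\pi_1(\Sigma_{g_i})$ ($g_i\ge2$) by $\langle c\rangle$, with Euler number $e_i=\pm[G:G_i]$ times the Euler number of $G$, hence $e_i\ne0$; the five-term homology exact sequence then gives $H_1(G_i)\cong\Z/e_i\oplus\Z^{2g_i}$ with $c$ mapping to a torsion class, so $c\in G_{i+1}$, and inductively $c\in\bigcap_i G_i$. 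In all three cases $\bigcap_i G_i\ne1$, so $G$ is not RFR$p$ by Lemma~\ref{l:normal}.

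\emph{The main obstacle.} The crux is the ``only if'' direction, and within it the $\widetilde{\mathrm{SL}}_2$ case: the derived subgroup of a surface group does \emph{not} persist in the standard filtration (surface groups are themselves RFR$p$), so the argument must target the central fibre, and the key point is to verify that the Euler number stays nonzero under passage to finite-index subgroups --- equivalently, that the central fibre remains rationally invisible in $H_1$ at every stage. A secondary, bookkeeping point is to check that the case analysis is exhaustive for manifolds with toroidal boundary: $\Nil$ and $\Sol$ do not occur there, since their model groups have only cocompact lattices, while a compact manifold with nonempty toroidal boundary carrying a $\widetilde{\mathrm{SL}}_2$ structure is Seifert fibered over a hyperbolic $2$-orbifold with nonempty boundary and hence also carries a $\bH^2\times\R$ structure, placing it in the good case; thus the ``bad'' cases reduce to closed $\Nil$-, $\Sol$-, and $\widetilde{\mathrm{SL}}_2$-manifolds.
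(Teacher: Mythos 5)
Your proof is correct and runs along essentially the same lines as the paper's: a geometry-by-geometry case analysis, with the five ``good'' geometries handled via virtual specialness, products of surface groups with $\Z$, and RAAG subgroups. The difference is in how the three ``bad'' geometries are dispatched. The paper handles $\Nil$ by citing Proposition~\ref{prop:rfrp rtfn} (nonabelian nilpotent groups are never RFR$p$) and $\widetilde{\PSL_2(\R)}$ by citing Theorem~\ref{thm:circle bundle} (nontrivial central $S^1$-extensions of surface groups are never RFR$p$), and for $\Sol$ it simply observes that every finite-index subgroup has rank-one abelianization; you instead reprove the $\Nil$ obstruction via the class-$2$ commutator identity $[a^N,b^N]=[a,b]^{N^2}$, reprove the circle-bundle obstruction via the five-term sequence and the scaling of the Euler number under passage to finite-index subgroups, and for $\Sol$ you compute the standard filtration $G_i=\Z^2\rtimes_{B^{p^{i-1}}}(p^{i-1}\Z)$ explicitly. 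These are the same underlying mechanisms the paper's cited lemmas rely on, so the gain is that your proof is self-contained rather than genuinely different. One real improvement over the paper's write-up: you explicitly check exhaustiveness for manifolds with toroidal boundary, in particular noting that $\Nil$ and $\Sol$ have no finite-volume quotients with boundary and that a $\widetilde{\PSL_2(\R)}$-manifold with boundary also carries $\bH^2\times\R$ and hence lands in the good case. The paper addresses this only in a remark following the theorem statement, and its in-proof treatment of $\widetilde{\PSL_2(\R)}$ tacitly assumes a closed base surface; making this visible inside the proof, as you do, is a small but genuine gain in completeness.
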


We remark that Theorem \ref{thm:geometric} relies on Agol's resolution of 
the virtual Haken conjecture~\cite{agolgm} and on Dani Wise's 
work~\cite{WisePreprint,AFW16,WiseRAAGs}. We further remark one subtlety 
concerning Theorem \ref{thm:geometric} to the reader. Namely, 
a circle bundle over a surface with nonempty boundary can admit 
a geometric structure modeled on both $\bH^2\times\R$ and on 
$\widetilde{\mathrm{PSL}_2(\R)}$ at the same time. Circle bundles 
over surfaces with nonempty boundary are always considered to 
be in the purview of Theorem \ref{thm:geometric}.

Motivated by the topological study of plane algebraic curves 
(see Subsection \ref{subsec:arr curves} below) we isolate a class 
$\mX$ of compact, $3$-dimensional graph manifolds whose 
fundamental groups are RFR$p$.   Namely, a graph manifold 
$M$ lies in the class $\mX$ if the following conditions are satisfied:

\begin{enumerate}[label=($\mX_\arabic*$)]
\item  \label{x1}
The underlying graph $\gam$ is finite, connected, and bipartite with colors $\mP$ and 
$\mL$, and each vertex in $\mP$ has degree at least two.
\item \label{x2}
Each vertex manifold $M_v$ is homeomorphic to a trivial circle bundle over an 
orientable surface with boundary.
\item \label{x3}
If $M_v$ is colored by $\mL$ then at least one boundary component of $M_v$ is a 
boundary component of $M$, and the Euler number of $M_v$ is zero. 
\item \label{x4}
If $M_v$ is colored by $\mP$ then no boundary component of $M_v$ is a boundary 
component of $M$, and the Euler number of $M_v$ is nonzero;
\item \label{x5}
The gluing maps are given by flips.
\end{enumerate}

We refer the reader to Section \ref{sect:3manifold} for more details and 
precise definitions of all technical terms involved in this definition. 
Using Theorem \ref{t:comb}, we prove the following result.

\begin{thm}
\label{thm:graph manifold}
Let $M$ be a compact graph manifold satisfying the above conditions.  
Then for each prime $p$, the group $\pi_1(M)$ is RFR$p$.
\end{thm}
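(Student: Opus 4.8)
The plan is to verify the three hypotheses of the combination theorem (Theorem~\ref{t:comb}) for the graph of groups decomposition of $\pi_1(M)$ associated to the graph manifold structure of $M$. By van Kampen, $\pi_1(M)=\pi_1(M_\gam)$ is the fundamental group of a graph of groups whose underlying graph is $\gam$, with vertex groups $G_v=\pi_1(M_v)$ and edge groups $G_e=\pi_1(T_e)\cong \Z^2$ (each $T_e$ is a torus). Condition \ref{x2} says each $M_v$ is a trivial circle bundle over an orientable surface $\Sigma_v$ with nonempty boundary, so $G_v\cong \pi_1(\Sigma_v)\times\Z$ where $\pi_1(\Sigma_v)$ is a finitely generated free group; hence $G_v$ is a right-angled Artin group (the RAAG on a disjoint union of edges with possibly one extra isolated... actually $F_k\times\Z$ is the RAAG on the graph $K_1 * \overline{K_k}$ join of a vertex with $k$ vertices, i.e., a star), and therefore RFR$p$ by Proposition~\ref{prop:ex rfrp}\eqref{raag}. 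That disposes of hypothesis (1).

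For hypotheses (2) and (3) the key point is to understand the inclusions of the edge tori into the vertex groups and the global RFR$p$ filtration. First I would analyze a single vertex group $G_v=F_v\times\Z$, where the $\Z$ factor is generated by the fiber class $t_v$. A boundary torus $T_e\subset M_v$ has $\pi_1$ generated by $t_v$ together with a boundary curve $c_e$ of $\Sigma_v$, so $G_e$ embeds as $\langle c_e\rangle\times\langle t_v\rangle$. The central observation, which I expect to be the crux of the argument, is that these edge subgroups are separable in the RFR$p$ topology of $G_v$: a boundary curve of a surface generates a maximal cyclic subgroup whose image in $H_1$ is primitive, and $\langle c_e\rangle\times\Z$ is a direct factor after passing to a finite-index RAAG-subgroup corresponding to an induced subgraph; one uses the explicit description of the RFR$p$ filtration of a RAAG (the mod-$p$ lower central / Stallings filtration, or the rational lower central series thickened $p$-adically) to see that $\langle c_e,t_v\rangle$ is an intersection of kernels of the defining homomorphisms. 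This gives hypothesis (3). For hypothesis (2) one must check that the ambient RFR$p$ filtration $\{G_i\}$ of $G$ restricts to a cofinal family in the RFR$p$ filtration of $G_v$; here conditions \ref{x3}--\ref{x5} enter. The flip gluings \ref{x5} swap base and fiber directions, and the Euler number conditions \ref{x3}, \ref{x4} (zero for $\mL$-vertices, nonzero for $\mP$-vertices) are exactly what is needed so that the fiber class $t_v$ of each vertex survives to a nontrivial \emph{rational} homology class of $G$ (equivalently, the Euler class obstruction to the circle bundle being homologically essential vanishes appropriately); the bipartite structure with $\mP$-vertices of degree $\geq 2$ and $\mL$-vertices touching $\partial M$ ensures the gluing graph does not force any homological degeneration.

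Concretely, to establish hypothesis (2) I would argue as follows. Take any finite-index $G_i\trianglelefteq G$ in the standard RFR$p$ filtration; its intersection with $G_v$ is finite-index in $G_v$ and is a union of cosets of some term of the RFR$p$ filtration of $G_v$ by Lemma~\ref{l:normal}-type normality, so it suffices to produce, for an arbitrary deep term $N\trianglelefteq G_v$ of the RFR$p$ filtration of $G_v$, a finite-index normal subgroup of $G$ meeting $G_v$ inside $N$ while still being RFR$p$-admissible (condition (4) of the definition). Since $N$ can be taken to be the kernel of a surjection $G_v\surj (\Z/p)^m$ that kills no primitive rational class prematurely, I would extend this to $G$ by first mapping $G\to H_1(G;\Z/p)$ and then, using the graph-of-groups structure and the fact that $H_1(G_v;\Q)\to H_1(G;\Q)$ is injective for each $v$ (this is the consequence of the Euler-number and flip hypotheses that requires a Mayer--Vietoris computation on the graph manifold), lifting the map $G_v\to(\Z/p)^m$ to a map out of $G$ after a finite-index restriction. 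The main obstacle is precisely this Mayer--Vietoris computation: showing that the flip gluings and the prescribed Euler numbers make $H_1(G_v;\Q)\inj H_1(G;\Q)$ and, more delicately, that no element of $G_v\setminus N$ becomes rationally trivial in a finite-index subgroup of $G$ — i.e., checking condition (4) of the RFR$p$ definition globally. Once hypotheses (1)--(3) of Theorem~\ref{t:comb} are verified, the theorem immediately yields that $\pi_1(M)$ is RFR$p$, completing the proof.
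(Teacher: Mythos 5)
Your overall strategy---verifying the three hypotheses of Theorem~\ref{t:comb} for the graph-of-groups decomposition of $\pi_1(M)$---is the paper's approach, and you correctly handle hypotheses (1) and (3): each vertex group is $F_{k(v)}\times\Z$ (a RAAG on a star), hence RFR$p$; and each edge group is a maximal $\Z^2$ inside $\Z\times F$, which is closed in the RFR$p$ topology by the paper's Lemma~\ref{l:edge group closed} (an elementary special case of Theorem~\ref{thm:separable}, as you observe).

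The verification of hypothesis (2), however, has a genuine gap, and it is precisely where the paper's technical work lies. You assert that the Euler-number and flip conditions imply, via a Mayer--Vietoris computation, that $H_1(G_v;\Q)\to H_1(G;\Q)$ is injective for every $X\in\mX$. That is not true for an arbitrary $X\in\mX$. The paper's Theorem~\ref{thm:injcriterion} proves this injectivity only under the extra hypothesis that the underlying graph $\gam$ has \emph{girth at least six}, and that is not one of the axioms $(\mX_1)$--$(\mX_5)$. For instance, the boundary manifold of a conic plus a transversal line in $\C^2$ satisfies all five axioms yet has a $4$-cycle as its incidence graph (cf.~Figure~\ref{fig:circle-line}). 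The paper's device for resolving this is the \emph{girth-fixing $p$-cover}: a finite $p$-cover classified through the retraction $\pi_1(X)\to\pi_1(\gam)$ whose underlying graph has girth $\geq 6$. The proof then refines the standard RFR$p$ tower $\{X_i\}$ to an interleaved tower $\{Y_i\}$ that alternates torsion-free $p$-congruence covers with girth-fixing $p$-covers, and Lemma~\ref{lem:propagation} shows this keeps each $Y_i\in\mX$ with girth $\geq 6$. Moreover, what is actually needed is not just injectivity but \emph{split} injectivity of $H_1(X_v;\Z)\to H_1(X;\Z)$ (Theorem~\ref{t:split} and Corollary~\ref{cor:split injection}): the split ensures the restriction of the torsion-free $p$-congruence cover of $Y_i$ to each vertex space is exactly the torsion-free $p$-congruence cover of that vertex space, so that the restriction of $\{Y_i\}$ to $X_v$ recovers the standard RFR$p$ tower of $X_v$. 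Your sketch (lifting a map $G_v\to(\Z/p)^m$ out to $G$) does not engage either of these points and would not go through as stated: one must first arrange for the girth condition, propagate it cofinally through the tower, and upgrade injectivity to a splitting.
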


In the above theorem, the assumption that the gluing maps 
of edge spaces be flips is not an assumption by itself. By a recent result of 
Doig and Horn in \cite{DoigHorn}, any gluing map in a graph manifold can 
be made a flip map, at the expense of adding exceptional fibers to the vertex 
spaces. So, the combination of assumptions \ref{x2} and \ref{x5} 
in Theorem \ref{thm:graph manifold} do actually make for a nontrivial 
hypothesis.  Graph manifolds satisfying just these two assumptions 
and some mild condition on the graph $\Gamma$ were shown by 
Schroeder \cite{Schroeder} to possess metrics of non-positive curvature. 

Recently, many authors have studied graph manifolds which are \emph{virtually special}, 
see for instance \cite{PW14,HP15,Liu13}. One of the algebraic consequences of a 
graph manifold being virtually special is that its fundamental group is virtually RFR$p$ 
for each prime $p$, cf.~\cite{AF11,AF13,Ko13}. It is important to note that, although the 
graph manifolds covered by Theorem \ref{thm:graph manifold} are generally virtually 
special, the conclusion of the theorem is not a virtual statement. 
In particular, the theorem does not follow formally from known results, since RFR$p$ 
is a more refined property than virtual specialness. The reader is directed to 
Subsection \ref{subsec:complements} below.

Applying Agol's and Wise's results~\cite{agolrfrs,agolgm,WisePreprint,WiseRAAGs}, 
one obtains the following general fact quite easily:

\begin{cor}
\label{cor:aspherical}
Let $M$ be a compact aspherical $3$-manifold with $\chi(M)=0$. 
Then there exists a finite cover $M'\to M$ such that for each prime $p$, the 
group $\pi_1(M')$ is RFR$p$.
\end{cor}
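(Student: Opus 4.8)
The plan is to derive Corollary \ref{cor:aspherical} from Theorem \ref{thm:geometric} together with the geometrization/virtual specialness machinery. First I would recall that an aspherical compact $3$-manifold $M$ with $\chi(M)=0$ is, by the geometrization theorem, built from pieces along its JSJ decomposition: the manifold is either geometric (with one of the eight geometries, but $\chi=0$ and asphericity rule out $S^3$ and $S^2\times\R$), or it has a nontrivial JSJ decomposition whose pieces are either hyperbolic or Seifert fibered. Since $M$ is aspherical with zero Euler characteristic, I would note that $M$ is a graph manifold, a hyperbolic manifold with toroidal boundary, or a mixed manifold obtained by gluing such pieces along incompressible tori.

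Next I would invoke Agol's and Wise's theorems: every such $M$ has a finite cover $M'$ that is virtually special, and in fact one can arrange $M'$ so that $\pi_1(M')$ embeds in a right-angled Artin group (by \cite{agolgm, WiseRAAGs}). The key point is that being virtually special, combined with Theorem \ref{thm:geometric}'s characterization, lets me control the geometry of the pieces of $M'$. Concretely, I would pass to a finite cover in which each JSJ piece becomes either a hyperbolic $3$-manifold (geometry $\bH^3$) or a Seifert fibered piece that, after further covering, is a trivial circle bundle over a surface with boundary — hence modeled on $\bH^2\times\R$ (or $\R^3$ in the flat case, or $\bH^2\times\R$, all of which appear in the allowed list of Theorem \ref{thm:geometric}). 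This uses the standard fact (going back to Luft–Sjerve, and used systematically in \cite{PW14, Liu13}) that finite covers of Seifert pieces can be taken to have trivial Euler number and orientable base with boundary.

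Then I would apply the combination theorem, Theorem \ref{t:comb}, to the graph-of-groups decomposition of $\pi_1(M')$ coming from its JSJ tori. Each vertex group is the fundamental group of one of these geometric pieces, which by Theorem \ref{thm:geometric} is RFR$p$ for every prime $p$ after passing to a further finite-index subgroup; the edge groups are $\Z^2$'s. Passing to a common finite cover I can assume all vertex groups are simultaneously RFR$p$. The hypotheses of Theorem \ref{t:comb} that need checking are the two topological conditions: that the RFR$p$ topology on $\pi_1(M')$ induces the RFR$p$ topology on each vertex group, and that each peripheral $\Z^2$ is closed in the RFR$p$ topology of the vertex group containing it. These follow from the separability properties of $3$-manifold groups in the virtually special setting — peripheral subgroups of special groups are separable and, more to the point, virtually retracts, and in a RAAG (or a virtually special group) the RFR$p$ filtration restricts correctly to such subgroups. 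This is precisely where Theorem \ref{t:rfrpsummary}\eqref{res8} and the RAAG case, Proposition \ref{prop:ex rfrp}\eqref{raag}, get leveraged.

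The main obstacle, and the step I would expect to require the most care, is verifying condition (2) of Theorem \ref{t:comb} — that the ambient RFR$p$ topology restricts to the RFR$p$ topology on each vertex group — since a priori the RFR$p$ filtration of $\pi_1(M')$ could be much sparser than that of a vertex group, and the rationality condition (4) in the definition of RFR$p$ (nonzero class in $H_1$ of each stage) is delicate under passing to subgroups. I would handle this by working inside a RAAG: choosing the embedding $\pi_1(M') \inj A_\Gamma$ from Wise's theorem so that the vertex subgroups land in standard sub-RAAGs (or at least in subgroups that are themselves virtual retracts), and then transporting the canonical RFR$p$ filtration of the RAAG — which is explicit and well-behaved under retractions — back down. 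The retraction kills nothing in rational homology along the filtration, which is exactly what condition (4) demands, so the induced filtration on each vertex group is cofinal with its own RFR$p$ filtration. Once that is in place, Theorem \ref{t:comb} applies verbatim and gives that $\pi_1(M')$ is RFR$p$ for each $p$, completing the proof.
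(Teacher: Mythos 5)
You already hold the paper's proof when you note that Agol's and Wise's theorems furnish a finite cover $M'$ with $\pi_1(M')$ embedding in a right-angled Artin group: RAAGs are RFR$p$ for every prime by Proposition \ref{prop:ex rfrp}, part \ref{raag}, and the RFR$p$ property passes to finitely generated subgroups by Theorem \ref{t:rfrp groups}, part \ref{subgroup}, so the embedding alone finishes the argument. The JSJ decomposition, the trivializations of Seifert pieces, and the appeal to the combination theorem are all unnecessary once this observation is in hand.

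The longer route you then sketch also has a genuine gap. You propose to verify conditions (2) and (3) of Theorem \ref{t:comb} for the JSJ graph-of-groups structure via an appeal to virtual retracts and the remark that ``the retraction kills nothing in rational homology,'' but this is precisely the hard content of a separate and logically independent result, Theorem \ref{thm:graph manifold}. That theorem occupies Sections \ref{sect:3manifold} and \ref{sect:graph rfrp} with Mayer--Vietoris computations, girth conditions on the plumbing graph, split-injectivity of $H_1$ of the vertex pieces, and a delicate propagation of these properties through the RFR$p$ tower --- and even then the argument is tailored to the restricted class $\mX$ of graph manifolds with trivial circle bundle pieces and flip gluings, not to arbitrary JSJ decompositions with hyperbolic pieces. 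A cubulation or RAAG embedding from \cite{agolgm,WiseRAAGs} does not automatically place the JSJ vertex groups as sub-RAAGs, nor even as retracts of $\pi_1(M')$, so the transport of filtrations you describe would need its own substantial justification. Since the corollary only asserts a virtual statement, the subgroup-of-RAAG argument is both sufficient and the intended one; the combination theorem is reserved by the paper for the non-virtual conclusions, where it earns its keep.
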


Note that Corollary \ref{cor:aspherical} is a virtual statement and hence does not imply 
Theorem \ref{thm:graph manifold} formally. To see why Corollary \ref{cor:aspherical} 
holds, it suffices to note that $M$ as in the hypothesis of the corollary is proved by 
Agol to be virtually special, so that some finite index subgroup of $\pi_1(M)$ is 
RFR$p$ for each prime by Proposition \ref{prop:ex rfrp}, part \ref{raag}.

\subsection{Plane algebraic curves}
\label{subsec:arr curves}
The naturality of the manifolds in the purview of Theorem \ref{thm:graph manifold} 
comes from the fact that they are an axiomatized version of boundary manifolds 
of curve arrangements in $\C^2$.  More precisely, let $\mC$ be a (reduced) algebraic 
curve in the complex affine plane. The {\em boundary manifold}\/ of this curve, 
$M_{\mC}$, is obtained by intersecting the boundary of a regular 
neighborhood of $\mC$ with a $4$-ball of sufficiently large radius, 
so that all singularities of $\mC$ are contained in this ball.
Clearly, $M_{\mC}$ is a compact, connected, oriented $3$-manifold. 
Moreover, if each irreducible component of the curve $\mC$ is transverse 
to the line at infinity in $\C^2$, then the boundary components of 
$M_{\mC}$ are tori. 

In Theorem \ref{thm:bdrycurve} we show that, except 
for a few easy-to-handle cases, all boundary manifolds arising 
in the above fashion belongs to the class $\mX$ of graph-manifolds. 
As a consequence, we deduce from Theorem \ref{thm:graph manifold} 
the following result.

\begin{thm}
\label{thm:bdyrfrp}
Let $\mC$ be an algebraic curve in $\C^2$.  Suppose each 
irreducible component of $\mC$ is smooth and 
transverse to the line at infinity, and all singularities of $\mC$ 
are of type ${\rm A}$. Then $\pi_1(M_{\mC})$ is RFR$p$, for all primes $p$.
\end{thm}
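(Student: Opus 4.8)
The plan is to realize the boundary manifold $M_{\mathcal C}$ as a graph manifold in the class $\mathcal X$, apart from a short list of degenerate configurations, and then to invoke Theorem~\ref{thm:graph manifold}. First I would recall the plumbing description of $M_{\mathcal C}$: blowing up $\CP^2$ at the singular points of $\mathcal C$ — a single blow-up suffices at each, the singularities being of type A — produces a surface $X$ in which the total transform of $\bar{\mathcal C}\cup L_\infty$ has normal crossings, and $M_{\mathcal C}$ is homeomorphic to the boundary of a regular neighborhood in $X$ of $\bigcup_i\widehat{\mathcal C}_i\cup\bigcup_\sigma E_\sigma$, where $\widehat{\mathcal C}_i$ is the proper transform of the $i$-th component of $\mathcal C$, $E_\sigma\cong\CP^1$ is the exceptional curve over the singular point $\sigma$, and the line at infinity is discarded because we work in $\C^2$. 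Thus $M_{\mathcal C}$ is the plumbed $3$-manifold of the dual graph $\gam$, which has one vertex for each $\widehat{\mathcal C}_i$ and each $E_\sigma$ and one edge for each intersection point.

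Next I would check conditions \ref{x1}--\ref{x5} for $\gam$, coloring each $\widehat{\mathcal C}_i$ by $\mathcal L$ and each $E_\sigma$ by $\mathcal P$. In \ref{x1}, bipartiteness holds because, after blowing up, the proper transforms of distinct components of $\mathcal C$ are disjoint, so every edge joins an $\mathcal L$-vertex to a $\mathcal P$-vertex; the valence of the vertex $E_\sigma$ equals the multiplicity of $\sigma$, which is at least two; and $\gam$ is connected whenever $\mathcal C$ is. Condition \ref{x2} holds since each vertex manifold is an $S^1$-bundle over a compact orientable surface with boundary — a punctured sphere over $E_\sigma$, a punctured copy of the smooth curve $\mathcal C_i$ over $\widehat{\mathcal C}_i$ — hence is a trivial bundle. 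For \ref{x3}--\ref{x4}: as $\mathcal C_i$ meets $L_\infty$ transversally, $\widehat{\mathcal C}_i$ still meets the discarded curve, so its vertex manifold carries a boundary torus of $M_{\mathcal C}$, and deleting the neighborhood of $L_\infty$ readjusts framings so that the Euler number of each $\mathcal L$-piece is zero (reflecting the triviality of the normal bundle of the affine part of $\mathcal C_i$); on the other hand each $\sigma$ lies in $\C^2$, so no $\mathcal P$-piece meets $\partial M_{\mathcal C}$, and $E_\sigma$ is a $(-1)$-curve in $X$, so its Euler number is nonzero. Condition \ref{x5} holds because plumbing along the normal crossing of $E_\sigma$ with $\widehat{\mathcal C}_i$ is by construction a flip. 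Hence $M_{\mathcal C}\in\mathcal X$, and Theorem~\ref{thm:graph manifold} gives that $\pi_1(M_{\mathcal C})$ is RFR$p$ for every prime $p$.

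It remains to handle the degenerate cases excluded above: $\mathcal C$ a single smooth component; $\mathcal C$ a union of curves pairwise disjoint in $\C^2$, for which $M_{\mathcal C}$ is disconnected; and small configurations in which $\gam$ is disconnected or has a $\mathcal P$-vertex of valence one. In each such case the fundamental group of each component of $M_{\mathcal C}$ is readily seen to be a finitely generated free group or a direct product $F\times\Z$ of a free group with $\Z$, both of which are RFR$p$ for all primes by Proposition~\ref{prop:ex rfrp} (the group $F\times\Z$ being a right-angled Artin group). The step I expect to be the main obstacle is the plumbing bookkeeping behind the second paragraph, namely controlling the effect of removing the $L_\infty$-vertex from the resolution graph and verifying that this leaves the Euler numbers of the $\mathcal L$-colored pieces equal to zero and those of the $\mathcal P$-colored pieces nonzero, with all gluings genuine flips and no exceptional fibers introduced (so that \ref{x2} survives), together with an exact determination of which curve configurations fall outside this argument — this being the content of Theorem~\ref{thm:bdrycurve}.
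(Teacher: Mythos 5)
Your proposal follows the same overall architecture as the paper: exhibit $M_{\mathcal C}$ as a graph manifold in the class $\mathcal X$ (this is the paper's Theorem~\ref{thm:bdrycurve}), then apply Theorem~\ref{thm:graph manifold}. The only stylistic difference is that you describe the plumbing structure via a single blow-up of $\CP^2$ at each type~A singular point, followed by discarding the $L_\infty$-piece, whereas the paper builds the same dual graph directly as the incidence graph of the configuration (vertices $\mathcal L \cup \mathcal P$ of components and multiple points, edges given by incidence), with $M_P$ described directly as a Hopf link exterior and $M_L$ as a circle bundle over the affine curve with disks removed. These are two presentations of the same plumbing diagram, and your second-paragraph verification of axioms \ref{x1}--\ref{x5} matches the paper's checks. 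Your attention to the degenerate configurations (a single smooth component, or a curve whose components are pairwise disjoint in $\C^2$, giving free or $F\times\Z$ fundamental groups handled by Proposition~\ref{prop:ex rfrp}) also mirrors the paper's acknowledgment that Theorem~\ref{thm:bdrycurve} applies ``except for a few easy-to-handle cases''; cf.\ Example~\ref{ex:affine smooth curve}. The one place your account could be tightened is the claim that removing the $L_\infty$-vertex ``readjusts framings'' to make the $\mathcal L$-pieces have Euler number zero: the paper sidesteps the framing bookkeeping entirely by observing that the normal bundle of a smooth affine curve in $\C^2$ is trivial, giving $e(M_L)=0$ directly, which is cleaner than tracking the effect of deleting $L_\infty$ from the resolution graph. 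Likewise, your identification of $e(M_P)\ne 0$ via the exceptional curve being a $(-1)$-curve agrees in substance (up to sign convention) with the paper's computation in Example~\ref{ex:hopf} that $e(M_P)=1$ for the Hopf link exterior; only nonvanishing is needed for axiom~\ref{x4}.
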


Here, by type ${\rm A}$ singularity, we mean that the germs are isomorphic to a pencil of lines.
See Subsection~\ref{subsec:affine}. The following particular case is worth singling out.  

\begin{cor}
\label{cor:bdyarrrfrp}
If $\A$ is an arrangement of lines in $\C^2$, then the 
fundamental group of the boundary manifold of $\A$ 
is RFR$p$, for all primes $p$.
\end{cor}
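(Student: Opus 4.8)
The plan is to deduce this from Theorem~\ref{thm:graph manifold}, by showing that the boundary manifold $M_{\mathcal{A}}$ of a line arrangement $\mathcal{A}$ in $\C^2$ belongs to the class $\mathcal{X}$ (apart from the degenerate case of an arrangement of mutually parallel lines, where $M_{\mathcal{A}}$ is a disjoint union of solid tori with $\pi_1\cong\Z$, which is RFR$p$). This is essentially the special case of the proof of Theorem~\ref{thm:bdrycurve} in which every irreducible component of the curve is a line; the one feature not already covered by Theorem~\ref{thm:bdyrfrp} is that $\mathcal{A}$ may have ordinary multiple points of multiplicity $m\geq 3$, which are not simple singularities of type~$\mathrm{A}$, so the main work is to see that such points still give rise to vertices of the prescribed form.

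First I would write down the graph-manifold structure of $M_{\mathcal{A}}$. Blow up $\C^2$ once at each point lying on at least two lines of $\mathcal{A}$. Since two distinct lines meet in at most one point, and do so transversally, a single round of blow-ups turns the total transform of $\mathcal{A}$ into a divisor with normal crossings: its components are the strict transforms $\tilde\ell_1,\dots,\tilde\ell_n$ of the lines, which are now pairwise disjoint, together with one exceptional curve $E_P\cong\CP^1$ for each blown-up point $P$, with $E_P^2=-1$ and $E_P$ meeting exactly those $\tilde\ell_i$ with $P\in\ell_i$. Because the boundary manifold of a curve is unchanged (up to homeomorphism) under blowing up, $M_{\mathcal{A}}$ is the boundary of a regular neighborhood of this normal-crossings divisor, hence a plumbing graph manifold whose underlying graph is the incidence graph of $\{\tilde\ell_i\}$ versus $\{E_P\}$, with circle-bundle pieces over punctured spheres spliced by flips along the linking tori.

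Next I would verify conditions \ref{x1}--\ref{x5}. The graph is finite, and — assuming, as one may, that the union of the lines is connected — connected; it is bipartite by construction with color classes $\mathcal{L}=\{\tilde\ell_i\}$ and $\mathcal{P}=\{E_P\}$, and each $E_P$ has degree equal to the multiplicity of $P$, hence at least two, which gives \ref{x1}. Each vertex manifold is a trivial circle bundle over an orientable punctured sphere, giving \ref{x2}. As in the proof of Theorem~\ref{thm:bdrycurve}, one checks that, in the normal form of this graph manifold, the piece over each $\tilde\ell_i$ has Euler number zero and possesses exactly one boundary torus lying on $\partial M_{\mathcal{A}}$, namely the torus linking $\ell_i$ near the line at infinity — this is \ref{x3} — while the piece over each $E_P$ lies in the interior of $M_{\mathcal{A}}$ and has nonzero Euler number, coming from $E_P^2=-1$, which is \ref{x4}. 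The plumbing gluings across normal crossings are flips, which is \ref{x5}. Theorem~\ref{thm:graph manifold} then yields that $\pi_1(M_{\mathcal{A}})$ is RFR$p$ for every prime $p$; the remaining case $n=1$ gives a solid torus, with $\pi_1\cong\Z$ free, again RFR$p$ by Proposition~\ref{prop:ex rfrp}.

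The step I expect to be the main obstacle is the interplay between the multiplicities and the Euler numbers in the identification of $M_{\mathcal{A}}$ with a plumbing manifold satisfying \ref{x3} and \ref{x4}: one must check that an ordinary $m$-fold point, although not of type~$\mathrm{A}$, resolves after a single blow-up into precisely the local picture demanded by class $\mathcal{X}$ — an exceptional $\CP^1$ of self-intersection $-1$ meeting the $m$ now-separated lines — and, intertwined with this, one must track how the self-intersection of each line, which drops by one under every blow-up through it, is redistributed in the JSJ normal form of the graph manifold entirely onto the exceptional pieces, so that the line pieces survive as honest product circle bundles of Euler number zero. This bookkeeping is exactly what is carried out in the proof of Theorem~\ref{thm:bdrycurve}; once it is in place, Corollary~\ref{cor:bdyarrrfrp} is immediate.
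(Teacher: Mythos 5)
Your conclusion is correct, but your starting premise rests on a misreading of the paper's conventions. You assert that Theorem~\ref{thm:bdyrfrp} does not already cover line arrangements because ordinary multiple points of multiplicity $m\geq 3$ ``are not simple singularities of type~${\rm A}$.'' That is the standard ADE usage, but it is not what this paper means: \S\ref{subsec:affine} explicitly defines a singularity of type~${\rm A}$ to be one whose germ is isomorphic to a pencil of lines, i.e., an ordinary multiple point of any multiplicity $m\geq 2$. With this convention a line arrangement in $\C^2$ automatically satisfies every hypothesis of Theorem~\ref{thm:bdyrfrp} (lines are smooth, lines are transverse to the line at infinity, and every singular point is an ordinary multiple point), so Corollary~\ref{cor:bdyarrrfrp} is an immediate special case. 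This is exactly the paper's proof, recorded as Corollary~\ref{cor:bdryarr} (that $M_{\A}\in\mX$) followed by Theorem~\ref{thm:graph manifold}.

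The blow-up argument you then develop is therefore unnecessary, and it is also more delicate than the paper's construction, for a reason you yourself flag as the main obstacle. After resolving an $m$-fold point, the strict transforms of the lines acquire nonzero self-intersection in the blown-up surface, and one must push those numbers back onto the exceptional pieces via Neumann's plumbing moves before axioms \ref{x3} and \ref{x4} can be read off. The paper avoids this entirely by not resolving the multiple points: at each $m$-fold point $P$ it takes the vertex manifold $M_P$ to be the exterior of a Hopf link on $m$ components, a trivial $S^1$-bundle over a punctured sphere with relative Euler number one (axiom \ref{x4}), and for each line $L$ it observes directly that the normal bundle of a smooth affine curve in $\C^2$ is holomorphically trivial, so $M_L$ is a genuine product with $e(M_L)=0$ (axiom \ref{x3}). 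Deferring the plumbing bookkeeping to ``the proof of Theorem~\ref{thm:bdrycurve}'' does not close your loop, because that proof works with the unresolved singularities, not the blown-up model.
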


We also show in Section \ref{sect:boundary manifold}  
that Theorem \ref{thm:bdyrfrp} does not generalize to the compact case, 
namely, that the boundary manifold of an algebraic curve in $\C\bP^2$ 
(even one that satisfies the aforementioned conditions), does not 
always have an RFR$p$ fundamental group.

The motivation behind studying the RFR$p$ property 
for boundary manifolds of arrangements comes from 
the following fundamental unsolved problems:

\begin{prob}
\label{prob:rfrp}
Let $G=\pi_1(\C^2\setminus V(\A))$, where here $\A$ is a line arrangement 
and where $V(\A)$ denotes the variety consisting of the union of the 
lines in $\A$.
\begin{enumerate}
\item
Is $G$ residually finite?
\item
Is $G$ torsion-free?
\end{enumerate}
\end{prob}

The algebraic and topological structure of the complement $\C^2\setminus V(\A)$ 
is closely related to that of the boundary manifold $M_{\A}$ (see~\cite{Hi01, CS08}, 
for instance), and Theorem \ref{thm:bdyrfrp} provides the first step in an 
approach to resolving Problem \ref{prob:rfrp}.

As for a self-contained approach to boundary manifolds of arrangements, 
this paper achieves (among many other things) two goals. The first is that 
it provides an axiomatic setup for studying boundary manifolds. The second 
is a subtlety of virtual versus non-virtual residual properties: the RFR$p$ 
property of fundamental groups of boundary manifolds at every prime $p$ 
holds on the nose, not just after passing to a finite index subgroup.

\subsection{Acknowledgements}
\label{subsec:acknowledgements}
The authors thank K.~Adiprasito, S.~Friedl, E.~Hironaka, 
Y.~Minsky, and A.~Silberstein for helpful conversations. 
The first author was partially supported by NSF grant DMS-1203964 
and Simons Foundation collaborative grant 429836, as well as 
by NSF Grant DMS-1711488 and by an Alfred P. Sloan Foundation Research Fellowship.. 
The second author was partially supported by NSA grant H98230-13-1-0225 
and the Simons Foundation collaborative grant 354156.

\section{Residually finite rationally $p$ groups}
\label{sect:rfrp}

In this section, we give a (very nearly) self-contained account of 
residually finite rationally $p$ groups. 

\subsection{The RFR$p$ filtration}
\label{subsec:rfrp filt}
Let $G=G_1$ be a finitely generated group and let $p$ be a prime.  
We say that $G$ is \emph{residually finite rationally $p$}\/ or RFR$p$ 
if there exists a sequence of subgroups 
$\{G_i\}_{i\geq 1}$ of $G$ such that:
\begin{enumerate}
\item \label{rfr1}
For each $i$, the group $G_{i+1}$ is a normal subgroup of $G_i$.
\item \label{rfr2}
We have 
\[
\bigcap_{i\ge 1} G_i=\{1\}.
\]
\item \label{rfr3}
For each $i$, the group $G_i/G_{i+1}$ is an elementary abelian $p$-group.
\item \label{rfr4}
For each $i$, we have that 
\[
\ker\{G_i\to H_1(G_i,\bQ)\}<G_{i+1}.
\]
\end{enumerate}

The reader may compare the RFR$p$ condition with the RFRS 
condition developed by Agol in \cite{agolrfrs}.  Agol requires each subgroup $G_i$ 
to be normal in $G$ and drops the requirement that $G_i/G_{i+1}$ 
be a $p$-group.

For a general finitely generated, abelian group $K$, let $\T(K)$ 
denote the torsion subgroup of $K$, and let 
\begin{equation}
\label{eq:tfk}
\TF(K)=K/\T(K).
\end{equation}
be the maximal torsion-free quotient of $K$.

\begin{lem}
\label{l:normal}
Let $G$ be RFR$p$ as above with a sequence $\{G_i\}$ of subgroups 
witnessing the statement that $G$ is RFR$p$.  Then there exists a 
sequence of subgroups $\{K_i\}$ of $G$ which witness the fact that 
$G$ is RFR$p$, and such that $K_i$ is normal in $G_1$ for each $i$.
\end{lem}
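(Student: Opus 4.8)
The plan is to replace the given (merely subnormal) series $\{G_i\}$ by a new series $\{K_i\}$ that is normal in $G=G_1$, built by taking successive intersections of conjugates. Concretely, I would set $K_1=G_1$ and define $K_{i+1}$ by intersecting $G_{i+1}$ with finitely many of its $G$-conjugates, so as to obtain a subgroup that is normal in $G_1$ and still sits below the rational-kernel of $K_i$. The key numerical input is that $G_{i+1}$ has finite index in $G_i$, hence in $G_1$, so $G_{i+1}$ has only finitely many $G_1$-conjugates; their intersection $N_{i+1}:=\bigcap_{g\in G_1}gG_{i+1}g^{-1}$ is the normal core of $G_{i+1}$ in $G_1$, is still of finite index, and is normal in $G_1$. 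One then defines $K_{i+1}=N_{i+1}\cap K_i$ (or equivalently works entirely inside the cores), and checks the four RFR$p$ axioms for the resulting chain.

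The verification proceeds axiom by axiom. Axiom \eqref{rfr1} (each $K_{i+1}\trianglelefteq K_i$) is immediate since $K_{i+1}$ is even normal in $G_1\ge K_i$. Axiom \eqref{rfr2} ($\bigcap K_i=\{1\}$) follows because $K_i\le G_i$ for all $i$, so the new intersection is contained in the old one, which is trivial by hypothesis. Axiom \eqref{rfr3} ($K_i/K_{i+1}$ elementary abelian $p$) requires the observation that a finite-index subgroup of an elementary abelian $p$-group is trivial, so in fact the only way to keep the quotients $p$-elementary while passing to cores is to argue that the core $N_{i+1}$ already contains $G_{i+1}'$ and $p$-th powers; more carefully, since $G_i/G_{i+1}$ is abelian, the $G_1$-action permutes the conjugates of $G_{i+1}$ inside $G_i$, but all such conjugates contain the fixed subgroup $[G_i,G_i]G_i^p$ provided we only conjugate by elements of $G_i$ — so the honest fix is to take the core inside $G_i$ rather than inside $G_1$ at each stage, and then intersect down the tower. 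This is the step I expect to be the main obstacle: balancing "normal in $G_1$" against "quotients stay elementary abelian $p$" forces one to be careful about which group the core is taken relative to, and likely to reindex the filtration (interleaving the original $G_i$ with their $G_1$-cores) so that both properties survive.

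For axiom \eqref{rfr4}, one uses that the natural surjection $K_i\to H_1(K_i,\Q)$ has kernel $[K_i,K_i]\,\mathrm{Tors}$, i.e. $\ker=[K_i,K_i]$ after tensoring with $\Q$, and one must check this kernel lies in $K_{i+1}$. Since $K_i\le G_i$ has finite index, the transfer (corestriction) map shows $H_1(G_i,\Q)\to H_1(K_i,\Q)$ has finite cokernel and the restriction $H_1(K_i,\Q)\to H_1(G_i,\Q)$ is injective on the image, so the rational-homology condition for $G_i$ passes down to $K_i$; combined with the original axiom \eqref{rfr4} for $\{G_i\}$ and the inclusion $K_{i+1}\supseteq$ (appropriate core of $G_{i+1}$), one concludes $\ker\{K_i\to H_1(K_i,\Q)\}<K_{i+1}$. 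Assembling these, the chain $\{K_i\}$ witnesses that $G$ is RFR$p$ with every term normal in $G_1$, which is the assertion of the lemma. The only real care needed is the bookkeeping in the previous paragraph; everything else is routine finite-index and transfer-map manipulation.
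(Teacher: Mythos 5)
Your normal-core strategy differs from the paper's proof. The paper sets $K_1=G$ and
\[
K_{i+1}=\ker\bigl\{K_i \to \TF H_1(K_i,\bZ)\to (\TF H_1(K_i,\bZ))\otimes\bZ/p\bZ\bigr\},
\]
so that each $K_i$ is \emph{characteristic} in $G$ by construction; conditions \eqref{rfr1}, \eqref{rfr3}, \eqref{rfr4} are then immediate, and $\bigcap K_i=\{1\}$ follows once one shows $K_i\le G_i$ by induction, using functoriality of $\TF H_1(\cdot)\otimes\bZ/p$ together with the fact that $K_{i+1}$ is the smallest kernel yielding an elementary abelian $p$-quotient satisfying \eqref{rfr4}. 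This route is shorter and sidesteps normal cores entirely.

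As for your proposal: the worry you flag about condition \eqref{rfr3} is real, but your proposed patch is incorrect. Taking the core inside $G_i$ rather than inside $G_1$ would destroy the very normality in $G_1$ that the lemma requires, and the assertion that ``a finite-index subgroup of an elementary abelian $p$-group is trivial'' is false. The correct resolution needs no reindexing. Carry along two inductive hypotheses simultaneously: that $K_i$ is normal in $G_1$, \emph{and} that $K_i\le G_i$. Then for $g\in K_i$ and $h\in G_1$, normality gives $h^{-1}gh\in K_i\le G_i$, hence $h^{-1}g^ph=(h^{-1}gh)^p\in G_{i+1}$ by condition \eqref{rfr3} for the original sequence; so $g^p$ lies in every $G_1$-conjugate of $G_{i+1}$, i.e.\ $g^p\in N_{i+1}$, and the same argument gives $[K_i,K_i]\le N_{i+1}$, so $K_i/K_{i+1}$ is indeed elementary abelian $p$. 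For condition \eqref{rfr4}, the transfer digression is unnecessary and your phrase ``$H_1(K_i,\Q)\to H_1(G_i,\Q)$ is injective on the image'' is misleading (this map is in general not injective for finite-index inclusions). All that is needed is functoriality: if $g\in K_i$ dies in $H_1(K_i,\Q)$, then it dies in $H_1(G_i,\Q)$ and hence lies in $G_{i+1}$; and since $\ker\{K_i\to H_1(K_i,\Q)\}$ is characteristic in $K_i$, which is normal in $G_1$, it lies in the core $N_{i+1}$.
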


\begin{proof}
We set $K_1=G$, and we define 
\begin{equation}
\label{eq:ki1}
K_{i+1}=\ker\big\{K_i \to \TF H_1(K_i,\bZ)\to (\TF H_1(K_i,\bZ))\otimes\bZ/p\bZ\big\}.
\end{equation}

By construction, each subgroup $K_i$ is characteristic in $G$, thereby 
verifying condition \eqref{rfr1}.  It is also clear that the sequence 
$\{K_i\}_{i\ge 1}$ satisfies conditions \eqref{rfr3} and \eqref{rfr4}.  
To see that condition  \eqref{rfr2} holds, note that $K_i/K_{i+1}$ 
is the largest elementary abelian quotient of $K_i$ satisfying \eqref{rfr4}.  
It follows immediately that $K_i\leq G_i$, so that 
\begin{equation}
\label{eq:capk}
\bigcap_iK_i\subset\bigcap_i G_i=\{1\},
\end{equation}
whence the conclusion.
\end{proof}

It follows from Lemma \ref{l:normal} that the RFR$p$ condition 
is strictly stronger than Agol's RFRS condition.  
The reader may note that in \cite{agolrfrs}, Agol shows that right-angled Artin 
groups are RFR$p$ for $p=2$, which we will show in Theorem \ref{t:rfrp groups} 
implies that all subgroups of right-angled Artin groups are RFR$2$.

The nested sequence of subgroups $\{K_i\}$ furnished by Lemma \ref{l:normal} 
will be called the \emph{standard RFR$p$ sequence} or the 
\emph{standard RFR$p$ filtration}.  Passing between groups 
and spaces, if $X$ is a connected CW-complex with $\pi_1(X)=G$, and 
if $\{X_i\}$ is a tower of covers such that $\pi_1(X_i)=K_i$, we call 
$\{X_i\}$ the \emph{standard RFR$p$ tower}\/ of $X$.  We will often call the 
quotients $G/K_i$ the \emph{RFR$p$ quotients of $G$}, which is 
not to be confused with those quotients of $G$ which are RFR$p$.

\subsection{The RFR$p$ topology}
\label{subsec:rpfrp top}
Let $p$ be a fixed prime. If $G$ is a finitely generated group, we take the 
natural definition for the \emph{RFR$p$ topology}\/ on $G$. A neighborhood 
basis for the identity is given by the standard RFR$p$ filtration of $G$, and 
a basis for the topology in general is given by the cosets of these subgroups. 
The group $G$ is RFR$p$ if and only if this topology is Hausdorff. We remark
that the use of the terminology ``topology" for us is mostly organizational, and 
we will not require specifics about this topology. As such, we will not delve
deeply into the details of the structure of the topology.

Let $H<G$ be a subgroup, let $\{G_i\}$ be the standard RFR$p$ filtration on $G$, 
and let $\phi_i\colon G\to G/G_i$ be the canonical projection. The subgroup $H$ is \emph{closed}\/ 
in the RFR$p$ topology if and only if for each $g\in G\setminus H$, there is an $i$ 
such that $\phi_i(g)\notin \phi_i(H)$.

If $G$ is a finitely generated group and $H<G$ is a finitely generated subgroup, 
then the \emph{RFR$p$ topology on $H$ induced by $G$ by restriction}\/ is the 
topology on $H$ whose neighborhood basis for the identity is given by the 
subgroups $\{G_i\cap H\}_{i\geq 1}$, where $\{G_i\}_{i\geq 1}$ is the standard 
RFR$p$ filtration of $G$. If $\{H_j\}_{j\geq 1}$ is the standard RFR$p$ filtration 
on $H$, we say that $G$ induces the RFR$p$ topology on $H$ if for each $j$ 
there exists an $i$ such that $H_j>G_i\cap H$.

Let $p$ be a fixed prime, let $G$ be a finitely generated group, and let $\{G_i\}$ 
be the standard RFR$p$ filtration on $G$. We denote the \emph{RFR$p$ radical}\/ 
of $G$ by 
\begin{equation}
\label{eq:radp}
\rad_p(G)=\bigcap_i G_i.
\end{equation}
We have that $G$ is RFR$p$ if and only if $\rad_p(G)$ is trivial. Notice that if 
$i\leq j$ then $\rad_p(G_i)=\rad_p(G_j)$, by the definition of the standard 
RFR$p$ filtration on $G$. The following fact is relatively straightforward, 
but is nevertheless useful:

\begin{prop}
\label{prop:radical}
Let $G$ be a finitely generated group, and let $\phi\colon G\to H$ 
be a surjective homomorphism, where $H$ is RFR$p$. 
If $g\notin\ker\phi$, then $g\notin\rad_p(G)$.
\end{prop}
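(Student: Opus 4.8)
The plan is to lift the RFR$p$ filtration of $H$ back to $G$ via $\phi$ and check that it detects the given element $g$. More precisely, let $\{H_i\}_{i\ge 1}$ be the standard RFR$p$ filtration of $H$, and set $G_i' = \phi^{-1}(H_i)$. First I would observe that each $G_i'$ is a finite-index subgroup of $G$ (since $[G:G_i'] = [H:H_i]$ as $\phi$ is surjective), that $G_1' = \phi^{-1}(H) = G$, that $G_{i+1}' \triangleleft G_i'$ with $G_i'/G_{i+1}' \cong H_i/H_{i+1}$ an elementary abelian $p$-group, and that $\bigcap_i G_i' = \phi^{-1}(\bigcap_i H_i) = \phi^{-1}(\{1\}) = \ker\phi$. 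So the sequence $\{G_i'\}$ is a ``relative'' RFR$p$ filtration whose intersection is $\ker\phi$ rather than $\{1\}$.

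The only nontrivial point is condition \eqref{rfr4}: I need $\ker\{G_i' \to H_1(G_i',\bQ)\} < G_{i+1}'$. The key step is to factor the map $G_i' \to H_1(G_i',\bQ)$ through $G_i' \xrightarrow{\phi} H_i \to H_1(H_i,\bQ)$. Indeed, the restriction $\phi|_{G_i'}\colon G_i' \to H_i$ is surjective, so it induces a surjection on rational homology $H_1(G_i',\bQ) \surj H_1(H_i,\bQ)$; hence if $g \in G_i'$ has nonzero image in $H_1(G_i',\bQ)$ — equivalently, if it maps to something nonzero in $H_1(H_i,\bQ)$ — I cannot directly conclude, but the contrapositive is what I want: if $\phi(g) \notin H_{i+1}$ then, since $\{H_i\}$ satisfies \eqref{rfr4}, $\phi(g)$ has nonzero class in $H_1(H_i,\bQ)$, and therefore $g$ has nonzero class in $H_1(G_i',\bQ)$ by surjectivity of $\phi$ on rational $H_1$. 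This shows that any $g \in G_i' \setminus G_{i+1}'$ represents a nonzero class in $H_1(G_i',\bQ)$, which is precisely condition \eqref{rfr4}.

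With the relative filtration in hand, the conclusion is immediate. Given $g \notin \ker\phi$, we have $\phi(g) \ne 1$ in $H$; since $H$ is RFR$p$, there is some $i$ with $\phi(g) \notin H_i$, i.e.\ $g \notin G_i'$. Now by the minimality property of the standard RFR$p$ filtration $\{G_i\}$ of $G$ (each $G_i/G_{i+1}$ is the \emph{largest} elementary abelian quotient of $G_i$ satisfying \eqref{rfr4}, as used in the proof of Lemma~\ref{l:normal}), one shows inductively that $G_j < G_j'$ for all $j$: having $G_{j} < G_j'$, the subgroup $G_{j+1}$ is contained in $G_{j+1}'$ because $G_j'/(G_{j+1}' \cap G_j)$ is an elementary abelian $p$-quotient of $G_j$ satisfying \eqref{rfr4}, hence is a quotient of $G_j/G_{j+1}$. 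Therefore $G_i < G_i'$, and since $g \notin G_i'$ we get $g \notin G_i$, whence $g \notin \bigcap_i G_i = \rad_p(G)$, as desired.

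The main obstacle is the induction $G_j < G_j'$ comparing the intrinsic standard filtration of $G$ to the pulled-back filtration; it requires being careful about the universal property characterizing the standard RFR$p$ steps, exactly the property invoked in Lemma~\ref{l:normal}. Everything else — surjectivity on rational $H_1$, the index and normality bookkeeping — is routine.
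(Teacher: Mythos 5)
Your proposal is correct and takes essentially the same route as the paper: pull back the standard RFR$p$ filtration of $H$ to $G$ via $\phi$, check that the pulled-back sequence $\{\phi^{-1}(H_i)\}$ satisfies condition~\eqref{rfr4} by factoring through $\TF H_1(H_i,\bZ)$, and then invoke the maximality property underlying Lemma~\ref{l:normal} to conclude $G_i < \phi^{-1}(H_i)$ for all $i$, so that $\rad_p(G) \subset \bigcap_i \phi^{-1}(H_i) = \ker\phi$. (One small slip in your inductive step: the group you need to be an elementary abelian $p$-quotient of $G_j$ is $G_j/(G_{j+1}'\cap G_j)$, not $G_j'/(G_{j+1}'\cap G_j)$.)
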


\begin{proof}
Write $\{G_i\}$ denote the standard RFR$p$ filtration on $G$. Let $1\neq h=\phi(g)$, 
and let $\{H_i\}$ be the standard RFR$p$ filtration of $H$. Then $h\in H_i\setminus H_{i+1}$ 
for some $i$. Pulling back the subgroups $\{H_i\}$ to a collection of subgroups $\{K_i\}$ of $G$, 
we have that $g\in K_i\setminus K_{i+1}$. Moreover, we have that $K_i/K_{i+1}$ is an 
elementary abelian $p$-group, and the quotient map $K_i\to K_i/K_{i+1}$ factors through 
the torsion-free abelianization $K_i\to \TF H_1(K_i,\bZ)$. It follows that for each $i$, 
there exists a $j$ such that $G_j<K_i$, by the same argument as in Lemma \ref{l:normal}. 
It follows that 
\begin{equation}
\label{eq:radpg}
\rad_p(G)<\bigcap_i K_i,
\end{equation}
so that $g\notin\rad_p(G)$.
\end{proof}

The following fact will be useful in the sequel:

\begin{cor}
\label{cor:retract}
Let $G$ be a finitely generated group, and let $r\colon G\to H$ 
be a retraction to a subgroup $H<G$. Then the 
RFR$p$ topology on $G$ induces the RFR$p$ topology on $H$.
\end{cor}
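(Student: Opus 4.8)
The plan is to exploit Proposition~\ref{prop:radical} together with the fact that a retraction provides a surjection whose kernel is controlled. Concretely, let $\iota\colon H\hookrightarrow G$ denote the inclusion, so that $r\circ\iota=\id_H$, and let $\{G_i\}$ and $\{H_j\}$ be the standard RFR$p$ filtrations on $G$ and $H$ respectively. The RFR$p$ topology on $G$ induces the RFR$p$ topology on $H$ precisely when, for every $j$, there is an $i$ with $H_j\supset G_i\cap H$. So the goal is: given $j$, produce such an $i$.

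First I would observe that $r\colon G\to H$ is a surjective homomorphism, and of course $H$ need not be RFR$p$ a priori, so I cannot apply Proposition~\ref{prop:radical} directly to $H$; instead I would apply it to the quotient $H/H_j$, which is a finite (hence certainly RFR$p$) group, via the composite $\phi_j\circ r\colon G\to H/H_j$. Actually, the cleaner route is to mimic the proof of Proposition~\ref{prop:radical} and of Lemma~\ref{l:normal} internally. Pull back the filtration $\{H_j\}$ along $r$ to get subgroups $K_j=r^{-1}(H_j)\trianglelefteq G$. Then $K_j/K_{j+1}\cong H_j/H_{j+1}$ is an elementary abelian $p$-group, and the key point is that the quotient map $K_j\to K_j/K_{j+1}$ factors through $\TF H_1(K_j,\bZ)\otimes\bZ/p\bZ$: this is because $r$ restricts to a retraction $K_j\to H_j$ (since $r(K_j)=H_j$ and $r|_{H_j}$ is the identity on $H_j\subset K_j$), so $H_1(K_j,\bQ)\to H_1(H_j,\bQ)$ is surjective and the condition defining $H_{j+1}\subset H_j$ — that every element outside $H_{j+1}$ is nonzero in $H_1(H_j,\bQ)$ — pulls back. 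Hence, by the universal property of the standard RFR$p$ filtration (exactly the argument in Lemma~\ref{l:normal}), for each $j$ there is an $i$ with $G_i\subset K_j$, and therefore $G_i\cap H\subset K_j\cap H=r^{-1}(H_j)\cap H=H_j$, using $r|_H=\id_H$ for the last equality. This is precisely the required inclusion.

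The main obstacle, and the step deserving the most care, is verifying that the rational-homology condition \eqref{rfr4} for the filtration $\{H_j\}$ on $H$ genuinely pulls back to give \eqref{rfr4}-type control on $\{K_j\}$ as a filtration of $G$ — i.e.\ that $\ker\{K_j\to H_1(K_j,\bQ)\}\subset K_{j+1}$. This follows because the retraction $r$ makes $H_1(H_j,\bQ)$ a direct summand-like quotient of $H_1(K_j,\bQ)$ (more precisely, $r_*\colon H_1(K_j,\bQ)\to H_1(H_j,\bQ)$ is split surjective via $\iota_*$), so an element of $K_j$ mapping to zero in $H_1(K_j,\bQ)$ maps to zero in $H_1(H_j,\bQ)$, hence its $r$-image lies in $H_{j+1}$, hence it lies in $K_{j+1}=r^{-1}(H_{j+1})$. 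Once this is in hand, the rest is the now-standard comparison argument showing that the standard filtration $\{G_i\}$ is cofinal in (i.e.\ eventually refines) any filtration of $G$ satisfying conditions \eqref{rfr1}, \eqref{rfr3}, \eqref{rfr4}, which is quoted verbatim from the proofs of Lemma~\ref{l:normal} and Proposition~\ref{prop:radical}. I would present the argument in roughly this order: set up $K_j=r^{-1}(H_j)$; check it satisfies \eqref{rfr1}, \eqref{rfr3}, \eqref{rfr4}; invoke cofinality of $\{G_i\}$ to get $G_i\subset K_j$; intersect with $H$ and use $r|_H=\id$ to conclude $G_i\cap H\subset H_j$.
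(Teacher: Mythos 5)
Your proof is correct, and it reaches the same conclusion by a slightly different packaging of the same underlying idea. The paper's proof runs an induction on $i$ with hypothesis $H_i=G_i\cap H$: the retraction restricts to a surjection $G_i\surj H_i$, so the elementary abelian $p$-group $H_i/H_{i+1}$ arises as a quotient of $G_i$ factoring through $\TF H_1(G_i,\bZ)\otimes\bZ/p\bZ$, forcing $G_{i+1}\cap H\subset H_{i+1}$. You instead pull the filtration $\{H_j\}$ back along $r$ to $K_j=r^{-1}(H_j)$, verify that $\{K_j\}$ satisfies conditions \eqref{rfr1}, \eqref{rfr3}, \eqref{rfr4}, and then invoke the cofinality of the standard filtration --- the same universal property exploited in the proofs of Lemma~\ref{l:normal} and Proposition~\ref{prop:radical} --- to obtain $G_j\subset K_j$, whence $G_j\cap H\subset K_j\cap H=H_j$. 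Both approaches live or die by the same factorization through $\TF H_1(\cdot,\bZ)\otimes\bZ/p\bZ$; what your version buys is modularity, and a cleaner isolation of where the retraction hypothesis actually enters: the cofinality $G_j\subset K_j=r^{-1}(H_j)$ would hold for any surjection $r\colon G\surj H$, and it is only the identification $K_j\cap H=H_j$ (via $r\vert_H=\id$) that uses $r$ being a retraction. One small remark: in your verification of \eqref{rfr4} for $\{K_j\}$, the appeal to split surjectivity of $r_*\colon H_1(K_j,\bQ)\to H_1(H_j,\bQ)$ is unnecessary --- any homomorphism $K_j\to H_j$ carries a class vanishing in $H_1(K_j,\bQ)$ to one vanishing in $H_1(H_j,\bQ)$, which already puts $r(k)$ in $H_{j+1}$ and hence $k$ in $K_{j+1}$.
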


\begin{proof}
Let $\{G_i\}_{i\geq 1}$ be the RFR$p$ filtration on $G$ and let $\{H_i\}_{i\geq 1}$ 
be the RFR$p$ filtration on $H$. Note that $H_1=H\cap G_1$ by definition. 
Assume that $H_i=G_i\cap H$ for some $i$. The retraction $r$ maps $G_i$ onto $H_i$, 
since the inclusion map of $H_i$ into $G_i$ is a right inverse to the identity 
map on $H_i$. 

Notice that $H_i$ maps onto $H_i/H_{i+1}$, and that this 
group is a quotient of $G_i$ which must factor through 
$\TF(H_1(G_i,\Z))\otimes\bZ/p\bZ$.  Hence, $H_{i+1}>H\cap G_{i+1}$. 
Thus the topology on $H$ induced by the filtration $\{G_i\}_{i\geq 1}$ 
is the RFR$p$ topology on $H$.
\end{proof}

\begin{cor}
\label{cor:closed retract}
Let $G$ be a finitely generated RFR$p$ group, and let $\phi\colon G\to H$ 
be a retraction. Then $H$ is closed in the RFR$p$ topology on $G$.
\end{cor}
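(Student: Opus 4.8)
The plan is to verify the criterion for closedness recorded in the previous subsection. Write $\{G_i\}_{i\ge 1}$ for the standard RFR$p$ filtration of $G$ (so $G_{i+1}=\ker\{G_i\to \TF H_1(G_i,\bZ)\otimes\bZ/p\bZ\}$), and let $\phi_i\colon G\to G/G_i$ be the quotient maps; since each $G_i$ is normal in $G$, the condition $\phi_i(g)\notin\phi_i(H)$ is equivalent to $g\notin G_iH$. Thus it suffices to show that for every $g\in G\setminus H$ there is an index $i$ with $g\notin G_iH$. Set $N=\ker\phi$, a normal subgroup, and associate to $g$ the element $n=g\,\phi(g)^{-1}$; then $\phi(n)=1$, so $n\in N$, and $n\ne 1$ precisely because $g\notin\im\phi=H$.

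The key step is to show that each $G_i$ is not merely characteristic in $G$ but \emph{fully invariant}, i.e.\ $\alpha(G_i)\subseteq G_i$ for every endomorphism $\alpha$ of $G$. I would prove this by induction on $i$: the case $i=1$ is trivial, and if $\alpha(G_i)\subseteq G_i$ then $\alpha$ restricts to an endomorphism of $G_i$, which is compatible with the induced endomorphisms of $H_1(G_i,\bZ)$, of $\TF H_1(G_i,\bZ)$, and of $\TF H_1(G_i,\bZ)\otimes\bZ/p\bZ$ by naturality of these constructions; hence $\alpha$ carries $G_{i+1}$, the kernel of $G_i\to\TF H_1(G_i,\bZ)\otimes\bZ/p\bZ$, into itself. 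Applying this to the retraction $\phi$, which is an idempotent endomorphism of $G$, gives $\phi(G_i)\subseteq G_i$ for all $i$.

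With that in hand, I would observe that $G_iH\cap N=G_i\cap N$ for every $i$: the inclusion $\supseteq$ is obvious, and if $x=g_ih\in G_iH\cap N$ with $g_i\in G_i$ and $h\in H$, then $1=\phi(x)=\phi(g_i)h$ forces $h=\phi(g_i)^{-1}\in G_i$, so $x=g_i\phi(g_i)^{-1}\in G_i$. Since $G$ is RFR$p$ we have $\bigcap_iG_i=\{1\}$, hence $\bigcap_i(G_i\cap N)=\{1\}$, so there is an $i$ with $n\notin G_i$. As $n\in N$ and $G_iH\cap N=G_i\cap N$, this gives $n\notin G_iH$; and then $g\notin G_iH$ as well, since otherwise $n=g\,\phi(g)^{-1}\in G_iH\cdot H=G_iH$. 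This yields the desired conclusion.

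I expect the only substantive point to be the full-invariance claim and its use to pin down $G_iH\cap N$; everything else is routine manipulation of normal subgroups and cosets. One could instead try to argue that $H$, being a retract of the RFR$p$ group $G$, is itself RFR$p$ and then feed $\phi$ into Proposition \ref{prop:radical}, but that route still requires comparing the standard filtration of $H$ with the traces $G_i\cap H$ (in the spirit of Corollary \ref{cor:retract}), so I prefer the direct argument above, which avoids that comparison altogether.
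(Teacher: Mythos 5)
Your proof is correct, and it follows essentially the same strategy as the paper's (which simply cites and sketches the argument from Lemma~3.9 of \cite{HW99}): decompose each $g\in G$ uniquely as $n\cdot h$ with $n\in N=\ker\phi$ and $h\in H$, and use $\bigcap_i G_i=\{1\}$ to separate the $N$-component. The one place where you have done more than the paper is the full-invariance claim. The paper's proof passes from ``$\bigcap_i (G_i\cap N)H = H$'' to ``$H$ is closed'' without comment, and that passage really does require $G_i\subseteq (G_i\cap N)H$, which is equivalent to $\phi(G_i)\subseteq G_i$; since $\phi$ is an endomorphism but not an automorphism, the ``characteristic'' statement in Lemma~\ref{l:normal} is not literally enough, and one needs functoriality of $K\mapsto\ker\{K\to\TF H_1(K,\bZ)\otimes\bZ/p\bZ\}$ under all endomorphisms, exactly as you argue by induction. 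So you have correctly identified and patched the one subtle point the paper leaves implicit, and your derivation of $G_iH\cap N=G_i\cap N$ from $\phi(G_i)\subseteq G_i$ is the clean way to close the gap. In short: same route, with the load-bearing naturality step made explicit.
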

\begin{proof}
The proof is identical to the proof of Lemma 3.9 in \cite{HW99}. We recall 
a proof for the convenience of the reader. Let $\{G_i\}_{i\geq 1}$ be the 
standard RFR$p$ filtration of $G$, let $N=\ker\phi$, and let $N_i=G_i\cap N$. 
Note that for each $i$, the subgroup $N_i<N$ has finite index. Observe that 
every element of $G$ can be written uniquely as a product $n\cdot h$, where 
$n\in N$ and $h\in H$. It follows that the intersection of the subgroups 
$\{N_iH\}_{i\geq 1}$ is exactly $H$, so that $H$ is closed in the RFR$p$ 
topology on $G$.
\end{proof}

\subsection{Relationship to nilpotent groups}
\label{subsec:nilp}
We note the following fairly straightforward fact. 
The reader may wish to compare the proof of Proposition \ref{prop:rfrp rtfn} 
below with \cite[Lemma 8.3]{Ko12}.

\begin{prop}
\label{prop:rfrp rtfn}
Let $N$ be a non-abelian nilpotent group.  
Then $N$ is not RFR$p$ for any prime $p$.
\end{prop}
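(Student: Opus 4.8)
The plan is to exploit the fact that a non-abelian nilpotent group contains "visible" torsion-like or central obstructions to condition~\eqref{rfr4}, the requirement that elements of $G_i\setminus G_{i+1}$ survive in $H_1(G_i,\bQ)$. First I would reduce to the case where $N$ is finitely generated: since the RFR$p$ definition applies to finitely generated groups, $N$ is finitely generated, hence so is every term of its lower central series and every $G_i$ (as $G_i$ has finite index in $N$). Now suppose for contradiction that $N$ is RFR$p$, witnessed by a filtration $\{G_i\}$; by Lemma~\ref{l:normal} we may take the standard filtration, so each $G_i$ is normal (indeed characteristic) in $N$ and $\bigcap_i G_i=\{1\}$.

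The key observation is that a non-trivial central element behaves badly. Pick $1\neq z$ in the center $Z(N)$ with $z$ chosen inside the last non-trivial term of the lower central series $\gamma_c(N)$, so that $z$ is a non-trivial product of commutators. Since $\bigcap_i G_i=\{1\}$ and $z\neq 1$, there is a largest index $i$ with $z\in G_i\setminus G_{i+1}$. By condition~\eqref{rfr4}, $z$ must represent a non-zero class in $H_1(G_i,\bQ)=\TF H_1(G_i,\bZ)\otimes\bQ$. The next step is to derive a contradiction from this: I would argue that $z$, being a product of commutators in $N$, lies in a subgroup on which it becomes a torsion (or trivial) class in abelianization. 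Concretely, because $G_i$ is finite-index in the nilpotent group $N$, it is itself finitely generated nilpotent; and $z\in\gamma_c(N)$ means $z$ lies in the intersection of $G_i$ with the lower central series of $N$. One then shows that some power $z^m$ lies in $[G_i,G_i]$ — this uses that in a finitely generated nilpotent group, $\gamma_c(N)\cap G_i$ maps to a torsion subgroup of $H_1(G_i,\bZ)$, a standard fact since the abelianization of a finite-index subgroup of a nilpotent group can only absorb the top central term up to finite index. Hence $z$ is torsion in $H_1(G_i,\bZ)$, so zero in $H_1(G_i,\bQ)$, contradicting~\eqref{rfr4}.

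The main obstacle I anticipate is making precise the claim that a non-trivial element of $\gamma_c(N)$ becomes torsion in $H_1(G_i,\bQ)$ for the relevant $G_i$: one must choose $z$ carefully so that it survives in no $G_i$-abelianization, and this requires knowing that the standard RFR$p$ filtration does not "peel off" the center slowly enough to keep $z$ rationally visible. A clean way to handle this: since $G_i$ is finitely generated nilpotent of class $\le c$, we have $[G_i,G_i]\supseteq \gamma_2(G_i)$ and $\gamma_2(N)\cap G_i$ has finite index in $\gamma_2(G_i)$ (both being finitely generated nilpotent of the same Hirsch length considerations), so any $z\in\gamma_2(N)$ has a power in $\gamma_2(G_i)\le[G_i,G_i]$, giving the torsion conclusion directly. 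I would then note that a non-abelian nilpotent group has $\gamma_2(N)\neq\{1\}$, pick $1\neq z\in\gamma_2(N)$, and run the argument above. The remaining care is purely bookkeeping: ensuring the finite-index and finite-generation hypotheses propagate to every $G_i$, which follows since finite-index subgroups of finitely generated nilpotent groups are finitely generated nilpotent.
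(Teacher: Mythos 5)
Your proof is correct and follows essentially the same approach as the paper: pick a nontrivial element $z$ of $\gamma_2(N)$ and show that, for the unique index $i$ with $z\in G_i\setminus G_{i+1}$, some power of $z$ lies in $[G_i,G_i]$, so $z$ is torsion in $H_1(G_i,\bZ)$, hence zero in $H_1(G_i,\bQ)$, contradicting condition~\eqref{rfr4}. Two small remarks: the paper first reduces to torsion-free $N$ (via Proposition~\ref{prop:rprp props}), which is the clean setting for the commensurability of the lower central series terms you invoke, and you have the inclusion flipped --- the fact you need is that $\gamma_2(G_i)$ has finite index in $\gamma_2(N)\cap G_i$, not the reverse.
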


\begin{proof}
First, if $N$ has torsion then $N$ is not RFR$p$ for any prime, 
as we will see below in Proposition \ref{prop:rprp props}. 
So, we may assume that $N$ is torsion-free.

Let $\{\gamma_i(N)\}_{i\geq 1}$ denote the lower central series 
of $N$, so that $\gamma_1(N)=N$ and 
\begin{equation}
\label{eq:lcs}
\gamma_{i+1}(N)=[N,\gamma_i(N)].
\end{equation} 
By assumption, this series terminates. 
Also let $\{N_j\}_{j\ge 1}$ be a sequence of subgroups of $N$ which witnesses 
the claim that $N$ is RFR$p$.  Then, in fact, $\{N_j\}_{j\ge 1}$ is the 
standard RFR$p$ filtration for $N$. 

By induction on the length of the lower central series and on 
$i$, it is straightforward to verify that if 
$g\in \gamma_2(N)\setminus\gamma_3(N)$, 
then the image of $g$ is either trivial or torsion in $N_j^{\ab}$, 
so that $g\in N_{j+1}$ for each $j$. This last claim follows 
from the fact that for each $j$, some nonzero power of $g$ 
is a product of commutators in the torsion-free group $N_j$. 
This contradicts the assumption that $\bigcap_j N_j=\{1\}$.  
\end{proof}

Furthermore, whether or not a particular group enjoys the RFR$p$ property 
depends on the prime $p$:

\begin{prop}
\label{p:gp}
For each prime $p$, there exists a finitely presented group $G_p$ which is RFR$p$, 
but $G_p$ is not RFR$q$ for any prime $q\neq p$.
\end{prop}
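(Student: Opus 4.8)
The plan is to exhibit, for each prime $p$, an explicit finitely presented group $G_p$ that can be forced to be RFR$p$ but whose abelianization (or a suitable canonical quotient) obstructs being RFR$q$ for $q\neq p$. The natural strategy is to build $G_p$ as an extension (or HNN/ascending construction) whose abelianized structure contains a $\bZ/p\bZ$ factor in a controlled way, so that in any RFR$q$ filtration the first quotient $G_p/K_2$ — being an elementary abelian $q$-group arising from $\TF H_1(G_p,\bZ)\otimes\bZ/q\bZ$ — is forced to annihilate an element that cannot be killed further down. The cleanest source of such examples is a mapping torus: take a finitely generated free or surface group $F$ (which is RFR$p$ for all $p$ by Proposition \ref{prop:ex rfrp}) and an automorphism $\phi$ chosen so that the induced action on $H_1(F,\bZ)$ has a nontrivial fixed part only after reducing mod $p$, while having no nontrivial rational invariant vectors; then set $G_p = F\rtimes_\phi \bZ$.

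First I would choose $\phi\in\Aut(F)$ so that the matrix $A=\phi_*$ acting on $H_1(F,\bZ)\cong\bZ^n$ satisfies: $\det(A-I)\neq 0$ (so $H_1(G_p,\bQ)\cong\bQ$, coming only from the $\bZ$-factor), but $A\equiv I \pmod p$ (so that mod-$p$ homology has large invariant part). A standard choice is $A = I + pB$ for an integer matrix $B$ with $\det(pB)\neq 0$ — e.g., $B$ invertible over $\bQ$. Realizing such an $A$ by an actual automorphism of $F_n$ for $n$ large is routine since $\Aut(F_n)\to \GL_n(\bZ)$ is surjective. Second, I would verify that $G_p = F\rtimes_\phi\bZ$ is RFR$p$: here one uses that $F$ is RFR$p$ together with the combination theorem (Theorem \ref{t:comb}), since $G_p$ is an ascending HNN extension of $F$ along the identity edge group, and the congruence $A\equiv I\pmod p$ ensures the needed compatibility of the RFR$p$ topologies and closedness of the edge group in the RFR$p$ topology of $F$. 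Alternatively one constructs the filtration by hand: because $A\equiv I\pmod p$, the mod-$p$ lower-central-type filtration of $F$ is $\phi$-invariant and descends to a filtration of $G_p$, and the extra $\bZ$-direction provides the rational classes needed for condition (4).

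Third, and this is the crux, I would show $G_p$ is not RFR$q$ for $q\neq p$. Suppose $\{K_i\}$ is the standard RFR$q$ filtration. Since $H_1(G_p,\bQ)\cong\bQ$ generated by the stable letter $t$, condition (4) forces $F < K_2$ up to the $\bZ$-direction — more precisely $K_2$ contains the commutator-and-$q$th-power subgroup, and since $A-I$ is invertible over $\bZ[1/q]$ (as $A-I = pB$ with $p$ a unit mod $q$), the subgroup $[F,F]\cdot F^q$ already contains "most" of $F$ after one step, and iterating, $\bigcap_i K_i$ contains a nontrivial element of $F$. The key computation is that over $\bZ_q$ the operator $A - I$ is invertible, so the $\bQ$-homology of every term $K_i$ never "sees" $F$, forcing $F\cap K_i$ to stabilize at a nontrivial subgroup; this contradicts condition (2). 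I expect this step — pinning down exactly why the RFR$q$ radical meets $F$ nontrivially, handling the interaction between the action on homology of the finite-index subgroups $K_i$ and the rational-class condition (4) — to be the main obstacle, and it will require a careful induction tracking how $A$ acts on $H_1$ of the successive covers, using that invertibility of $A-I$ mod $q$ is inherited by finite-index subgroups corresponding to the filtration.

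Finally, once RFR$p$-ness and non-RFR$q$-ness are established for this explicit $G_p = F_n\rtimes_\phi\bZ$, which is manifestly finitely presented (being an HNN extension of a finitely presented group with finitely generated edge groups, indeed a mapping torus), the proposition follows. I would close by remarking that the same construction with $F$ replaced by a closed surface group or a RAAG gives further examples, and that taking $A = I + p_1p_2\cdots p_k B$ yields groups that are RFR$p$ simultaneously for $p\in\{p_1,\dots,p_k\}$ but for no other prime, which may be worth stating as a corollary.
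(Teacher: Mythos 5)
Your construction does not work: with $\phi_*=A=I+pB$ and $B$ invertible over $\bQ$, the mapping torus $G_p=F\rtimes_\phi\bZ$ is \emph{not} RFR$p$. The problem is that $A$ has infinite order, so $\det(A^{p^k}-I)\neq 0$ for every $k\ge 0$ (indeed $A^{p^k}-I=p^{k+1}B+O(p^{k+2})$). Writing $\{K_i\}$ for the standard RFR$p$ filtration of Lemma~\ref{l:normal}, one computes directly that $\TF H_1(K_i,\bZ)\cong\bZ$ is generated by the image of $t^{p^{i-1}}$ at every stage, so $K_i=F\rtimes_{\phi^{p^{i-1}}}\langle t^{p^{i-1}}\rangle$ and $\bigcap_i K_i=F\neq\{1\}$; since the standard filtration is the fastest descending one, no filtration can converge, and $G_p$ is not RFR$p$. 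For the same reason your appeal to Theorem~\ref{t:comb} cannot be made to work: it requires the RFR$p$ topology on $G_p$ to induce the RFR$p$ topology on the vertex group $F$, but $F\cap K_i=F$ for all $i$, so the induced topology on $F$ is indiscrete rather than Hausdorff. The congruence $A\equiv I\pmod p$ pulls in the wrong direction here; it forces $A^{p^k}\equiv I\pmod{p^{k+1}}$, which is precisely what traps $F$ in the radical forever.

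The paper's construction avoids this by taking the monodromy to have \emph{finite order exactly $p$}: the $\bZ$-action on $A=\bZ^{p-1}$ factors through $\bZ/p\bZ$ via (the companion matrix of) the $p$-th cyclotomic polynomial $M$, for which $\det(M-I)=\pm p$, but crucially $M^p=I$. Then the second term of the standard RFR$p$ filtration is $A\rtimes_{M^p}p\bZ=A\times\bZ\cong\bZ^p$, which is free abelian, so the filtration continues to converge and $G_p$ is RFR$p$ (the same mechanism makes the free-group variant $H_p=F_{p-1}\rtimes_\psi\bZ$ work, since there the second term is $F_{p-1}\times\bZ$). For the negative direction, non-RFR$q$-ness for $q\neq p$, the paper uses that $\det(M-I)=\pm p$ is a unit mod $q$, so the mod-$q$ cover is $A\rtimes_{M^q}\bZ$; since $\gcd(q,p)=1$ the group $\langle M^q\rangle=\langle M\rangle$, and using $\bQ$-irreducibility of the representation one gets $K_q\cong G_p$, so the filtration stabilizes and $A\subset\rad_q(G_p)$. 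Your criterion ``$\det(A-I)$ coprime to $q$'' captures the right idea for this half, but it is incompatible with RFR$p$-ness once $A$ has infinite order; you need finite order $p$, not $A\equiv I\pmod p$, and for $p>2$ these are genuinely different conditions (e.g.\ for $p=3$ the cyclotomic companion matrix is $\left(\begin{smallmatrix}0&-1\\1&-1\end{smallmatrix}\right)$, which is not $\equiv I\pmod 3$). The same issue invalidates your proposed corollary with $A=I+p_1\cdots p_k B$.
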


\begin{proof}
Fix a basis $\{v_1,\ldots,v_p\}$ for $\bZ^p$.  Let 
\begin{equation}
\label{eq:regrep}
\bZ/p\bZ\to \GL_p(\bZ)=\Aut(\bZ^p)
\end{equation}
be the regular representation of $\bZ/p\bZ$ which permutes the coordinates of 
$\bZ^p$.  We consider the $\bQ$-irreducible representation $V\cong \bZ^{p-1}\otimes\bQ$ 
of $\bZ/p\bZ$ given by vectors whose coordinates add up to zero, and let 
$A\subset V$ be the integral points.  Furthermore, we let $G_p$ be the 
semidirect product 
\begin{equation}
\label{eq:agz}
\xymatrixcolsep{14pt}
\xymatrix{1\ar[r]& A \ar[r]& G_p \ar[r]&\bZ \ar[r]& 1}, 
\end{equation}
where the $\bZ$-action on $A$ is via the canonical projection $\bZ\surj\bZ/p\bZ$.

Observe that $b_1(G_p)=1$.  Observe furthermore that the kernel of 
the map $G_p\to\bZ/p\bZ$ given by reducing the first homology of $G_p$ 
modulo $p$ is isomorphic to $\bZ^p$.  It is clear then that $G_p$ is RFR$p$.

Now let $q\neq p$ be another prime.  Let $G_p\surj \bZ/q\bZ$ be the map given 
by reducing the homology of $G_p$ modulo $q$, and let $K_{q}$ be the kernel.  
Since $q$ and $p$ are relatively prime, and since $V$ is an irreducible 
$\bQ$-representation of $\bZ/p\bZ$, we have that $K_{q}\cong G_p$.  It follows that 
$G_p$ is not RFR${q}$, since $A$ is contained in any sequence of subgroups 
witnessing the claim that $G_p$ is RFR$q$.
\end{proof}

Observe that in Proposition \ref{p:gp}, the group $G_p$ is virtually abelian.  
To obtain a non-virtually abelian example, 
observe that for each $n$, the natural homomorphism $\Aut(F_n)\to \GL_n(\bZ)$ 
is surjective.  Thus, one can mimic the construction of Proposition \ref{p:gp} in 
the free group case, obtaining a semidirect product 
\begin{equation}
\label{eq:fhz}
\xymatrixcolsep{14pt}
\xymatrix{1\ar[r]& F_{p-1} \ar[r]&  H_p \ar[r]& \bZ \ar[r]&  1}
\end{equation}
of $\bZ$ with a free group of rank $(p-1)$ which is RFR$p$ for exactly one prime.  
It is easy to check that $H_p$ is virtually a direct product, and that neither $G_p$ 
nor $H_p$ is residually torsion-free nilpotent.

\section{Properties of RFR$p$ groups}
\label{sec:tfq rfrp}

In this section we discuss further properties enjoyed by residually 
finite rationally $p$ groups. 

\subsection{Torsion-free quotients of RFR$p$ groups}
\label{subsec:tfq rfrp}

We start with some immediate consequences of the definition.

\begin{prop}
\label{prop:rprp props}
Let $G$ be a finitely generated group which is RFR$p$.  Then:
\begin{enumerate}
\item  \label{r1}
$G$ is residually $p$.  In particular, $G$ is residually finite and 
residually nilpotent.
\item  \label{r2}
$G$ is torsion-free.
\item \label{r3}
If in addition $G$ is finitely presented, then $G$ has a solvable word problem.
\end{enumerate}
\end{prop}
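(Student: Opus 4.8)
The plan is to derive all three assertions directly from the defining conditions, working throughout with the standard RFR$p$ filtration $\{G_i\}$ of $G$ supplied by Lemma~\ref{l:normal}. For part~\eqref{r1}, I would first record that every $G_i$ is a finite-index normal subgroup of $G$ with $|G/G_i|$ a power of $p$. That each $G_i$ is characteristic in $G$, hence normal, is part of the content of Lemma~\ref{l:normal}. Finiteness of the index follows by induction: $G_1 = G$, and if $G_i$ has finite index in $G$ then $G_i$ is finitely generated, so $\TF H_1(G_i,\bZ)\otimes\bZ/p\bZ$ is a finite elementary abelian $p$-group, whence $G_{i+1}$ has finite index in $G_i$ and hence in $G$; since $G = G_1 > G_2 > \cdots > G_i$ with each successive quotient an elementary abelian $p$-group, $|G/G_i| = \prod_{j<i} |G_j/G_{j+1}|$ is a power of $p$. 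Then, given $1 \neq g \in G$, condition~\eqref{rfr2} yields an index $i$ with $g \notin G_i$, and the quotient map $G \to G/G_i$ sends $g$ to a nontrivial element of a finite $p$-group; thus $G$ is residually $p$, hence residually finite, and residually nilpotent because finite $p$-groups are nilpotent.

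For part~\eqref{r2}, suppose for contradiction that some $g \in G$ has finite order $n > 1$. Since $\bigcap_i G_i = \{1\}$ while $g \in G_1$, there is a largest index $i$ with $g \in G_i$, so $g \in G_i \setminus G_{i+1}$. Condition~\eqref{rfr4} then forces the image of $g$ under the canonical map $G_i \to H_1(G_i,\bQ)$ to be nonzero. But $g$ has finite order in $G_i$, so its image in $H_1(G_i,\bQ) = H_1(G_i,\bZ)\otimes\bQ$ is torsion, hence zero, a contradiction. Therefore $G$ is torsion-free.

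Part~\eqref{r3} then follows from part~\eqref{r1} together with the classical fact (due to Mal'cev) that a finitely presented residually finite group has solvable word problem: to decide whether a word $w$ represents the identity in $G$, one runs in parallel an enumeration of all consequences of the finitely many defining relators and an enumeration of all finite quotients of $G$, testing the image of $w$ in each; residual finiteness guarantees that exactly one of the two searches halts with a definitive answer. I expect no genuine obstacle in any of this; the only point needing a little care is the bookkeeping in part~\eqref{r1} showing that the standard filtration consists of normal subgroups of $p$-power index, and this is immediate from Lemma~\ref{l:normal} and a short induction.
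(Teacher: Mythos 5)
Your proof is correct and follows essentially the same route as the paper's: part~\eqref{r1} by exhibiting finite $p$-group quotients from the standard filtration, part~\eqref{r2} by observing that an element of $G_i\setminus G_{i+1}$ must have infinite order because it survives in $H_1(G_i,\bQ)$, and part~\eqref{r3} by invoking Mal'cev's theorem for finitely presented residually finite groups. The paper merely compresses \eqref{r1} to ``straightforward from the definition,'' which you have correctly unwound.
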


\begin{proof}
Item \eqref{r1} is straightforward from the definition.  Item \eqref{r2} 
follows from the fact that if $1\neq g\in G_i\setminus G_{i+1}$, then 
$g$ represents a torsion-free class in $H_1(G_i,\Z)$, and therefore 
has infinite order in $G$.  Item \eqref{r3} is a completely standard result about 
finitely presented, residually finite groups (see for instance~\cite{LyndonSchupp}).
\end{proof}

If a finitely generated group $G$ is RFR$p$ for every prime $p$, we have that 
$G$ is residually $p$ for every prime and torsion-free.  Recall that a group 
$G$ is \emph{residually torsion-free nilpotent}\/ if each non-identity element 
$g\in G$ survives in a torsion-free nilpotent quotient of $G$.  Note that a group 
which is residually torsion-free nilpotent is torsion-free and residually $p$ for 
every prime $p$.  

Residual torsion-free nilpotence of a finitely generated group 
$G$ is a rather strong property which has many useful consequences.  
For instance, if $G$ is residually torsion-free nilpotent then $G$ is bi-orderable 
and $\bZ[G]$ is an integral domain (see~\cite{DeroinNavasRivas,Dehornoyetal}). 

\begin{que}
\label{quest:rtfn}
Let $G$ be a finitely generated group which is RFR$p$ for every prime $p$.  
Is $G$ residually torsion-free nilpotent?
\end{que}

Recall that a group $G$ is \emph{polycyclic}\/ if it admits a finite subnormal 
series with cyclic factors.  Note that a finitely generated nilpotent group is 
polycyclic and that a polycyclic group is solvable, but that the reverse 
implications are generally false.

\begin{thm}
\label{thm:rtfpoly}
Let $G$ be a finitely generated group which is RFR$p$ for some prime $p$.  
Then $G$ is residually torsion-free polycyclic.  In particular, $G$ is residually 
torsion-free solvable.
\end{thm}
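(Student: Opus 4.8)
The plan is to show that each RFR$p$ quotient $G/K_i$ of the standard RFR$p$ filtration is torsion-free polycyclic; since $\bigcap_i K_i = \{1\}$, every non-identity element of $G$ survives in some such quotient, which gives the conclusion. The key structural fact driving the argument is that the filtration $\{K_i\}$ is built by iterated extensions by finitely generated torsion-free abelian groups: by construction in Lemma \ref{l:normal}, $K_i/K_{i+1} \cong \TF H_1(K_i,\bZ) \otimes \bZ/p\bZ$, but the relevant point is that $K_i/K_{i+1}$ sits inside the free abelian group $\TF H_1(K_i,\bZ)$ ``before the mod $p$ reduction,'' so the successive subquotients of the chain $G = K_1 \rhd K_2 \rhd \cdots$ are controlled by torsion-free abelian groups of finite rank.

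First I would argue by induction on $i$ that $G/K_{i+1}$ is torsion-free polycyclic. The base case $G/K_1$ is trivial. For the inductive step, note that we have a short exact sequence
\begin{equation}
\label{eq:sesKi}
1 \longrightarrow K_i/K_{i+1} \longrightarrow G/K_{i+1} \longrightarrow G/K_i \longrightarrow 1.
\end{equation}
The quotient $K_i/K_{i+1}$ need not itself be torsion-free (it is an elementary abelian $p$-group). To fix this, I would instead work with the slightly coarser normal chain obtained from the torsion-free abelianizations: set $L_{i+1} = \ker\{K_i \to \TF H_1(K_i,\bZ)\}$, so that $K_i/L_{i+1} \cong \TF H_1(K_i,\bZ)$ is finitely generated free abelian, and $L_{i+1} \leq K_{i+1} \leq K_i$ with $K_{i+1}/L_{i+1}$ finite of $p$-power index. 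The chain $G = K_1 \rhd L_2 \rhd K_2 \rhd L_3 \rhd K_3 \rhd \cdots$ then has each consecutive quotient either finitely generated free abelian (hence polycyclic) or a finite $p$-group (hence polycyclic), so every $G/L_{i+1}$ is polycyclic, being a finite iterated extension of polycyclic groups. It remains to upgrade ``polycyclic'' to ``torsion-free polycyclic'': a finitely generated group survives in a torsion-free quotient of a given polycyclic quotient provided that quotient is torsion-free, so I would show that $G/L_{i+1}$ has a further finite-index (or quotient) modification that is torsion-free, or alternatively recall the standard fact that any polycyclic group is virtually torsion-free and that a torsion-free class surviving in $G/L_{i+1}$ can be pushed to a torsion-free polycyclic group by quotienting out a suitable finite normal subgroup — one must check that the element we care about is not killed.

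The main obstacle I expect is precisely this last torsion-freeness bookkeeping: the RFR$p$ condition guarantees that a non-identity $g$ lands in $K_i \setminus K_{i+1}$ for some $i$ and represents a nonzero \emph{rational} homology class in $H_1(K_i,\bQ)$, hence has infinite order in $K_i/L_{i+1} = \TF H_1(K_i,\bZ)$; but to conclude I need a torsion-free polycyclic quotient of $G$ (not of $K_i$) in which $g$ survives. The fix is to observe that $G/L_{i+1}$ is polycyclic and has a torsion-free polycyclic quotient $Q$ obtained by modding out the (finite, characteristic) torsion ideal of the Fitting-type radical, or more directly by passing to $G/L_{i+1}$ modulo its maximal finite normal subgroup; since the image of $g$ in $G/L_{i+1}$ has infinite order — it already has infinite order in the further quotient $K_i/L_{i+1}$, which is a subgroup of $G/L_{i+1}$ containing the image of $g$ — it cannot lie in any finite normal subgroup, so it survives in $Q$. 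Chasing through the identifications to confirm that ``infinite order in $K_i/L_{i+1}$'' genuinely implies ``infinite order in $G/L_{i+1}$'' (using that $K_i/L_{i+1} \hookrightarrow G/L_{i+1}$) and that the torsion-free polycyclic quotient is reached without killing $g$ is the routine-but-delicate core; once that is in place, the ``in particular, residually torsion-free solvable'' clause is immediate since polycyclic groups are solvable.
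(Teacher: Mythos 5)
Your plan correctly identifies the key ingredient — the kernels $L_{i+1} = \ker\{K_i \to \TF H_1(K_i,\bZ)\}$ of the torsion-free abelianization maps — and this is indeed what drives the paper's proof. However, your proposal has two genuine gaps.

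First, the interleaved chain $G = K_1 \rhd L_2 \rhd K_2 \rhd L_3 \rhd K_3 \rhd \cdots$ does not nest. Since $K_{i+1}$ is the kernel of $K_i \to \TF H_1(K_i,\bZ) \otimes \bZ/p\bZ$, which factors through $K_i \to \TF H_1(K_i,\bZ)$, you have $L_{i+1} \subseteq K_{i+1}$, not the other way around; and there is no a priori containment between $L_{i+1}$ and $K_{i+2}$, because $K_{i+2}$ is defined in terms of $K_{i+1}$'s homology, not $K_i$'s. (Relatedly, $K_{i+1}/L_{i+1}$ is not finite: it is isomorphic to $p\cdot\TF H_1(K_i,\bZ)$, a free abelian group of full rank, which merely has finite $p$-power \emph{index} in $K_i/L_{i+1}$.) The paper avoids this by building a fresh nested normal chain recursively: set $N_1 = G$ and $N_{i+1} = \bigl(\ker\{K_i \to \TF H_1(K_i,\bZ)\}\bigr)\cap N_i$, i.e., intersect each new kernel with the previous term of the new chain. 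This guarantees $N_{i+1}\lhd N_i$, $N_i \leq K_i$, and $N_i \lhd G$; and by the Second Isomorphism Theorem $N_i/N_{i+1}$ embeds into $\TF H_1(K_i,\bZ)$, so it is finitely generated torsion-free abelian.

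Second, and more seriously, your plan to recover torsion-freeness by killing the maximal finite normal subgroup of the polycyclic quotient $G/L_{i+1}$ does not work in general. A polycyclic group with trivial maximal finite normal subgroup can still have torsion — the infinite dihedral group $\Z\rtimes\Z/2$ is polycyclic, has no nontrivial finite normal subgroup, yet has involutions and in fact admits no nontrivial torsion-free quotient at all. So knowing that $g$ has infinite order in $G/L_{i+1}$ and survives modulo the maximal finite normal subgroup does not give you a torsion-free polycyclic target. The paper never encounters this issue: once the chain $\{N_i\}$ above is set up, each $G/N_{i+1}$ is an extension of the inductively torsion-free polycyclic $G/N_i$ by the torsion-free abelian $N_i/N_{i+1}$, and an extension of a torsion-free group by a torsion-free group is torsion-free. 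There is no need to repair torsion after the fact — the construction is torsion-free from the start.

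In short: you found the right kernels but assembled them into a chain that doesn't nest, and the corrective step you rely on (quotient by the maximal finite normal subgroup) is not sound. Replacing your chain by the recursively intersected one, and observing that torsion-freeness is then automatic by induction, closes both gaps and recovers the paper's proof.
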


\begin{proof}
Let $\{G_i\}$ be the standard RFR$p$ sequence for $G$, with $G_1=G$.  
We will define a sequence $\{K_i\}$ of subgroups of $G$ such that $K_i\leq G_i$ 
for each $i$ and such that $G/K_i$ is a torsion-free polycyclic group for each $i$.  
Since $\bigcap_i G_i=\{1\}$, this will prove that $G$ is residually torsion-free polycyclic.

Set $K_1=G_1$ and set 
\[
K_2=\ker\{G_1\to\TF H_1(G_1,\bZ)\}<G_2.
\]  
Since $G$ is finitely generated, we have that $G/K_2$ is a finitely generated 
torsion-free abelian group and therefore torsion-free polycyclic.  In general, 
we set 
\[
K_{i+1}=(\ker\{G_i\to\TF H_1(G_i,\bZ)\})\cap K_i<G_{i+1}.
\]  
By the Second Isomorphism Theorem for groups, we have that 
\[
K_i/K_{i+1}\cong \frac{K_i\cdot (\ker\{G_i\to\TF H_1(G_i,\bZ)\})}{\ker\{G_i\to
\TF H_1(G_i,\bZ)\}}.
\]  

Since $K_i<G_i$, we have that $K_i/K_{i+1}$ is a subgroup of the finitely 
generated abelian group $\TF H_1(G_i,\bZ)$ and is therefore a finitely 
generated, torsion-free abelian group.  By construction, $K_i$ is normal 
in $G$ for each $i$, so that by induction on $i$, we have that $G/K_i$ 
is torsion-free polycyclic for each $i$.
\end{proof}

We note the residual torsion-free solvability of RFR$p$ groups because 
of apparent connections to BNS invariants~\cite{friedltillman}.

\subsection{Separability of maximal abelian subgroups}
\label{subsec:max abel}
In the proof of Theorem \ref{thm:combination}, we will require 
the separability of certain subgroups of RFR$p$ groups.  
Let $G$ be a group.  We will say that a subgroup $H<G$ is 
\emph{separable}\/ if for every $g\in G\setminus H$, there 
is a finite quotient $\phi\colon G\to Q$ such that $\phi(g)\notin\phi(H)$.  
A subgroup $H$ is \emph{RFR$p$-separable} in $G$ if 
we can assume that $Q=G/G_i$ for some term $G_i$ in 
the standard RFR$p$ filtration of $G$. In other words, 
a subgroup $H<G$ is RFR$p$-separable in $G$ if and 
only if $H$ is closed in the RFR$p$ topology on $G$.

The following result about RFR$p$ groups mirrors a result of 
E.~Hamilton about hyperbolic $3$-manifold groups (see \cite{hamilton}):

\begin{thm}
\label{thm:separable}
Let $G$ be a finitely generated RFR$p$ group and let $K<G$ be a finitely 
generated abelian subgroup which is maximal among abelian groups with 
the same rank as $K$. Then $K$ is RFR$p$-separable in $G$.
\end{thm}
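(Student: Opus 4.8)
The plan is to show that a rank-$n$ maximal abelian subgroup $K<G$ is closed in the RFR$p$ topology by analyzing how $K$ projects into the successive RFR$p$ quotients $G/G_i$. Fix $g\in G\setminus K$; I must produce a term $G_i$ of the standard filtration with $g\notin KG_i$. The key structural fact to exploit is condition \eqref{rfr4}: each $G_i$ contains $\ker\{G_i\to H_1(G_i,\bQ)\}$, so the filtration ``sees'' all of rational first homology at every stage. Since $K$ is abelian, for each $i$ the image of $K$ in $G_{i-1}/G_i$ (more precisely in $\TF H_1(G_{i-1},\bZ)\otimes\bZ/p\bZ$ restricted to $K\cap G_{i-1}$) is controlled by the image of $K$ in the rational homology of the relevant finite-index subgroup.

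First I would reduce to the following: let $L_i = K\cap G_i$, a descending chain of finite-index subgroups of $K\cong\bZ^n$ (finite index because $K/L_i$ embeds in the finite group $G/G_i$ once $i\ge 2$, using that $K$ is finitely generated and $\bigcap G_i=1$ forces each $K/L_i$ finite). The intersection $\bigcap_i L_i$ is a subgroup of $\bZ^n$ which, by a rank argument, is either trivial or of strictly smaller rank; call its rank $m\le n$. If $m=0$ we are essentially done: the chain $L_i$ shrinks to the identity in $K$, and combined with closedness of finite-index subgroups this lets us separate $g$ from $K$. The substance is handling $m>0$, and this is where maximality of $K$ among rank-$n$ abelian subgroups enters.

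The heart of the argument: suppose $\bigcap_i L_i =: L_\infty$ has rank $m\ge 1$. Then $L_\infty$ survives (as a nontrivial free abelian subgroup) into $\rad_p(G_i)$ for all large $i$. But $G$ is RFR$p$, so $\rad_p(G) = \bigcap G_i$ is trivial; and since $\rad_p(G_i)=\rad_p(G)$ for all $i$ by the remark preceding Proposition \ref{prop:radical}, this would force $L_\infty$ trivial, a contradiction — \emph{provided} I can show $L_\infty \subset \rad_p(G_i)$, i.e. that $L_\infty \subset G_j$ for all $j$. That inclusion is immediate from $L_\infty = \bigcap_i L_i \subset L_j \subset G_j$. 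Wait — that already gives $L_\infty\subset\bigcap_j G_j=\{1\}$ directly, so in fact $L_\infty$ is always trivial and the maximality hypothesis seems not yet used. The real issue is subtler: the chain $L_i$ shrinking to $1$ in $K$ does not by itself separate $g$ from $K$, because $g$ could lie in $KG_i$ for every $i$ without lying in $K$. So I will instead argue: if $g\in KG_i$ for all $i$, pick $k_i\in K$ with $g k_i^{-1}\in G_i$; then in $G/\rad_p(G)=G$ the element $g$ is a limit of elements of $K$ in the RFR$p$ topology. The maximality of $K$ is used here to show the closure $\bar K$ of $K$ is abelian of the same rank, forcing $\bar K = K$: indeed $\bar K$ is an abelian subgroup (limits of commuting elements commute, as commutators are continuous and $\{1\}$ is closed), it contains $K$ so has rank $\ge n$, and one shows rank $\le n$ using that each $G/G_i$ is a $p$-group of bounded ``rank growth'' controlled by $H_1(-,\bQ)$ — more carefully, $\bar K \cap G_i / (\text{lower})$ injects into rational homology which pins the rank. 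Hence $\bar K=K$, so $g\in K$, contradiction.

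The main obstacle I anticipate is the rank bound on $\bar K$: showing that passing to the closure cannot increase the rank of a maximal-rank abelian subgroup. The cleanest route is probably to use Theorem \ref{thm:rtfpoly}: $G$ is residually torsion-free polycyclic, so fix a torsion-free polycyclic quotient $q\colon G\to P$ in which $q(g)\ne 1$ if $g\notin K$ is detectable there; in a torsion-free polycyclic group, abelian subgroups have bounded rank (the Hirsch length), and one compares $q(K)$ with $q(\bar K)$. Alternatively, and perhaps more in the spirit of the paper, I would mimic Hamilton's argument directly: work stage by stage, at each finite quotient $G/G_i$ replace $K$ by its saturation and use that $K$ is maximal of its rank to show the saturation stabilizes, so that for $i$ large $KG_i$ is an honest subgroup not containing $g$. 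I expect the bookkeeping — ensuring the index $[K:K\cap G_i]$ and the rank of $K\cap G_i$ both behave — to be the fiddly part, but no deep new idea beyond maximality plus condition \eqref{rfr4} should be needed.
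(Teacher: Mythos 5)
Your setup and diagnosis of the problem are sound: you correctly observe that $L_\infty = \bigcap_i (K\cap G_i)$ is automatically trivial and carries no information, that the real task is to show $g\notin KG_i$ for some $i$, and that this is equivalent to showing the closure $\bar K$ of $K$ in the RFR$p$ topology equals $K$. Your observation that $\bar K$ is abelian (because the commutator map is continuous and $\{1\}$ is closed in a Hausdorff group topology) is correct and cleanly subsumes the paper's first case, where $g$ fails to centralize $K$. So your framing is, if anything, a little slicker than the paper's explicit two-case dichotomy.

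The genuine gap is precisely where you flagged it: you never establish that $\bar K$ has rank $\le n$ (or, equivalently, that a rank-$(n+1)$ free abelian subgroup $H=\langle g,K\rangle$ is eventually detected with full rank in some RFR$p$ quotient). The sentence ``$\bar K\cap G_i/(\text{lower})$ injects into rational homology which pins the rank'' does not give a bound, because the ranks of $\TF H_1(G_i,\bZ)$ grow without limit as $i$ increases; there is no a priori ceiling to bootstrap from. The polycyclic route is circular: to compare $q(K)$ with $q(\bar K)$ in a torsion-free polycyclic quotient you would need $q(g)\notin q(K)$, which is exactly the separability you are trying to prove — merely having $q(g)\ne 1$ is not enough. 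The ``Hamilton stage-by-stage'' route is indeed what the paper does, but it is the substance of the theorem, not bookkeeping: the paper runs an induction on $n$ in which, assuming $\phi_i(K)$ already has rank $n$, one passes to $\bar H\subset\TF(G_j^{\ab})$, and if $\rk\bar H=n$, one produces an element $d\in H$ that dies in $\TF(G_j^{\ab})$ but, by axiom \eqref{rfr4}, survives in $\TF(G_s^{\ab})$ for a later stage $s$; since the map $\TF(G_s^{\ab})\to\TF(G_j^{\ab})$ kills $d$ while the image of $H$ downstairs already has rank $n$, the image of $H$ upstairs is forced to have rank $n+1$. A final mod-$p^m$ step then transfers the rank count into the finite RFR$p$ quotients. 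That rank-promotion argument is the irreducible core of the proof, and your proposal acknowledges but does not supply it.
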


The maximality assumption on $K$ simply means that if $K$ is properly contained 
in an abelian subgroup $H<G$ then $\rk K<\rk H$. The necessity of this assumption 
results from the following example: suppose $K$ and $H$ are both torsion-free 
abelian groups of rank $n$, and that $p$ does not divide $[H:K]$. 
Then $K$ is not separable in the RFR$p$ topology on $H$, even though the 
RFR$p$ topology on $K$ agrees with the RFR$p$ topology induced from $H$.

\begin{proof}[Proof of Theorem \ref{thm:separable}]
As usual, write $\{G_i\}_{i\geq 1}$ for the standard RFR$p$ filtration of $G$, 
and write $\phi_i\colon G\to G/G_{i+1}$ for the canonical projection. Let 
$g\in G\setminus K$.   Since $K$ is 
maximal with respect to abelian subgroups of $G$ of the same rank as $K$, 
the group $H:=\langle g,K\rangle$ 
is not isomorphic to $K$. We write $n$ for the rank of $K$, so that either 
$H$ is abelian of rank $n+1$, or $g$ does not centralize $K$.

Notice that if $g$ is not in the centralizer of $K$ then there is some $k\in K$ 
such that $[g,k]\neq 1$. Then for some $i$, we have $[g,k]\in G_i\setminus G_{i+1}$. 
In particular, $\phi_i([g,k])\neq 1$ in $G/G_i$, so that $\phi_i(g)\notin \phi_i(K)$. 
Thus, if $g$ does not centralize $K$ then we can separate $g$ from $K$ in the 
RFR$p$ topology.

Thus, we may assume that $g$ centralizes $K$, so that $H\cong \bZ^{n+1}$. 
It suffices to find an $i$ such that $\phi_i(H)$ is an abelian $p$-group of rank 
exactly $n+1$. This way,  since $\phi_i(K)$ will be a $p$-group of rank at most $n$, 
we will immediately obtain that $\phi_i(g)\notin\phi_i(K)$, thereby showing that  $g$ 
may be separated from $K$ in the RFR$p$ topology. 

We proceed by induction on $n$.   The base case of the 
induction is clear. If $H$ is a cyclic group, then because $G$ is 
RFR$p$ and $g\in G\setminus\{1\}$, there is an $i$ for which 
$\phi_i(H)$ will be a $p$-group of rank exactly one.

For the inductive step, we may assume that there is an $i$ such that $\phi_i(K)$ is a 
$p$-group of rank exactly $n$. Picking a basis $\{k_1,\ldots, k_n,g\}$ for $H$, we 
may choose an index $j\geq i$ such that no basis element for $H$ lies in in $G_{j+1}$. 
The basis elements themselves may not lie in $G_j$, but by replacing the chosen basis 
elements by positive powers if necessary, we may assume they do. We then consider
 the image $\bar{H}$ of $H$ inside of $\TF(G_j^{\ab})$.

Observe that since $j\geq i$, we have that $\rk\bar{H}$ is at least $n$, by the assumptions 
on $i$. Note furthermore that if $\rk\bar{H}=n+1$, then the image of $\bar{H}$ in 
\begin{equation}
\label{eq:tfgj}
\TF(G_j^{\ab})\otimes \bZ/p^m\bZ
\end{equation}
will have rank $n+1$ for some sufficiently large $m$. By the definition of the RFR$p$ filtration 
on $G$, we have that the image of $\phi_k(H)$ in $G/G_{k+1}$ will contain an abelian 
$p$-group of rank exactly $n+1$ for some $k\geq j$, and will thus itself have rank exactly $n+1$.

Thus, we may assume that $\rk\bar{H}=n$, so that there is an element $d\in H$ which is 
nontrivial in $H$ but which is trivial in $\bar{H}$. We choose an index $s$ such that 
$d\notin G_{s+1}$ and again replace the basis elements of $H$ and $d$ by suitable 
powers so that they lie in $G_s$. Then, (suitable powers of) the elements $k_1,\ldots,k_n,h,d$ 
generate a subgroup of $\TF (G_s^{\ab})$ of rank at least $n$, and $d$ lies in the kernel 
of the natural map  
\begin{equation}
\label{eq:tags}
\xymatrixcolsep{16pt}
\xymatrix{
\TF (G_s^{\ab})\ar[r]&\TF (G_j^{\ab})}
\end{equation}
induced by the inclusion $G_s\to G_j$. The image of $H$ in $\TF (G_j^{\ab})$ under 
this map has rank exactly $n$. Thus, the image of $H$ in $\TF (G_s^{\ab})$ must 
have rank exactly $n+1$.

Again, we see that the image of $H$ in $\TF(G_s^{\ab})\otimes \bZ/p^m\bZ$ has rank 
$n+1$, for some sufficiently large $m$. In particular, $\phi_t(H)$ is an abelian 
$p$-group of rank $n+1$ for some $t\geq s$, and this completes the proof.
\end{proof}

\subsection{Orderability}
\label{subsec:orderability}

In this subsection, we establish item \eqref{res9} of Theorem \ref{t:rfrpsummary}. 
This follows from the following result of Rhemtulla~\cite{Rhemtulla1973}:

\begin{thm}[\cite{Rhemtulla1973}]
\label{thm:orderability}
Let $G$ be a group which is residually locally residually $p$ for 
infinitely many primes $p$. Then $G$ is bi-orderable.
\end{thm}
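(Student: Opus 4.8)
The plan is to reduce to the finitely generated case and then synthesize a bi-ordering from the infinitely many $p$-power quotients by passing to an ultraproduct.

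First I would invoke the standard fact that bi-orderability is a local property: the bi-orderings of a group $G$ form a closed subset of $\{-1,0,1\}^{G}$, and if every finitely generated subgroup of $G$ is bi-orderable then a finite-intersection-property argument (a bi-ordering of a finitely generated subgroup restricts to bi-orderings of all of its finitely generated subgroups) produces one on $G$. Since being residually (locally residually $p$) passes to subgroups, and since a \emph{finitely generated} locally-residually-$p$ group is already residually $p$ (it is one of its own finitely generated subgroups), for a finitely generated group the hypothesis collapses to ``$G$ is residually $p$''. So I may assume $G$ is finitely generated and residually $p$ for all primes $p$ in some infinite set $S$. Then $G$ is torsion-free — a nontrivial element of order $q$ would survive in a finite $p$-quotient for some $p\in S$ with $p\ne q$, which is impossible — and $G$ is residually nilpotent (finite $p$-groups are nilpotent), so $\bigcap_{n}\gamma_{n}(G)=\{1\}$ and every $1\ne g\in G$ has a well-defined finite lower-central degree $d(g)$ with a nonzero leading term in $\gamma_{d(g)}(G)/\gamma_{d(g)+1}(G)$.

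One cannot finish by simply ordering $G$ by this leading term in the integral associated graded Lie ring, because those graded pieces may have torsion; were they all torsion-free, $G$ would be residually torsion-free nilpotent, which is exactly the all-primes case of the open Question~\ref{quest:rtfn} and may not be assumed. Instead I would exploit the infinitude of $S$. Index by pairs $(p,N)$ with $p\in S$ and $N\triangleleft G$ of $p$-power index, and choose a non-principal ultrafilter $\mathcal{U}$ such that for every bound $p_{0}$ the set of pairs with $p>p_{0}$ is in $\mathcal{U}$, and for every $1\ne g\in G$ the set of pairs with $g\notin N$ is in $\mathcal{U}$; such $\mathcal{U}$ exists because $G$ is residually $p$ for each $p\in S$ and $S$ is infinite, so the relevant family has the finite-intersection property. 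Put $\Pi=\bigl(\prod_{(p,N)}G/N\bigr)/\mathcal{U}$; the diagonal $G\to\Pi$ is injective, and since each $G/N$ is a finite $p$-group with the primes tending to infinity along $\mathcal{U}$, the group $\Pi$ is torsion-free. For each $m$, the quotient $\Pi/\gamma_{m+1}(\Pi)$ is the ultraproduct of the finite $p$-groups $(G/N)/\gamma_{m+1}(G/N)$, which have nilpotency class at most $m$, hence less than $p$ for $\mathcal{U}$-almost every pair; the Lazard correspondence then matches each of these with a nilpotent Lie ring of $p$-power order via the Baker--Campbell--Hausdorff formula, whose coefficients have denominators dividing $m!$ and are therefore $p$-adic units. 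Taking the ultraproduct, $\Pi/\gamma_{m+1}(\Pi)$ corresponds to a torsion-free nilpotent Lie ring, hence is a torsion-free nilpotent group, hence is bi-orderable by Mal'cev's theorem.

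The remaining step — transporting the bi-orderings of the $\Pi/\gamma_{m+1}(\Pi)$ down to $G$ — is the delicate part, and I expect it to be the main obstacle. One would like to use residual nilpotence of $G$ to embed $G$ into $\prod_{m}G/\gamma_{m+1}(G)$ and bi-order each factor, but $G/\gamma_{m+1}(G)$ need not be torsion-free, and $\Pi$ itself fails to be residually nilpotent (an element whose coordinates lie in $\gamma_{m_{i}}(G/N_{i})$ with $m_{i}\to\infty$ along $\mathcal{U}$ lies in every $\gamma_{m+1}(\Pi)$ yet is nontrivial). Reconciling this is the crux: one must show the torsion-free nilpotent quotients $\Pi/\gamma_{m+1}(\Pi)$ nevertheless separate and coherently orient the diagonal copy of $G$. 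In practice I would return to the finite-subset criterion for bi-orderability and show that for each finite $F\subseteq G\setminus\{1\}$ there is a sign assignment on $F$ surviving in $G$, read off from the bi-ordering of the torsion-free nilpotent group $\Pi/\gamma_{m+1}(\Pi)$ for $m$ large relative to $F$, together with a large-prime estimate: modulo a prime $p\in S$ far larger than the complexity of $F$, balanced residues behave like honest integers throughout the bounded computations involved, so no ``wrap-around'' can manufacture a relation defeating the sign choice. Making the interaction between this depth filtration, the ultrafilter, and the limiting nilpotent data precise is where the real work lies; this bookkeeping is the substance of~\cite{Rhemtulla1973}.
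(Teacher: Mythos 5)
There is a genuine gap, and you acknowledge it yourself: the decisive step is never carried out. The preliminary reductions are fine --- bi-orderability is a local property, ``residually locally residually $p$'' passes to subgroups and collapses to ``residually $p$'' for finitely generated groups, and torsion-freeness and residual nilpotence follow as you say. But the theorem itself is the assertion that the Ohnishi--Fuchs finite-subset criterion holds (for each finite $F\subseteq G\setminus\{1\}$ there is a choice of signs whose normal subsemigroup misses $1$), and your text ends by describing what such an argument ``would'' involve and then deferring it to \cite{Rhemtulla1973}. A proof that concludes with ``this bookkeeping is the substance of the cited paper'' is a restatement of the citation, not a proof. (For what it is worth, the paper under review also gives no proof --- it quotes Rhemtulla --- so there is no internal argument to compare against; but that does not make your sketch complete.)

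Moreover, the machinery you do build points in a direction that cannot close the gap on its own. The torsion-free nilpotent quotients $\Pi/\gamma_{m+1}(\Pi)$ see the diagonal copy of $G$ only modulo $G\cap\gamma_{m+1}(\Pi)$, and this intersection contains the isolator of $\gamma_{m+1}(G)$: if $g^{k}\in\gamma_{m+1}(G)$ with $k$ prime to $p$, then the image of $g$ in the finite $p$-group $G/(\gamma_{m+1}(G)N)$ has order dividing both $k$ and a power of $p$, hence is trivial; since your ultrafilter concentrates on large primes, such a $g$ lies in $\gamma_{m+1}(\Pi)$ for every $m$. So your construction bi-orders at best the quotient $G/\bigcap_{m}\sqrt{\gamma_{m+1}(G)}$, and it yields $G$ itself only if $G$ is residually torsion-free nilpotent --- an implication that does not follow formally from residual $p$-ness for infinitely many primes and is precisely the kind of statement the paper records as open (Question \ref{quest:rtfn}). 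Rhemtulla's actual argument avoids this by working directly with the sign criterion inside a single finite $p$-quotient for one sufficiently large prime $p$ (chosen after fixing the finite set and the putative witnesses that $1$ lies in every signed normal subsemigroup), extracting a contradiction from a weight/counting argument along the lower central series of that finite $p$-group. That combinatorial step is the theorem; without it your proposal is a frame around a hole.
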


The consequence of this result which is relevant for our discussion is the following:

\begin{cor}
\label{cor:biorder}
Let $G$ be a group which is RFR$p$ for infinitely many primes $p$. 
Then $G$ is bi-orderable.
\end{cor}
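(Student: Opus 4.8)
The plan is to derive Corollary~\ref{cor:biorder} directly from Rhemtulla's Theorem~\ref{thm:orderability}, so the entire task reduces to a definition-chase: verify that a group which is RFR$p$ for infinitely many primes $p$ satisfies the hypothesis of being \emph{residually locally residually $p$} for infinitely many primes. First I would recall the precise meaning of that hypothesis: $G$ is residually locally residually $p$ if for every $1 \neq g \in G$ there is a normal subgroup $N \triangleleft G$ with $g \notin N$ such that $G/N$ is locally residually $p$ (i.e.\ every finitely generated subgroup of $G/N$ is residually $p$). Since $G$ is finitely generated here, the cleanest route is simply to exhibit, for each nontrivial $g$, a prime $p$ (from the infinite set for which $G$ is RFR$p$) and observe that $G$ itself is residually $p$ by Proposition~\ref{prop:rprp props}\eqref{r1}; one then takes $N = \{1\}$, so that $G/N = G$ is (locally) residually $p$, which certainly separates $g$ from $N$.

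The key steps, in order, are: (1) Fix $1 \neq g \in G$. (2) Choose any prime $p$ in the infinite set $S$ of primes for which $G$ is RFR$p$ — indeed any single such $p$ works, since the RFR$p$ condition already yields that $G$ itself is residually $p$ by Proposition~\ref{prop:rprp props}\eqref{r1}. (3) Note that $G$ being residually $p$ trivially implies $G$ is locally residually $p$, because subgroups of residually $p$ groups are residually $p$. (4) Taking the trivial normal subgroup $N = 1$, we have $g \notin N$ and $G/N = G$ is locally residually $p$; hence $G$ is residually locally residually $p$ at the prime $p$. (5) Since this works for every prime in the infinite set $S$, the group $G$ is residually locally residually $p$ for infinitely many primes $p$, so Theorem~\ref{thm:orderability} applies and $G$ is bi-orderable.

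I do not anticipate any genuine obstacle here — this is a soft consequence packaging an already-cited theorem. The only mild subtlety is making sure the quantifier structure in Rhemtulla's phrase ``residually locally residually $p$ for infinitely many primes $p$'' is read correctly: it suffices that for each nontrivial element there exists \emph{some} prime in an infinite set with the separation property, and our argument in fact gives the stronger statement that \emph{every} prime in $S$ works uniformly for \emph{every} nontrivial element (with $N=1$). So the proof is essentially one line: RFR$p$ implies residually $p$ (Proposition~\ref{prop:rprp props}), residually $p$ implies residually locally residually $p$ trivially, and applying this at infinitely many primes invokes Theorem~\ref{thm:orderability}. One could alternatively phrase it without even invoking residual-$p$-ness explicitly, noting that RFR$p$ is itself a ``residually (locally residually $p$)'' certificate, but routing through Proposition~\ref{prop:rprp props} is the most transparent.
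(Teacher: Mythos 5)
Your proposal is correct and matches the paper's (implicit) argument: the paper states Corollary~\ref{cor:biorder} without a written proof, relying on exactly the chain you spell out, namely RFR$p$ $\Rightarrow$ residually~$p$ (Proposition~\ref{prop:rprp props}\eqref{r1}) $\Rightarrow$ residually locally residually~$p$, done at infinitely many primes and fed into Theorem~\ref{thm:orderability}. Your parsing of the ``residually locally residually $p$'' hypothesis and the observation that $N=1$ works uniformly is the right way to make this explicit.
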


\section{Classes of groups which are RFR$p$}
\label{sec:class rfrp}

We now populate the class of RFR$p$ groups with several families 
of examples occurring in low-dimensional topology and geometric 
group theory. 

\subsection{Free groups and surface groups}
\label{subsec:free surf}

We start by showing that finitely generated free groups 
are residually finite rationally $p$, based on an argument  
the first author gave in \cite{IMSProc}, which we will recall 
for the convenience of the reader.   

\begin{prop}
\label{prop:free rfrp}
Finitely generated free groups are RFR$p$, for all primes $p$. 
\end{prop}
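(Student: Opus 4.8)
The plan is to exhibit an explicit nested sequence of finite-index subgroups of a finitely generated free group $F = F_n$ witnessing the RFR$p$ condition, and the natural candidate is the standard RFR$p$ filtration itself: set $K_1 = F$ and inductively $K_{i+1} = \ker\{K_i \to \TF H_1(K_i,\bZ) \to \TF H_1(K_i,\bZ)\otimes \bZ/p\bZ\}$, as in Lemma \ref{l:normal}. Conditions \eqref{rfr1}, \eqref{rfr3}, \eqref{rfr4} hold automatically by the construction in Lemma \ref{l:normal}; the only nontrivial point to verify is condition \eqref{rfr2}, namely $\bigcap_{i\ge 1} K_i = \{1\}$. Since each $K_i$ is a finite-index subgroup of the free group $F$, it is itself free of finite rank (Nielsen--Schreier), so the construction makes sense at every stage.

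First I would reduce the intersection statement to a residual property. Because the $K_i$ form a descending chain with elementary abelian $p$ quotients, $F/K_i$ is a finite $p$-group, so it suffices to show that for every $1 \ne w \in F$ there is some $i$ with $w \notin K_i$. Equivalently, writing $F^{(i)}_p$ for the $i$-th term of the rational lower $p$-central–type series generated by this recipe, one wants $\bigcap_i F^{(i)}_p = \{1\}$. Here I would invoke the classical fact that finitely generated free groups are residually (torsion-free nilpotent), indeed residually-$p$ for every prime $p$: the lower central series quotients $\gamma_k(F)/\gamma_{k+1}(F)$ are free abelian (Magnus), and the Magnus embedding of $\bZ F_n$ into the ring of noncommutative formal power series shows $\bigcap_k \gamma_k(F) = \{1\}$. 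The key step is then to compare the standard RFR$p$ filtration $\{K_i\}$ with the lower central series: one shows by induction on $k$ that $\gamma_{k}(F)$ eventually lies inside... no, the correct direction is that each $K_i$ lies between two terms of the (mod-$p$) lower central series, so that $\bigcap_i K_i \subseteq \bigcap_k (\gamma_k(F)\cdot F^p\text{-type terms})$, which is trivial. Concretely: $K_2 \supseteq \gamma_2(F)\cdot F^p$ is false in general, but $K_2 \subseteq$ the kernel of $F \to H_1(F,\bZ/p)$, and more importantly each successive $K_i$ is obtained from $K_{i-1}$ by passing to a torsion-free-abelianization-mod-$p$ kernel, which for a free group $K_{i-1}$ kills a nontrivial amount: any $w \in K_{i-1} \setminus \{1\}$ has nontrivial image in $H_1(K_{i-1},\bZ)$ unless $w \in [K_{i-1},K_{i-1}]$, and iterating this lands us in the intersection of the derived series, which for a residually-(torsion-free nilpotent) free group requires a further argument.

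The cleanest route, and the one I would actually write, is to cite the first author's argument in \cite{IMSProc} directly: take a basis $x_1,\dots,x_n$ of $F$, observe that the standard filtration refines the mod-$p$ lower central series, and use that $F$ is residually torsion-free nilpotent (hence $\bigcap$ of the mod-$p$ lower central series is trivial) together with condition \eqref{rfr4} being automatically satisfied at each stage because $H_1(K_i,\bZ)$ is free abelian for every term $K_i$ (each $K_i$ being a finitely generated free group), so the torsion-free abelianization is the full abelianization and no class is lost to torsion. Thus every nontrivial $w$ survives in some finite $p$-quotient arising in the filtration. The main obstacle — and the point deserving care — is precisely this comparison of the RFR$p$ filtration with a known residual series: one must check that the ``rational'' refinement (quotienting by torsion in $H_1$) does not stall the filtration, which is exactly where freeness of all finite-index subgroups is used, since for a free group the abelianization is already torsion-free and $\TF H_1(K_i,\bZ) = H_1(K_i,\bZ) \cong \bZ^{\rk K_i}$. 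Once that is in hand, $\bigcap_i K_i = \{1\}$ follows, and all four defining conditions of RFR$p$ are met, for every prime $p$.
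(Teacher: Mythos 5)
Your argument is correct in outline but takes a genuinely different route from the paper, and a couple of points deserve tightening.

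The paper's proof is geometric and self-contained: it realizes $F$ as $\pi_1$ of a wedge of circles, builds the tower $X_{i+1}\to X_i$ classified by $H_1(X_i,\bZ/p\bZ)$, and shows that the shortest embedded loop in $X_i$ cannot lift to $X_{i+1}$ because an embedded loop in a graph represents a primitive homology class. Hence the systole strictly increases at every stage, so no nontrivial loop survives in all covers. Your proof is instead algebraic: you observe that each $K_i$ is free (Nielsen--Schreier), so $H_1(K_i,\bZ)$ is already torsion-free and the \textquotedblleft$\TF$\textquotedblright\ in the recursion of Lemma~\ref{l:normal} is vacuous; thus $K_{i+1}=[K_i,K_i]K_i^{\,p}$, which is the iterated mod-$p$ homology kernel series. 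You then compare this with the mod-$p$ lower central series $P_1(F)=F$, $P_{i+1}(F)=[P_i(F),F]\cdot P_i(F)^p$, and invoke the classical fact that free groups are residually $p$, i.e.\ $\bigcap_i P_i(F)=\{1\}$. This works, but note that it outsources the crux to an external theorem (residual $p$-ness of free groups), whereas the paper's graph-theoretic argument proves that fact along the way. You also say you would \textquotedblleft cite the first author's argument in \cite{IMSProc} directly,\textquotedblright\ but the argument there is precisely the geometric one reproduced in the paper, not the lower-central-series comparison you actually carry out --- so you are proposing a replacement, not a citation.

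Two local issues. First, the inclusion you need, namely that the standard filtration refines the mod-$p$ lower central series, is asserted but not proved; it deserves the (short) induction: if $K_i\subseteq P_i(F)$ then $K_{i+1}=[K_i,K_i]K_i^{\,p}\subseteq[P_i(F),F]\cdot P_i(F)^p=P_{i+1}(F)$, so $\bigcap_i K_i\subseteq\bigcap_i P_i(F)=\{1\}$. Second, the aside in your middle paragraph that \textquotedblleft$K_2\supseteq\gamma_2(F)\cdot F^p$ is false in general\textquotedblright\ is wrong in this context: for a free group $H_1(F,\bZ)$ is torsion-free, so $K_2=\ker\{F\to H_1(F,\bZ/p\bZ)\}=\gamma_2(F)\cdot F^p$ exactly. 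More generally, the middle paragraph wavers between the two possible directions of comparison before settling on the correct one; the final paragraph has the right structure, and you should simply excise the exploratory detour and lead with the inclusion $K_i\subseteq P_i(F)$.
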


\begin{proof}
We realize a free group as the fundamental 
group of a wedge of circles $X$, which we think of as a graph equipped with 
the graph metric.  In any simplicial graph $\gam$ equipped with the graph 
metric, we have the following two observations: first, any shortest unbased 
(non-backtracking) loop $\gamma$ in $\gam$ is simple, i.e., $\gamma$ has 
no self-intersections.  Second, if $\gamma$ is a simple, oriented loop, then 
the homology class $[\gamma]\in H_1(\gam,\bZ)$ is primitive.  This can be 
seen by choosing any edge $e$ of $\gamma$, extending $\gamma\setminus e$ 
to a maximal tree $T\subset\gam$, and considering the graph $\gam/T$.  

Using these observations, we build a sequence of covers of $X$ 
by setting $X_1=X$ and letting $X_{i+1}$ be the finite cover of 
$X_i$ induced by the quotient $H_1(X_i,\bZ/p\bZ)$ 
of $\pi_1(X_i)$.  We see that the shortest unbased loop in $X_i$ does not 
lift to $X_{i+1}$, so that by induction, the shortest loop in $X_i$ has length 
at least $i$ in the graph metric.
\end{proof}

More generally, we will show in Proposition \ref{prop:raag rfrp}
that right-angled Artin groups are residually finite rationally 
$p$, for all primes $p$. 

\begin{prop}
\label{prop:free surf}
Fundamental groups of closed, orientable surfaces 
are RFR$p$, for all primes $p$.
\end{prop}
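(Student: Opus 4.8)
The plan is to deduce that closed orientable surface groups are RFR$p$ from the classical fact that such groups are residually $p$, exploiting a special feature: every finite-index subgroup of a closed orientable surface group is again a closed orientable surface group, so the standard RFR$p$ filtration stays inside this class and becomes completely explicit. The genus $0$ and genus $1$ cases are degenerate (the trivial group and $\bZ^2$, both trivially RFR$p$), so I would assume $G=\pi_1(\Sigma_g)$ with $g\ge 2$.

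The first step is the structural observation. If $H\le G$ has finite index, then $H=\pi_1(\Sigma')$ for a connected finite cover $\Sigma'\to\Sigma_g$, which is again a closed orientable surface; in particular $H_1(H;\bZ)$ is torsion-free. Applying this inductively to the standard RFR$p$ filtration $\{K_i\}$ of $G$, one gets $\mathrm{TF}\,H_1(K_i;\bZ)\otimes\bZ/p\bZ=H_1(K_i;\bZ/p\bZ)$ at every stage, so that
\[
K_{i+1}=\ker\{K_i\to H_1(K_i;\bZ/p\bZ)\}=K_i^p\,[K_i,K_i].
\]
Thus the standard filtration of $G$ is exactly the series obtained by iterating the passage to the mod-$p$ homology subgroup: writing $G^{(0)}=G$ and $G^{(n+1)}=(G^{(n)})^p[G^{(n)},G^{(n)}]$, we have $K_i=G^{(i-1)}$. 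For these groups the rational-homology condition in the definition of RFR$p$ is moreover automatic, since $H_1(K_i;\bZ)$ torsion-free means any element of $K_i$ trivial in $H_1(K_i;\bQ)$ already lies in $[K_i,K_i]\le K_{i+1}$, which is forced by the elementary abelian $p$ condition. So the entire problem reduces to showing $\bigcap_n G^{(n)}=\{1\}$.

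For this I would invoke the classical fact that surface groups are residually torsion-free nilpotent (going back to work of Baumslag and others), hence residually $p$ for every prime $p$. Given $1\ne g\in G$, pick a surjection $\phi\colon G\to P$ onto a finite $p$-group with $\phi(g)\ne 1$. Since $\phi$ is a homomorphism, $\phi(G^{(n+1)})=\phi(G^{(n)})^p\,[\phi(G^{(n)}),\phi(G^{(n)})]$ is the Frattini subgroup of the finite $p$-group $\phi(G^{(n)})$, and by the Burnside basis theorem the Frattini subgroup of a nontrivial finite $p$-group is a proper subgroup. Hence the orders $|\phi(G^{(n)})|$ strictly decrease until $\phi(G^{(n)})=\{1\}$, so $G^{(n)}\le\ker\phi$ for $n$ large and therefore $g\notin G^{(n)}$. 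As $g$ was arbitrary, $\bigcap_n G^{(n)}=\{1\}$ and $G$ is RFR$p$.

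The step I expect to be the real content, and the reason this cannot simply copy the proof of Proposition \ref{prop:free rfrp}, is the passage from residual $p$-ness to control of the filtration: a shortest essential geodesic on a surface is simple but may be \emph{separating}, hence null-homologous, so it persists into the first mod-$p$ homology cover and the ``injectivity radius grows'' argument breaks down. The device above circumvents this by going straight to arbitrarily deep finite $p$-quotients instead of improving the geometry one cover at a time. (Alternatively, closed orientable surface groups embed in right-angled Artin groups by work of Crisp--Wiest, so the proposition also follows from Proposition \ref{prop:raag rfrp} together with Theorem \ref{t:rfrp groups}; the argument sketched here has the advantage of being self-contained and of running parallel to the free-group case.)
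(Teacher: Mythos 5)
Your argument is correct, and it takes a genuinely different route from the paper's. The paper mirrors its free-group proof using hyperbolic geometry: it fixes a hyperbolic metric on $\Sigma_g$ and its covers, notes that shortest closed geodesics are simple, that non-separating simple closed geodesics are homologically primitive (hence die in the mod-$p$ homology cover), and that separating ones lift to non-separating ones; combined with discreteness of the length spectrum this makes the systole of the tower $X_i$ tend to infinity, so every nontrivial loop eventually fails to lift. Your argument is purely algebraic: since every finite-index subgroup of a closed orientable surface group is again a closed orientable surface group and so has torsion-free abelianization, the standard RFR$p$ filtration collapses to the iterated mod-$p$ homology series $K_{i+1}=K_i^p[K_i,K_i]$, the rational-homology condition becomes automatic, and residual $p$-ness of surface groups forces $\bigcap_n K_n=\{1\}$ by the Frattini/Burnside basis argument. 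What you gain is a clean general criterion — a finitely generated residually $p$ group all of whose finite-index subgroups have torsion-free $H_1$ is RFR$p$ — which covers free groups and surface groups uniformly and sidesteps the systole-growth analysis (in particular the separating case, which you correctly flag as the obstruction to a verbatim transplant of the free-group proof, though the paper does handle it by passing to the homology cover). What you give up is self-containment: you import residual $p$-ness of surface groups from the classical literature, whereas the paper's geometric argument establishes the stronger RFR$p$ statement directly.
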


\begin{proof}
The argument is nearly identical to that for free groups.  
For genus one, the claim is straightforward, so we assume the genus of the base 
surface to be at least two.  We choose a hyperbolic metric on a surface $X=X_1$ 
and on all of its covers (by pullback).  Again, any shortest geodesic on a hyperbolic 
surface is simple.  If $\gamma$ is a simple, oriented, closed geodesic on a hyperbolic 
surface, $\gamma$ represents a primitive homology class if and only if it is non-separating.  
If $\gamma\subset X$ is a separating simple closed geodesic and $p$ is any prime, then 
$\gamma$ lifts to the universal modulo $p$ homology cover $X_p\to X$, and any lift of 
$\gamma$ is non-separating on $X_p$ (though the union of all lifts of $\gamma$ 
is separating).  

We now build the tower of covers $\{X_i\}$ of $X=X_1$ in the same 
manner as in the case of free groups.  The hyperbolic length spectrum 
of geodesics on $X$ is a discrete subset of $\bR$, since a hyperbolic 
metric is induced by a discrete representation of $\pi_1(S)\to \textrm{PSL}_2(\bR)$.    
Thus, we again see that for any closed geodesic $\gamma\subset X$, we have that 
$\gamma$ does not lift to $X_i$ for $i\gg 1$.
\end{proof}

\subsection{Operations on RFR$p$ groups}
\label{subsec:ops}

Next, we show that the class of RFR$p$ groups is closed under 
certain natural operations. A nearly verbatim statement for RFRS 
groups was established by Agol in \cite{agolrfrs}, albeit our proof for part 
\eqref{coprod} is somewhat different.

\begin{thm}
\label{t:rfrp groups}
Fix a prime $p$.  The class of RFR$p$ groups is closed 
under the following operations:
\begin{enumerate}
\item \label{subgroup}
Passing to finitely generated subgroups.
\item \label{prod}
Taking finite direct products.
\item \label{coprod}
Taking finite free products.
\end{enumerate}
\end{thm}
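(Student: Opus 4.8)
The plan is to establish each of the three closure properties in turn, exploiting the characterization of RFR$p$ via the standard filtration $\{G_i\}$ defined by iterating the torsion-free $p$-abelianization.

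\smallskip

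\textbf{Part \eqref{subgroup}: finitely generated subgroups.} Let $H < G$ be finitely generated, with $G$ RFR$p$. The natural candidate witnessing filtration is $\{G_i \cap H\}_{i \ge 1}$, where $\{G_i\}$ is the standard RFR$p$ filtration of $G$. Conditions (\ref{rfr1}) and (\ref{rfr2}) are immediate: $G_{i+1}\cap H$ is normal in $G_i\cap H$ since $G_{i+1}\lhd G_i$, and $\bigcap_i (G_i\cap H) \subseteq \bigcap_i G_i = \{1\}$. The point requiring care is (\ref{rfr3}) and (\ref{rfr4}): a priori $G_i\cap H \to (G_i/G_{i+1})$ need not land in a subgroup that still satisfies the ``no torsion killed'' requirement \eqref{rfr4} relative to $H$ itself. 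The fix is to not use $\{G_i\cap H\}$ verbatim but to observe that this is a separating filtration by finite-index subgroups and then replace it by the standard RFR$p$ filtration $\{H_j\}$ of $H$: one shows by the same argument as in Lemma \ref{l:normal} that for each $i$ there is a $j$ with $H_j \le G_i\cap H$, since $H_j$ is the largest quotient of $H_{j-1}$ satisfying \eqref{rfr4} and any finite-index normal subgroup $G_i\cap H$ with elementary abelian $p$-quotient that contains the kernel of torsion-free abelianization must contain some $H_j$. Hence $\bigcap_j H_j \subseteq \bigcap_i (G_i\cap H) = \{1\}$, so the standard RFR$p$ filtration of $H$ already witnesses that $H$ is RFR$p$. (This is exactly the reasoning already used in the proof of Proposition \ref{prop:radical}.)

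\smallskip

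\textbf{Part \eqref{prod}: finite direct products.} It suffices by induction to treat $G = A \times B$ with $A, B$ both RFR$p$. Let $\{A_i\}$ and $\{B_i\}$ be the standard RFR$p$ filtrations, and set $G_i = A_i \times B_i$. Then $G_{i+1}\lhd G_i$, $G_i/G_{i+1} \cong A_i/A_{i+1} \times B_i/B_{i+1}$ is elementary abelian $p$, and $\bigcap_i G_i = \{1\}$. For \eqref{rfr4}: since $H_1(A_i\times B_i,\bQ) \cong H_1(A_i,\bQ)\oplus H_1(B_i,\bQ)$ by the K\"unneth formula, the kernel of $G_i \to H_1(G_i,\bQ)$ is $\ker(A_i\to H_1(A_i,\bQ)) \times \ker(B_i\to H_1(B_i,\bQ)) \le A_{i+1}\times B_{i+1} = G_{i+1}$. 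So $\{G_i\}$ witnesses RFR$p$ for the product; this part is essentially routine.

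\smallskip

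\textbf{Part \eqref{coprod}: finite free products.} This is the main obstacle, and where the remark about a different proof than Agol's is relevant. By induction it suffices to treat $G = A * B$ with $A, B$ RFR$p$ and finitely generated. The idea is to realize $A * B$ as the fundamental group of a graph of groups over a single edge with trivial edge group and vertex groups $A, B$, and then invoke the combination theorem (Theorem \ref{t:comb}) once we have verified its hypotheses. Condition (1) of Theorem \ref{t:comb} is given. The trivial subgroup is closed in any RFR$p$ topology on $A$ (resp.\ $B$) precisely because $A$ (resp.\ $B$) is RFR$p$ — its RFR$p$ radical is trivial — so condition (3) holds. The real work is condition (2): that the RFR$p$ topology on $A*B$ induces the RFR$p$ topology on each vertex group $A$ and $B$. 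For this I would use that the projection $A * B \to A$ killing $B$ is a retraction onto $A$; by Corollary \ref{cor:retract}, a retraction induces the RFR$p$ topology on the retract. Likewise $A*B \to B$ is a retraction. Hence both conditions (2) are satisfied, and Theorem \ref{t:comb} yields that $A*B$ is RFR$p$. The subtlety to double-check is that Corollary \ref{cor:retract} applies to the ambient group $A*B$ (which is finitely generated since $A,B$ are) and its retract, and that ``induces the RFR$p$ topology'' is precisely condition (2) of Theorem \ref{t:comb}; but this matches verbatim, so the argument closes.
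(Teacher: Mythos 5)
Your argument is correct and reaches the same conclusions, but parts (1) and (3) take routes worth comparing to the paper's.

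For part \eqref{subgroup}, the difficulty you flag is actually not there. The filtration $\{H_i\}=\{G_i\cap H\}$ verifies condition \eqref{rfr4} directly: if $h\in H_i\setminus H_{i+1}$, then $h\in G_i\setminus G_{i+1}$, so $h$ is nonzero in $H_1(G_i,\Q)$; and by naturality of abelianization the composite $H_i\to H_1(H_i,\Q)\to H_1(G_i,\Q)$ agrees with $H_i\hookrightarrow G_i\to H_1(G_i,\Q)$, so $h$ cannot die in $H_1(H_i,\Q)$. That is precisely the paper's argument, and it makes $\{G_i\cap H\}$ a witnessing sequence on the nose, with no need to pass to the standard filtration of $H$. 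Your detour can be made to work --- inducting on $i$ to show $H_j\leq G_i\cap H$ only requires condition \eqref{rfr4} for the ambient $G_i$, not for $G_i\cap H$, by the same naturality observation --- but as written the justification (``any finite-index normal subgroup $G_i\cap H$ \dots that contains the kernel of torsion-free abelianization'') implicitly asserts condition \eqref{rfr4} for $G_i\cap H$, which is exactly the thing you declared unavailable; so the paragraph reads circularly even though the underlying idea is salvageable. Either way, the direct check is both correct and shorter.

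Part \eqref{prod} is essentially identical to the paper's argument. For part \eqref{coprod}, you take a genuinely different and cleaner route: where the paper (Corollary \ref{cor:free product}) verifies condition (2) of Theorem \ref{thm:combination} by an explicit homology computation for the wedge $Z_i\simeq X_i\vee(\text{circles})\vee Y_i$, you instead observe that $A*B\to A$ and $A*B\to B$ are retractions and invoke Corollary \ref{cor:retract}. This is a nicer deduction; both versions of course share the same deferral to Theorem \ref{thm:combination}, which the paper notes is not circular. One small point to make explicit: for Corollary \ref{cor:retract} to apply to the filtration, one needs $r(G_i)\subset G_i$, which holds because the standard RFR$p$ filtration is fully invariant, not merely characteristic; the paper's proof of that corollary glosses over this, so it is worth being aware of when citing it.
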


We remark that an arbitrary subgroup of an RFR$p$ group will be RFR$p$ 
in an appropriate sense; only the finite generation may be lost.

\begin{proof}[Proof of Theorem \ref{t:rfrp groups}]
We prove the items in order.  Let $G$ be a group which is RFR$p$, as witnessed 
by a sequence of nested subgroups $\{G_i\}$, and let $H<G$ be an arbitrary subgroup.  
We set $H_i=H\cap G_i$.  Evidently, we have 
\[
\bigcap_i H_i=\{1\}.
\]  
Furthermore, $H_i/H_{i+1}$ is the image of $H_i$ inside of $G_i/G_{i+1}$ and is 
therefore an elementary abelian $p$-group. Moreover, if $h\in H_i\setminus H_{i+1}$,  
then $h\in G_i\setminus G_{i+1}$ and therefore has infinite order in $H_1(G_i,\bZ)$.  
It follows that $h$ must also have infinite order in $H_1(H_i,\bZ)$.  Thus, the sequence 
of subgroups $\{H_i\}$ witnesses the claim that $H$ is RFR$p$.  Thus, the class of 
RFR$p$ groups  is closed under taking subgroups.

If $G$ and $H$ are groups, we have 
\[
H_1(G\times H,\bZ)\cong H_1(G,\bZ)\times H_1(H,\bZ).
\]  
If $G$ and $H$ are RFR$p$ with nested sequences of subgroups $\{G_i\}$ and $\{H_i\}$, 
we set $K=G\times H$ and $K_i=G_i\times H_i$ for each $i$.  We have that 
\[
K_i/K_{i+1}\cong G_i/G_{i+1}\times H_i/H_{i+1},
\] 
as follows from an easy computation, so that $K_i/K_{i+1}$ 
is an elementary abelian $p$-group.  Furthermore, $K_i/K_{i+1}$ is a quotient of 
$\TF H_1(G_i\times H_i,\bZ)$, and 
\[
\bigcap_i K_i=\bigcap_i G_i\times H_i=\{1\},
\] 
so that $\{K_i\}$ witnesses the fact that $K$ is RFR$p$.  By an easy induction, 
this shows that the class of 
RFR$p$ groups  is closed under taking finite direct products.

To prove that the class the class of 
RFR$p$ groups  is closed under taking finite free products, 
we will note that this claim is a special case of Theorem \ref{thm:combination}. 
It will not be circular to postpone the proof until then (see Corollary \ref{cor:free product}).
\end{proof}

\subsection{Circle bundles over surfaces}
\label{subsec:circle bundles}

The following fact shows that there is a sharp dichotomy between groups which 
are RFR$p$ and groups which are not RFR$p$ in the class of cyclic central 
extensions of surface groups:

\begin{thm}
\label{thm:circle bundle}
Let $S$ be an aspherical, compact, orientable surface and let 
\[
\xymatrixcolsep{14pt}
\xymatrix{
F\ar[r]& E \ar[r]& S}
\] 
be a fiber bundle with fiber $F=S^1$, such that the total space $E$ is orientable. 
Write $e\in H^2(S,\bZ)$ for the Euler class of the bundle.
\begin{enumerate}
\item
If $e=0$, then $\pi_1(E)$ is RFR$p$ for every prime $p$.
\item
If $e\neq 0$ then $\pi_1(E)$ is not RFR$p$ for any prime $p$.
\end{enumerate}
\end{thm}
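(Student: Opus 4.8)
The plan is to treat the two cases by entirely different mechanisms: the split case reduces to groups we already know are RFR$p$, while the non-split case is governed by Proposition \ref{prop:rfrp rtfn} together with a central-torsion argument. For case (1), when $e=0$ the bundle is trivial, so $E \cong S \times S^1$ and hence $\pi_1(E) \cong \pi_1(S) \times \bZ$. If $S$ is closed, $\pi_1(S)$ is RFR$p$ by Proposition \ref{prop:free surf}; if $S$ has boundary, $\pi_1(S)$ is a finitely generated free group and is RFR$p$ by Proposition \ref{prop:free rfrp}. Since $\bZ$ is visibly RFR$p$, Theorem \ref{t:rfrp groups}, part \eqref{prod}, shows the direct product $\pi_1(S) \times \bZ$ is RFR$p$, and this holds for every prime $p$. (One should note that ``orientable total space'' forces the relevant monodromy to be trivial, so no twisting enters; this is automatic once $e=0$ for an $S^1$-bundle.)

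For case (2), suppose $e \neq 0$. First assume $S$ is closed of genus $g \geq 1$; the genus-zero case does not arise since $H^2(S^2,\bZ) = \bZ$ but then the total space is a lens-type space whose $\pi_1$ is finite and hence has torsion, so is not RFR$p$ by Proposition \ref{prop:rprp props}, part \eqref{r2}. With $g \geq 1$, the group $\Gamma = \pi_1(E)$ fits in a central extension
\[
1 \to \bZ \to \Gamma \to \pi_1(S) \to 1,
\]
where the central $\bZ$ is generated by the fiber class $z$, and the extension class is $e \neq 0$. The key algebraic fact I would extract is that, because $e \neq 0$, the image of $z$ in $H_1(\Gamma,\bQ)$ is \emph{torsion} — equivalently, some nonzero power of $z$ lies in the commutator subgroup $\Gamma'$. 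Indeed, pairing $e$ with a homology class of $S$ on which it is nonzero exhibits $z^{N}$ (for suitable $N \neq 0$) as a product of commutators of lifts of the corresponding loops in $S$. Now run the argument of Proposition \ref{prop:rfrp rtfn}: if $\{\Gamma_i\}$ is any sequence witnessing that $\Gamma$ is RFR$p$, it is the standard RFR$p$ filtration, and for each $i$ the quotient $\Gamma_i / \Gamma_{i+1}$ is a quotient of $\TF H_1(\Gamma_i,\bZ)$. Since $z^N \in \Gamma'$, the element $z^N$ — and hence $z$ — maps to a torsion class in $H_1(\Gamma_i,\bZ)$ whenever $z \in \Gamma_i$; one checks inductively that $z \in \Gamma_i$ for all $i$ (at each stage $z$ lies in the kernel of the map to $\TF H_1(\Gamma_i,\bZ) \otimes \bZ/p\bZ$, since a power of $z$ is a product of commutators inside the group $\Gamma_i$, which one verifies is again a central extension of a surface or free group with nonzero Euler class, or at least has $z^M \in \Gamma_i'$ for some $M \neq 0$). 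Therefore $z \in \bigcap_i \Gamma_i$, contradicting $\bigcap_i \Gamma_i = \{1\}$ since $z$ has infinite order. The boundary case is analogous but easier: if $S$ has boundary then the bundle is automatically trivial, so in fact $e = 0$ always, and case (2) simply cannot occur for $S$ with boundary — a remark worth recording, since it means the dichotomy is vacuous on the boundary side and the genuine content is for closed $S$.

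The main obstacle is the inductive step in case (2): one must ensure that the property ``a nonzero power of the fiber class lies in the commutator subgroup'' is inherited by each term $\Gamma_i$ of the standard RFR$p$ filtration, so that the argument of Proposition \ref{prop:rfrp rtfn} can be iterated. This requires understanding $\Gamma_i$ as again a central-by-(surface or free) group; concretely, $\Gamma_i$ is the fundamental group of a finite cover $E_i \to E$, which is again a circle bundle over a finite cover $S_i \to S$, with Euler class the pullback of $e$. The subtlety is that the pulled-back Euler class could a priori become zero after passing to a cover — but this does not happen here, because the central $\bZ$ generated by $z$ is contained in every $\Gamma_i$ by the time we reach that stage of the induction (the fiber $S^1$ lifts to every cover in the tower precisely because its image in each $H_1(\Gamma_i,\bQ)$ is torsion, so we never ``unwrap'' the fiber), and hence the fiber of $E_i \to S_i$ is still the full circle, with a power still lying in $\Gamma_i'$. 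Making this bookkeeping precise — that $z$ stays in the tower and that ``power of fiber class in commutator subgroup'' persists — is the one place the proof requires genuine care rather than invocation of earlier results.
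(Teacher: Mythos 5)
Your proposal is correct and follows essentially the same route as the paper: part (1) by trivializing the bundle and invoking closure of RFR$p$ under direct products, and part (2) by showing the central fiber class $z$ lies in every term of the filtration because the Euler class remains nonzero after pulling back to finite covers (the paper phrases this as $G_i$ being a nonsplit central extension, citing torsion-freeness of $H^2(\pi_1(S),\Z)$, while you phrase it as the pullback of $e$ to $S_i$ staying nonzero — these are the same observation). Two small remarks: the sphere case you worry about is excluded outright by the asphericity hypothesis on $S$, and in the inductive step the covers $S_i$ are always closed (since $e\neq 0$ forces $S$ closed), so $\pi_1(S_i)$ is never free and your hedging clause about free groups is unnecessary.
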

\begin{proof}
If $e=0$ then $\pi_1(E)\cong \Z\times\pi_1(S)$.  Thus, $\pi_1(E)$ is RFR$p$ for 
every prime $p$ by combining Proposition \ref{prop:ex rfrp} and 
Theorem \ref{t:rfrp groups}.

If $e\neq 0$ then we follow the argument given in \cite{Ko12}, which 
we reproduce here for the reader's convenience. We have a short exact 
sequence
\begin{equation}
\label{eq:zes}
\xymatrixcolsep{14pt}
\xymatrix{1\ar[r] &\Z \ar[r] & \pi_1(E)\ar[r] & \pi_1(S)\ar[r] & 1},
\end{equation} 
where the leftmost copy of $\Z$ is central and is generated by an element $t$. 
We claim that for any prime $p$, we have $\rad_p(\pi_1(E))=\langle t\rangle$.

First, if $g\in\pi_1(E)$ then we may write $g=h\cdot t^k$, where $h\in\pi_1(S)$. 
Since $\pi_1(S)$ is RFR$p$ for each prime $p$, we have that if $h\neq 1$ then 
$h\notin\rad_p(\pi_1(E))$, by Proposition \ref{prop:radical}. Thus, we have an 
inclusion $\langle t\rangle\supset\rad_p(\pi_1(E))$, 
which holds for every prime $p$.

Conversely, write $G=\pi_1(E)$ and $\{G_i\}_{i\geq1}$ for the standard RFR$p$ 
filtration of $G$. The fact that $e\neq 0$ means that a nonzero power of $t$ is a 
product of commutators in $G$ (see~\cite{Br}). In particular, we have that $t$ maps to a torsion 
element of $H_1(G,\Z)$, so that $t\in G_2$. By induction, we may suppose that 
$t\in G_i$ for some $i>1$. Since $G_i<G_1=G$ has finite index, we have that 
$G_i$ again decomposes as a nonsplit central extension of a surface group, 
by a standard cohomology of groups argument using the fact that $H^2(\pi_1(S),\Z)$ 
is torsion-free. In particular, $t$ maps to a torsion element of $H_1(G_i,\Z)$, 
so that $t\in G_{i+1}$. Hence $t\in\rad_p(\pi_1(E))$, establishing the reverse inclusion.
\end{proof}

\subsection{Complements}
\label{subsec:complements}
In this short subsection, we gather some classes of groups which fail to be RFR$p$, 
be it for some prime, for infinitely many primes, or for all primes. Many of the details 
have been discussed above (see Subsections \ref{subsec:rfrp filt} and \ref{subsec:nilp}, 
as well as Theorem \ref{thm:circle bundle}), so we can safely omit most of the proofs.

\begin{prop}\label{prop:one prime}
The following classes of finitely generated groups fail to be RFR$p$ for any prime $p$:
\begin{enumerate}
\item
Groups with torsion.
\item
Nonabelian nilpotent groups.
\item
Central extensions which are not virtually split.
\item
Nonabelian groups $G$ with $b_1(G)<2$.
\end{enumerate}
\end{prop}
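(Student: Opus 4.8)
Each of the four items has essentially been established or is immediate from earlier material, so the proof will be a short catalog of references with a one-line argument in each case.

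\begin{proof}[Proof proposal for Proposition \ref{prop:one prime}]
The plan is to handle the four classes in turn, citing the relevant earlier results. First, if $G$ has nontrivial torsion, then $G$ cannot be RFR$p$ for any prime $p$: this is exactly item \eqref{r2} of Proposition \ref{prop:rprp props} (RFR$p$ groups are torsion-free), read contrapositively. Second, a nonabelian nilpotent group is never RFR$p$ for any prime $p$ by Proposition \ref{prop:rfrp rtfn}. Third, I would invoke Theorem \ref{thm:circle bundle}(2): a cyclic central extension of a surface group with nonzero Euler class is not RFR$p$; more generally, the argument there shows that if $1\to Z\to G\to Q\to 1$ is central with $Z$ infinite cyclic generated by $t$, and no finite-index subgroup of $G$ splits the extension, then a nonzero power of $t$ lies in the commutator subgroup of every finite-index subgroup, hence $t\in\rad_p(G)$ for every $p$, so $G$ is not RFR$p$ — I would either phrase this as a direct generalization of the proof of Theorem \ref{thm:circle bundle} or, if one prefers, restrict the statement of item (3) to the surface-group case already covered. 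Fourth, if $G$ is nonabelian with $b_1(G)<2$, then in the standard RFR$p$ filtration $G_1/G_2$ is either trivial or cyclic of order $p$ (since $b_1(G)\le 1$); if $b_1(G)=0$ then $G_2=G_1$ and the filtration is constant, so $\rad_p(G)=G\ne\{1\}$; if $b_1(G)=1$ then $G_2=\ker\{G\to\TF H_1(G,\bZ)\otimes\bZ/p\bZ\}$ has infinite cyclic abelianization-image quotient, but more to the point $G_2=\ker\{G\to\bZ/p\bZ\}$ already contains the commutator subgroup $G'$, and since $G$ is nonabelian $G'\ne\{1\}$; iterating, one checks $G'\le\rad_p(G)$, so $\rad_p(G)\ne\{1\}$ and $G$ is not RFR$p$.

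The only item requiring genuine argument rather than a pointer is item (4). The key point is that a group with $b_1(G)\le 1$ cannot ``separate'' its commutator subgroup using the RFR$p$ filtration, because at each stage the torsion-free abelianization has rank at most one, the first quotient $G_i/G_{i+1}$ has $\bZ/p\bZ$-rank at most one, and condition \eqref{rfr4} forces $G_{i+1}$ to contain $G_i'$ whenever $b_1(G_i)\le 1$. One then needs the elementary observation that a finite-index subgroup of a nonabelian group with $b_1\le 1$ could a priori have larger first Betti number, so the cleanest route is to argue directly: by Theorem \ref{thm:rtfpoly} an RFR$p$ group is residually torsion-free polycyclic, hence residually (torsion-free nilpotent-by-abelian) in a weak sense — but more simply, a nonabelian RFR$p$ group $G$ has, for $g\in G'\setminus\{1\}$, some index $i$ with $g\in G_i\setminus G_{i+1}$, so $g$ has nonzero image in $\TF H_1(G_i,\bZ)$; this already shows $G$ is \emph{not} perfect-by-anything obstructive. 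The real content for (4) is that $b_1(G)<2$ combined with nonabelianness is incompatible; I expect the smoothest proof to route through the fact that a nonabelian RFR$p$ group has a nonabelian free quotient virtually (item \eqref{res4} of Theorem \ref{t:rfrpsummary}) only when RFR$p$ for infinitely many primes, so instead I will argue at the level of a single prime using the filtration directly.

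I anticipate the main obstacle being purely expository: making item (3) and item (4) precise without over- or under-claiming. For (3), "central extensions which are not virtually split" must be interpreted as central extensions $1\to \bZ^k\to G\to Q\to 1$ with $Q$ infinite and no finite-index subgroup splitting; the proof is the cohomological argument in Theorem \ref{thm:circle bundle}, namely that the relevant Euler-type class in $H^2(Q;\bZ^k)$ restricts nontrivially to every finite-index subgroup, so a power of a central generator dies in $\TF H_1$ at every stage. For (4), the one-line proof is: in the standard RFR$p$ filtration, $b_1(G)\le 1$ forces $G_2\supseteq G'$ (as $G/G'$ has torsion-free rank $\le 1$, so its image in $\TF H_1(G,\bZ)\otimes\bZ/p$ is the whole reduction and $G_2$ is precisely the preimage of $0$, which contains $G'$); by induction, using that $G_i$ also has $b_1\le 1$ — which follows because $G_i/G_{i+1}$ cyclic forces the rank not to grow past $1$ along the filtration when it starts at $\le 1$ — one gets $G'\le G_i$ for all $i$, hence $\{1\}\ne G'\subseteq\rad_p(G)$. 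I will spell out the induction in (4) and simply cite Proposition \ref{prop:rprp props}\eqref{r2}, Proposition \ref{prop:rfrp rtfn}, and Theorem \ref{thm:circle bundle} for (1), (2), (3) respectively.
\end{proof}
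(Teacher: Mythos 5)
Your treatment of items (1)--(3) matches the paper's, which simply refers to Proposition \ref{prop:rprp props}\eqref{r2}, Proposition \ref{prop:rfrp rtfn}, and Theorem \ref{thm:circle bundle} respectively, noting that only item (4) remains to be established.

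Your argument for item (4) has a genuine gap. The induction rests on the claim that $b_1(G_i)\le 1$ for all $i$, which you justify by saying ``$G_i/G_{i+1}$ cyclic forces the rank not to grow past $1$ along the filtration.'' This is circular: in the standard RFR$p$ filtration, $G_i/G_{i+1}$ is by construction an elementary abelian $p$-group of rank exactly $b_1(G_i)$, so ``$G_i/G_{i+1}$ is cyclic'' is literally \emph{equivalent} to ``$b_1(G_i)\le 1$,'' and whether the Betti numbers of $G_2, G_3,\dots$ stay bounded by one is precisely what is in question. In general they do not: passing to a finite-index subgroup can strictly increase $b_1$, and this is exactly the phenomenon quantified by formula \eqref{eq:b1cover} in terms of torsion points on $V_1(G)$. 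Moreover, even granting the Betti-number bound, the inductive step ``$G'\subseteq G_i\Rightarrow G'\subseteq G_{i+1}$'' is not automatic: $G_{i+1}$ is defined by killing the image of $G_i$ in $\TF H_1(G_i,\bZ)\otimes\bZ/p\bZ$, which kills $G_i'$ but not a priori $G'$, since a commutator of elements of $G$ that happens to lie in $G_i$ can have infinite order in $H_1(G_i,\bZ)$ (this already happens for free groups). So the intended conclusion $G'\subseteq\rad_p(G)$ is not established by your argument.

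The paper's proof of item (4) takes a different and shorter route. A nonabelian RFR$p$ group $G$ surjects onto a nonabelian finite $p$-group $Q=G/G_i$ (choose $i$ so large that some nontrivial commutator survives). A nonabelian finite $p$-group cannot have cyclic abelianization: if $Q/Q'$ were cyclic then so would $Q/\Phi(Q)$ be, and the Burnside basis theorem would force $Q$ to be cyclic, hence abelian. From this noncyclic abelian $p$-group quotient of $G$, the paper concludes $b_1(G)\ge 2$. This argument avoids any induction along the filtration and does not require controlling $b_1(G_i)$ for $i\ge 2$ or locating the commutator subgroup inside $\rad_p(G)$. You should replace the filtration induction with this $p$-group quotient argument.
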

\begin{proof}
Only the last item has not been formally established above. If $G$ is nonabelian 
and RFR$p$ then $G$ surjects onto a nonabelian $p$-group, whose abelianization 
cannot be cyclic by elementary group cohomology considerations. Thus, $b_1(G)\geq 2$.
\end{proof}

All the statements in the following proposition have been (or will be) established 
(see Subsection \ref{subsec:orderability} above and Subsection \ref{subsec:tits alt} below).

\begin{prop}
\label{prop:many primes}
The following classes of finitely generated groups fail to be RFR$p$ for 
infinitely many primes:
\begin{enumerate}
\item
Groups which are not bi-orderable.
\item
Groups which are not large.
\end{enumerate}
\end{prop}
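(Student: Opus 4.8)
The plan is to dispatch both items by invoking results already stated in the excerpt, so the ``proof'' is essentially a pair of contrapositive bookkeeping remarks. For item (1): suppose $G$ is a finitely generated group which is RFR$p$ for infinitely many primes $p$. Then Corollary \ref{cor:biorder} applies verbatim and tells us $G$ is bi-orderable. Hence a group which is \emph{not} bi-orderable can be RFR$p$ for at most finitely many primes; equivalently, it fails to be RFR$p$ for infinitely many primes. This is exactly the assertion of item (1), so no further work is needed beyond citing Corollary \ref{cor:biorder}.

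For item (2): here I would appeal to the largeness statement, which is item \eqref{res4} of Theorem \ref{t:rfrpsummary} (proved as Theorem \ref{t:rfrplarge}, to be established in Subsection \ref{subsec:tits alt}). That result says: if $G$ is finitely presented, nonabelian, and RFR$p$ for infinitely many primes, then $G$ is large. Taking the contrapositive, a finitely presented nonabelian group that is not large cannot be RFR$p$ for infinitely many primes. I should note that largeness is only meaningful for nonabelian groups, and a finitely generated abelian group is trivially not large but is RFR$p$ for all primes; so strictly speaking the statement of item (2) should be read with the standing hypotheses of Theorem \ref{t:rfrpsummary} in force (finitely presented, nonabelian), or at least with ``not large'' understood to presuppose nonabelianness. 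I would phrase the one-line proof so as to point at Theorem \ref{t:rfrplarge} and absorb these hypotheses.

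The only real subtlety — and the step I would flag — is the logical dependency: item (2) relies on Theorem \ref{t:rfrplarge}, which in turn (per the remark after Theorem \ref{thm:geometric} and the phrasing in Subsection \ref{subsec:tits alt}) is proved later in the paper via a Tits-alternative argument. So the proof of Proposition \ref{prop:many primes} is not self-contained at this point in the exposition; it is a forward reference. This is harmless — the excerpt's own preamble to the proposition already says ``All the statements in the following proposition have been (or will be) established'' — but in writing the proof I would be explicit that item (2) is simply the contrapositive of the largeness theorem and point the reader to Subsection \ref{subsec:tits alt}. There is no computation to grind through; the entire content is the two citations.

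\begin{proof}
Item (1) is the contrapositive of Corollary \ref{cor:biorder}: if $G$ were RFR$p$ for infinitely many primes, then $G$ would be bi-orderable. Item (2) is the contrapositive of the largeness statement, Theorem \ref{t:rfrplarge} (item \eqref{res4} of Theorem \ref{t:rfrpsummary}); under the standing hypotheses there (finitely presented and nonabelian), a group that is RFR$p$ for infinitely many primes is large, so a non-large such group cannot have this property. See Subsection \ref{subsec:tits alt} for the relevant argument.
\end{proof}
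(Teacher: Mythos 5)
Your proposal is correct and matches the paper's approach exactly: the paper gives no explicit proof of Proposition \ref{prop:many primes}, merely pointing to Subsection \ref{subsec:orderability} (for Corollary \ref{cor:biorder}) and Subsection \ref{subsec:tits alt} (for Theorem \ref{t:rfrplarge}), which is precisely the pair of contrapositives you invoke. Your caveat on item (2) — that the conclusion only holds under the standing hypotheses of Theorem \ref{t:rfrplarge}, namely finite presentation and nonabelianness, since for instance $\Z$ is finitely generated, not large, yet RFR$p$ for all primes — is a genuine, if minor, imprecision in the paper's statement and worth flagging.
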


Since bi-orderable groups are somewhat rare in nature, the first item of 
Proposition \ref{prop:many primes} really does suggest that RFR$p$ is 
a fine property for groups to enjoy. For instance, many virtually special 
groups arising in $3$-manifold topology fail to be bi-orderable. Thus, 
whereas a graph manifold may be virtually special, knowing its fundamental 
group is RFR$p$ for all primes is a significantly different bit of data, 
which gives further strength to the results above, such as 
Theorem \ref{thm:graph manifold}.

\section{Alexander varieties, BNS invariant, and largeness}
\label{sec:cvs}

We start this section by reviewing some background on the 
homology jump loci and the Alexander varieties of 
spaces and groups, following \cite{Hi97, PS10, Su-imrn}.

\subsection{Jump loci and Alexander invariants}
\label{subsec:jump}

Let $G$ be a finitely-generated group, and let $X$ be a connected  
CW-complex with finite $1$-skeleton such that $\pi_1(X)\cong G$.  
The \emph{characteristic varieties}\/ $V_i(X)$ are the jumping loci for 
the (degree $1$) cohomology groups of $X$ with coefficients in 
rank $1$ local systems.  We write $\yh{G}$ for the group of complex 
characters of $G$.  Let $\chi\colon G\to \bC^*$ be a character of $G$ 
and let $H^1(X,\bC_{\chi})$ be the twisted cohomology 
module corresponding to $\chi$.  For each $i\ge 0$, put 
\begin{equation}
\label{eq:vix}
V_i(X)=\{\chi \in  \yh{G}\mid\dim H^1(X,\bC_{\chi})\geq i\}.
\end{equation}

It is readily seen that each of these sets is a Zariski closed subset of 
the character group; moreover, $V_i(X)\supseteq V_{i+1}(X)$ for all $i$, 
and $V_i(X)=\emptyset$ for $i\gg 0$.  
Clearly, each of these sets depends only on the fundamental 
group of $X$, so we may define $V_i(G):=V_i(X)$. 
If $\phi\colon G_1 \to G_2$ is a surjective homomorphism, 
we obtain an injective morphism $\hat\phi\colon \yh{G}_2 \to \yh{G}_1$ 
by precomposition. It is readily verified that the map $\hat\phi$ takes $V_i(G_2)$ to 
$V_i(G_1)$.

By definition, the trivial representation $\yh{1}\in \yh{G}$ belongs to $V_i(G)$ 
if an only if $b_1(G)\ge i$.  Away from $\yh{1}$, the sets $V_i(G)$ 
coincide with the {\em Alexander varieties}\/ of $G$.  

To define these varieties, 
first consider the derived series of $G$, defined inductively by setting $G'=[G,G]$, 
$G^2=G''=[G',G']$, and $G^k=[G^{k-1},G^{k-1}]$ for $k\ge 3$.  The quotient 
$G/G^k$ is the \emph{universal $k$-step solvable quotient}\/ of $G$.  

Next, let $B(G)=G'/G''$ be the {\em Alexander invariant}\/ of $G$, viewed as a 
module over $\Z[G/G']$ via the conjugation action of $G/G'$ on 
$G'/G''$.   Note that $\C[G/G']$ is the coordinate ring of the character 
group $\yh{G}$.  We then let the $i$-th Alexander variety of $G$ be the 
support locus of the $i$-th exterior power of the complexified Alexander 
invariant, that is, 
\begin{equation}
\label{eq:supp}
W_i(G)=V\bigg({\rm ann} \Big( \bigwedge ^i B(G) \otimes \C \Big)\bigg).  
\end{equation}
As shown in \cite{Hi97}, the following equality holds, for each $i\ge 1$:
\begin{equation}
\label{eq:vw}
V_i(G) \setminus \yh{1} = W_i(G)  \setminus \yh{1}. 
\end{equation}

This description makes it apparent that the characteristic varieties 
$V_i(G)$ only depend on $G/G''$, the maximal metabelian quotient 
of $G$.  More precisely, we have the following lemma.

\begin{lemma}
\label{lem:metacv}
For any finitely generated group $G$, the projection map $\pi\colon G\to G/G''$ 
induces an isomorphism $\hat\pi\colon \yh{G/G''} \to \yh{G}$ which restricts to 
isomorphisms  $V_i(G/G'')\to V_i(G)$ for all $i\ge 1$.
\end{lemma}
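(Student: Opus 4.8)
The plan is to observe that the two assertions of the lemma---that $\hat\pi$ is an isomorphism of character groups, and that it restricts to isomorphisms of the jump loci---follow from different, but equally elementary, facts. First I would handle the statement about characters. A character $\chi\colon G\to\bC^*$ is a homomorphism to an abelian group, hence factors through $G/G'$; since $G''=[G',G']\subseteq G'$, it certainly factors through $G/G''$ as well. Conversely any character of $G/G''$ pulls back along $\pi\colon G\to G/G''$ to a character of $G$. These two operations are mutually inverse, so $\hat\pi\colon \yh{G/G''}\to\yh{G}$ is a bijection; it is visibly a morphism of affine algebraic groups (indeed both character groups are identified with $\yh{G/G'}$), so it is an isomorphism of algebraic groups.

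Next I would treat the jump loci. The cleanest route is to invoke the identification \eqref{eq:vw} of $V_i(G)\setminus\yh{1}$ with the Alexander variety $W_i(G)\setminus\yh{1}$, which by \eqref{eq:supp} is the support of $\bigwedge^i B(G)\otimes\bC$ as a module over $\C[G/G']$. The point is that the Alexander invariant $B(G)=G'/G''$ and its $\Z[G/G']$-module structure depend only on the metabelian quotient $G/G''$: writing $\bar G=G/G''$, we have $\bar G'=G'/G''=B(G)$ and $\bar G''=1$, so $B(\bar G)=\bar G'/\bar G''=G'/G''=B(G)$, and the conjugation action of $\bar G/\bar G'\cong G/G'$ matches that of $G/G'$ on $G'/G''$ under the canonical identifications. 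Hence $W_i(G)=W_i(G/G'')$ as subvarieties of $\yh{G}\cong\yh{G/G''}$ for every $i\ge 1$, and therefore $V_i(G)\setminus\yh{1}$ and $V_i(G/G'')\setminus\yh{1}$ correspond under $\hat\pi$. Finally, since $H_1(G,\Z)=G/G'=\bar G/\bar G'=H_1(G/G'',\Z)$, we have $b_1(G)=b_1(G/G'')$, so the trivial character $\yh{1}$ lies in $V_i(G)$ if and only if it lies in $V_i(G/G'')$; this takes care of the one point excluded by \eqref{eq:vw}, and we conclude that $\hat\pi$ restricts to an isomorphism $V_i(G/G'')\to V_i(G)$ for all $i\ge 1$.

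I do not anticipate a genuine obstacle here: the argument is a matter of chasing definitions. The only point requiring a modicum of care is the functoriality---checking that $\hat\pi$, which a priori is only asserted to intertwine the $V_i$ by the general remark about surjections in Subsection \ref{subsec:jump}, in fact restricts to a bijection and not merely an inclusion; this is exactly where one uses that the Alexander invariant is literally unchanged (not just surjected upon) when passing from $G$ to $G/G''$, together with the already-established bijectivity of $\hat\pi$ on the ambient character groups. One should also note in passing that the coordinate ring identification $\C[G/G']=\C[(G/G'')/(G/G'')']$ is what makes ``support locus'' an unambiguous notion on either side.
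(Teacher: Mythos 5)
Your proof is correct and follows essentially the same route as the paper: identify the character groups via the isomorphism on abelianizations, observe that $B(G)\cong B(G/G'')$ because $(G/G'')'=G'/G''$ and $(G/G'')''=1$, and then invoke \eqref{eq:supp} and \eqref{eq:vw}. Your explicit handling of the trivial character via $b_1(G)=b_1(G/G'')$ is a small point the paper's proof leaves implicit, but the argument is otherwise identical in substance.
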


\begin{proof}
Clearly, the map $\pi$ induces an isomorphism on abelianizations, and thus 
an isomorphism between the respective character group. 

Now note that $(G/G'')'=G'/G''$ and $(G/G'')''$ is trivial; thus, the map $\pi$ 
also induces an isomorphism $B(G)\to B(G/G'')$.  Applying formulas 
\eqref{eq:supp} and \eqref{eq:vw} proves the remaining claim. 
\end{proof}

\subsection{Non-finitely presented metabelian quotients}
\label{subsec:afp metabelian}

Once again, let $X$ be a connected CW-complex with finite $1$-skeleton, 
and set $\pi_1(X)=G$.  
If $A$ is a finite abelian group and $\phi\colon G\to A$ is a surjective 
homomorphism, we obtain a finite cover $X_A\to X$ induced by $\phi$.  
The complex Betti number $b_1(X_A)$ is related to $b_1(X)$ and the varieties 
$V_i(X)$ by the following well-known formula: 
\begin{equation}
\label{eq:b1cover}
b_1(X_A)=b_1(X)+\sum_{i=1}^k \abs{\yh{\phi}(\yh{A}\setminus \yh{1})\cap V_i(X)},
\end{equation}
where here $V_i(X)=\emptyset$ for $i>k$.

The following result relates torsion points on the Alexander variety to largeness 
for finitely presented groups:

\begin{thm}[See \cite{Ko14}]
\label{t:KoThesis}
Let $G$ be a finitely presented group.  The group $G$ is large if and only if 
there exists a finite index subgroup $H<G$ such that $V_1(H)$ has infinitely 
many torsion points.
\end{thm}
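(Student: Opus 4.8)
The plan is to prove the two implications separately, with the forward direction being routine and the converse carrying all the weight.

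For the forward direction, suppose $G$ is large, so that there is a finite-index subgroup $H<G$ and an epimorphism $\phi\colon H\twoheadrightarrow F_2$ onto a nonabelian free group (any such surjection onto a larger free group composes with a retraction to $F_2$). As recorded in Subsection~\ref{subsec:jump}, the surjection $\phi$ induces an injective morphism $\widehat\phi\colon\widehat{F_2}\hookrightarrow\widehat H$ carrying $V_i(F_2)$ into $V_i(H)$. For a free group of rank $\ge 2$ one has $\dim H^1(F_2,\C_\chi)=1$ for every character $\chi\neq 1$ (by an Euler characteristic count), and $\widehat 1\in V_1(F_2)$ since $b_1(F_2)\ge 1$; hence $V_1(F_2)=\widehat{F_2}\cong(\C^*)^2$, which contains infinitely many torsion points. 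Their images under $\widehat\phi$ are then infinitely many torsion points of $\widehat H$ lying in $V_1(H)$, as required.

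For the converse, suppose $H<G$ has finite index and $V_1(H)$ contains infinitely many torsion points; it suffices to prove $H$ is large, since largeness of a finite-index subgroup passes to the ambient group. Since $V_1(H)$ is a Zariski-closed subset of the character torus $\widehat H$ and is infinite, it has a component of positive dimension. The key first step is to invoke Laurent's theorem (the Manin--Mumford statement for algebraic tori): the Zariski closure of the set of torsion points of $V_1(H)$ is a finite union of torsion cosets of subtori, and since it is infinite, one such coset $\rho\cdot T$, with $\rho$ a torsion character of some order $m$ and $\dim T\ge 1$, lies in $V_1(H)$. Choosing a primitive integral character $\alpha\colon H\twoheadrightarrow\Z$ whose dual one-parameter subgroup $\widehat\alpha(\C^*)$ is contained in $T$ gives a translated subtorus $\rho\cdot\widehat\alpha(\C^*)\subseteq V_1(H)$. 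Now pass to $H_1=\ker(\rho\colon H\to\C^*)$, of index $m$ in $H$. Restriction of characters $\widehat H\to\widehat{H_1}$ carries $V_i(H)$ into $V_i(H_1)$ (the transfer shows restriction is injective on twisted $H^1$ in the finite-index case), and since $\rho$ restricts trivially to $H_1$, the image of $\rho\cdot\widehat\alpha(\C^*)$ is a positive-dimensional subtorus of $\widehat{H_1}$ through the identity, lying in $V_1(H_1)$. Replacing $\alpha|_{H_1}$ by its primitive part $\beta\colon H_1\twoheadrightarrow\Z$, this says precisely that the one-variable Alexander polynomial $\Delta^\beta_{H_1}$ vanishes identically --- equivalently, via \eqref{eq:vw} and \eqref{eq:supp}, that the Alexander invariant $B(H_1)$ has positive $\Q[t^{\pm 1}]$-rank, equivalently (applying \eqref{eq:b1cover} to the cyclic covers of $H_1$ dual to $\beta$) that $b_1$ of the $N$-fold cyclic cover of $H_1$ grows at least linearly in $N$.

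The final, and main, step is to deduce largeness of $H_1$ from the identical vanishing of $\Delta^\beta_{H_1}$, for the finitely presented group $H_1$; this is the crux, and the place where finite presentation is indispensable, since it forces $B(H_1)$ to be finitely generated over $\Q[t^{\pm 1}]$, so that positive rank yields an honest free summand and hence, for every $N$, a free $\Q[\Z/N\Z]$-summand in the rational first homology of the $N$-fold cyclic cover. I would carry out this step following \cite{Ko14}, using the linearly growing tower of cyclic covers --- all controlled by the single homomorphism $\beta$ --- to build epimorphisms onto nonabelian free groups. I expect this to be the main obstacle: as the example $\Z^n$ (whose $V_1$ is a single point, hence has only finitely many torsion points) shows, merely producing finite-index subgroups with unbounded first Betti number does not suffice to force largeness; it is exactly the positive-dimensional torsion-coset structure extracted via Laurent's theorem, which endows all the relevant covers with a common $\Z$-direction, that makes the argument run. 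Once $H_1$ is known to be large, so is $G$.
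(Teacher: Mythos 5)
The paper does not prove this theorem; it is imported verbatim from Koberda's earlier paper \cite{Ko14}, so there is no in-text argument to compare against, and your sketch has to be judged on its own merits.

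Your forward direction is correct and is the standard one: an Euler-characteristic count gives $V_1(F_n)=(\C^*)^n$ for $n\ge 2$, and functoriality of $V_1$ under the epimorphism $H\surj F_2$ does the rest. In the converse direction, the reduction via Laurent's theorem (Manin--Mumford for tori) to a positive-dimensional torsion coset $\rho\cdot T\subset V_1(H)$, and then to a genuine subtorus $\widehat\alpha(\C^*)\subset V_1(H_1)$ through $\widehat 1$ after passing to $H_1=\ker\rho$, is exactly the right move, and your justification that restriction carries $V_1(H)$ into $V_1(H_1)$ (transfer in characteristic zero) is fine. This matches the strategy of \cite{Ko14}.

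The genuine gap is the final step, which you yourself flag and defer with ``I would carry out this step following \cite{Ko14}.'' That sentence is not an argument. What is needed, concretely: finite presentation of $H_1$ makes $B(H_1)\otimes\Q$ a finitely generated module over the PID $\Q[t^{\pm 1}]$, and the identical vanishing of the Alexander polynomial along $\beta$ gives it a free $\Q[t^{\pm 1}]$-summand, hence $b_1$ of the $\Z/N$-cover $H_{1,N}=\beta^{-1}(N\Z)$ grows linearly in $N$ (this follows from \eqref{eq:b1cover} applied to the $N$-torsion points on $\widehat\beta(\C^*)$). To get largeness one must now (i) pass from rational to mod-$p$ Betti numbers, which is free since $\dim_{\F_p}H_1(\cdot,\F_p)\ge b_1$, (ii) restrict to $N=p^i$ so that the quotients $H_1/H_{1,p^i}$ are abelian $p$-groups, and (iii) invoke Lackenby's largeness criterion for finitely presented groups (linear growth of $\F_p$-homology along a nested chain of normal subgroups with abelian $p$-group quotients). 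Without step (iii) named and applied, the deduction does not close; it is precisely the finitely-presented input, and it is the content of the theorem you are trying to prove. Finally, the illustrative remark about $\Z^n$ is garbled: every finite-index subgroup of $\Z^n$ has $b_1=n$, so it is not an example of unbounded Betti growth without largeness; the correct contrast is that scattered torsion points in $V_1$ (no positive-dimensional coset) produce covers with growing $b_1$ but no common $\Z$-direction, and then no Lackenby chain.
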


The finite presentation assumption in the above theorem is essential.  
For instance, let $F_n$ be a free group of rank $n\ge 2$.  It is readily 
verified that $V_1(F_n)=(\C^*)^n$.  Thus, by Lemma \ref{lem:metacv}, we also have 
that $V_1(F_n/F_n'')=(\C^*)^n$. In particular, the variety $V_1(F_n/F_n'')$ 
has infinitely many torsion points, though the group $F_n/F_n''$ is 
solvable, and thus not large.

As an application, we can prove the following corollary, which recovers 
and generalizes the main result of Baumslag and Strebel \cite{BS76}.

\begin{cor}
\label{c:rfrpfg}
Let $G$ be a finitely generated group which is nonabelian and RFR$p$ 
for infinitely many primes $p$.  Then the universal metabelian quotient 
$G/G''$ is not finitely presented.  In particular, $G'$ is not finitely generated.
\end{cor}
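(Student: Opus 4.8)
The second statement follows at once from the first: were $G'$ finitely generated, $G/G''$ would be an extension of the finitely generated abelian group $G/G'$ by the finitely generated abelian group $G'/G''$, hence polycyclic and therefore finitely presented. So I would concentrate on proving that $Q:=G/G''$ is not finitely presented, arguing by contradiction and assuming it is.

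The idea is to play Theorem \ref{t:KoThesis} against the RFR$p$ hypothesis. Since $Q$ is metabelian it is solvable, hence not large; being also finitely presented, Theorem \ref{t:KoThesis} shows that $V_1(H)$ has only finitely many torsion points for every finite-index subgroup $H\le Q$. Now if $N\le G$ has finite index and $N\supseteq G'$, then $\bar N:=N/G''$ is a finite-index subgroup of $Q$, and from $N\supseteq G'$ one gets $G''\subseteq[N,N]$ and $N^{(3)}\subseteq G''$, so $\bar N$ is metabelian; thus $V_1(\bar N)$ is its Alexander variety. Taking $N=G$ and using Lemma \ref{lem:metacv}, in particular $V_1(G)=V_1(Q)$ has only finitely many torsion points. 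Against this, I would produce infinitely many. Fix a prime $p$ with $G$ RFR$p$ and let $G=G_1\supseteq G_2\supseteq\cdots$ be the standard RFR$p$ filtration. Two observations drive the argument. First, every $G_m$ is nonabelian: otherwise $G$ would be virtually abelian, and an inspection of the kind in the proof of Proposition \ref{p:gp} shows a nonabelian virtually abelian group is RFR$q$ for at most one prime $q$, contradicting that $G$ — hence its finite-index subgroup $G_m$ — is RFR$q$ for infinitely many $q$. Second, since each $G_m$ is nonabelian, $\ker(G_m\to\TF H_1(G_m,\bZ))$ is nontrivial (otherwise $G_m$ embeds in a free abelian group); picking $1\ne g$ in this kernel, there is an index $j>m$ with $g\in G_j\setminus G_{j+1}$, and condition~(4) in the definition of RFR$p$ then forces $g$ to have infinite order in $H_1(G_j,\bQ)$ although it is torsion in $H_1(G_m,\bQ)$, so the inclusion-induced surjection $H_1(G_j,\bQ)\to H_1(G_m,\bQ)$ has a nontrivial kernel and $b_1(G_j)>b_1(G_m)$. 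Iterating, $b_1$ strictly increases infinitely often along the $p$-tower, so there are consecutive jumps $b_1(G_{k+1})>b_1(G_k)$; since $G_k/G_{k+1}$ is elementary abelian of rank $b_1(G_k)$, formula \eqref{eq:b1cover} then places a character of order $p$ on some $V_i(G_k)\subseteq V_1(G_k)$. As $G_2=\ker(G\to\TF H_1(G,\bZ)\otimes\bZ/p\bZ)$ contains $G'$, pushing such a point down to $\bar{G_2}=G_2/G''\le Q$ — or keeping it on $V_1(G)=V_1(Q)$ when the first jump already occurs at the top level — exhibits a torsion point on the $V_1$ of a finite-index subgroup of $Q$. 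Letting $p$ range over the infinitely many primes for which $G$ is RFR$p$, and noting that only boundedly many candidate subgroups arise, pigeonholes infinitely many torsion points onto a single such $V_1$, contradicting the previous paragraph.

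The step I expect to be the real obstacle is the descent to $Q$. The torsion points are manufactured on $V_1(G_k)$ for terms $G_k$ of the RFR$p$-tower, but $G_k$ for $k\ge3$ need not contain $G'$, and passing to a quotient only yields an inclusion of characteristic varieties running the wrong way, so one cannot simply divide by $G''$. I would handle this by relocating every $b_1$-jump to a level whose group contains $G'$ — the jump at level $j$ is witnessed by a character of $G_j/G_{j+1}$ factoring through $\TF H_1(G_j,\bZ)$, hence killing $[G_j,G_j]$ — and then verifying that this character survives in the support of the Alexander invariant of the resulting finite-index subgroup of $Q$; alternatively one does the entire homological bookkeeping directly with the module $B(Q)=Q'$ over $\bZ[Q^{\ab}]$ and its restrictions to finite-index subgroups. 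The auxiliary fact that every term of such an RFR$p$-tower stays nonabelian also requires care; beyond these two points, the argument is just an assembly of Lemma \ref{lem:metacv}, formula \eqref{eq:b1cover}, and Theorem \ref{t:KoThesis}.
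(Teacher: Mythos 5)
Your overall strategy is the paper's: produce infinitely many torsion points on the first characteristic variety of a finite-index subgroup of $G/G''$, then get a contradiction from Theorem~\ref{t:KoThesis} and solvability. The passage from ``$G/G''$ not finitely presented'' to ``$G'$ not finitely generated'' via polycyclicity is also the intended one. But the implementation diverges in a way that creates the very obstacle you then struggle to close. The paper (in Lemma~\ref{lem:tpoints}) does not run down the standard RFR$p$ tower $\{G_k\}$; instead, for each prime $p$ it uses the nested sequence of \emph{abelian} congruence covers
\[
K_{p,n+1}=\ker\bigl\{G\to \bigl(\TF H_1(G,\bZ)\bigr)\otimes\bZ/p^n\bZ\bigr\},
\]
and shows $b_1(K_{p,n})>b_1(G)$ for $n\gg0$: otherwise the $K_{p,n}$ would themselves form an RFR$p$ filtration with $G'<\bigcap_n K_{p,n}=\{1\}$, forcing $G$ abelian. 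Since each $K_{p,n}$ is the kernel of an abelian quotient of $G$, formula~\eqref{eq:b1cover} applied to the cover $G\supset K_{p,n}$ places a $p$-power torsion point \emph{on $V_1(G)$ itself}, which equals $V_1(G/G'')$ by Lemma~\ref{lem:metacv}. Letting $p$ range over the infinitely many primes and noting that $p$- and $p'$-power torsion points are distinct finishes the argument. There is no descent to perform.

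By contrast, your argument locates the $b_1$-jump somewhere in the tower $G=G_1>G_2>\cdots$, so the torsion point is produced on $V_1(G_k)$ for a $k$ you do not control; as you rightly observe, $G_k$ need not contain $G'$, and passing to $G_k/G''$ moves characteristic varieties the wrong way. The repair you sketch (``relocate the jump'') is not carried out, and the later pigeonhole step is also not justified: the tower $\{G_k\}$ depends on $p$, so as $p$ varies you do \emph{not} get boundedly many candidate subgroups, and the torsion points you construct live on infinitely many different varieties $V_1(G_k)$ rather than a single one. Finally, the auxiliary claim that every $G_m$ is nonabelian is justified by asserting that a nonabelian virtually abelian group is RFR$q$ for at most one prime, quoting Proposition~\ref{p:gp} as precedent; but that proposition exhibits a single example and proves no such general statement, and the paper's proof of Lemma~\ref{lem:tpoints} sidesteps the issue entirely. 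The fix for all of these problems is the same: work with the abelian covers $K_{p,n}$ of $G$, not with the full RFR$p$ tower, so that everything lands on $V_1(G)=V_1(G/G'')$ from the start.
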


\begin{proof}
Since $G$ is RFR$p$ for infinitely many primes and not 
abelian, we have that $V_1(G)$ contains infinitely many torsion points, as will follow from Lemma~\ref{lem:tpoints} below.  
By Lemma \ref{lem:metacv}, we have that $V_1(G/G'')$ also contains 
infinitely many torsion points.

Now suppose $G/G''$ is finitely presented.  Then Theorem \ref{t:KoThesis} 
implies that $G/G''$ is large.  However, $G/G''$ is solvable, and this is 
a contradiction.
\end{proof}

A related result (based on the above proof) is given in  
Proposition 4.9 from \cite{PS18}. In fact, the same proof as in 
Corollary \ref{c:rfrpfg} works for all universal solvable quotients:

\begin{cor}
\label{c:rfrpfg-k}
Let $G$ be a finitely generated group which is nonabelian 
and RFR$p$ for infinitely many primes $p$.  Then the universal 
$k$-step solvable quotient $G/G^k$ is not finitely presented, 
for any $k\ge 2$. 
\end{cor}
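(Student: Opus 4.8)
The plan is to mimic the proof of Corollary \ref{c:rfrpfg} almost verbatim, replacing the metabelian quotient $G/G''$ by the $k$-step solvable quotient $G/G^k$, and supplying the one extra ingredient that is needed to rerun the argument at each solvability level: the characteristic varieties only depend on the metabelian quotient, so the hypothesis on torsion points transports up the solvable tower.

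First I would record that since $G$ is nonabelian and RFR$p$ for infinitely many primes $p$, the variety $V_1(G)$ contains infinitely many torsion points. Indeed, for each such $p$, Corollary \ref{cor:retract}-style reasoning (or simply formula \eqref{eq:b1cover}) applied to the map onto a nonabelian $p$-quotient forces $V_1(G)$ to meet the $p$-torsion of $\yh{G}$ in a point other than $\yh{1}$; letting $p$ range over infinitely many primes yields infinitely many distinct torsion characters in $V_1(G)$. (This is exactly the step already used in the proof of Corollary \ref{c:rfrpfg}, so I would just cite it.)

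Next, fix $k\ge 2$ and suppose for contradiction that $G/G^k$ is finitely presented. The key observation is that $V_1(G/G^k)=V_1(G)$: the group $(G/G^k)''$ is $G''/G^k$, so the maximal metabelian quotient of $G/G^k$ coincides with $G/G''$, and hence by Lemma \ref{lem:metacv} applied to both $G$ and $G/G^k$ we get $\hat\pi$-compatible isomorphisms $V_i(G/G^k)\cong V_i(G/G'')\cong V_i(G)$ for all $i\ge 1$. In particular $V_1(G/G^k)$ contains infinitely many torsion points. Since $G/G^k$ is finitely presented, Theorem \ref{t:KoThesis} (applied with $H=G/G^k$ itself) then forces $G/G^k$ to be large. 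But $G/G^k$ is $k$-step solvable, hence solvable, hence not large — a solvable group cannot virtually surject onto a nonabelian free group. This contradiction shows $G/G^k$ is not finitely presented, for every $k\ge 2$.

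The argument is essentially routine given the machinery already assembled; the only point that requires a moment's care — and the step I expect to be the mild obstacle — is verifying cleanly that passing to $G/G^k$ does not change $V_1$, i.e., that the characteristic variety really is insensitive to the solvability level above metabelian. This is immediate from the identification $(G/G^k)/(G/G^k)'' = G/G''$ together with Lemma \ref{lem:metacv}, so I would spell out that one line and then the contradiction closes the proof.
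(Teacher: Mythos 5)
Your proof is correct and fills in exactly what the paper means by its one-line remark that ``the same proof works for all universal solvable quotients'': you observe $(G/G^k)''=G''/G^k$, so that Lemma \ref{lem:metacv} applied to both $G$ and $G/G^k$ gives $V_1(G/G^k)\cong V_1(G/G'')\cong V_1(G)$, and the contradiction via Theorem \ref{t:KoThesis} together with solvability of $G/G^k$ then goes through verbatim. One tiny citation slip: the torsion-point fact about $V_1(G)$ is Lemma \ref{lem:tpoints} rather than Corollary \ref{cor:retract}, but your parenthetical appeal to formula \eqref{eq:b1cover} shows you are pointing at the right argument.
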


\subsection{A Tits Alternative for RFR$p$ groups}
\label{subsec:tits alt}
We now connect the RFR$p$ property of a group $G$ to the aforementioned 
arithmetic property of $V_1(G)$. 
 
\begin{lemma}
\label{lem:tpoints}
Let $G$ be a non-abelian, finitely generated group which is RFR$p$ for infinitely 
many primes.  Then $V_1(G)$ contains infinitely many torsion points.  
\end{lemma}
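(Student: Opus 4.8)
The plan is to reduce to the metabelian case and then, for each of the infinitely many primes $p$ at which $G$ is RFR$p$, to manufacture a \emph{nontrivial} character of $G$ of $p$-power order lying in $V_1(G)$; since distinct primes then yield torsion characters of pairwise distinct orders, $V_1(G)$ acquires infinitely many torsion points.

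First I would replace $G$ by its maximal metabelian quotient $G/G''$. By Lemma \ref{lem:metacv} this does not change $V_1$, and $G/G''$ still satisfies all of the hypotheses: it is finitely generated; it is nonabelian, since $G'=G''$ together with residual nilpotence of $G$ (Proposition \ref{prop:rprp props}) would force $G'=1$; and for each prime $p$ at which $G$ is RFR$p$ the projected filtration $\{G_i^{(p)}/G''\}$ still satisfies conditions (1)--(4) of the definition of RFR$p$ — here one uses that $G''\subseteq\gamma_2(G_i^{(p)})$ for every $i$, so that passing to $G/G''$ affects neither the triviality of the intersection nor the rational abelianizations occurring in condition (4). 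So assume henceforth that $G$ is metabelian, write $Q=G/G'$, and let $B=G'$ be the Alexander invariant, regarded after complexification as a finitely generated $\bC[Q]$-module; recall $V_1(G)\setminus\{\yh{1}\}=\operatorname{Supp}_{\bC[Q]}(B\otimes\bC)\setminus\{\yh{1}\}$.

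Now fix a prime $p$ at which $G$ is RFR$p$, with standard filtration $\{G_i\}$, so $\bigcap_i G_i=\{1\}$. Since $G_2$ contains $G'$ (as $G/G_2$ is abelian) and $G'\ne\{1\}$, there is a least index $k\ge 3$ with $G'\not\subseteq G_k$; set $H=G_{k-1}$. Then $G'\subseteq H$ and $H$ is characteristic in $G$, so $\Pi:=G/H$ is a finite abelian $p$-group. By minimality of $k$, the image $\overline{G'}$ of $G'$ in $\TF H_1(H,\bZ)$ is nonzero, and it is a $\bZ[\Pi]$-submodule because $H$ acts trivially on $H_1(H,\bZ)$. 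The element $\overline{G'}\otimes\bQ$ is a nonzero submodule of $H_1(H,\bQ)$ lying in the kernel of $\operatorname{incl}_*\colon H_1(H,\bQ)\to H_1(G,\bQ)$, since $G'$ dies in the abelianization of $G$; but $\operatorname{incl}_*$ is injective on the invariants $H_1(H,\bQ)^\Pi$ by the standard transfer argument ($|\Pi|$ being invertible in $\bQ$). Hence $\overline{G'}\otimes\bC$ is not concentrated in its trivial isotypic component, so some nontrivial $\chi\in\yh{\Pi}$ occurs in it; inflating along $Q\surj\Pi$, this $\chi$ is a nontrivial character of $Q$ of $p$-power order.

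It remains to see that $\chi\in V_1(G)$. The $G$-equivariant map $G'\inj H\to\TF H_1(H,\bZ)$ factors through the $\bZ[Q_{k-1}]$-coinvariants of $G'$, where $Q_{k-1}=H/G'\le Q$ (again because $H$, hence $G'$, acts trivially on $H_1(H,\bZ)$), and it also kills the image of $\bigwedge^2 Q_{k-1}$; thus $\overline{G'}\otimes\bC$ is a $\bC[\Pi]$-module quotient of $(B\otimes\bC)\otimes_{\bC[Q]}\bC[\Pi]$. Consequently $\chi$ occurring in $\overline{G'}\otimes\bC$ forces $(B\otimes\bC)\otimes_{\bC[Q]}\bC_\chi\ne 0$, i.e. $(B\otimes\bC)/\mathfrak{m}_\chi(B\otimes\bC)\ne 0$, whence by Nakayama $\mathfrak{m}_\chi\in\operatorname{Supp}_{\bC[Q]}(B\otimes\bC)$ and $\chi\in V_1(G)$. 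Running this for the infinitely many admissible $p$ produces torsion characters $\chi_p\in V_1(G)$ whose orders are divisible by pairwise distinct primes, and the lemma follows. The step I expect to demand the most care is this last one — correctly exhibiting $\overline{G'}$ as a subquotient of the restricted Alexander module and keeping the $Q$-module and $\Pi$-module structures straight; the metabelian reduction, the transfer injectivity, and the Nakayama argument are all routine.
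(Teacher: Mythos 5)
Your core argument is sound and takes a genuinely different route from the paper, but there is a gap in the preliminary metabelian reduction that needs to be addressed — fortunately, it is both unjustified and unnecessary.

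\textbf{On the metabelian reduction.} You claim that $G/G''$ inherits the RFR$p$ property from $G$, justified by the assertion that $G''\subseteq\gamma_2\big(G_i^{(p)}\big)$ for every $i$. This inclusion is not clear (and I believe it fails in general): one certainly has $G''\subseteq [G_2,G_2]$ since $G'\subseteq G_2$, but for $i\geq 3$ the subgroup $G'$ need not lie inside $G_i$ (indeed, for $G=F_2$ one readily checks $F_2'\not\subseteq G_3$), so there is no reason for $G''$ to lie inside $[G_i,G_i]$. Even granting that inclusion, the triviality of $\bigcap_i G_i G''/G''$ in $G/G''$ would still need an argument, since $\bigcap_i G_i G''$ can strictly contain $G''$. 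The good news is that the reduction is unnecessary: your argument after that point only ever uses the \emph{image} of $G'$ in $\TF H_1(H,\bZ)$, and that map factors through $B = G'/G''$ in any case, because $G'\subseteq H$ implies $G'' = [G',G']\subseteq [H,H]$, which dies in $H_1(H,\bZ)$. So simply work with $G$ itself, replace ``$B=G'$'' by ``$B=G'/G''$'', and everything you wrote goes through verbatim.

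\textbf{Comparison with the paper's proof.} Modulo the fix above, your argument is correct and genuinely different from the paper's. The paper uses the formula
\[
b_1(X_A)=b_1(X)+\sum_{i\geq 1}\abs{\yh{\phi}(\yh{A}\setminus\yh{1})\cap V_i(X)}
\]
essentially as a black box: it works with the abelianization congruence subgroups $K_{p,n}=\ker\{G\to(\TF H_1(G,\bZ))\otimes\bZ/p^n\bZ\}$, argues by contradiction that if $b_1(K_{p,n})=b_1(G)$ for all $n$ then the standard RFR$p$ filtration equals $\{K_{p,n}\}$ and thus $G'\subseteq\bigcap_n K_{p,n}=\{1\}$, and concludes that some $b_1$ must jump, which by the formula produces a nontrivial $p$-power torsion character in $V_1(G)$. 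You instead use the standard RFR$p$ filtration $\{G_i\}$ directly, locate the first index $k$ at which $G'$ escapes $G_k$, and then extract the torsion character explicitly: transfer shows the corestriction $H_1(H,\bQ)^\Pi\to H_1(G,\bQ)$ is injective, forcing the nonzero submodule $\overline{G'}\otimes\bC$ (which lies in the kernel) to have a nontrivial isotypic component, and Nakayama applied to $(B\otimes\bC)\otimes_{\bC[Q]}\bC_\chi$ places that $\chi$ in the support of the Alexander invariant, hence in $V_1(G)$. Your route is longer but more self-contained: it essentially re-derives, in the particular case at hand, the Fox-calculus/Sakuma-type mechanism underlying the paper's black-box Betti number formula, while also producing the torsion character more explicitly. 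The final counting step (distinct primes give characters of coprime orders) is the same in both proofs.
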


\begin{proof}
Suppose $G$ is RFR$p$ for infinitely many primes $p$.  
For each prime $p$, we write 
\begin{equation}
\label{eq:kpn}
K_{p,n+1}=\ker\{G\to (\TF H_1(G,\bZ))\otimes\bZ/p^n\bZ\}.
\end{equation}

We claim that if $G$ is nonabelian and RFR$p$, then $b_1(K_{p,n})>b_1(G)$ 
for $n\gg 0$.  Indeed, otherwise we can construct a sequence of subgroups 
$\{G_i\}_{i\geq 0}$ which witness the fact that $G$ is RFR$p$, so that 
$G_1=G$ and $G_2=K_{p,2}$.  Since $b_1(K_{p,n})=b_1(G)$ for all $n$, 
an easy induction shows that $G_n=K_{p,n}$ for all $n$.  In particular, 
$\bigcap_n K_{p,n}=\{1\}$, 
which implies $G$ is abelian, since $G'<K_{p,n}$ for all $n$. This is a contradiction.

Thus, if $G$ satisfies out hypothesis, we have that $V_1(G)$ contains 
at least one $p$-torsion point for infinitely many values of $p$, 
by \eqref{eq:b1cover}. Since for primes $p\neq p'$, the $p$-torsion 
and $p'$-torsion points on $V_1(G)$ are disjoint, we have that 
$V_1(G)$ contains infinitely many torsion points.
\end{proof}

Agol asked the first author \cite{agolprivate} whether a group 
which is RFRS and not virtually abelian is \emph{large}, i.e., virtually 
surjects to a nonabelian free group.   We give the following affirmative 
partial answer to Agol's question, which contrasts sharply with the 
example described in Proposition \ref{p:gp}:

\begin{thm}
\label{t:rfrplarge}
Let $G$ be a finitely presented group which is RFR$p$ for infinitely 
many primes.  Then either:
\begin{enumerate}
\item \label{tits1}
$G$ is abelian.
\item \label{tits2}
$G$ is large.
\end{enumerate}
\end{thm}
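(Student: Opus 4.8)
The proof combines the two main ingredients already assembled in this section: Lemma~\ref{lem:tpoints}, which guarantees that a non-abelian finitely generated RFR$p$-for-infinitely-many-primes group $G$ has infinitely many torsion points on $V_1(G)$, and Theorem~\ref{t:KoThesis}, which says a \emph{finitely presented} group is large precisely when some finite index subgroup has a characteristic variety with infinitely many torsion points. So if $G$ is not abelian, the plan is to produce infinitely many torsion points on $V_1(G)$ itself (not merely on $V_1(H)$ for some finite index $H$), and then invoke Theorem~\ref{t:KoThesis} directly with $H=G$.

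First I would dispose of the abelian case: if $G$ is abelian we are in alternative~\eqref{tits1} and there is nothing to prove. So assume $G$ is non-abelian. Since $G$ is finitely generated (indeed finitely presented) and RFR$p$ for infinitely many primes, Lemma~\ref{lem:tpoints} applies verbatim and gives that $V_1(G)$ contains infinitely many torsion points. Now apply Theorem~\ref{t:KoThesis} with the finite index subgroup taken to be $G$ itself: since $G$ is finitely presented and $V_1(G)$ has infinitely many torsion points, $G$ is large. This puts us in alternative~\eqref{tits2}, and the dichotomy is established.

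**Where the work really lives.** The statement is short, but it is a genuine synthesis: essentially all the substance has been pushed into Lemma~\ref{lem:tpoints} and Theorem~\ref{t:KoThesis}. Of these, the main obstacle is Lemma~\ref{lem:tpoints} — specifically the claim that a non-abelian RFR$p$ group $G$ satisfies $b_1(K_{p,n})>b_1(G)$ for $n\gg 0$, which feeds into formula~\eqref{eq:b1cover} to produce a $p$-torsion point on $V_1(G)$. The delicate point there is the induction showing that if the first Betti number never grew then the standard RFR$p$ filtration would have to be exactly the sequence $\{K_{p,n}\}$, forcing $G'$ into every term and hence $\bigcap_n K_{p,n}\supseteq G'$, contradicting triviality of the RFR$p$ radical unless $G$ is abelian. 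In the present theorem itself, though, the only thing to be careful about is that the hypotheses of both cited results are met on the nose: finite presentation is used for Theorem~\ref{t:KoThesis}, and ``RFR$p$ for infinitely many primes, non-abelian, finitely generated'' is exactly what Lemma~\ref{lem:tpoints} requires. No passage to a proper finite index subgroup is needed, which is what makes the conclusion clean.

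**One remark on sharpness.** It is worth noting in the write-up that the finite presentation hypothesis cannot be dropped: by the discussion following Theorem~\ref{t:KoThesis}, the metabelian group $F_n/F_n''$ has $V_1=(\bC^*)^n$ with infinitely many torsion points yet is solvable and hence not large, so the bare RFR$p$-for-infinitely-many-primes-and-non-abelian hypothesis on a merely finitely generated group would not suffice to reach alternative~\eqref{tits2}. This also explains why the theorem is phrased as a dichotomy rather than the cleaner ``non-abelian implies large'': the latter is exactly Agol's question, to which this is the stated partial answer.
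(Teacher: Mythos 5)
Your argument is exactly the paper's: Lemma~\ref{lem:tpoints} supplies infinitely many torsion points on $V_1(G)$ when $G$ is non-abelian, and Theorem~\ref{t:KoThesis} (applied with $H=G$, using finite presentation) then yields largeness. The paper's proof is the one-line ``Follows at once from Theorem~\ref{t:KoThesis} and Lemma~\ref{lem:tpoints},'' so you have correctly identified and spelled out the intended synthesis.
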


\begin{proof}
Follows at once from Theorem \ref{t:KoThesis} and Lemma \ref{lem:tpoints}.
\end{proof}

We remark that in addition to largeness of groups, one can consider the class 
of \emph{very large}\/ groups, which are ones which surject to a nonabelian 
free group without passing to a finite index subgroup. All examples
of groups known to the authors which are finitely presented and RFR$p$ 
for infinitely many primes are either abelian or very large. We do not know 
if there is an example of such a group which is large but not very large.

\subsection{$\Sigma$-invariants}
\label{subsec:bns}
We now relate the RFR$p$ property to the Bieri--Neumann--Strebel invariant 
of \cite{bns}.  Once again, let $G$ be a finitely generated group.  Without 
loss of generality, we may assume that 
$G$ is generated by a finite, symmetric set $\Omega$.  We write 
$\cay(G,\Omega)$ for the Cayley graph of $G$ with respect to $\Omega$, 
and we write $S(G)$ for the unit sphere in the first real cohomology 
group of $G$: 
\begin{equation}
\label{eq:sg}
S(G)=(H^1(G,\bR)\setminus\{0\})/\{\chi\sim \lambda\cdot\chi,\,\lambda\in\bR_{>0}\}.
\end{equation} 

For $\chi\in S(G)$, we write $\cay_{\chi}(G,\Omega)$ for the subgraph consisting of 
vertices $g\in G$ such that $\chi(g)\geq 0$.  A fundamental fact about 
this graph is that its connectivity is independent of the generating 
set $\Omega$, so we may suppress $\Omega$ in our notation.

We write 
\begin{equation}
\label{eq:sigma1}
\Sigma^1(G)=\{\chi\in S(G)\mid \cay_{\chi}(G)\textrm{ is connected}\},
\end{equation}
and $E^1(G)$ for the complement of $\Sigma^1(G)$.  If $N$ is a normal subgroup 
of $G$, we write $S(G,N)$ for the real characters in $S(G)$ which vanish on $N$.  
The following result is fundamental in BNS theory:

\begin{thm}[\cite{bns}]
\label{t:bns}
Let $G$ be a finitely generated group, and let $G/N$ be an infinite abelian quotient.  
The group $N$ is finitely generated if and only if $S(G,N)\subset \Sigma^1(G)$.  
In particular, $G'$ is finitely generated if and only if $E^1(G)=\emptyset$.
\end{thm}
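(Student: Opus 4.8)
The plan is to prove the two implications of the equivalence separately and then obtain the ``in particular'' clause as the special case $N=G'$. For the easy implication --- $N$ finitely generated $\Rightarrow S(G,N)\subseteq\Sigma^1(G)$ --- fix a finite symmetric generating set $\Omega$ of $G$ containing a finite generating set $Y$ of $N$, and let $\chi\in S(G,N)$. Since $\chi$ vanishes on $N$ it is constant on each right coset $Ng$, with value $\chi(g)$, so the vertex set of $\cay_\chi(G)$ is a union of right cosets of $N$. Because $N$ is finitely generated and normal, the $Y$-labelled edges keep one inside a coset and the induced subgraph on $Ng$ is connected (a copy of $\cay(N,gYg^{-1})$). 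Collapsing each right coset to a point, an operation preserving connectivity in both directions, identifies $\cay_\chi(G)$ with the induced subgraph of $\cay(G/N,\overline\Omega)$ on the half-space $\{\overline g\in G/N:\chi(\overline g)\geq 0\}$; it then suffices to invoke the elementary fact that in a finitely generated abelian group such a half-space spans a connected subgraph of every Cayley graph (reduce modulo $\ker\chi$ to an injective functional on a lattice, then to a numerical-semigroup computation).

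The substance of the converse is the rank-one case $G/N\cong\Z$. Here $N=\ker\chi$ for the quotient map $\chi$, the extension splits as $N\rtimes\langle t\rangle$ with $\chi(t)=1$, and one may choose a finite symmetric generating set $\Omega=\{t^{\pm1}\}\cup Y$ with $Y\subseteq N$ (possible since $G$ is finitely generated; note that $Y$ need not generate $N$). From $\chi\in\Sigma^1(G)$: for each $w\in N$ take a path $1=g_0,g_1,\ldots,g_\ell=w$ in $\cay_\chi(G)$, write $g_i=c_i t^{h_i}$ with $c_i\in N$ and $h_i=\chi(g_i)\geq 0$, and follow the coordinate $c_i$ along the path; reading off the edge labels shows that $w$ is a product of conjugates $t^h y t^{-h}$ with $h\geq 0$ and $y\in Y$, so $N=\langle t^h Y t^{-h}:h\geq 0\rangle$. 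Running the same argument with $-\chi\in\Sigma^1(G)$ gives $N=\langle t^{-h}Y t^{h}:h\geq 0\rangle$. Combining the two: the finite set $tYt^{-1}\subseteq N$ lies, by the second description, in $\langle t^{-j}Y t^{j}:0\leq j\leq m\rangle$ for some $m$, and a one-line induction on $h$ propagates this to $t^h Y t^{-h}$ for all $h\geq 0$; hence $N=\langle t^{-j}y t^{j}:0\leq j\leq m,\ y\in Y\rangle$ is finitely generated.

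For the general converse, after replacing $N$ by the preimage of the torsion subgroup of $G/N$ --- a finite-index overgroup with the same $S(G,N)$, hence with the same finite-generation status --- I may assume $G/N\cong\Z^n$ and induct on $n$, the base $n=1$ being the rank-one case. For $n\geq 2$, take a coordinate projection $\chi_0\colon G\to\Z^n\to\Z$; since $\pm[\chi_0]\in S(G,N)\subseteq\Sigma^1(G)$, the rank-one case makes $G_0:=\ker\chi_0$ finitely generated, with $G_0/N\cong\Z^{n-1}$. To feed this to the inductive hypothesis I need $S(G_0,N)\subseteq\Sigma^1(G_0)$: each $\psi_0\in S(G_0,N)$ extends to some $\psi\in S(G,N)\subseteq\Sigma^1(G)$ with $\psi|_{G_0}=\psi_0\neq 0$, and the standard restriction theorem for $\Sigma^1$ (for the finitely generated normal subgroup $G_0$ with $G/G_0\cong\Z$) then gives $[\psi_0]\in\Sigma^1(G_0)$. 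Induction completes the equivalence, and the ``in particular'' clause is the case $N=G'$: every real character kills $G'$, so $S(G,G')=S(G)$ and the statement reads $G'$ finitely generated $\iff\Sigma^1(G)=S(G)\iff E^1(G)=\emptyset$. I expect the rank-one case to be the main obstacle --- turning the qualitative statement ``$\cay_\chi(G)$ is connected'' into an honest finite generating set for $N$ is where the work concentrates, and it genuinely requires both $\chi$ and $-\chi$ to lie in $\Sigma^1(G)$; the only other non-elementary ingredient, the restriction theorem for $\Sigma^1$ used in the induction, I would quote from the Bieri--Neumann--Strebel and Bieri--Strebel work rather than reprove.
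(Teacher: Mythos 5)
The paper does not prove this theorem; it simply quotes it from the Bieri--Neumann--Strebel paper \cite{bns}, so there is no internal argument to compare against. Your forward direction and your rank-one case are both correct and essentially standard, and your reduction of the torsion-free case to $G/N\cong\Z^n$ is fine. The gap is in the inductive step for $n\geq 2$, where you invoke a ``standard restriction theorem for $\Sigma^1$'' of the form: if $G_0\lhd G$ is a finitely generated normal subgroup with $G/G_0\cong\Z$ and $[\psi]\in\Sigma^1(G)$ restricts to a nonzero character $\psi_0$ of $G_0$, then $[\psi_0]\in\Sigma^1(G_0)$. That statement is false. Take $G=F_2\times\Z=\langle a,b\rangle\times\langle t\rangle$ and $G_0=F_2$. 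By the direct-product formula $\Sigma^1(G_1\times G_2)^{c}=\Sigma^1(G_1)^{c}\cup\Sigma^1(G_2)^{c}$ (viewed inside $S(G_1\times G_2)$) one has $\Sigma^1(G)^{c}=S(F_2)$, i.e.\ $\Sigma^1(G)=\{[\psi]:\psi(t)\neq 0\}$. The character $\psi$ with $\psi(a)=1$, $\psi(b)=0$, $\psi(t)=1$ therefore satisfies $[\psi]\in\Sigma^1(G)$ and $\psi|_{G_0}=\psi_0\neq 0$; but $\Sigma^1(F_2)=\emptyset$, so $[\psi_0]\notin\Sigma^1(G_0)$.

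This is not merely a citation error: the fix is not obvious. Extending $\psi_0$ to $G$ gives a one-parameter family of extensions distinguished by the value on $t$, and connectivity of $\cay_\psi(G)$ for one (even all) of them does not control connectivity of $\cay_{\psi_0}(G_0)$ unless one carefully chooses the extension and uses additional structure (openness of $\Sigma^1$, the flat subsphere $S(G,N)$, the fact that conjugation acts trivially on $G_0/N$, etc.). The original BNS proof does not go by a simple rank induction combined with a restriction theorem; it uses a finer analysis (monoids of ``$\chi$-positive'' elements, openness of $\Sigma^1$). To repair your argument you would need, at minimum, to identify and correctly state the restriction/extension result you actually need (for instance, one involving the horizontal extension with $\psi(t)=0$, where both $\psi$ and its nearby perturbations lie in $\Sigma^1(G)$), and it is not clear from what you have written that such a statement is available off the shelf in the form you require. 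As it stands, the converse direction for $n\geq 2$ is unproved.

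Two small remarks on the parts that do work: in the easy direction, the ``numerical semigroup'' fact that the nonnegative half-space of a finitely generated abelian group is connected in any Cayley graph deserves a sentence, since it is the one place the abelianness of $G/N$ is genuinely used; and in the rank-one case it is worth saying explicitly that one may choose a finite generating set of the form $\{t^{\pm 1}\}\cup Y$ with $Y\subset N$ by multiplying each generator of $G$ by the appropriate power of $t$, since that choice is what makes the ``read off the edge labels'' bookkeeping clean.
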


By analogy to a result of Beauville on the structure of K\"ahler groups, 
we have the following result:

\begin{thm}
\label{t:beauville}
Let $G$ be a finitely generated group which is RFR$p$ for infinitely 
many primes $p$.  If $E^1(G)=\emptyset$ (or, 
if $E^1(G/G^k)=\emptyset$, for some $k\ge 2$), then $G$ is abelian.
\end{thm}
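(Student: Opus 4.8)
The plan is to reduce the statement to the combination of Corollary \ref{c:rfrpfg} (or \ref{c:rfrpfg-k}) with Theorem \ref{t:bns}. First I would treat the case $E^1(G) = \emptyset$. By Theorem \ref{t:bns}, the hypothesis $E^1(G) = \emptyset$ is equivalent to the statement that the derived subgroup $G'$ is finitely generated. On the other hand, Corollary \ref{c:rfrpfg} tells us that if $G$ is nonabelian and RFR$p$ for infinitely many primes $p$, then $G'$ is \emph{not} finitely generated. These two facts are incompatible, so $G$ must be abelian. This handles the first alternative in the hypothesis with essentially no extra work.

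For the parenthetical variant, suppose instead that $E^1(G/G^k) = \emptyset$ for some $k \ge 2$. The point is that the RFR$p$ property passes to the relevant quotient in the sense needed here: if $G$ is RFR$p$ for infinitely many primes, then so is $G/G^k$, since the standard RFR$p$ filtration of $G$ descends (each term of the filtration contains $G''$, hence $G^k$, once we are past the first step — this is the same observation used in the proof of Lemma \ref{lem:tpoints}). Thus $G/G^k$ is a finitely generated group which is RFR$p$ for infinitely many primes. Now apply Theorem \ref{t:bns} to $G/G^k$: the vanishing of $E^1(G/G^k)$ forces $(G/G^k)'$ to be finitely generated. But Corollary \ref{c:rfrpfg-k} (applied with this same $k$, or via the observation that $(G/G^k)/(G/G^k)'' = G/G^{k'}$ for suitable $k'$) shows that if $G/G^k$ were nonabelian then its derived subgroup could not be finitely generated — indeed $G/G^k$ nonabelian would make the universal metabelian quotient of $G$ non-finitely-presented, contradicting finite generation of the relevant derived subgroup. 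Hence $G/G^k$ is abelian, which forces $G^k = G'$, and then an easy induction on $k$ (using $G^k = [G^{k-1}, G^{k-1}]$) collapses the derived series, giving $G' = G''$; since $G'$ is residually torsion-free polycyclic (by Theorem \ref{thm:rtfpoly}, as $G$ is RFR$p$) and hence residually solvable, a perfect normal subgroup like $G' = G''$ inside such a group must be trivial, so $G$ is abelian.

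The main obstacle I expect is not in the logical skeleton — which is just Theorem \ref{t:bns} versus Corollary \ref{c:rfrpfg} — but in making the descent of the RFR$p$ property to $G/G^k$ clean, and in the final step of the parenthetical case, namely deducing that $G$ itself (not merely $G/G^k$) is abelian from the fact that $G/G^k$ is abelian. The cleanest route for that last step is to observe that $G/G^k$ abelian means $G^k \supseteq G'$, while always $G^k \subseteq G'$, so $G' = G^k$; iterating the defining relation $G^{j+1} = [G^j, G^j]$ downward shows $G' = G^k = G^{k-1} = \cdots = G''$, i.e. $G'$ is perfect, and a perfect subgroup of a residually (torsion-free) solvable group is trivial by Theorem \ref{thm:rtfpoly}. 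I would present the $E^1(G) = \emptyset$ case first as the motivating one-line argument, then spend the bulk of the proof on the reduction of the $G/G^k$ case to it.
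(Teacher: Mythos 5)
Your argument for the case $E^1(G)=\emptyset$ is correct and matches the paper's intent: Theorem~\ref{t:bns} gives $G'$ finitely generated, Corollary~\ref{c:rfrpfg} (if $G$ is nonabelian) gives $G'$ not finitely generated, contradiction.

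The parenthetical case, however, has a genuine gap. You claim that the standard RFR$p$ filtration $\{K_i\}$ of $G$ descends to $G/G^k$ because ``each term of the filtration contains $G''$, hence $G^k$.'' This is false. From the definition one only gets $K_{i+1}\supset K_i'$, which by induction yields $K_{i+1}\supset G^{i-1}$ --- a \emph{decreasing} sequence of derived subgroups, not a fixed one. If every $K_i$ (for $i\ge 3$, say) contained $G''$, then $\bigcap_i K_i\supset G''$, and since $\bigcap_i K_i=\{1\}$ this would force $G''=\{1\}$, i.e.\ $G$ metabelian --- absurd for, e.g., $G=F_2$. The observation in the proof of Lemma~\ref{lem:tpoints} you cite concerns the subgroups $K_{p,n}\supset G'$, which is a statement about the \emph{first} derived subgroup only and does not iterate. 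So $G/G^k$ being RFR$p$ is unjustified (and likely false in general), and your subsequent application of Corollary~\ref{c:rfrpfg} to $G/G^k$ is not licensed. The detour through showing $G/G^k$ abelian and then collapsing the derived series (which is itself correct, via $G'=G^k$ being perfect and $G$ residually solvable by Theorem~\ref{thm:rtfpoly}) is therefore also unnecessary.

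The clean route, which handles both cases uniformly without any claim about $G/G^k$ being RFR$p$, is this. Suppose $G$ is nonabelian. By Corollary~\ref{c:rfrpfg}, $G/G''$ is not finitely presented; since $G^{\ab}$ is a finitely generated abelian group (hence finitely presented) and $1\to G'/G''\to G/G''\to G^{\ab}\to 1$, the abelian group $G'/G''$ cannot be finitely generated (else $G/G''$ would be an extension of finitely presented groups, hence finitely presented). Now, for $k\ge 2$ we have $G^k\subseteq G''$, so $G'/G''$ is a quotient of $(G/G^k)'=G'/G^k$; hence $(G/G^k)'$ is not finitely generated. By Theorem~\ref{t:bns} this gives $E^1(G/G^k)\ne\emptyset$. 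The case $k$ ``$=1$'', i.e.\ $E^1(G)\ne\emptyset$, follows the same way since $G'/G''$ is a quotient of $G'$. Contrapositively, either hypothesis in Theorem~\ref{t:beauville} forces $G$ abelian.
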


\begin{proof}
This follows immediately from Theorem \ref{t:rfrplarge} and Corollary \ref{c:rfrpfg}.
\end{proof}

\section{A combination theorem for RFR$p$ groups}
\label{sect:graph of groups}
In this section, we wish to give suitable hypotheses on vertex spaces, 
edge spaces, and gluing maps in a graph of spaces which guarantee 
that the resulting space has an RFR$p$ fundamental group. The hypotheses 
in Theorem \ref{thm:combination} may be difficult to verify in general, 
though we will show that within a certain natural class of graphs of 
spaces, the hypotheses are satisfied.

\subsection{Graphs of spaces}
\label{subsect:graph of spaces}
Let $\gam$ be a finite graph with vertex set $V(\gam)$ and edge set 
$E(\gam)\subset V(\gam)\times V(\gam)$. To each vertex $v\in V(\gam)$, 
we associate a connected, finite CW-complex $X_v$. Let $e=\{s,t\}\in E(\gam)$ be 
an edge. To each such edge $e$ we associate a connected, finite CW-complex 
$X_e\times [0,1]$, together with maps of CW complexes 
$\phi_{e,s}\colon X_e\times\{0\}\to X_s$ and $\phi_{e,t}\colon X_e\times\{1\}\to X_t$. 

We build the \emph{graph of spaces} $X_{\gam}$ by identifying $X_e\times\{0\}$ 
and $X_e\times\{1\}$ with their images under $\phi_{e,s}$ and $\phi_{e,t}$, 
respectively, for each edge of $\gam$. Replacing the discussion of CW-complexes 
with groups, we obtain a \emph{graph of groups}. Note that in the most general definition 
of a graph of spaces, we do not assume that $\gam$ is a simplicial graph, 
nor that the maps $\{\phi_{e,v}\}_{e\in E(\gam),v\in V(\gam)}$ induce 
injective maps on fundamental groups.

When considering graphs of groups, some care is required in choosing basepoints.
Specifically, one needs to choose identifications of vertex groups as subgroups of
the graph of groups, and of edge groups as subgroups of vertex groups. In the sequel,
we will always assume that such identifications have been made. For the most part,
any group theoretic verifications that need to be made are robust under the action of
the ambient group by conjugation.

If $Y\to X=X_{\gam}$ is a finite covering space, we will implicitly pull back 
the graph of spaces structure on $X$ to $Y$. In particular, the vertex 
spaces of $Y$ are the components of the preimages of the vertex 
spaces of $X$, and the edge spaces of $Y$ are the components 
of the preimages of the edge spaces of $X$.

Observe that a graph of spaces $X=X_{\gam}$ is equipped with a natural 
\emph{collapsing map}\/ $\kappa\colon X\to \gam$, which collapses each 
vertex space $X_v$ to a point and each thickened edge space $X_e\times [0,1]$ 
to the interval $[0,1]$. We choose an arbitrary splitting $\iota\colon \gam\to X$. 
For each vertex space $X_v$, we choose a basepoint 
$p_v\in \iota(\gam)\cap X_v$, which we identify with a basepoint for $\pi_1(X_v)$. 
For each free homotopy class of loops $\gamma\subset X$, we put $\gamma$ 
into \emph{standard form}. That is to say, $\gamma$ is allowed to trace out any 
based homotopy class of loops in $X_v$, based at $p_v$, and is allowed to 
travel between two adjacent vertex spaces along $\iota(\gam)$ only. It is clear 
that any homotopy class of loops in $X$ can be put into standard form.

Note that if $Y\to X$ is is a finite covering space, then the space $Y$ admits 
a natural collapsing map $\kappa_Y\colon Y\to \gam_Y$, and the graph 
$\gam_Y$ admits a natural map to $\gam=\gam_X$. These four maps form 
a natural commutative square, though it is important to note that 
$\gam_Y\to\gam$ is generally not a covering map.

If $\gamma\subset X$ is a homotopy class of loops, we define the 
\emph{combinatorial complexity}\/ of $\gamma$ to be the number of 
times $\gamma$ travels between two adjacent vertex spaces, minimized 
over all representatives of $\gamma$ which are in standard form. 
In other words, we count the number of times that $\gamma$ 
traverses an edge space of $X$. We write $C(\gamma)$ for 
the combinatorial complexity of $\gamma$.

If $\gamma\subset X$ is a homotopy class of loops, we define the 
\emph{backtracking number}\/ of $\gamma$ to be the number of 
times which $\gamma$ enters a vertex space $X_v$ through an 
edge space $X_e$, and then exits $X_v$ through the same edge 
space $X_e$, summed over all vertices and edge spaces. In other 
words, the backtracking number of $\gamma$ is the total number 
of times the combinatorial loop $\kappa(\gamma)$ backtracks inside 
$\gam$. If a vertex $v$ contributes to the backtracking number of 
$\gamma$, we will say that the loop $\gamma$ backtracks at the 
vertex space $X_v$. We write $B_X(\gamma)$ for the backtracking 
number of $\gamma$ in $X$.

Note that if $Y\to X$ is a covering space to which $\gamma$ lifts, then $\gamma$ 
and any of its lifts $\gamma'$ have backtracking numbers $B_X(\gamma)$ and $B_Y(\gamma')$. 
It is straightforward to see that $B_X(\gamma)\geq B_Y(\gamma')$. We may thus say 
that the backtracking number is non-increasing along covers. By convention, the 
backtracking number of a loop can only be positive if the combinatorial complexity 
of the loop is positive, so a loop of combinatorial complexity zero will have 
backtracking number zero.

\subsection{A combination theorem for RFR$p$ groups}
We now establish the main result of this section:

\begin{thm}
\label{thm:combination}
Fix a prime $p$. Let $X=X_{\gam}$ be a finite graph of connected, finite CW-complexes 
with vertex spaces $\{X_v\}_{v\in V(\gam)}$ and edge spaces $\{X_e\}_{e\in E(\gam)}$ 
satisfying the following conditions:
\begin{enumerate}
\item
For each $v\in V(\gam)$, the group $\pi_1(X_v)$ is RFR$p$.
\item
For each $v\in V(\gam)$, the RFR$p$ topology on $\pi_1(X)$ induces the 
RFR$p$ topology on $\pi_1(X_v)$ by restriction.
\item
For each $e\in E(\gam)$ and each $v\in e$, we have that the image 
\[
\phi_{e,v}(\pi_1(X_e))<\pi_1(X_v)
\] 
is closed in the RFR$p$ topology on $\pi_1(X_v)$.
\end{enumerate}
Then $\pi_1(X)$ is RFR$p$.
\end{thm}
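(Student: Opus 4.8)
The plan is to show directly that $\pi_1(X)$ is RFR$p$ by producing, for each nontrivial $g \in \pi_1(X)$, a term in the standard RFR$p$ filtration of $\pi_1(X)$ that does not contain $g$; equivalently, to show $\rad_p(\pi_1(X)) = \{1\}$. The natural measure of the "size" of $g$ to induct on is the pair consisting of the combinatorial complexity $C(\gamma)$ of a loop $\gamma$ representing $g$ and, secondarily, its backtracking number $B_X(\gamma)$. First I would handle the base case $C(\gamma) = 0$: then $g$ lies in a conjugate of some vertex group $\pi_1(X_v)$, and since $\pi_1(X_v)$ is RFR$p$ (hypothesis (1)) and the RFR$p$ topology on $\pi_1(X)$ restricts to the RFR$p$ topology on $\pi_1(X_v)$ (hypothesis (2)), some $G_i$ in the filtration of $\pi_1(X)$ already separates $g$ from the identity. (Conjugation is harmless since the $G_i$ are normal by Lemma~\ref{l:normal}.)

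For the inductive step, suppose $C(\gamma) \geq 1$. The strategy is to pass to a finite cover $Y \to X$ in the standard RFR$p$ tower of $X$ in which a lift $\gamma'$ of $\gamma$ has strictly smaller combinatorial complexity, or at least strictly smaller backtracking number — after which the inductive hypothesis applied to $Y$ (whose vertex and edge spaces again satisfy the three hypotheses, since these conditions pass to finite covers: the vertex groups are finitely generated subgroups of RFR$p$ groups, hence RFR$p$ by Theorem~\ref{t:rfrp groups}\eqref{subgroup}, and closedness of the edge group images is inherited because closed subgroups pull back to closed subgroups under the covering) finishes the argument. The key point is that if $\gamma$ backtracks at some vertex space $X_v$, entering and leaving through an edge space $X_e$ with $\phi_{e,v}(\pi_1(X_e))$ closed in $\pi_1(X_v)$ (hypothesis (3)), then the portion of $\gamma$ inside $X_v$ between those two traversals of $X_e$ represents an element of $\pi_1(X_v)$; if that element lies \emph{outside} $\phi_{e,v}(\pi_1(X_e))$, closedness plus hypothesis (2) let us find a cover where the two lifted edge-space sheets no longer match up, which increases $C$ of a lift — but then a homotopy argument shows some lift of $\gamma$ has a genuine reduction, while if it lies \emph{inside} the edge group the backtracking can be homotoped away directly, reducing $B_X(\gamma)$ without increasing $C(\gamma)$. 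Iterating, we reach a cover where either $C$ drops or $\gamma$ lifts to a loop with $B = 0$; in the latter case a loop of positive complexity and zero backtracking must, after passing to one more RFR$p$-cover induced by $H_1(-,\Z/p)$ of the graph $\gam_Y$ (whose fundamental group is free), fail to lift at all, giving the needed separation.

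The main obstacle — and the step I expect to require the most care — is the interaction between the combinatorial reduction and the homology condition \eqref{rfr4} in the definition of RFR$p$: it is not enough to separate $g$ in \emph{some} finite quotient; we must separate it in a quotient belonging to the standard RFR$p$ filtration, i.e., one all of whose successive subquotients are detected by rational first homology. So at each stage where we pass to a cover to reduce complexity or kill backtracking, we must verify that the element witnessing the "escape" (e.g., the edge-space element lying outside $\phi_{e,v}(\pi_1(X_e))$, or the non-nullhomotopic combinatorial loop in $\gam_Y$) survives into torsion-free first homology of the relevant subgroup — this is exactly where hypothesis (2), that $\pi_1(X)$ \emph{induces} the RFR$p$ topology on each $\pi_1(X_v)$, does the heavy lifting, together with the fact from Corollary~\ref{cor:retract}-style reasoning that the graph-of-spaces collapse interacts well with homology. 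Bookkeeping the double induction on $(C(\gamma), B_X(\gamma))$ so that it is genuinely well-founded — in particular ensuring that the covers we pass to do not secretly increase $C$ of \emph{every} lift — is the technical heart of the proof.
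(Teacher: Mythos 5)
Your proposal follows essentially the same route as the paper's proof: a lexicographic induction on the pair $(C(\gamma), B_X(\gamma))$, a base case handled by hypotheses (1) and (2), an appeal to the free fundamental group of the underlying graph (and Proposition~\ref{prop:radical}) to handle lifts whose collapse is non-nullhomotopic, and an inductive step that uses closedness of the edge-group images to find a cover in which a backtrack disappears. One small correction to your bookkeeping worry: the combinatorial complexity $C$ is \emph{invariant} under passing to covers (not merely bounded), and the backtracking number is non-increasing, which is exactly what makes the double induction well-founded; so the step you describe as ``increases $C$ of a lift'' should really say that $C$ is unchanged while $B$ strictly drops.
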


The reader may compare the hypotheses of Theorem \ref{thm:combination} 
to the notion of \emph{$\mathfrak{P}$-efficiency} (see \cite{AF13} and \cite{WZ10}).

Observe that we do not assume that the gluing maps of the edge spaces to 
the vertex spaces induce injections on the level of fundamental groups, 
which could at least in principle have disastrous algebraic consequences. 
However, the assumption that each vertex space has an RFR$p$ fundamental 
group and that the RFR$p$ topology on $\pi_1(X)$ induces the RFR$p$ topology 
on the fundamental group of each vertex space implies that the inclusion $X_v\to X$ 
induces an injection on the level of fundamental groups.

The condition that the image of the edge space fundamental group is closed in the 
RFR$p$ topology on the fundamental group of the vertex space may seem difficult 
to verify, though we will show in the sequel that under natural hypotheses, this condition 
is automatically satisfied.  

Before proving the result, we note that we use very few specifics about the 
RFR$p$ topology. Indeed, the proof we give below could be suitably adapted 
to prove a combination theorem for the following classes of groups:

\begin{itemize}
\item
Residually finite groups;
\item
Residually solvable groups;
\item
Residually nilpotent groups;
\item
Residually $p$ groups.
\end{itemize}

\begin{proof}[Proof of Theorem \ref{thm:combination}]
We will prove the theorem by induction on the combinatorial 
complexity and the backtracking number, $(C(\gamma),B(\gamma))$, 
ordered lexicographically. We will denote the standard RFR$p$ tower 
of $X$ by $\{X_i\}_{i\geq 1}$, and we will show that for each nontrivial 
homotopy class of closed loops $\gamma\subset X$, 
there is some $i$ such that $\gamma$ does not lift to $X_i$.

For the base case, we suppose that $\gamma$ has combinatorial complexity zero, 
so that $\gamma$ remains inside of a single vertex space $X_v$ for its entire itinerary. 
Viewing $\gamma$ as a based homotopy class of loops, we identify $\gamma$ 
with an element of $G_v=\pi_1(X_v)$. We write $\{G_{v,i}\}_{i\geq 1}$ for the 
standard RFR$p$ filtration of $G_v$, so that $\gamma\in G_{v,i}\setminus G_{v,i+1}$ 
for some $i$. Similarly, we will write $G=\pi_1(X)$ and $\{G_i\}_{i\geq 1}$ 
for the standard RFR$p$ filtration on $G$.

By assumption we have that the RFR$p$ topology on $X$ induces the 
RFR$p$ topology on $X_v$. Thus for each $i$, there is a $j$ such that 
$G_j\cap G_v<G_{v,i}$. Since $\gamma\in G_{v,i}\setminus G_{v,i+1}$ 
for some $i$, we have that $\gamma_i\in G_j\setminus G_{j+1}$ for some $j$, 
which establishes the base case of the induction.

We now assume that the combinatorial complexity of $\gamma$ is $n>0$. 
We may suppose for a contradiction that $\gamma$ lifts to a loop 
$\gamma_i\subset X_i$ for each $i$. Writing 
$\kappa_i\colon X_i\to\gam_i$ for the collapsing map induced by 
pulling back the graph manifold structure of $X$ to $X_i$, we 
may assume that $\kappa_i(\gamma_i)$ is nullhomotopic in 
$\gam_i$ for each $i$ and each lift $\gamma_i$ of $\gamma$ 
to $X_i$. Indeed, otherwise observe that $\pi_1(X_i)$ surjects to 
$\pi_1(\gam_i)$ via $\kappa_i$, the latter of which is an RFR$p$ group. 
By Proposition \ref{prop:radical}, we have that any element of 
$\pi_1(X_i)$ which is not in $\ker\kappa_i$ does not lie in 
$\rad_p(\pi_1(X_i))=\rad_p(\pi_1(X))$.

For a lift $\gamma_i\subset X_i$ of $\gamma$, we will write $B_i(\gamma_i)$ 
for its backtracking number. Note that because the cover $X_i\to X$ is regular, 
this number is independent of the choice of lift. Furthermore, the combinatorial 
complexity $C(\gamma_i)$ is constant under passing to covers, so we will just 
denote it by $n$. Observe that we have the \emph{a priori} estimate 
$B_i(\gamma_i)\leq n/2$ for all $i$. Furthermore, if $\gamma$ lifts to 
each cover $X_i$ of $X$, then we must have $B_i(\gamma_i)\geq 1$ for all $i$. 
This is simply because a nullhomotopic loop in a graph must backtrack at least 
once. Thus, we have that the pair $(C(\gamma_i),B_i(\gamma_i))$ is bounded 
below by $(n,1)$ for all $i$.

We claim that if $\gamma$ lifts to each $X_i$, then for any loop $\gamma_i$ 
with associated data 
\begin{equation}
\label{eq:cb gamma}
(C(\gamma_i),B(\gamma_i))=(n,B(\gamma_i)),
\end{equation}
we can either decrease $B(\gamma_i)$ by one or we can decrease 
$C(\gamma_i)$ by two, after passing to a sufficiently high index $i$. 
This will prove the result by completing the induction.

For each cover $X_i$ in the standard RFR$p$ tower of $X$, we will 
fix a lift of $\gamma$, say $\gamma_i$. We choose these lifts at the 
beginning so that if $k>i$, then the cover $X_k\to X_i$ restricts to a 
map $\gamma_k\to \gamma_i$. Since each lift $\gamma_i$ has 
combinatorial length exactly $n$, we write 
\begin{equation}
\label{eq:gamma curves}
\{\gamma_i^1,\ldots,\gamma_i^n\}
\end{equation}
for the segments which are the intersections of $\gamma_i$ with the vertex 
spaces of $X_i$, i.e., 
\begin{equation}
\label{eq:gamma ij}
\gamma_i^j=\gamma_i\cap X_i^j\subset X_i.
\end{equation}
We will label these segments (and \emph{ipso facto}\/ 
the corresponding vertex spaces) coherently, so that for $k>i$, the 
segment $\gamma_i^j$ is covered by the segment $\gamma_k^j$. 
For each $i,j$, we have that $\gamma_i^j$ and $\gamma_i^{j+1}$ 
lie in different vertex spaces of $X_i$, by definition.

However, since $\kappa_i(\gamma_i)$ 
is nullhomotopic in $\gam_i$, we have that for some $j$, the segments 
$\gamma_i^{j-1}$ and $\gamma_i^{j+1}$ lie in the same vertex space 
\begin{equation}
\label{eq:xii}
X_i^{j-1}=X_i^{j+1}\subset X_i,
\end{equation}
and that the segments $\gamma_i^{j-1}$ and $\gamma_i^{j+1}$ meet 
the segment $\gamma_i^j\subset X_i^j$ in the same edge space $Y_i$. 
In other words, $\gamma_i$ backtracks at $X_i^j$.

We claim that if the loop $\gamma_i$ backtracks at the vertex space 
$X_i^j$ for all $i$, then either we may deform the segment $\gamma_i^j$ 
into $X_i^{j-1}$ for $i\gg1$, or we may decrease $B_i(\gamma_i)$ by at 
least one, for $i\gg 1$. Note that in the first case, it follows then that 
we have decreased the combinatorial complexity of $\gamma$ by at 
least two (after passing to a sufficiently high index $i$), so that this 
will complete the induction.

For each $i$, let us again write $Y_i$ for the edge space of $X_i$ between 
$X_i^{j-1}$ and $X_i^j$. Fixing a basepoint $y_i$ in each $Y_i$, we may 
deform $\gamma_i^j$ to be a closed loop in $X_i^j$ which is based at $y_i$, 
because $\gamma_i$ enters and exits $X_i^j$ through the same edge space 
$Y_i$. We will denote this based loop inside of $X_i^j$ by $\beta_i$. 
Of course, $\beta_i$ is well-defined only up to an element of $\pi_1(Y_i,y_i)$.

Observe that by the minimality of the combinatorial complexity of $\gamma$, 
we may assume that it is not possible to deform $\beta_i$ into $Y_i$ for any $i$, 
because then $\gamma_i$ could be pushed to avoid $X_i^j$ entirely, decreasing 
the combinatorial complexity by two and completing the induction. Fixing $i$, 
we claim that for $k\gg i$, each component $\beta_k$ of the preimage of the 
loop $\beta_i$ in the vertex space $X_k^j\subset X_k$ will be an arc traversing 
two distinct edge spaces of $X_k$. In particular, the loop $\gamma_k$ no 
longer backtracks at the vertex space $X_k^j$ (see Figure \ref{fig:backtrack}).

\begin{figure} 
  \includegraphics[width=\textwidth]{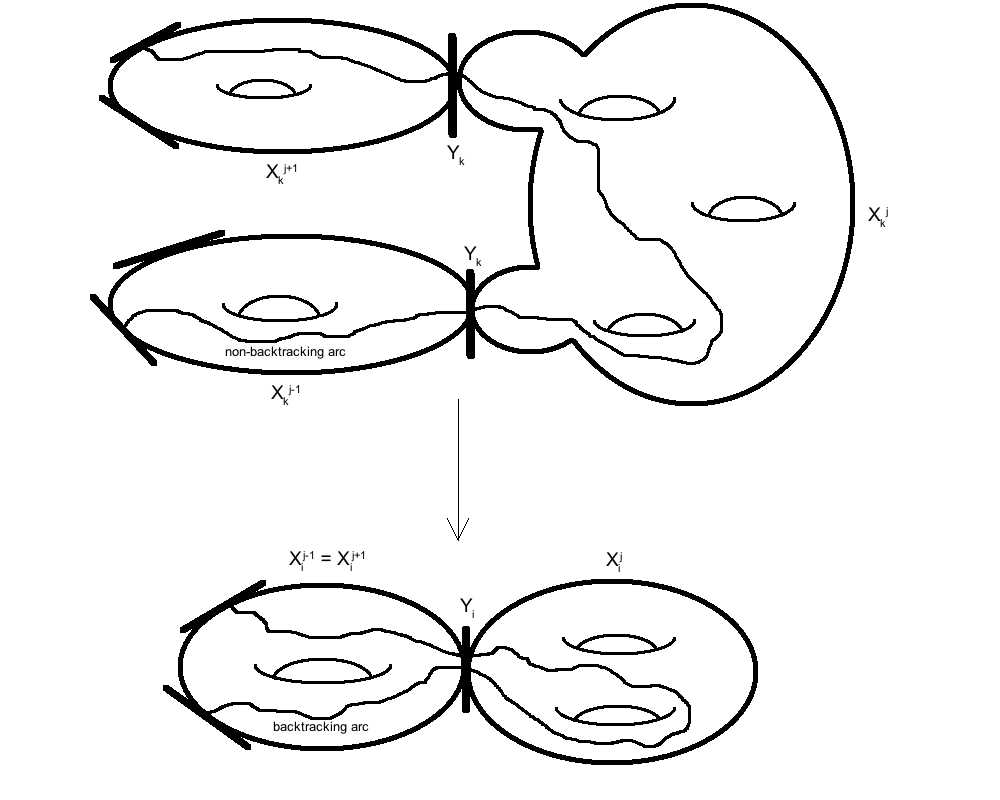}
\caption{Lifting a backtracking arc to a non-backtracking arc}
\label{fig:backtrack}
\end{figure}

To see this last claim (and thus establish the theorem), we need to find a 
$k\gg i$ so that the deck transformation of the cover $X_k^j\to X_i^j$ 
corresponding to the image of the homotopy class of the loop $\beta_i$ 
does not lie in the image of the subgroup $\phi_*(\pi_1(Y_i))$, where 
here we abuse notation and write $\phi$ for the gluing map which 
attaches $Y_i$ to $X_i^j$.

By assumption, the subgroup $\phi_*(\pi_1(Y_i))$ is closed in the RFR$p$ 
topology on $\pi_1(X_i^j)$, and the RFR$p$ topology on $\pi_1(X_i^j)$ 
coincides with the restriction of the RFR$p$ topology on $\pi_1(X_i)$. 
Writing $G_{i,k}$ for the deck group of $X_k^j\to X_i^j$, the statement that 
the subgroup $\phi_*(\pi_1(Y_i))$ is closed in the RFR$p$ topology on 
$\pi_1(X_i^j)$ is exactly the statement that for every element 
\begin{equation}
\label{eq:g element}
g\in \pi_1(X_i^j) \setminus \phi_*(\pi_1(Y_i)),
\end{equation}
there is a $k$ such that the image of $g$ in $G_{i,k}$ does not lie in the image 
of $\phi_*(\pi_1(Y_i))$. Thus, for $k\gg i$, we have that the image of the homotopy 
class of $\beta_i$ in $G_{i,k}$ does not lie in the image of $\phi_*(\pi_1(Y_i))$.

It follows that if $\beta_k$ is a component of the preimage of $\beta_i$ in such 
a cover $X_k^j\to X_i^j$, then the endpoints of $\beta_k$ cannot both lie in a 
single component of the preimage of $Y_i$. This establishes the claim and proves 
the theorem.
\end{proof}

\subsection{Applications of the combination theorem}
\label{subset:apply comb}

We now give the promised missing part of Theorem \ref{t:rfrp groups}:
\begin{cor}
\label{cor:free product}
The class of RFR$p$ groups is closed under taking 
finite free products.
\end{cor}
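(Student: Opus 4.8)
The plan is to deduce Corollary \ref{cor:free product} directly from Theorem \ref{thm:combination} by realizing a finite free product $G = G_1 * \cdots * G_n$ of RFR$p$ groups as the fundamental group of a suitable graph of spaces and then checking that the three hypotheses of the combination theorem are satisfied. Concretely, I would take a single vertex for each factor $G_k$, represented by a connected finite CW-complex $X_k$ with $\pi_1(X_k) = G_k$ (each such complex exists since $G_k$ is finitely generated, and we only need finite $1$-skeleta), add one extra vertex whose space $X_0$ is a point, and join $X_0$ to each $X_k$ by an edge whose edge space is also a point. The resulting graph of spaces $X_\gam$ is a tree of spaces with trivial edge groups, so van Kampen gives $\pi_1(X_\gam) \cong G_1 * \cdots * G_n$, as desired.

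Next I would verify the three hypotheses. Hypothesis (1) is immediate: each vertex group is either an RFR$p$ group $G_k$ by assumption, or the trivial group, which is vacuously RFR$p$. Hypothesis (3) is also immediate: each edge group is trivial, and the trivial subgroup is closed in the RFR$p$ topology on any RFR$p$ group (indeed the trivial subgroup is closed precisely because the RFR$p$ radical is trivial; and in the point vertex it is the whole group, hence closed). The real content is hypothesis (2): I must show that for each factor $G_k$, the RFR$p$ topology on $G = G_1 * \cdots * G_n$ induces, by restriction, the RFR$p$ topology on $G_k$. For this I would invoke Corollary \ref{cor:retract}: there is a retraction $r\colon G \to G_k$ obtained by killing all the other free factors, so the RFR$p$ topology on $G$ induces the RFR$p$ topology on $G_k$. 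With all three hypotheses in hand, Theorem \ref{thm:combination} yields that $\pi_1(X_\gam) = G_1 * \cdots * G_n$ is RFR$p$, and an easy induction reduces the general finite free product to the two-factor case if one prefers; but the tree-of-spaces model already handles all $n$ at once.

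The one point that needs a little care — and is really the only obstacle — is checking hypothesis (2) cleanly. Corollary \ref{cor:retract} is stated for a retraction $r\colon G \to H$ onto a subgroup $H < G$ and concludes that the RFR$p$ topology on $G$ induces the RFR$p$ topology on $H$; here $H = G_k$ sits inside $G = G_1 * \cdots * G_n$ as a free factor, and the retraction is the homomorphism that is the identity on $G_k$ and trivial on every other $G_\ell$. This is genuinely a retraction, so the corollary applies verbatim, and one also gets for free (from the remark preceding Theorem \ref{thm:combination}, or directly from the injectivity of $G_k \hookrightarrow G$) that the vertex-space inclusions are $\pi_1$-injective, as they must be. I would therefore organize the proof as: (i) build the tree of spaces; (ii) identify $\pi_1$; (iii) check (1) and (3) trivially; (iv) check (2) via Corollary \ref{cor:retract}; (v) apply Theorem \ref{thm:combination}. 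No delicate estimates are required — all the work was already done in the combination theorem.
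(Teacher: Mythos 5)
Your proof is correct, and it follows the same broad strategy as the paper: realize the free product as $\pi_1$ of a graph of spaces with trivial edge groups and apply Theorem~\ref{thm:combination}. The one substantive difference is how you verify hypothesis~(2). The paper reduces to the two-factor case by induction, models $G*H$ on the wedge $Z = X\vee Y$, and then checks directly that the covers $Z_i$ in the standard RFR$p$ tower decompose as a wedge of circles attached to copies of the $X_i$'s and $Y_i$'s, so that $H_1(Z_i,\Z)$ splits as a direct sum and the RFR$p$ topology on $G*H$ visibly restricts correctly to each factor. You instead invoke Corollary~\ref{cor:retract}, using the retraction $G_1*\cdots*G_n \twoheadrightarrow G_k$ that kills all the other free factors, which establishes the same thing in one line and also lets you treat all $n$ factors simultaneously via a star-shaped tree rather than by induction. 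Both are valid; your route is shorter and reuses machinery the paper already established (and indeed uses for the right-angled Artin group case in Proposition~\ref{prop:raag rfrp}), while the paper's route gives a slightly more concrete picture of the covering spaces. Your verification of hypothesis~(3) — trivial subgroups are closed precisely because the $G_k$ are RFR$p$, and in the point vertex the trivial group is everything — matches the paper's observation that the trivial group being closed is exactly the RFR$p$ condition. One small remark: your auxiliary central vertex $X_0$ (a point) is harmless but unnecessary; you could equally take any spanning tree on vertices $X_1,\dots,X_n$ with point edge spaces, which is closer to the paper's two-vertex model when $n=2$.
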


\begin{proof}
By induction, it suffices to prove the corollary for two RFR$p$ 
groups. Let $G=\pi_1(X)$ and $H=\pi_1(Y)$ be two such groups, 
where $X$ and $Y$ are connected, finite, based CW-complexes. 
The one-point union $Z=X\vee Y$ is homotopic 
to a CW-complex which has the structure of a graph of spaces, where 
$X$ and $Y$ are the vertex spaces and where the edge space is a point.

By assumption, $G$ and $H$ are RFR$p$ groups; hence, the trivial group is 
closed in the RFR$p$ topology on both $G$ and $H$. It therefore suffices 
to prove that the RFR$p$ topology on $G*H$ restricts to the RFR$p$ topology 
on $G$ and on $H$.

This last claim is straightforward, though. Setting $\{X_i\}_{i\geq 1}$, $\{Y_i\}_{i\geq 1}$, 
and $\{Z_i\}_{i\geq 1}$ to be the standard RFR$p$ towers for $X$, $Y$, and $Z$ respectively, 
we have that for each $i$, the space $Z_i$ is homotopy equivalent to a wedge of circles, 
glued along one point to a finite collection of CW-complexes, each of which is homotopy 
equivalent to either $X_i$ or $Y_i$. The integral first homology of $Z_i$ is just the direct 
sum of the integral first homologies of these spaces. It follows easily then that the RFR$p$ 
topology on $G*H$ restricts to the RFR$p$ topology on both $G$ and $H$.

The corollary now follows by Theorem \ref{thm:combination}.
\end{proof}

\begin{comment}
Note that the statement of Theorem \ref{thm:combination} can be modified slightly in 
order to obtain the following negative combination theorem, thus elucidating the 
necessity of some of the hypotheses in Theorem \ref{thm:combination}:
\begin{prop}
\label{prop:not rfrp}
Let $X=X_{\gam}$ be a finite graph of finite spaces with vertex spaces 
$\{X_v\}_{v\in V(\gam)}$, and let $v\in V(\gam)$ be a vertex such that 
the following conditions are satisfied:
\begin{enumerate}
\item
The group $\pi_1(X_v)$ is not RFR$p$.
\item
The RFR$p$ topology on $\pi_1(X)$ induces the RFR$p$ topology on 
$X_v$ by restriction.
\end{enumerate}
Then $\pi_1(X)$ is not RFR$p$.
\end{prop}

\begin{proof}
Since the RFR$p$ topology on $\pi_1(X)$ induces the RFR$p$ topology on 
$\pi_1(X_v)$ by the second assumption, we have that 
\[
\rad_p(\pi_1(X_v))<\rad_p(X)
\] 
for that $v$. But the first assumption implies that $\rad_p(\pi_1(X_v))\neq \{1\}$, 
so that 
\[
\rad_p(\pi_1(X))\neq \{1\},
\] 
i.e., $\pi_1(X)$ is not RFR$p$.
\end{proof}
\end{comment}

It is also possible to use Theorem \ref{thm:combination} to prove that 
right-angled Artin groups enjoy the RFR$p$ property, which is 
part \eqref{raag} of Proposition \ref{prop:ex rfrp} from the introduction.

\begin{prop}
\label{prop:raag rfrp}
Right-angled Artin groups are RFR$p$, for all primes $p$. 
\end{prop}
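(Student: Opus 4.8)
The plan is to proceed by induction on the number of vertices of the defining graph, using Theorem \ref{thm:combination} as the inductive engine. A right-angled Artin group $A_\Gamma$ associated to a finite simplicial graph $\Gamma$ decomposes as a graph of groups in a standard way: if $v$ is a vertex of $\Gamma$, then $A_\Gamma$ splits as an amalgamated product $A_{\Gamma\setminus v} *_{A_{\lk(v)}} A_{\st(v)}$, where $\st(v)$ is the star of $v$ and $\lk(v)$ is its link, and where $A_{\st(v)} \cong A_{\lk(v)} \times \langle v\rangle \cong A_{\lk(v)} \times \bZ$. Realizing each vertex group as the fundamental group of a Salvetti-type complex, we obtain a graph of spaces with two vertex spaces and one edge space (the Salvetti complex of $\lk(v)$). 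By induction, all three of these groups are RFR$p$, since they are RAAGs on graphs with fewer vertices (or, in the base case of a single vertex, the group is $\bZ$, which is visibly RFR$p$).

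To invoke Theorem \ref{thm:combination}, I must verify its three hypotheses. Condition (1) is the inductive hypothesis. Conditions (2) and (3) — that the RFR$p$ topology on $A_\Gamma$ restricts to the RFR$p$ topology on each vertex group, and that the edge group $A_{\lk(v)}$ embeds as a closed subgroup of each vertex group in the RFR$p$ topology — should both follow from a single structural observation: in this splitting, each of the inclusions $A_{\lk(v)} \inj A_{\st(v)}$, $A_{\lk(v)} \inj A_{\Gamma\setminus v}$, $A_{\st(v)} \inj A_\Gamma$, and $A_{\Gamma\setminus v}\inj A_\Gamma$ is a \emph{retract}. Indeed, for any full subgraph $\Lambda \subseteq \Gamma$, the map $A_\Gamma \to A_\Lambda$ killing the generators outside $\Lambda$ is a retraction onto $A_\Lambda$. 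Consequently, by Corollary \ref{cor:retract}, the RFR$p$ topology on the larger group induces the RFR$p$ topology on the retract subgroup, which gives condition (2) (applied to $A_{\st(v)}$ and $A_{\Gamma\setminus v}$ inside $A_\Gamma$, composed with the inductively known fact that the RFR$p$ topology on $A_{\st(v)}$ restricts correctly to $A_{\lk(v)}$). And by Corollary \ref{cor:closed retract}, a retract subgroup of an RFR$p$ group is closed in the RFR$p$ topology, which gives condition (3) once we know the vertex groups are RFR$p$.

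One point deserving care: the edge space must be glued into each vertex space by $\pi_1$-injective maps that land on the retract subgroup $A_{\lk(v)}$, and the identifications of $A_{\lk(v)}$ inside $A_{\st(v)}$ and inside $A_{\Gamma\setminus v}$ must be compatible with the retraction structure so that Corollary \ref{cor:retract} genuinely applies on the nose (not merely up to finite index). Since $\lk(v)$ is a full subgraph of both $\st(v)$ and $\Gamma\setminus v$, and since the canonical inclusion-induced maps on RAAGs respect full subgraph inclusions, this compatibility is automatic; the retraction $A_{\st(v)}\to A_{\lk(v)}$ restricts correctly because it is just ``forget the generator $v$.'' I expect the main obstacle to be purely bookkeeping: tracking carefully that the RFR$p$ topology ``passes through'' the chain $A_{\lk(v)} \subset A_{\st(v)} \subset A_\Gamma$ and likewise $A_{\lk(v)} \subset A_{\Gamma\setminus v} \subset A_\Gamma$, so that hypothesis (2) of Theorem \ref{thm:combination} holds with the edge group as well as the vertex groups — but each link in this chain is a retract, so Corollary \ref{cor:retract} applied repeatedly closes the argument. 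Having verified all three hypotheses, Theorem \ref{thm:combination} yields that $A_\Gamma$ is RFR$p$, completing the induction.
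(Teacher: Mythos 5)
Your proof is correct and follows essentially the same route as the paper's: the same splitting $A_\Gamma \cong A_{\Gamma\setminus v} *_{A_{\lk(v)}} A_{\st(v)}$, the same induction on $\abs{V(\Gamma)}$, and the same appeal to Corollaries \ref{cor:retract} and \ref{cor:closed retract} to verify hypotheses (2) and (3) of Theorem \ref{thm:combination}. The only (inessential) difference is that the paper first reduces to the case of connected $\Gamma$ via closure under free products, a step you skip but which is not needed for your argument to go through.
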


\begin{proof}
First note that we may as well consider the case where the defining 
graph $\gam$ is connected, since, as we just showed, 
the class of RFR$p$ groups is closed under finite 
free products.  The essential point is that for each 
$v\in V(\gam)$ one has a graph of groups decomposition 
\begin{equation}
\label{eq:RAAGdec}
A(\gam)\cong A(\gam_v)*_{A(\lk(v))} A(\st(v)),
\end{equation}
where $\gam_v$ is the subgraph of $\gam$ spanned by $V(\gam)\setminus \{v\}$, 
and where $\st(v)$ and $\lk(v)$ are the star and link of $v$, respectively. 

The RFR$p$ topology on $A(\gam)$ induces the RFR$p$ topology on both 
vertex groups by induction on $\abs{V(\gam)}$. We have that $\gam_v$ is a 
proper subgraph of $\gam$ and $A(\gam)$ retracts to $A(\gam_v)$, so we 
can apply Corollary \ref{cor:retract}. The right-angled Artin group $A(\st(v))$ 
is the direct product of $\Z$ with $A(\lk(v))$, both of which 
are retracts of $A(\gam)$, so that Corollary \ref{cor:retract} applies again.

Similarly by induction on $\abs{V(\gam)}$, both vertex groups 
are RFR$p$. The final verification needed before applying 
Theorem \ref{thm:combination} is to show that the edge group $A(\lk(v))$ 
is closed in the RFR$p$ topology on each vertex space. Since $A(\lk(v))$ is 
a retract of $A(\gam)$, we apply Corollary \ref{cor:closed retract} 
to confirm that fact.

The result now follows from Theorem \ref{thm:combination}.
\end{proof}

\begin{remark}
\label{rem:hw}
An alternative argument can be given, based on Lemma 3.9 from \cite{HW99}.
\end{remark}

The proof of Proposition \ref{prop:raag rfrp} 
has  the following immediate corollary.

\begin{cor}
\label{cor:closed raag}
Let $\Lambda<\gam$ be a (full) subgraph and let $p$ be a prime. Then 
$A(\Lambda)<A(\gam)$ is closed in the RFR$p$ topology.
\end{cor}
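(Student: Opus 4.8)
The plan is to reduce the statement about an arbitrary full subgraph $\Lambda<\gam$ to the single-vertex-deletion case already treated in the proof of Proposition \ref{prop:raag rfrp}, and then iterate. First I would observe that it suffices to handle the case where $\Lambda$ is obtained from $\gam$ by deleting a single vertex $v$, i.e., $\Lambda=\gam_v$ in the notation of that proof: since any full subgraph arises by deleting finitely many vertices one at a time, and closedness of subgroups in the RFR$p$ topology is transitive in the appropriate sense (if $H<K<G$ with $K$ closed in $G$ and $H$ closed in $K$ with respect to the induced topology, and the RFR$p$ topology on $K$ is induced from $G$, then $H$ is closed in $G$), an induction on $\abs{V(\gam)\setminus V(\Lambda)}$ will finish the job. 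The base case $\Lambda=\gam$ is trivial.

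For the single-deletion step, the key point is exactly what was extracted in the proof of Proposition \ref{prop:raag rfrp}: $A(\gam)$ retracts onto $A(\gam_v)$ via the map killing the generator $v$. By Corollary \ref{cor:closed retract}, applied with $G=A(\gam)$ (which is RFR$p$ by Proposition \ref{prop:raag rfrp}) and the retraction $\phi\colon A(\gam)\to A(\gam_v)$, the subgroup $A(\gam_v)$ is closed in the RFR$p$ topology on $A(\gam)$. Moreover, by Corollary \ref{cor:retract} the RFR$p$ topology on $A(\gam)$ induces the RFR$p$ topology on $A(\gam_v)$, which is precisely the compatibility hypothesis needed to run the transitivity argument in the inductive step.

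Then I would assemble the induction: write $\gam=\gam_0\supset\gam_1\supset\cdots\supset\gam_k=\Lambda$, where each $\gam_{j+1}$ is $\gam_j$ with one vertex removed (still a full subgraph, hence still a RAAG defining graph). At each stage $A(\gam_{j+1})$ is a retract of $A(\gam_j)$, so by Corollary \ref{cor:closed retract} it is closed in the RFR$p$ topology on $A(\gam_j)$, and by Corollary \ref{cor:retract} the RFR$p$ topology on $A(\gam_j)$ restricts to the RFR$p$ topology on $A(\gam_{j+1})$. Chaining these facts gives that $A(\Lambda)=A(\gam_k)$ is closed in the RFR$p$ topology on $A(\gam)$.

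The only real subtlety — and the step I expect to need the most care — is verifying the transitivity of ``closed in the RFR$p$ topology'' along the chain. This is not purely formal because the RFR$p$ topology is not defined by a fixed ambient filtration but by each group's \emph{own} standard filtration; what makes it work is that at each step the ambient filtration on $A(\gam_j)$ restricts to the standard filtration on $A(\gam_{j+1})$ (this is the content of Corollary \ref{cor:retract}), so a set closed in the subspace topology on $A(\gam_{j+1})$ is closed in its own RFR$p$ topology and conversely. Once that bookkeeping is set up, the argument is a short induction and nothing further is needed.
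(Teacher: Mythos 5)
Your argument is correct, but it takes a considerably longer route than the paper intends, and in the process introduces a subtlety (transitivity of closedness in the RFR$p$ topology along a chain) that you then have to verify. The point you should have pressed harder is the same one you yourself isolate at the start of your second paragraph, but applied globally rather than one vertex at a time: for \emph{any} full subgraph $\Lambda<\gam$, the assignment $v\mapsto v$ for $v\in V(\Lambda)$ and $v\mapsto 1$ for $v\notin V(\Lambda)$ defines a retraction $r\colon A(\gam)\to A(\Lambda)$. (Well-definedness uses exactly the fullness hypothesis: every edge of $\gam$ with both endpoints in $\Lambda$ is already an edge of $\Lambda$, and any killed generator commutes with everything.) Since $A(\gam)$ is RFR$p$ by Proposition \ref{prop:raag rfrp}, a single application of Corollary \ref{cor:closed retract} to this retraction immediately gives that $A(\Lambda)$ is closed in the RFR$p$ topology on $A(\gam)$, which is precisely why the paper labels this an ``immediate corollary.'' Your induction on $\abs{V(\gam)\setminus V(\Lambda)}$ reproves this by stitching together one-vertex retractions, and the transitivity lemma you worry about does hold (if $g\notin K$ use closedness of $K$ in $G$; if $g\in K\setminus H$ use closedness of $H$ in $K$ together with the fact that $G$ induces the RFR$p$ topology on $K$, so some $G_i\cap K\le K_j$ separates). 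But even within your own framework you could bypass the transitivity argument: the composition $A(\gam)\to A(\gam_1)\to\cdots\to A(\Lambda)$ of your one-step retractions is again a retraction, so Corollary \ref{cor:closed retract} applies once to the composite. In short, the proof is right but buys nothing over the direct one-line argument; the only thing the extra machinery purchases is a potential pitfall.
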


\section{$3$-manifold groups and the RFR$p$ property}
\label{sect:3manifold}

\subsection{Fundamental groups of $3$-manifolds}
\label{subsect:3mfd groups}
A group $G$ is called a \emph{$3$-manifold group}\/ if it can be realized as 
the fundamental group of a compact, connected, orientable $3$-manifold $M$ 
with $\chi(M)=0$. In this section, we study $3$-manifold groups and whether or 
not they are RFR$p$, for both geometric manifolds and non-geometric manifolds. 
In the first case, we can exactly characterize which geometric $3$-mani\-folds 
groups are virtually RFR$p$, and in the second case we can give some 
hypotheses which guarantee that a non-geometric $3$-manifold group is RFR$p$. 

The reader will note that the hypotheses we place on the non-geometric $3$-mani\-fold 
groups are modeled on boundary manifolds of curve arrangements in $\C^2$, and 
indeed in this section we will prove that such a boundary manifold has an RFR$p$ 
fundamental group.

We will restrict our discussion to \emph{prime}\/ $3$-manifolds, namely 
ones which cannot be decomposed as nontrivial connected sums. Note that on the level 
of fundamental groups, connected sum corresponds to free product, and the free product 
of two finitely generated groups will be RFR$p$ if and only if both free factors are RFR$p$ 
(cf. Theorem \ref{t:rfrp groups} and Corollary \ref{cor:free product}).

\subsection{Geometric $3$-manifolds}
\label{subsect:geometric}
Recall that a $3$-manifold $M$ is \emph{geometric}\/ if it admits 
a finite volume complete metric modeled on one of the eight Thurston 
geometries,
\[
\{S^3,S^2\times\R, \R^3,\Nil,\Sol,\bH^2\times \R,
\widetilde{\PSL_2(\R)},\bH^3\},
\]
see \cite{Perelman1, Perelman2, Scottgeom, ThurstonBook}. 
Perelman's Geometrization Theorem says every prime $3$-manifold can be cut up along a 
canonical collection of incompressible tori into finitely many pieces, every one of which is 
geometric. It is well-known (see \cite{ThurstonBook}) that if a manifold is geometric, then 
its geometry can be read off from the structure of its fundamental group, and conversely 
the geometry of a $3$-manifold determines the structure of its fundamental group. 
We are therefore prepared to give a proof of Theorem \ref{thm:geometric} as 
claimed in the introduction.

Recall that Theorem \ref{thm:geometric} asserts that certain geometric 
$3$-manifold groups $G$ are virtually RFR$p$, but not necessarily 
RFR$p$. This is an important distinction. For one, if we allow for 
orbifolds and orbifold fundamental groups, then $G$ could potentially 
have torsion and therefore not be RFR$p$ for any prime $p$. More 
essentially, there are geometric $3$-manifold groups which fail to 
be RFR$p$ for any prime, but which become RFR$p$ for every prime 
after passing to a finite index subgroup. We illustrate this assertion 
with a class of examples.

\begin{example}
\label{ex:hyp knot}
Let $G$ be the fundamental group of a hyperbolic knot complement.  
Then $G$ falls under the purview of Theorem \ref{thm:geometric}, 
so that there is a finite index subgroup $K<G$ such that $K$ is 
RFR$p$ for every prime $p$. On the other hand, it is an easy 
exercise to check that for each prime $p$ we have that 
$\rad_p(G)=[G,G]\neq \{1\}$, since a nonabelian $p$-group 
must have noncyclic abelianization.
\end{example}

\begin{proof}[Proof of Theorem \ref{thm:geometric}]
Let $G=\pi_1(M)$ be a geometric $3$-manifold group. We begin with the geometries 
$\{S^3,S^2\times\R,\R^3\}$. In the case of $S^3$, we have that $G$ is finite 
and so there is nothing to show. In the other two cases, $G$ either contains 
$\Z$ or $\Z^3$ with finite index, in which case it is clear that $G$ is RFR$p$ for 
each prime.

If $M$ is modeled on $\bH^2\times\R$, then a finite index subgroup of $G$ is 
isomorphic to $\pi_1(S)\times\Z$, where $S$ is an orientable surface. 
Combining Proposition \ref{prop:ex rfrp} and Theorem \ref{t:rfrp groups}, we 
have that $G$ is virtually RFR$p$ for every prime.

If $M$ is modeled on $\bH^3$, then Agol's resolution of the Virtual Haken Conjecture \cite{agolgm} 
shows that a finite index subgroup of $G$ lies as a finitely generated subgroup of a right-angled 
Artin group and is therefore RFR$p$ for every prime, by Theorem \ref{t:rfrp groups}.

If $M$ is modeled in $\Nil$ geometry, then every finite index subgroup of $G$ is nonabelian 
and nilpotent, and hence not RFR$p$ for any prime $p$ by Proposition \ref{prop:rfrp rtfn}.

If $M$ is modeled on the $\Sol$ geometry, then $G$ has a finite index subgroup 
$H$ which is a semidirect product of $\Z^2$ with $\Z$, where the conjugation 
action of $\Z^2$ is by a hyperbolic matrix. Any finite index subgroup $K$ of 
$H$ has rank one abelianization, so that $\rad_p(K)\neq \{1\}$ for any $p$.

Finally, if $M$ is modeled on $\widetilde{\PSL_2(\R)}$, then a finite index subgroup of 
$G$ is a nonsplit central extension of $\pi_1(S)$ by $\Z$, where $S$ is a closed, orientable 
surface of genus at least two. By Theorem \ref{thm:circle bundle}, we have that $G$ is not 
virtually RFR$p$ for any prime $p$.  This completes the proof.
\end{proof}

\subsection{Graph manifolds}
\label{subsect:graph}
We wish to develop criteria which allow one to verify the hypotheses 
of Theorem \ref{thm:combination}, and thus prove that certain 
non-geometric $3$-manifold groups are RFR$p$, and deduce 
Theorem \ref{thm:bdyrfrp}. For our purposes, a prime 
$3$-manifold $M$ is an \emph{graph manifold}\/ 
if it is a graph of spaces $X$ satisfying the following conditions:
\begin{enumerate}
\item \label{gr1}
Each vertex space $X_v$ is a Seifert-fibered manifold, with $\deg(v)$ being at 
most the number of components of $\partial X_v$.
\item \label{gr2}
Each edge space $X_e$ is a torus.
\item \label{gr3}
The gluing maps which assemble $X$ are given by matching the two 
boundary components of $X_e\times[0,1]$ via an orientation-preserving 
homeomorphism to a component of $\partial X_v$ and $\partial X_{w}$ 
respectively, where $e=\{v,w\}$.
\end{enumerate}

For a general graph manifold $M$ as we have defined it here, it may not be the case 
that $\pi_1(M)$ is RFR$p$, even if each of the vertex manifolds have RFR$p$ fundamental 
groups. We illustrate this phenomenon with a family of examples. 

\begin{example}
\label{ex:torus knots}
Let $M_1$ and $M_2$ be torus knot complements in $S^3$, 
which are well-known to be Seifert-fibered. We will write $K_1$ and $K_2$ 
for the respective fundamental groups. 
The cusps of $M_1$ and $M_2$ give rise to copies of $\Z^2$ inside of $K_1$ and 
$K_2$ respectively, and on the level of homology, the maps $\phi_i\colon \Z^2\to H_1(M_i,\Z)$ 
induced by inclusion have rank one. So, inside of $K_i$, we will decompose $\Z^2$ 
as a direct sum of two cyclic groups $A_i\oplus B_i$, where $B_i=\ker \phi_i$. 

Let us glue now $M_1$ to $M_2$ along the cusps to get a new graph 
manifold $M$, in such a way that $A_1$ is identified with $B_2$ and $A_2$ 
is identified with $B_1$.   The resulting $3$-manifold has trivial first 
homology, and so does not have an RFR$p$ fundamental group. In terms 
of Theorem \ref{thm:combination}, we see that the RFR$p$ topology on 
$\pi_1(M)$ does not induce the RFR$p$ topology on either $K_1$ or $K_2$.
\end{example}

\subsection{The vertex manifolds}
\label{subsec:class}

We now proceed with the construction of the graph manifolds 
comprising the class $\mX$ described in the introduction. We 
go over each of the five axioms, and introduce some further notation 
and terminology along the way.  

 \ref{x1}
Let $\gam$ be a finite, connected, bipartite, simplicial graph such that each 
vertex has degree at least two.  We color the vertices two colors, and we 
denote the resulting equivalence classes by $\mL$ and $\mP$.  If 
$v=L\in\mL$, we write $\mP_L\subset\mP$ for the 
set of vertices which are adjacent to $L$, and similarly if $v=P\in\mP$, 
we write $\mL_P\subset\mL$ for the set of vertices which are adjacent to $P$.

\ref{x2}
We build a graph manifold $X=X_{\gam}$ whose underlying graph is 
$\gam$ as follows.  For each vertex $v\in V(\gam)$, the vertex manifold 
$X_v$ is homeomorphic to $S^1\times S_v$, where $S^1$ is the circle, 
and where 
\begin{equation}
\label{eq:surf v}
S_v=S\setminus\bigcup_{i=1}^{m(v)} D_i^2,
\end{equation}
where $S$ is a closed, orientable surface, and where $\{D^2\}_{i=1}^{m(v)}$ 
denotes a disjoint union of $m(v)$ open disks.  Thus, $\pi_1(X_v)\cong \bZ\times F_{k(v)}$, 
where $F_{k(v)}$ denotes the free group of rank $k(v)$, a number which 
depends on $m(v)$ and on the genus of $S$.  As part of axioms \ref{x3} and 
\ref{x4}, will make the following assumptions on the graph $\gam$:
\begin{enumerate}[label=($\mX'_\arabic*$),leftmargin=4\parindent]
\setcounter{enumi}{2}
\item \label{mv1}
If $v=L\in\mL$, then $m(v)\geq\deg v+1$.
\\[-10pt]
\item \label{mv2}
If $v=P\in\mP$, then $m(v)=\deg v$.
\end{enumerate}

In the first case, the boundary components of $S_v$ will be 
denoted by
\begin{equation}
\label{eq:clp bdry}
\{C_{L,P}\}_{P\in\mP_L}\cup\{C^1_{L},C^2_{L},\ldots,C^{r(L)}_{L}\}, 
\end{equation}
where $C_{L,P}$ corresponds to the edge $\{L,P\}$, and 
where $r(L)=m(L)-\deg(L)$.  In the second case, the boundary 
components of $S_v$ will be denoted by
\begin{equation}
\label{eq:cpl bdry}
\{C_{P,L}\}_{L\in\mL_P}, 
\end{equation}
where $C_{P,L}$ corresponds to the edge $\{P,L\}$.
The homology class of $C_{L,P}$ and $C_{P,L}$ in $H_1(S_v,\Z)$ 
will be written $b_{L,P}$ and $b_{P,L}$ respectively, and the homology 
classes of $\{C^{i}_{L}\}$ will be written $\{b^{i}_{L}\}$. 

\subsection{Euler numbers}
\label{subsec:framings}

Next, we explain the role played by the Euler numbers in axioms \ref{x3} and 
\ref{x4}. We start with a simple, motivating example. 

\begin{example}
\label{ex:hopf}
Suppose $X$ is the exterior of an two-component Hopf link in $S^3$.  
Then $X$ fibers overs the circle, with fiber an annulus, and with 
monodromy a Dehn twist around the core of the annulus.  Alternatively, 
the Hopf fibration $S^3 \to S^2$ restricts to a fibration of $X$ over $S^1\times I$. 
Since the Hopf fibration has Euler class one, the Euler number of the Seifert 
manifold $X$ is also one. 

It follows that $X$ is homeomorphic to a circle bundle over 
the annulus, but there is no trivialization preserving the annulus. 
We have that the group $H_1(X,\Z)$ is generated by the homology class  
$t$ of the fiber, together with the two boundary homology classes 
$b_1,b_2$ of the annulus, with the relation $t=b_1+b_2$ corresponding 
to an Euler number of one. The homology classes $b_1$ and $b_2$ 
of the boundary components of the annulus are homologous in the annulus itself, 
but not in $X$.  
\end{example}

We now return to an arbitrary graph manifold $X\in \mX$. 
By assumption, each vertex manifold $X_v$ is a Seifert manifold, 
since it is a circle bundle over an orientable surface 
with boundary with trivial monodromy, and thus homeomorphic 
to a product bundle.  Yet, as we saw in the above example, 
the trivialization may not preserve the underlying surface, 
and the discussion of Euler numbers below reflects this fact.

We will write $t_v$ for the homology generator of the $S^1$ factor of $X_v$, 
and $B_V$ for the total homology span of the boundary components 
of $\{S_v\}_{v\in V(\gam)}$ inside of $X_v$.  
We then have that $H_1(X_v,\bZ)$ is a free abelian group, and the 
homology classes in $\langle t_v,B_v\rangle$ satisfy a single 
linear relation determined by the \emph{Euler number}\/ 
of the corresponding Seifert manifold.  By definition, we 
take this Euler number, $e(v)$, to be the coefficient of 
$t_v$ in this linear relation.  In the terminology of Luecke 
and Wu \cite{LW}, the integer $e(v)$ is the relative Euler number 
of $X_v$ with respect to the chosen framing of $\partial X_v$, 
to wit, the curves in $\partial X_v$ corresponding to the curves in 
$\partial S_v$ specified in \eqref{eq:clp bdry} and \eqref{eq:cpl bdry} 
under the homeomorphism $X_v \cong S^1\times S_v$.   
In turn, this number coincides with the (orbifold) Euler number of 
the (closed) Seifert manifold obtained by filling in the boundary 
tori of $X_v$ with solid tori, while matching the framing 
of $\partial X_v$ with the meridians of these solid tori.

As the second part of axioms \ref{x3} and 
\ref{x4}, we will make the following assumptions on the 
integers $e(v)$.  

\begin{enumerate}[label=($\mX''_\arabic*$),leftmargin=4\parindent]
\setcounter{enumi}{2}
\item \label{ev1}
If $v=L$, we will assume that $e(v)=0$, 
so that the relation reads 
\begin{equation}
\label{eq:Euler0}
\sum_{P\in\mP_L} b_{L,P}+\sum_{j=1}^{r(L)}b^j_{L}=0.
\end{equation}
\\[-10pt]
\item \label{ev2}
If $v=P$, we will assume that $e(v)\ne 0$, 
so that the relation reads 
\begin{equation}
\label{eq:Euler1}
\sum_{L\in\mL_P} b_{P,L} = k_P\cdot t_P,
\end{equation}
where $k_P=e(P)$ is a nonzero integer.
\end{enumerate}

\subsection{Gluing maps}
\label{subsec:flips}

Finally, we need to define the gluing maps which allow us to 
assemble our class $\mX$ of graph manifolds.  If $e=\{L,P\}$ forms an edge in 
$\gam$, axiom \ref{x5} requires that we glue $X_L$ to $X_P$ via a \emph{flip map}.  
That is to say, we choose homeomorphisms $\psi_e\colon S^1\to C_{P,L}$ and 
$\bar{\psi}_e\colon C_{L,P}\to S^1$, and we glue $X_L$ to $X_P$ 
along $X_e\cong S^1\times S^1$ via the homeomorphism 
\begin{equation}
\label{eq:psi flip}
\psi_e\times\bar{\psi}_e\colon S^1\times C_{L,P}\to C_{P,L}\times S^1.
\end{equation}

Put another way, the chosen homeomorphisms of the vertex manifolds 
$X_v$ with $S^1\times S_v$ determine meridian-longitude pairs on each 
torus $X_{v,e}$, for each edge $e$ incident to $v$.  Given an edge $e=\{v,w\}$, 
we glue $X_{v}$ to $X_{w}$ by identifying the tori  $X_{v,e}$ and $X_{w,e}$ 
via a diffeomorphism represented in the aforementioned basis by the matrix 
$J=\left(\begin{smallmatrix} 0 & 1\\1 & 0\end{smallmatrix}\right)$.
For more information on this procedure, we refer to \cite{DoigHorn}.

In this way, we have assembled a compact, connected, orientable graph 
manifold $X=X_{\gam}$. This completes the description of our class $\mX$ 
of graph manifolds. 

\section{The boundary manifold of a plane algebraic curve}
\label{sect:boundary manifold}

Let $\mX$ be the class of all graph manifolds obtained by the procedure 
detailed in the previous section. Our next objective is to prove 
Theorem \ref{thm:graph manifold} from the introduction, which states that 
the fundamental groups of manifolds in this class enjoy the RFR$p$ property, 
for all primes $p$. 
Before proceeding with the proof, we motivate our result 
by showing that certain $3$-manifolds occurring in the topological 
study of complex plane algebraic curves belong to the class $\mX$.

\subsection{Projective algebraic curves}
\label{subsec:curves}

We refer the reader to~\cite{BK} for background on the material in 
this subsection. Let $\mC$ be an algebraic curve in the complex projective 
plane $\CP^2$, that is, the zero-locus of a homogeneous polynomial 
$f\in \C[x,y,z]$.  Without essential loss of generality, we may 
assume $\mC$ is reduced, i.e., $f$ has no repeated factors.  
By definition, the degree of $\mC$ is the degree of its defining 
polynomial $f$ (which is uniquely defined, up to constants).

Let $T$ be a regular neighborhood of $\mC$, and let $M_{\mC}=\partial T$ 
be its boundary.  Then $M_{\mC}$ is a closed, orientable $3$-manifold, 
called the {\em boundary manifold}\/ of the curve $\mC$.   
As shown by Durfee in \cite{Du83}, the homeomorphism type 
of $M_{\mC}$ is independent of the choices made in constructing the 
regular neighborhood $T$, and depends only on $\mC$. 

We will mainly be interested in the case when each irreducible 
component $C$ is smooth, and all the singularities of $\mC$ are 
simple, that is, any two distinct components intersect transversely. 
Here are a couple of well-known examples.

\begin{figure}
\centering
\begin{tikzpicture}[scale=0.55]
\draw[blue, style=thick] (-0.5,3) -- (2.5,-3);
\draw[blue, style=thick]  (0.5,3) -- (-2.5,-3);
\draw[blue, style=thick] (-3.5,-2) -- (3.5,-2);
\fill[red] (-2,-2) circle (4pt);  
\fill[red] (2,-2) circle (4pt);  
\fill[red] (0,2) circle (4pt);   
\node at (-4,-2) {$L_1$}; 
\node at (-2.7,-3.5) {$L_2$};
\node at (2.8,-3.5) {$L_3$};
\node at (-2.2,-1.1) {$P_1$}; 
\node at (2.3,-1.1) {$P_2$};
\node at (0.8,2.1) {$P_3$}; 
%%%%%%%%%%%%%%%%
\node [draw, thick, minimum size=3cm, regular polygon, regular polygon sides=6]  
(a) at (10,0) {};
\foreach \x in {1,3,5}
\fill[blue]  (a.corner \x) circle[radius=4pt];
\foreach \x in {2,4,6}
\fill[red] (a.corner \x) circle[radius=4pt];
\node[align=right, above] at (12,2.2) {\small $L_1$};
\node[align=right, below] at (12,-2.2) {\small $L_2$};
\node[align=left] at (6.6,0) {\small $L_3$};
\node[align=left, above] at (8,2.2) {\small $P_2$};
\node[align=left, below] at (8,-2.2) {\small $P_3$};
\node[align=right] at (13.4,0) {\small $P_1$};  
\end{tikzpicture}
%%%%%%%%%%%%%%%%%%
\caption{A generic arrangement of $3$ lines and its graph}
\label{fig:3lines}
\end{figure}
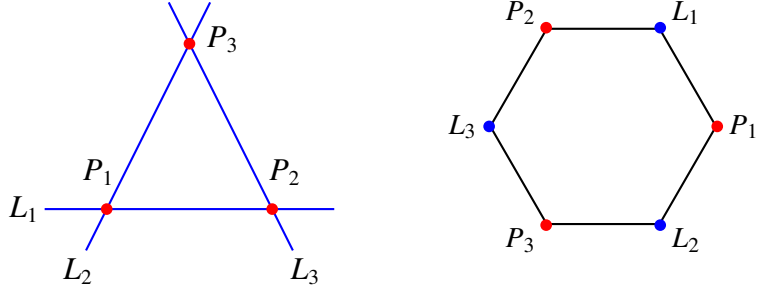

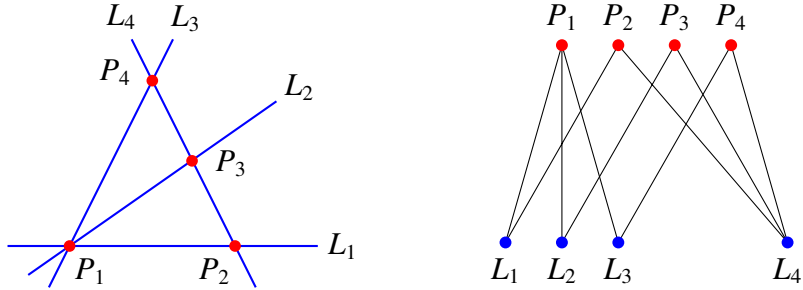
\begin{figure}
\centering
\begin{tikzpicture}[scale=0.55]
\draw[blue, style=thick] (-0.5,3) -- (2.5,-3);
\draw[blue, style=thick]  (0.5,3) -- (-2.5,-3);
\draw[blue, style=thick] (-3.5,-2) -- (4,-2);
\draw[blue, style=thick]  (-3,-2.7) -- (3,1.5);
\fill[red](-2,-2) circle (4pt);  \fill[red](2,-2) circle (4pt);  
\fill[red](0,2) circle (4pt);  \fill[red](0.96,0.06) circle (4pt);  
\node at (4.6,-2.0) {$L_1$}; \node at (3.55,1.85) {$L_2$};
\node at (0.8,3.5) {$L_3$}; \node at (-0.75,3.55) {$L_4$};
\node at (-1.5,-2.6) {$P_1$}; \node at (1.5,-2.6) {$P_2$}; 
\node at (1.9,0) {$P_3$}; \node at (-0.9,2.2) {$P_4$}; 
\end{tikzpicture}
\hspace*{0.5in}
\begin{tikzpicture}[scale=0.75]
\draw (0,0) -- (1,3.5) -- (1,0)--(3,3.5)--(5,0)--(2,3.5)--(0,0);  
\draw (1,3.5)--(2,0) --(4,3.5)--(5,0);
\fill[blue](0,0) circle (3pt);\fill[blue](1,0) circle (3pt);\fill[blue](2,0) circle (3pt);
\fill[blue](5,0) circle (3pt);\fill[red](1,3.5) circle (3pt);\fill[red](2,3.5) circle (3pt);
\fill[red](3,3.5) circle (3pt);\fill[red](4,3.5) circle (3pt);
\node at (0,-0.5) {$L_1$}; \node at (1,-0.5) {$L_2$};
\node at (2,-0.5) {$L_3$}; \node at (5,-0.5) {$L_4$};
\node at (1,4) {$P_1$}; \node at (2,4) {$P_2$};
\node at (3,4) {$P_3$}; \node at (4,4) {$P_4$};
\end{tikzpicture}
\caption{A near-pencil of lines and its associated graph}
\label{fig:nearpencil}
\end{figure}

\begin{example}
\label{ex:smooth curve}
Suppose $\mC$ has a single irreducible component $C$, which 
we assume to be smooth. Then $C$ is homeomorphic to an orientable 
surface $\Sigma_g$ of genus $g=\binom{d-1}{2}$, where $d$ is the degree 
of $C$. Moreover, by B\'{e}zout's theorem, $C\cdot C=d^2$.   Thus, $M_{\mC}$ is 
a circle bundle over $\Sigma_g$ with Euler number $e=d^2$. 
\end{example} 

\begin{example}
\label{ex:bdry pencil}
Suppose $\mC$ is a pencil of $n$ lines in $\CP^2$, 
defined by the polynomial $f=z_1^{n}-z_2^{n}$.  
Then $M_{\mC}=\sharp^{n-1} S^{1}\times S^{2}$; in particular, 
if $n=1$ (the case $d=1$ in the previous example), then $M_{\mC}=S^3$.  
\end{example}

\begin{example}
\label{ex:bdry near pencil}
Suppose $\mC$ is a near-pencil of $n$ lines in $\CP^2$, 
defined by the polynomial $f=z_1(z_2^{n-1}-z_3^{n-1})$, 
then $M_{\mC}=S^1\times\Sigma_{n-2}$.   The case $n=3$ (for 
which $M_{\mC}$ is the $3$-torus) is depicted in Figure \ref{fig:3lines}, 
while the case $n=4$ is depicted in Figure \ref{fig:nearpencil}.
\end{example} 

Note that in the first example the group $\pi_1(M_{\mC})$ is not RFR$p$, 
for any prime $p$, provided $d\ge 2$ (cf.~Theorem \ref{thm:circle bundle}), 
while in the second and third examples $\pi_1(M_{\mC})$ is RFR$p$ 
for all primes $p$. 
 
It turns out that the boundary manifold of a plane algebraic curve 
is a graph manifold.  This structure can be described in terms of 
Neumann's plumbing calculus \cite{Neu}.  We refer to \cite{DP,EN} 
for a detailed exposition of the subject, and to \cite{JY98, Hi01, CS08} 
for a more specific description in the case of line arrangements. Let us 
briefly review this construction, in the special context we consider here. 

Given an algebraic curve $\mC\subset \CP^2$, 
let $\L$ be the set of irreducible components, and 
let $\P$ be the set of multiple points, i.e., the set of points 
$P\in \CP^2$ where at least two distinct curves from $\L$ intersect.  
The underlying graph $\Gamma$ is the incidence graph of the 
arrangement of irreducible curves comprising $\L$. The 
graph has vertex set $\L \cup \P$, and has an edge joining 
$C$ to $P$ precisely when $C$ contains $P$ (see Figures 
\ref{fig:3lines}, \ref{fig:nearpencil}, and \ref{fig:circle-line}). 
The case when the curve $\mC$ is irreducible (and smooth) 
was treated in Example \ref{ex:smooth curve}.  
So let us assume that $\abs{\L}\ge 2$; in particular, $\abs{\P}\ge 1$,    
and the graph $\Gamma$ is bipartite.  For each point $P\in \mP$, 
the vertex manifold $M_P$ is the exterior of a Hopf link on as many  
components as the multiplicity of $P$.  Likewise, for each curve $C\in \L$, 
the vertex manifold $M_C$ is a circle bundle whose base is $C$ 
with a number of open $2$-disks, one for each multiple point lying on $C$.  
Finally, the vertex manifolds are glued my means of flip maps along 
boundary tori, as specified by the plumbing graph $\Gamma$, to 
produce the boundary manifold $M_{\mC}=M_{\Gamma}$.

\begin{figure}
\centering
\begin{tikzpicture}[scale=0.55]
\draw[blue, style=thick] (-4,0) -- (4,0);
\draw[blue, style=thick] (0,0) circle (1in);
\fill[red](-2.55,0) circle (4pt);  
\fill[red](2.55,0) circle (4pt); 
\node at (2,2.4) {\small $C$};
\node at (0,-0.5) {\small $L$};
\node at (-3,-0.5) {\small $P$};
\node at (3,-0.5) {\small $Q$}; 
\draw[style=thick]  (8,0) -- (10,2); 
\draw[style=thick]  (8,0) -- (10,-2); 
\draw[style=thick]  (12,0) -- (10,2); 
\draw[style=thick]  (12,0) -- (10,-2); 
\fill[blue](10,2) circle (4pt);  
\fill[blue](10,-2) circle (4pt); 
\fill[red](8,0) circle (4pt);  
\fill[red](12,0) circle (4pt);  
\node at (7.4,0) {\small $P$};
\node at (12.6,0) {\small $Q$};
\node at (10,2.7) {\small $C$};
\node at (10,-2.7) {\small $L$};
\end{tikzpicture}
\caption{A conic-line arrangement and its intersection graph}
\label{fig:circle-line}
\end{figure}
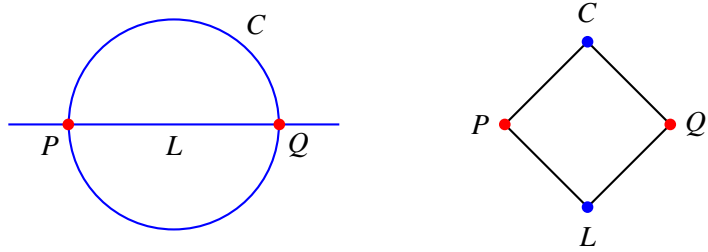

\begin{example}
\label{ex:heisenberg}
Suppose $\mC=C\cup L$ consists of a smooth conic and a transverse line, 
as in Figure \ref{fig:circle-line}. The graph $\Gamma$ is a square, and 
all vertex manifolds are thickened tori $S^1\times S^1 \times I$.  
Following the algorithm from  \cite[Theorem 5.1]{Neu}, as sketched 
in \cite[Figure 4.2]{DP}, we see that the boundary manifold $M_{\mC}$ 
is the mapping torus of a Dehn twist, or, alternatively, an $S^1$-bundle 
over $S^1\times S^1$ with Euler number $1$.  Either description shows 
that $M_{\mC}$ is the Heisenberg nilmanifold. In view of Proposition 
\ref{prop:rfrp rtfn} (or, alternatively, Theorem \ref{thm:circle bundle}), 
we conclude that the group $\pi_1(M_{\mC})$ is not RFR$p$, 
for any prime $p$. 
\end{example}  

\subsection{Affine algebraic curves}
\label{subsec:affine}

Similar considerations apply to affine, plane algebraic curves.  More 
precisely, let $\mC$ be a (reduced) algebraic curve in the 
affine plane $\C^2$, that is, the zero-locus of a polynomial 
$f\in \C[x,y]$ with no repeated factors.  As before, we will only 
consider the case when each irreducible component of $\mC$ is 
smooth, and all the singularities of $\mC$ are of \emph{type ${\rm A}$}, 
that is, their germs are isomorphic to a pencil of lines. Furthermore, 
we shall assume that each irreducible component of $\mC$ is 
transverse to the line at infinity.  

Let $\partial{T}$ be the boundary of a regular neighborhood of $\mC$.  
We define the {\em boundary manifold}\/ of the curve to be the intersection 
\begin{equation}
\label{eq:bdry affine}
M_{\mC}:=\partial T \cap B^4,
\end{equation}
where $B^4$ is a ball of sufficiently large radius, so that all singularities 
of $\mC$ are contained in this ball. Clearly, $M_{\mC}$ is a smooth, connected, 
orientable $3$-manifold, with boundary components tori $S^1\times S^1$ in   
one-to-one correspondence with the irreducible components of $\mC$.  

\begin{example}
\label{ex:affine smooth curve}
Suppose $\mC$ has a single (smooth) irreducible component of degree $d$.  
Let $\overline{M}$ be the $S^1$-bundle with Euler number $d^2$ over the Riemann 
surface of genus $\binom{d-1}{2}$ from Example \ref{ex:smooth curve}.  
The boundary manifold $M_{\mC}$, then, is obtained by removing open tubular 
neighborhoods of $d$ fibers of this bundle. Consequently,  
$\pi_1(M_{\mC})$ is isomorphic to $\Z\times F_{(d-1)^2}$, and thus 
is RFR$p$, for all primes $p$.
\end{example} 

We previously defined a class $\mX$ of compact 
graph manifolds $M$ for which the underlying graph 
$\gam$ is connected, bipartite, and each vertex in one of 
the parts has degree at least two, such that each vertex manifold  is 
a trivial circle bundle over an orientable surface with boundary 
obeying some technical conditions on the framings, and such that 
all the gluing maps are given by flips.  The main result of this 
section shows that all boundary manifolds arising from this  
construction belong to this class.

\begin{thm}
\label{thm:bdrycurve}
Let $\mC$ be a plane algebraic curve such that
\begin{enumerate}
\item \label{cc1}
Each irreducible component of $\mC$ is smooth and transverse to 
the line at infinity.
\item \label{cc2}
Each singular point of $\mC$ is a type ${\rm A}$ singularity.
\end{enumerate}
Then the boundary manifold $M_{\mC}$ lies in $\mX$. 
\end{thm}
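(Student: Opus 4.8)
The plan is to equip $M_{\mC}$ with its plumbing graph structure, in the sense of Neumann's calculus~\cite{Neu} and as reviewed in Subsection~\ref{subsec:curves} (see also~\cite{DP,EN,CS08}), and to verify directly the five axioms \ref{x1}--\ref{x5} defining $\mX$. We may assume $\mC$ is connected, and also that it has at least two irreducible components: for a single smooth component $C=\mC$ of degree $d_C$, Example~\ref{ex:affine smooth curve} gives $M_{\mC}\cong S^1\times S_C$, where $S_C$ is a closed orientable surface of genus $\binom{d_C-1}{2}$ with $d_C\ge 1$ open disks deleted, and this is the vertex manifold of a one-vertex graph carrying the color $\mL$ (with no $\mP$-vertices) and relative Euler number zero, so all the axioms hold trivially. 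So assume $\abs{\L}\ge 2$, let $\mP$ denote the (nonempty) set of multiple points of $\mC$, and let $\Gamma$ be the incidence graph, with an edge joining a component $C$ to a point $P$ exactly when $P\in C$. Then axiom~\ref{x1} holds: $\Gamma$ is finite and bipartite by construction, and connected because $\mC$ is; moreover each $P\in\mP$ has $\deg_\Gamma(P)$ equal to its multiplicity, which is at least $2$. Here the type ${\rm A}$ hypothesis enters, since all components being smooth and all singularities being ordinary multiple points forces the local branches of $\mC$ at $P$ to be in bijection with the components through $P$.

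Axioms~\ref{x2} and~\ref{x5} follow from the plumbing description. Each vertex manifold is a circle bundle over an orientable surface with nonempty boundary, hence is trivial. For a multiple point $P$ of multiplicity $\mu$, the type ${\rm A}$ condition identifies $M_P$ with the exterior of a Hopf link on $\mu$ fibers of the Hopf fibration $S^3\to S^2$, an $S^1$-bundle over a $\mu$-holed sphere (cf.\ Example~\ref{ex:hopf}). For a component $C$ of degree $d_C$, smoothness means the projective closure $\overline{C}$ is a closed Riemann surface of genus $\binom{d_C-1}{2}$, and $M_C$ is the portion of an $S^1$-bundle over $\overline{C}$ lying over the complement of small disks about the multiple points of $\mC$ on $C$ and about the $d_C$ points of $\overline{C}$ on the line at infinity; thus $M_C\cong S^1\times S_C$ with $S_C$ as in~\eqref{eq:surf v}. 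That adjacent vertex manifolds are glued by flip maps along boundary tori is part of the plumbing calculus for boundary manifolds of plane curves reviewed in Subsection~\ref{subsec:curves} (see~\cite{Neu,DP,CS08}), which gives axiom~\ref{x5}.

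The substance lies in axioms~\ref{x3} and~\ref{x4}, namely the framing conditions~\ref{mv1}--\ref{mv2} and the Euler number conditions~\ref{ev1}--\ref{ev2}. For a multiple point $P$ of multiplicity $\mu$, all $\mu$ boundary tori of $M_P$ are edge tori glued to the component-vertices through $P$, so $m(P)=\deg(P)=\mu$ and no boundary torus of $M_P$ lies on $\partial M_{\mC}$; and the relative Euler number of the Hopf-link exterior with respect to the framing imposed by the flip gluings is a nonzero integer (cf.\ Example~\ref{ex:hopf}), which is precisely~\eqref{eq:Euler1}. This gives axiom~\ref{x4}. For a component $C$ of degree $d_C$, transversality of $C$ to the line at infinity guarantees that $\overline{C}$ meets it in $d_C$ distinct points, each giving one boundary torus of $M_{\mC}$ inside $M_C$; hence $r(C)=d_C\ge 1$ and $m(C)=d_C+\deg(C)\ge\deg(C)+1$, establishing~\ref{mv1} and the boundary half of axiom~\ref{x3}. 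The Euler number condition~\ref{ev1} reduces to the observation that, over $\overline{C}$ with the disks at the multiple points deleted, the bundle $M_C$ is a product, so its relative Euler number vanishes for the resulting framing; equivalently, the boundary classes of $S_C$ sum to zero, as in~\eqref{eq:Euler0}. The one point that genuinely requires care---and which I expect to be the main obstacle---is to check that this product framing agrees with the framing implicit in the plumbing picture, i.e.\ that the boundary curves $C_{C,P}$ of~\eqref{eq:clp bdry} cut out by the product homeomorphism $M_C\cong S^1\times S_C$ coincide with the curves along which the flips to the point-vertices $M_P$ are performed. This is verified by carrying out Neumann's plumbing calculus for this family of curves (see~\cite{DP,EN}), the crux being to track how the self-intersection number $\overline{C}\cdot\overline{C}=d_C^2$ furnished by B\'{e}zout's theorem is redistributed among the boundary tori of $M_C$. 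Granting this, all five axioms hold, so $M_{\mC}\in\mX$.
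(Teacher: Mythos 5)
Your proposal follows essentially the same route as the paper's proof: set up the incidence graph $\Gamma$, identify the vertex manifolds via plumbing, and check the five axioms in turn. The verification of \ref{x1}, \ref{x2}, and \ref{x5} matches the paper's, and your counts for $m(v)$ and $r(L)$ in axioms \ref{x3}--\ref{x4} are correct. Your separate treatment of the single-component case is a sensible addition; the paper's argument implicitly assumes $\P\ne\emptyset$ when it asserts $\abs{\P_L}\ge 1$ for all $L$.

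On the point you flag as the main obstacle, the paper handles it more directly than your sketch anticipates, and the appeal to B\'ezout is a red herring in the affine setting. For a smooth affine component $L$ of degree $d$, the normal bundle of $L$ in $\C^2$ is trivial outright (this is automatic, with no self-intersection computation needed), so its tubular neighborhood is a genuine product $L\times D^2$. Its boundary, intersected with the large ball and with the disks at the multiple points removed, is therefore $S_L\times S^1$ on the nose, and the boundary framing so obtained is exactly the one with respect to which the relative Euler number is defined in \S\ref{subsec:framings}. Thus $e(M_L)=0$ by inspection. The quantity $\overline{C}\cdot\overline{C}=d^2$ belongs to the projective picture of Example~\ref{ex:smooth curve}; in the affine picture it is absorbed by the $d$ free boundary tori at infinity (the curves $C^1_L,\dots,C^{r(L)}_L$ of \eqref{eq:clp bdry}) and does not enter the relative Euler number at the multiple-point tori. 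The remaining compatibility---that the edge gluings are flips relative to this product framing---is in both your sketch and the paper's proof delegated to Neumann's plumbing calculus via \cite{Neu,DP,EN}, so your argument carries no gap relative to the paper's.
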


\begin{proof}
We start by describing the underlying graph $\Gamma$ of the 
graph-manifold $M_{\Gamma}=M_{\mC}$.
Let $\L$ be the set of irreducible components of $\mC$, and 
let $\P$ be the set of multiple points of $\mC$, i.e., the set of points 
$P$ in $\C^2$ where at least two distinct curves from 
$\L$ meet.  By our assumption on the singularities of $\mC$,  
if $L_1$ and $L_2$ are two distinct 
components of $\mC$ meeting at a point $P\in \P$, 
then $L_1$ and  $L_2$ intersect transversely at $P$.  

The graph $\Gamma$ is the incidence graph of the resulting 
point--line configuration. This graph has vertex set 
$V(\Gamma)=\L\cup \P$ and edge set 
\begin{equation}
\label{eq:eglp}
E(\Gamma)=\{ (L,P) \in \L\times \P\mid P\in L\}.  
\end{equation}
(See Figures \ref{fig:3lines}, \ref{fig:nearpencil}, and \ref{fig:circle-line} 
for some illustrations.) 
Note that the link of a vertex $P$ is $\L_P=\{L \in \L \mid P\in L\}$, 
whereas the link of a vertex $L$ is $\P_L=\{P \in \P \mid P\in L\}$.  
In view of our assumptions, we have that $\abs{\L_P}\ge 2$ 
and $\abs{\P_L}\ge 1$, for all $P$ and $L$.  It follows that 
$\Gamma$ is a connected, bipartite graph, and each vertex 
$P\in \P$ has degree at least two. Thus, axiom \ref{x1} is satisfied 
by the graph $\Gamma$.  

By assumption, each component $L\in \mC$ is a smooth, irreducible 
curve, transverse to the line at infinity.  Let $d$ be the degree of $L$.  
Then $L$ can be viewed as an (orientable) Riemann surface of genus 
$g=\binom{d-1}{2}$, with $d$ punctures corresponding to 
the points where $L$ meets the line at infinity.  
By construction, the vertex manifold $M_L$ is the 
boundary of a tubular neighborhood of $L$ inside $\C^2$, intersected 
with a ball centered at $0$ and containing all the points in $\mP$.  
As the normal bundle of $L$ is trivial, the vertex manifold $M_L$ is 
homeomorphic to the product of $S^1$ with a copy of $L$ from which 
several open disks (one for each point $P\in \P_L$) have been removed. 
It follows that $e(M_L)=0$, and so axioms \ref{x2} and \ref{x3} hold for 
the vertex manifold $M_L$.

Likewise, each intersection point $P\in \mP$ is a singularity of type ${\rm A}$, 
and so its singularity link is the Hopf link on $\abs{\mL_P}$ components.  
Consequently, the vertex manifold $M_P$ is the exterior of  this link, and 
thus homeomorphic to $S^1\times S_P$, where $S_P$ is a sphere with 
a number of disks removed (one for each $L\in \L_P$).  
The idea outlined in Example \ref{ex:hopf} shows that the 
Euler number $e(M_P)$ is equal to one.  Thus, axioms \ref{x2} 
and \ref{x4} hold for the vertex manifold $M_P$.  

Finally, as shown in the aforementioned references,  
the vertex manifolds are glued along tori via flip maps, in 
a manner specified by the edges of the plumbing graph $\Gamma$. 
Thus, axiom \ref{x5} is verified, and this completes the proof.  
\end{proof}

We single out an immediate corollary, for future use. 

\begin{cor}
\label{cor:bdryarr}
Let $\A$ be an arrangement of lines in $\C^2$.  Then the boundary 
manifold $M_{\A}$ lies in $\mX$. 
\end{cor}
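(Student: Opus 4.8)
The plan is to deduce this corollary directly from Theorem~\ref{thm:bdrycurve}. Viewing the line arrangement $\A$ as a reduced plane algebraic curve $\mC = V(\A) \subset \C^2$, it suffices to check that $\mC$ satisfies the two hypotheses \ref{cc1} and \ref{cc2} of that theorem.

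First I would verify hypothesis~\ref{cc1}. Each irreducible component of $\mC$ is a line, hence a smooth curve (isomorphic to $\C$, a Riemann surface of genus $0$). Moreover, the projective closure in $\CP^2$ of an affine line is a projective line, which meets the line at infinity in exactly one point; since two distinct lines in $\CP^2$ always intersect transversally, every component of $\mC$ is automatically transverse to the line at infinity. Thus hypothesis~\ref{cc1} holds with no additional assumptions on $\A$.

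Next I would verify hypothesis~\ref{cc2}. The singular points of $\mC = V(\A)$ are precisely the points $P \in \C^2$ lying on two or more distinct lines of $\A$. In a neighborhood of such a point, $\mC$ agrees with the union of the lines of $\A$ passing through $P$; after a translation taking $P$ to the origin, these lines form a pencil through the origin, so the germ of $\mC$ at $P$ is isomorphic to a pencil of lines. By definition, this is a type~${\rm A}$ singularity, so hypothesis~\ref{cc2} holds.

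With both hypotheses verified, Theorem~\ref{thm:bdrycurve} yields that $M_{\A} = M_{\mC}$ lies in $\mX$. There is no real obstacle in this argument: all of the substantive work has already been carried out in Theorem~\ref{thm:bdrycurve}, and the only point requiring even a brief justification is the automatic transversality of an affine line to the line at infinity.
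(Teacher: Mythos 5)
Your proposal is correct and matches the paper's intent exactly: the paper states this as an immediate corollary of Theorem~\ref{thm:bdrycurve} with no separate proof, and your verification that a line arrangement satisfies hypotheses~\ref{cc1} and~\ref{cc2} (lines are smooth, automatically transverse to the line at infinity, and the singularities are local pencils, hence type~${\rm A}$) is precisely what makes the deduction immediate.
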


\section{Applying the combination theorem to boundary manifolds}
\label{sect:graph rfrp}

This section is devoted to proving Theorem \ref{thm:graph manifold} from the 
introduction, using Theorem \ref{thm:combination} as the main tool.  

\subsection{A Mayer--Vietoris sequence}
\label{subsect:MV}
In an arbitrary graph manifold $X$, even within the class $\mX$ we have defined 
previously, it is generally still not true that the inclusion $X_v\to X$ 
of a vertex manifold induces an injection 
$H_1(X_v,\bZ)\to H_1(X,\bZ)$.
We will now give conditions on the graph $\gam$ which will guarantee 
that the inclusion of a vertex manifold induces an injection on first homology, 
for graph manifolds within the class $\mX$.

First observe that the Mayer--Vietoris sequence for $X=X_{\gam}$ implies that 
\begin{equation}
\label{eq:mv h1x}
H_1(X,\bZ)\cong H_V\oplus\bZ^{b_1(\gam)},
\end{equation}
where $H_V$ is the image of the map induced on homology by 
the inclusion 
\begin{equation}
\label{eq:coprod xv}
\coprod_{v\in V(\gam)} X_v\to X.
\end{equation}

Recall we are assuming that each vertex manifold $X_v$ is homeomorphic to 
$S^1\times S_v$, where  $S_v$ is obtained by deleting a number of disjoint, 
open disks from a closed, orientable surface $S$.  

For each vertex $v$, we will write $B_v\subset H_1(X_v,\bZ)$ 
for the subgroup generated by the homology classes of the boundary 
components of $S_v$.  Specifically, we consider the inclusion 
$\bigcup_{i=1}^{m(v)} S^1_i\to S_v\subset X_v$ 
given by sending $S^1_i\to\partial D^2_i$, and we set $B_v$ to be the 
image of the induced map on first homology.
We then have a direct sum decomposition 
\begin{equation}
\label{eq:h1 sv}
H_1(S_v,\bZ)\cong B_v\oplus W_v, 
\end{equation}
where roughly $W_v$ is ``generated by the genus" of $S_v$.  
Note that when we assemble $X$, all the gluing maps 
are performed along boundary components of $X_v$.  The 
Mayer--Vietoris sequence implies then that 
\begin{equation}
\label{eq:hv sum}
H_V\cong \langle B_V,t_v\mid v\in V(\gam)\rangle\oplus 
\Big(\bigoplus_{v\in V(\gam)} W_v\Big),
\end{equation}
where $B_V$ is the total homology span of the boundary components 
of $\{S_v\}_{v\in V(\gam)}$ inside of $X$.  

Recall from \S\ref{subsec:class} that for each vertex $L\in \mL$, the 
surface $S_L$ has boundary curves $C_{L,P}$ indexed by $P\in \mP_L$, 
and some extra boundary curves $C^1_{L},\dots, C^{r(L)}_L$. Recall also that 
we denote the corresponding homology classes in $H_1(S_L,\Z)$ by $b_{L,P}$ 
and  $b^{i}_{L}$, respectively.
Likewise, for each vertex $P\in \mP$, the surface $S_P$ has boundary curves 
$C_{P,L}$ indexed by $L\in \mL_P$, and the  homology classes of these curves 
are denoted by $b_{P,L}$. With this notation, we have that 
\begin{equation}
\label{eq:bl}
B_L=\langle\{b_{L,P}\}_{P\in\mP_L}\cup\{b^1_{L},b^2_{L},\ldots, b^{r(L)}_{L}\}\rangle
\:\ \text{and} \:\ 
B_P=\langle\{b_{P,L}\}_{L\in\mL_P}\rangle.
\end{equation} 

Recall from \S\ref{subsec:framings} that, for each vertex $v$, we 
denote by $t_v$ the homology generator of the $S^1$ factor of $X_v$.  
For $v=L$, we will write $\Xi_L$ for the subgroup of $B_L$ generated by 
the classes $b^2_{L},\ldots, b^{r(L)}_{L}$, and we will set 
\begin{equation}
\label{eq:xi}
\Xi=\bigoplus_{L\in \mL}\Xi_L.
\end{equation} 
Observe that if $\deg L=1$ then $\Xi_L=0$.

With this notation, the group $B_V$ is the span of the images of 
$\{b_{L,P} \}$,  $\{b_{P,L}\}$, and $\Xi$, where $L$ and $P$ 
range over $\mL$ and $\mP$ respectively.  We note that it is immediate 
from the Mayer--Vietoris sequence that the subgroup $\Xi$ breaks off 
as a direct summand of $B_V$.

\begin{lem}
\label{lem:tlgens}
Let $X\in\mX$.  Then there is a finite index subgroup of the abelian 
group \[\langle B_V,t_v\mid_{v\in V(\gam)}\rangle/\Xi\] which is freely 
generated by the homology classes $\{t_L\}_{L\in\mL}$.
\end{lem}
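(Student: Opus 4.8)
The plan is to compute an explicit finite presentation of the abelian group $A:=\langle B_V,t_v\mid v\in V(\gam)\rangle/\Xi$ and then simplify it by Tietze transformations until the assertion becomes transparent.

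First I would write down a presentation of $\langle B_V,t_v\mid v\in V(\gam)\rangle$. By the Mayer--Vietoris computation of \S\ref{subsect:MV} --- and because the gluings of $X=X_{\gam}$ occur only along boundary tori, so that the summands $W_v$ split off as in \eqref{eq:hv sum} --- this group is generated by the classes $t_v$ ($v\in V(\gam)$), the classes $b_{L,P}$ and $b_{P,L}$ (one pair for each edge $\{L,P\}$), and the classes $b^1_L,\dots,b^{r(L)}_L$ (for each $L\in\mL$), subject to the Euler relations \eqref{eq:Euler0} and \eqref{eq:Euler1} (one for each $L\in\mL$ and each $P\in\mP$, respectively) together with the relations imposed by the gluing maps. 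The crucial point is this last family: along an edge $e=\{L,P\}$, the flip map \eqref{eq:psi flip} identifies the boundary torus $S^1\times C_{L,P}$ of $X_L$ with the boundary torus $C_{P,L}\times S^1$ of $X_P$, sending the fiber curve of $X_L$ to the section curve $C_{P,L}$ of $X_P$ and the section curve $C_{L,P}$ of $X_L$ to the fiber curve of $X_P$; hence in $H_1(X,\bZ)$ one obtains
\[
b_{L,P}=\pm\,t_P \qquad\text{and}\qquad b_{P,L}=\pm\,t_L ,
\]
the signs being immaterial in what follows.

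Next I would run a sequence of Tietze moves. Passing to the quotient by $\Xi=\bigoplus_{L}\Xi_L$ kills $b^2_L,\dots,b^{r(L)}_L$ for every $L$; note that $r(L)=m(L)-\deg L\ge 1$, so $b^1_L$ always survives. The gluing relations let me eliminate every $b_{L,P}$ and every $b_{P,L}$, replacing them by $\pm t_P$ and $\pm t_L$ respectively. With these substitutions the relation \eqref{eq:Euler0} at a vertex $L$ reads $\sum_{P\in\mP_L}(\pm t_P)+b^1_L=0$, which I then use to eliminate $b^1_L$ as well. What is left is the presentation
\[
A\;\cong\;\Big(\bigoplus_{L\in\mL}\bZ\,t_L\ \oplus\ \bigoplus_{P\in\mP}\bZ\,t_P\Big)\Big/\Big\langle\ \sum_{L\in\mL_P}(\pm t_L)-k_P\,t_P\ :\ P\in\mP\ \Big\rangle ,
\]
where $k_P=e(P)\neq 0$ by axiom \ref{x4}.

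Then I would examine the subgroup $T:=\langle\,t_L : L\in\mL\,\rangle\le A$. Each defining relator has $t_P$-coefficient $-k_P\neq 0$, so if a combination $\sum_{L}n_L t_L$ vanishes in $A$, lifting to the free abelian group above and comparing $t_P$-coordinates forces every relator to enter with coefficient $0$, whence all $n_L=0$; thus $T$ is freely generated by $\{t_L\}_{L\in\mL}$. For the index, $A/T$ is generated by the images $\overline{t_P}$, and the relators give $k_P\,\overline{t_P}=0$, so $A/T$ is a quotient of the finite group $\bigoplus_{P\in\mP}\bZ/k_P\bZ$; hence $T$ has finite index in $A$, which is exactly the assertion. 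The only genuine obstacle I foresee is the first step: justifying carefully that the flip maps contribute precisely the ``swap'' relations $b_{L,P}=\pm t_P$, $b_{P,L}=\pm t_L$ and impose no further constraints on the listed generators. Once the presentation is established, everything else is bookkeeping and elementary linear algebra over $\bZ$.
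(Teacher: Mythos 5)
Your proof is correct and takes essentially the same route as the paper: identify the gluing (flip) relations $b_{L,P}=\pm t_P$, $b_{P,L}=\pm t_L$, use them together with the Euler relations \eqref{eq:Euler0}, \eqref{eq:Euler1} and the quotient by $\Xi$ to reduce to a presentation generated by $\{t_v\}$ with one relator $\sum_{L\in\mL_P}(\pm t_L)-k_P t_P$ per $P\in\mP$, and then read off that $\{t_L\}$ freely generates a subgroup of index dividing $\prod_P|k_P|$. Your final paragraph in fact spells out the freeness and finite-index verification a bit more explicitly than the paper does, and your tracking of $b^1_L$ (which survives the quotient by $\Xi$ but is then eliminated via \eqref{eq:Euler0}) is a welcome clarification of a point the paper glosses over.
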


\begin{proof}
The Mayer--Vietoris sequence for $X$ says that the image of 
$\langle B_V,t_v\mid_{v\in V(\gam)}\rangle$ is a quotient of 
\begin{equation}
\label{eq:tvbv}
\bigoplus_{v\in V(\gam)} \langle B_v, t_v\rangle,
\end{equation}
with relations given by the gluing maps $\psi_e\times\bar{\psi}_e$.
Let $L,K\in\mL_P$.  The gluing maps impose the relations 
\begin{equation}
\label{eq:tpbkp}
b_{L,P}=t_P=b_{K,P}.
\end{equation}

Similarly, let $P,Q\in\mP_L$.  The gluing maps impose the relations 
\begin{equation}
\label{eq:tlbql}
b_{P,L}=t_L=b_{Q,L}.
\end{equation}  
It follows that $\langle B_V,t_v\mid_{v\in V(\gam)}\rangle$ is in 
fact generated by $\{t_v\}_{v\in V(\gam)}$.  The only remaining relations 
come from the nonzero Euler numbers $e(P)=k_P$  of the vertex spaces 
$\{X_P\}_{P\in\mP}$.  Recall from \eqref{eq:Euler1} that these relations say that 
\begin{equation}
\label{eq:kptp}
k_P\cdot t_P=\sum_{L\in\mL_P} t_L.
\end{equation} 

It follows immediately that $\langle B_V,t_v\mid_{v\in V(\gam)}\rangle/\Xi$ 
has a finite index subgroup which is generated by $\{t_L\}_{L\in\mL}$, and 
that no further relations among these generators hold.
\end{proof}

\subsection{Girth and homological injectivity}
\label{subsect:girth}
Recall that the \emph{girth}\/ of a graph $\gam$ is the length of the shortest 
non-backtracking loop in $\gam$.

\begin{thm}
\label{thm:injcriterion}
Let $X\in\mX$, and suppose that the girth of the defining graph $\gam$ 
is at least six.  Then the inclusion $X_v\to X$ induces an injection 
$H_1(X_v,\bZ)\to H_1(X,\bZ)$.
\end{thm}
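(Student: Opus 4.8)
The plan is to pass to rational coefficients. Write $A_{\bQ}=A\otimes_{\bZ}\bQ$. Since $H_1(X_v,\bZ)$ is free abelian, the inclusion-induced map $\iota_v\colon H_1(X_v,\bZ)\to H_1(X,\bZ)$ is injective as soon as it is injective after applying $-\otimes\bQ$, and for that it suffices to show that the image of $\iota_v\otimes\bQ$ has dimension $\dim_{\bQ}H_1(X_v,\bQ)$. By the Mayer--Vietoris description \eqref{eq:mv h1x}--\eqref{eq:hv sum} one has $H_1(X,\bQ)\cong\langle B_V,t_u\mid u\in V(\gam)\rangle_{\bQ}\oplus\bigoplus_{u}(W_u)_{\bQ}\oplus\bQ^{b_1(\gam)}$, while $H_1(X_v,\bQ)\cong\langle B_v,t_v\rangle_{\bQ}\oplus(W_v)_{\bQ}$. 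The gluing maps only involve the fibre classes $t_v$ and the boundary classes, so no edge relation touches the $W_u$-summands; hence $\iota_v\otimes\bQ$ carries $(W_v)_{\bQ}$ isomorphically onto the $W_v$-summand of $H_1(X,\bQ)$ and carries $\langle B_v,t_v\rangle_{\bQ}$ into $\langle B_V,t_u\rangle_{\bQ}$. Thus the problem reduces to proving that the induced map
\[
\phi_v\colon\langle B_v,t_v\rangle_{\bQ}\longrightarrow\langle B_V,t_u\mid u\in V(\gam)\rangle_{\bQ}
\]
is injective for every vertex $v$.

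For $v=P\in\mP$ this costs nothing and uses no girth hypothesis: $\langle B_P,t_P\rangle_{\bQ}$ is spanned by $t_P$ and the classes $b_{P,L}$ ($L\in\mL_P$), subject only to the Euler relation \eqref{eq:Euler1}, and the gluing relation \eqref{eq:tlbql} gives $\phi_P(b_{P,L})=t_L$; so the image of $\phi_P$ is the span of $\{t_L\mid L\in\mL_P\}$, which by Lemma \ref{lem:tlgens} has dimension $\abs{\mL_P}=\dim_{\bQ}\langle B_P,t_P\rangle_{\bQ}$, whence $\phi_P$ is injective. The substantive case is $v=L\in\mL$. Here I would split $\langle B_L,t_L\rangle_{\bQ}=V_L\oplus(\Xi_L)_{\bQ}$, where $V_L$ is spanned by $t_L$ and $\{b_{L,P}\mid P\in\mP_L\}$, the class $b^1_L$ being expressed in terms of these and of $\Xi_L$ via the relation \eqref{eq:Euler0}. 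Since $\Xi$ splits off $B_V$ as a direct summand, the restriction of $\phi_L$ to $(\Xi_L)_{\bQ}$ is the inclusion of a direct summand and so is injective. To handle $V_L$, I would compose $\phi_L$ with the projection $\psi\colon\langle B_V,t_u\rangle_{\bQ}\to\bQ^{\mL}$ supplied by Lemma \ref{lem:tlgens}: it annihilates $\Xi$, sends $t_L\mapsto e_L$, and --- substituting \eqref{eq:tlbql} into \eqref{eq:Euler1} --- sends $t_P\mapsto k_P^{-1}\sum_{K\in\mL_P}e_K$. Rewriting $\phi_L(b_{L,P})=t_P$ via \eqref{eq:tpbkp}, a short computation gives, for $x=\alpha\,t_L+\sum_{P\in\mP_L}\beta_P\,b_{L,P}$,
\[
\psi(\phi_L(x))=\Big(\alpha+\sum_{P\in\mP_L}\tfrac{\beta_P}{k_P}\Big)e_L+\sum_{P\in\mP_L}\tfrac{\beta_P}{k_P}\sum_{K\in\mL_P\setminus\{L\}}e_K .
\]

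The crux, and the one place the girth hypothesis is used, is deciding when this vector vanishes. Girth at least six forbids a $4$-cycle on vertices $L,P,K,Q$ with $L,K\in\mL$ and $P,Q\in\mP$; equivalently, two distinct vertices of $\mL$ have at most one common neighbour in $\mP$. Hence for each $P\in\mP_L$ one may pick (using $\deg P\ge2$) some $K\in\mL_P$ with $K\ne L$, and then $P$ is the unique element of $\mP_L$ incident to $K$, so vanishing of the coefficient of $e_K$ forces $\beta_P=0$; afterwards vanishing of the coefficient of $e_L$ forces $\alpha=0$. Thus $\psi\circ\phi_L$ is injective on $V_L$. Combined with injectivity of $\phi_L$ on $(\Xi_L)_{\bQ}\subseteq\ker\psi$, this yields injectivity of $\phi_L$: writing an element of $\langle B_L,t_L\rangle_{\bQ}$ as $x=v+\xi$ with $v\in V_L$ and $\xi\in(\Xi_L)_{\bQ}$, applying $\psi$ to $\phi_L(x)=0$ forces $v=0$, and then $\phi_L(\xi)=\xi$ forces $\xi=0$. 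Everything other than the girth/$4$-cycle observation is bookkeeping with the presentation of $\langle B_V,t_u\rangle$ already set up in the proof of Lemma \ref{lem:tlgens}, so I expect that observation to be the only real obstacle.
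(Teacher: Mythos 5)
Your proof is correct and follows essentially the same route as the paper's: it identifies the $W_v$-summand as automatically safe, handles $v=P$ directly via Lemma~\ref{lem:tlgens}, splits off $\Xi_L$ for $v=L$, maps $b_{L,P}\mapsto t_P$ and expands $t_P$ in the $\{t_K\}$-basis, and then uses the girth~$\ge 6$ hypothesis in exactly the same way---by choosing for each $P\in\mP_L$ a neighbour $K_P\in\mL_P\setminus\{L\}$ and invoking the no-$4$-cycle property so that $e_{K_P}$ detects $\beta_P$. The only cosmetic difference is that you rationalize and project onto $\bQ^{\mL}$, whereas the paper stays over $\bZ$ and exhibits linear independence of $t_L$ and the integral classes $\beta_{L,P}=\sum_{K\in\mL_P}t_K$ inside a finite-index free abelian subgroup; the underlying combinatorics are identical.
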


\begin{proof}
First, suppose that $v=P$.  We have that 
\begin{equation}
\label{eq:h1xp}
H_1(X_P,\bZ)=\Big\langle t_P,\{b_{P,L}\}_{L\in\mL_P}\mid k_P\cdot t_P
=\sum_{L\in\mL_P} b_{P,L}\Big\rangle.
\end{equation}
In $H_1(X,\bZ)$, we have that the image of the finite index subgroup of 
$H_1(X_P,\bZ)$ generated by $k_P\cdot t_P$ and by $\{b_{P,L}\}_{L\in\mL_P}$ 
is in fact generated by $\{t_L\}_{L\in\mL_P}$.  These latter elements generate a free 
abelian group of the same rank as $H_1(X_P,\bZ)$, by Lemma \ref{lem:tlgens}.  It follows 
that the inclusion $X_P\to X$ induces an injective map $H_1(X_P,\bZ)\to H_1(X,\bZ)$.

Now, suppose that $v=L$.  Consider the finite index subgroup of $H_1(X_L,\bZ)/\Xi_L$
generated by (the images of) $t_L$ and by $\{k_P\cdot b_{L,P}\}_{P\in\mP_L}$.  Since 
$b_{L,P}$ is identified with $t_P$, the image of 
$k_P\cdot b_{L,P}$ in $H_1(X,\bZ)$ is given by 
\begin{equation}
\label{eq:blp}
\beta_{L,P}:=\sum_{K\in\mL_P} t_K,
\end{equation}
as follows from the gluing relations in $X$.

It suffices to show that the classes $t_L$ and $\{\beta_{L,P}\}_{P\in\mP_L}$ 
are linearly independent in $H_1(X,\bZ)$.  This will establish the theorem, 
because the inclusion $X_L\to X$ then induces a map $H_1(X_L,\bZ)\to H_1(X,\bZ)$, 
whose image contains a free abelian subgroup whose rank is the same 
as that of $H_1(X_L,\bZ)$, so that this map must be injective.

For each class $\beta_{L,P}$, choose a vertex $K\in\mL_P\setminus \{L\}$.  
Such a vertex exists, because we assumed that 
the degree of $P$ is at least two.  
Now let $Q\in\mP_L\setminus \{P\}$.  Notice that $\lk(P)\cap\lk(Q)=\{L\}$, 
since the girth of $\gam$ is at least six.  Therefore, for each $P,Q\in\lk(L)$, 
we can find a vertex $K_P\in\mL_P\setminus \{L\}$ and $K_Q\in\mL_Q\setminus \{L\}$ 
such that $K_P\neq K_Q$ for $P\neq Q$.  Furthermore, in the expressions 
\begin{equation}
\label{eq:sum tk}
\beta_{L,P}=\sum_{K\in\mL_P} t_K
\quad\text{and}\quad 
\beta_{L,Q}=\sum_{H\in\mL_Q} t_H,
\end{equation} 
we have that 
\begin{equation}
\label{eq:tktl}
\{t_K\}_{K\in\mL_P}\cap\{t_H\}_{H\in\mL_Q}=\{t_L\},
\end{equation}
since $\mL_P\cap\mL_Q=\{L\}$. Thus, the generator $t_{K_P}$ occurs 
in the expression of $\beta_{L,P}$ and no other class $\beta_{L,Q}$ for 
$P\neq Q$.  Since 
\begin{equation}
\label{eq:bvt}
\langle B_V,t_v\mid_{v\in V(\gam)}\rangle/\Xi
\end{equation}
is virtually freely generated by $\{t_L\}_{L\in\mL}$, it follows immediately 
that the classes $t_L$ and $\{\beta_{L,P}\}_{P\in\mP_L}$ are linearly independent.
\end{proof}

\subsection{Primitive lattices}
\label{subsec:primitive}
Before proceeding, we need to establish a couple of lemmas. 
Let $A$ be a finitely generated abelian group, and let $B<A$ be a subgroup. 
 We say that $B$ is \emph{primitive}\/ if the inclusion $B\to A$ is a split injection.

\begin{lem}
\label{l:pushoutAmalgam}
Let $A,B,C$ be finitely generated, torsion-free abelian groups, and let $i_A\colon C\to A$ 
and $i_B\colon C\to B$ be injective maps such that $i_A(C)$ and $i_B(C)$ are primitive.  
Then the natural copies of $A$ and $B$ in the pushout 
\[
P= (A\oplus B)/(i_A(C)=i_B(C))
\] 
are primitive.  Furthermore, $P$ is torsion-free.
\end{lem}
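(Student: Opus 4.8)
The plan is to realize the pushout concretely and then split off a complement to $C$ on one side. Write $N\subset A\oplus B$ for the subgroup $\{(i_A(c),-i_B(c)):c\in C\}$, so that $P=(A\oplus B)/N$; the ``natural copy'' of $A$ in $P$ is the image of the composite $A\to A\oplus B\to P$, $a\mapsto (a,0)+N$, and similarly for $B$. Since $i_B(C)$ is primitive in $B$, I would first fix a splitting $B=i_B(C)\oplus B'$.

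Next I would check that the composite $\iota\colon A\oplus B'\to A\oplus B\to P$ is an isomorphism. For surjectivity: given $(a,b)+N$ with $b=i_B(c)+b'$ and $b'\in B'$, the relation $(i_A(c),-i_B(c))\in N$ gives $(a,b)+N=(a+i_A(c),b')+N=\iota(a+i_A(c),b')$. For injectivity: if $(a,b')\in N$, then $a=i_A(c)$ and $b'=-i_B(c)$ for some $c\in C$; since $b'\in B'$, $i_B(c)\in i_B(C)$, and the sum $B=i_B(C)\oplus B'$ is direct, we get $b'=0$ and $i_B(c)=0$, whence $c=0$ (as $i_B$ is injective) and $a=0$.

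Granting this, $P\cong A\oplus B'$, and under $\iota$ the natural copy of $A$ in $P$ corresponds to the summand $A\oplus 0$; in particular the map $A\to P$ is injective and its image is a direct summand, i.e.\ primitive. Moreover $A\oplus B'$ is a subgroup of the torsion-free group $A\oplus B$, hence is torsion-free, so $P$ is torsion-free. Finally, running the identical argument with a splitting $A=i_A(C)\oplus A'$ coming from primitivity of $i_A(C)$ produces an isomorphism $P\cong A'\oplus B$ carrying the natural copy of $B$ onto the summand $0\oplus B$, so that copy is primitive as well.

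There is no serious obstacle here; the one point that warrants care is verifying that the two maps $A\to P$ and $B\to P$ are genuinely injective and land as the evident summands under the two isomorphisms above — which is exactly what the direct-sum bookkeeping in the injectivity check accomplishes. As an alternative route, one could first prove torsion-freeness of $P$ directly (if $n\cdot(a,b)\in N$, the primitivity of $i_A(C)$ and $i_B(C)$ forces $(a,b)\in N$), and then deduce primitivity from the fact that a subgroup of a finitely generated free abelian group is a direct summand iff the quotient is torsion-free, using $P/\iota(A)\cong B/i_B(C)$; but the explicit splitting above yields both conclusions simultaneously.
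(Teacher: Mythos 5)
Your argument is correct and is essentially the paper's own proof: both fix the splitting $B = i_B(C)\oplus B'$ coming from primitivity and observe that $P \cong A\oplus B'$ (equivalently, the paper phrases this as $P/B'\cong A$), which at once exhibits the copy of $A$ as a direct summand and shows $P$ is torsion-free, with the claim for $B$ following by symmetry. Your version just spells out the isomorphism $\iota\colon A\oplus B'\to P$ and its injectivity/surjectivity a bit more explicitly than the paper does.
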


\begin{proof}
Since $i_B(C)<B$ is primitive, we have that 
$B\cong i_B(C)\oplus B'$ for some complement $B'$.  
Similarly, $A\cong i_A(C)\oplus A'$, so that 
$A'\cap B'=\{0\}$ in $P$, and so that $i_A(C)$ and $i_B(C)$ are identified in 
$P$ via the inverses of $i_A$ and $i_B$, respectively. We therefore 
have an isomorphism $P/B'\cong A$.  Composing this isomorphism with 
the canonical projection $P\surj P/B'$, we obtain an epimorphism 
$P\surj A$ which splits the natural inclusion of $A$ into $P$.  
Switching the roles of $A$ and $B$, we have the conclusion of the lemma.  
It is clear that $P$ is torsion-free.
\end{proof}

\begin{lem}
\label{l:pushoutHNN}
Let $A$ be a finitely generated abelian group, let $B,C<A$ be subgroups with an 
isomorphism $\phi\colon B\to C$, and let $G_{\phi}$ be the abelian HNN extension 
of $A$ along $\phi$, i.e.,
\[
G_{\phi}\cong\langle A,t\mid [t,A]=1, C=\phi(B)\rangle.
\]  
Let $D<A$ be a primitive, torsion-free subgroup such that the inclusion $D\to A$ 
descends to an injection $D\to G_{\phi}$.  Then $D<G_{\phi}$ is primitive.
\end{lem}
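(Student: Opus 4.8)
The plan is to first pin down the structure of $G_\phi$. Since $t$ is declared to commute with all of $A$, the group $G_\phi$ is abelian, and the only relations beyond those of $A$ are the centrality relations together with the identifications $b=\phi(b)$ for $b\in B$ encoded by ``$C=\phi(B)$''. Setting $R=\langle b-\phi(b):b\in B\rangle\le A$, we therefore have $G_\phi\cong (A/R)\oplus\langle t\rangle$ with $\langle t\rangle\cong\Z$, and the natural map $A\to G_\phi$ is the composite $A\surj A/R\inj G_\phi$. In particular the image of $D$ lies in the summand $A/R$, and the hypothesis that $D\to G_\phi$ is an injection is exactly the statement that $D\cap R=\{0\}$; hence $D$ maps isomorphically onto its image $\bar D<A/R$, which is torsion-free. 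Since a direct summand of $A/R$ is automatically a direct summand of $(A/R)\oplus\Z$, it suffices to prove that $\bar D$ is primitive in $A/R$.

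For finitely generated abelian groups a subgroup is primitive iff it is a direct summand iff it is pure, so the goal becomes: produce a retraction $A/R\surj\bar D$, equivalently (identifying $\bar D\cong D$) a homomorphism $\bar\rho\colon A/R\to D$ restricting to this identification on $\bar D$. Lifting along $A\surj A/R$, giving such a $\bar\rho$ is the same as giving a retraction $\rho\colon A\to D$ of the inclusion $D\inj A$ whose kernel contains $R$; then $\rho$ factors through $A/R$ and, since $\rho|_D=\id_D$, the induced $\bar\rho$ restricts to $\bar D\xrightarrow{\ \sim\ } D$ as required.

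To build $\rho$ I would use the primitivity of $D$ in $A$: fix a splitting $A=D\oplus E_0$, with projections $\pi_D$ and $\pi_{E_0}$. Every complement of $D$ in $A$ is the graph $E_g=\{e+g(e):e\in E_0\}$ of some homomorphism $g\colon E_0\to D$, and $R\subseteq E_g$ is equivalent to $g\bigl(\pi_{E_0}(r)\bigr)=\pi_D(r)$ for all $r\in R$. Because $D\cap R=\{0\}$, this prescription is well defined on $\pi_{E_0}(R)$ and defines a homomorphism $g_0\colon\pi_{E_0}(R)\to D$; one then extends $g_0$ to a homomorphism $g\colon E_0\to D$ and sets $\rho(d+e)=d-g(e)$, which is a retraction onto $D$ with $R\subseteq\ker\rho=E_g$. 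Torsion-freeness is visible throughout, as in the proof of Lemma \ref{l:pushoutAmalgam}.

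The step I expect to be the real obstacle is the extension of $g_0$ from $\pi_{E_0}(R)$ to all of $E_0$: since $D$ is free this is possible precisely when $\pi_{E_0}(R)$ is a pure subgroup of $E_0\cong A/D$, i.e.\ when $D\oplus R$ is primitive in $A$. This is the point where one must use more than the bare injectivity $D\cap R=\{0\}$, namely the concrete form $R=\langle b-\phi(b)\rangle$ together with the position of $D$ relative to $B$ and $C$ in the situations where the lemma is applied (there $D$ is complementary to the span of the relevant boundary classes carrying $B$ and $C$, so $D\oplus R$ is manifestly a direct summand). Once purity of $\pi_{E_0}(R)$ is in hand, the rest is routine, and the conclusion that $D<G_\phi$ is primitive follows.
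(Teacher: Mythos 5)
You have correctly located the crux of the argument, and your concern is well founded. Both your approach and the paper's reduce to producing a retraction $\rho\colon A\to D$ whose kernel contains $K=\langle\phi(b)-b\mid b\in B\rangle$; as you observe, the condition $D\cap K=\{0\}$ alone does not yield such a $\rho$, and this is precisely the obstruction. The paper's own proof has exactly the same gap: after checking that the images of $D$ and $K$ still meet trivially in the free quotient $A/T$, it simply asserts ``we may find a map $A/T\to D$ which splits the inclusion of $D$ into $A$, and for which $K$ lies in the kernel,'' with no justification for why such a splitting can be chosen to kill $K$.

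In fact the lemma as stated is false. Take $A=\Z^3$ with basis $e_1,e_2,e_3$, and set $D=\Z e_1$, $B=\Z e_2$, $C=\Z(e_1+3e_2)$, with $\phi(e_2)=e_1+3e_2$. Then $K=\Z(e_1+2e_2)$, the subgroup $D$ is primitive and torsion-free in $A$, and $D\cap K=\{0\}$, so $D\inj G_\phi\cong(A/K)\oplus\Z$. But $A/K\cong\Z^2$ with basis the classes of $e_2,e_3$, and $e_1\equiv -2e_2\bmod K$, so the image of $D$ in $A/K$ is $2\Z\bar e_2$, an index-two subgroup of the direct summand $\Z\bar e_2$, which is not primitive; hence $D$ is not primitive in $G_\phi$. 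So your instinct is right: one needs a hypothesis beyond injectivity --- for instance that $D\oplus K$ be primitive in $A$ (equivalently, in your notation, that $\pi_{E_0}(R)$ be pure in $E_0$) --- and this must be supplied from the specific setup where the lemma is invoked in Theorem~\ref{t:split}, rather than following from the stated hypotheses. Your proposal, by stopping and flagging this, is more honest than the paper's proof, which silently steps over it.
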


\begin{proof}
The kernel of the canonical projection map $A\times \langle t\rangle 
\to G_{\phi}$ is  the group 
\begin{equation}
\label{eq:group}
K=\langle\{\phi(b)-b\mid b\in B\}\rangle.
\end{equation} 
Since the inclusion $D\to A$ projects to an inclusion $D\to G_{\phi}$, we have that 
$K\cap D=\{0\}$.  Thus, we have that $D$ and $K$ span a subgroup of $A$ isomorphic 
to $D\oplus K$.  We claim that $K$ can be extended to a complement $H$ for $D$ in $A$, 
so that the inclusion of $D$ into $G_{\phi}$ is split.

Let $T<A$ be the torsion subgroup.  We have that $D$ is still primitive in $A/T$.  
It is easy to check that the images of $K$ and $D$ in $A/T$ still have trivial intersection.  
Indeed, suppose $x\in D$ and $y\in K$ differ by a torsion element $t$ of order $n$, 
so that $x=y\cdot t$.  Then since $D$ is torsion-free, we have that 
\[
0\neq x^n= y^n\in K\cap D,
\] 
a contradiction.  Thus, we may find a map $A/T \to D$ which splits the inclusion of 
$D$ into $A$, and for which $K$ lies in the kernel.  This map factors through 
$G_{\phi}$, so that the inclusion $D\to G_{\phi}$ is primitive.
\end{proof}

\subsection{Promoting injectivity to split-injectivity}
\label{subsec:split int}
We now refine Theorem \ref{thm:injcriterion} slightly, which will allow us to prove that 
the class $\mX$ has certain desirable closure properties with respect to taking finite covers. 
Namely, we will now show that under the hypothesis that the girth of the defining 
graph $\gam$ is at least six, the inclusion $X_v\to X$ induces a split injection 
on the level of first integral homology.

We wish to show that under certain general conditions, the vertex groups $\{G_v\}$ 
and the edge groups $\{G_e\}$ in a graph of groups $G_{\gam}$ split on the level 
of homology.  That is to say, we will give some general conditions under which the 
inclusion $G_v\to G_{\gam}$ induces a split map $H_1(G_v,\bZ)\to H_1(G_{\gam},\bZ)$.  
We will generally assume that the groups $\{G_v\}$ and $\{G_e\}$ all include into 
$G_{\gam}$, and that all these groups are finitely generated.

\begin{thm}
\label{t:split}
Let $G_{\gam}$ be a graph of groups with vertex groups $\{G_v\}$ and edge 
groups $\{G_e\}$.  Suppose that:
\begin{enumerate}
\item \label{hyp1}
For each $v$, the inclusion $G_v\to G_{\gam}$ induces an injection 
$H_1(G_v,\bZ)\to H_1(G_{\gam},\bZ)$.
\item  \label{hyp2}
The group $H_1(G_v,\bZ)$ is torsion-free for each $v\in V$.
\item  \label{hyp3}
The inclusion $G_e \to G_v$ induces a split injection $H_1(G_e,Z) \to H_1(G_v,Z)$.
\end{enumerate}
Then the inclusion $G_v\to G_{\gam}$ induces a split injection 
$H_1(G_v,\bZ)\to H_1(G_{\gam},\bZ)$.
\end{thm}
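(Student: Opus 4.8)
The plan is to realize $G_\gam$ as an iterated amalgam along a spanning tree followed by iterated HNN extensions along the remaining edges, and to feed each step into the two structural lemmas just proved. Fix a spanning tree $T\subset\gam$ and list the edges of $E(\gam)\setminus E(T)$ as $e_1,\dots,e_k$; write $G^{(0)}$ for the graph of groups on $T$ and $G^{(j)}$ for the graph of groups on $\delta^{(j)}:=T\cup\{e_1,\dots,e_j\}$, so that $G^{(j)}$ is the HNN extension of $G^{(j-1)}$ along $e_j$ and $G^{(k)}=G_\gam$. I will use freely the standard descriptions of $H_1$ of an amalgam $G_1*_CG_2$ as the pushout $\bigl(H_1(G_1)\oplus H_1(G_2)\bigr)/\bigl(i_1(H_1(C))=i_2(H_1(C))\bigr)$ and of $H_1$ of the HNN extension of a group $A$ along an isomorphism $\phi\colon B\to C$ between subgroups as $\langle H_1(A),t\mid [t,H_1(A)]=1,\ C=\phi(B)\rangle$; these are exactly the groups appearing in Lemmas~\ref{l:pushoutAmalgam} and \ref{l:pushoutHNN}.

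First I would dispatch the tree part, by induction on $|E(T)|$, proving the stronger assertion that $H_1(G_T,\bZ)$ is torsion-free and that $H_1(G_u,\bZ)\to H_1(G_T,\bZ)$ is a split injection for every vertex $u$ of $T$; hypothesis \eqref{hyp1} is not needed here. The base case $|E(T)|=0$ is hypothesis \eqref{hyp2}. For the inductive step, pick a leaf $w$ of $T$ with incident edge $e=\{v,w\}$, so that $G_T\cong G_{T'}*_{G_e}G_w$ with $T'=T\setminus\{w\}$. By induction $H_1(G_{T'})$ is torsion-free and $H_1(G_v)\to H_1(G_{T'})$ is split injective; composing with the split injection $H_1(G_e)\to H_1(G_v)$ of \eqref{hyp3} shows the image of $H_1(G_e)$ in $H_1(G_{T'})$ is primitive, and the image of $H_1(G_e)$ in $H_1(G_w)$ is primitive by \eqref{hyp3} again. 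Lemma~\ref{l:pushoutAmalgam}, applied with $A=H_1(G_{T'})$, $B=H_1(G_w)$, $C=H_1(G_e)$ (all torsion-free by \eqref{hyp2} and induction), then gives that $H_1(G_T)$ is torsion-free and that the copies of $H_1(G_{T'})$ and of $H_1(G_w)$ are primitive in it; composing these split injections with the inductive ones completes the tree case.

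Next I would add the non-tree edges one at a time, proving by induction on $j$ that for every vertex $u$ the image $D^{(j)}_u$ of $H_1(G_u)$ in $H_1(G^{(j)})$ is a primitive subgroup onto which $H_1(G_u)$ maps isomorphically. The case $j=0$ is the tree statement. For the step, $G^{(j)}$ is the HNN extension of $G^{(j-1)}$ along $e_j=\{v,w\}$; on $H_1$ the two inclusions of $G_{e_j}$ give subgroups $B,C\le A:=H_1(G^{(j-1)})$ together with an isomorphism $\phi\colon B\to C$, and $H_1(G^{(j)})=\langle A,t\mid[t,A]=1,\ C=\phi(B)\rangle$. Fix $u$ and set $D:=D^{(j-1)}_u\le A$, which is primitive and torsion-free by the inductive hypothesis together with \eqref{hyp2}. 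The key point is that the inclusion $D\hookrightarrow A$ descends to an \emph{injection} $D\to H_1(G^{(j)})$: since $\delta^{(j)}$ is a connected subgraph of $\gam$, the map $G^{(j)}\to G_\gam$ is injective and restricts to the standard inclusion on $G_u$, so the composite $H_1(G_u)\to H_1(G^{(j)})\to H_1(G_\gam)$ is the injective map of \eqref{hyp1}, which forces $H_1(G_u)\to H_1(G^{(j)})$ to be injective. Lemma~\ref{l:pushoutHNN} now applies and shows $D^{(j)}_u$ is primitive and isomorphic to $D\cong H_1(G_u)$. Taking $j=k$ yields that $H_1(G_v)\to H_1(G_\gam)$ is a split injection for every vertex $v$, as claimed.

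The place where genuine care is required is the descent step in the HNN induction: verifying that $D$ injects into the partial homology $H_1(G^{(j)})$ is exactly where hypothesis \eqref{hyp1} is indispensable, and it cannot be dropped — an HNN extension of a free abelian group along $\phi=-\id$ already produces torsion in $H_1$, so split injectivity of the edge maps alone does not rule out non-split behaviour. A lesser point to watch is matching the Mayer--Vietoris and abelianization computations of $H_1$ with the precise presentations in the two lemmas; note in particular that we never need $H_1(G_\gam)$ itself to be torsion-free, since Lemma~\ref{l:pushoutHNN} imposes no torsion-freeness on its input group $A$ — which is why it is convenient to carry out all of the amalgams (the tree edges) before any of the HNN extensions.
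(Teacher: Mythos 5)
Your proof is correct and follows exactly the route the paper takes: choose a spanning tree, handle the tree amalgams first via Lemma~\ref{l:pushoutAmalgam}, then add the remaining edges one at a time as HNN extensions and apply Lemma~\ref{l:pushoutHNN}. You have filled in the details of the paper's ``easy inductions'' (including the observation that hypothesis~\eqref{hyp1} enters only in the HNN step to guarantee the descent hypothesis of Lemma~\ref{l:pushoutHNN}, and that the order tree-then-HNN matters because Lemma~\ref{l:pushoutAmalgam} needs torsion-free inputs whereas Lemma~\ref{l:pushoutHNN} does not), but the underlying argument is the same.
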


\begin{proof}
It suffices to compute the abelianization of the group $G_{\gam}$, which is described 
as an iterated amalgamated product and HNN extension via the gluing data specified 
by the edge groups.

Let $T\subset\gam$ be a maximal tree, and let $G_T$ be the associated graph of groups.  
On the level of homology, an easy induction using the Mayer--Vietoris sequence and 
Lemma \ref{l:pushoutAmalgam} implies the conclusion for $H_1(G_v,\bZ)<H_1(G_T,\bZ)$.

The conclusion for $G_{\gam}$ now follows from Lemma \ref{l:pushoutHNN} and an 
easy induction on $|E(\gam)\setminus E(T)|$.
\end{proof}

\begin{cor}\label{cor:split injection}
Let $X\in\mX$ have defining graph $\gam$ of girth at least six. 
Then for each $v\in V(\gam)$, the inclusion $X_v\to X$ induces 
a split injection $H_1(X_v,\bZ)\to H_1(X,\bZ)$.
\end{cor}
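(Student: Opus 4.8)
The plan is to deduce the corollary directly from the general splitting criterion of Theorem~\ref{t:split}. First I would record that $\pi_1(X)=G_\gam$ carries the graph-of-groups structure dual to the graph-of-spaces structure of $X$, with vertex groups $\pi_1(X_v)\cong\bZ\times F_{k(v)}$ and edge groups $\pi_1(X_e)\cong\bZ^2$. The edge maps $\pi_1(X_e)\to\pi_1(X_v)$ are injective, since a boundary torus of $S^1\times S_v$ is $\pi_1$-injective (the boundary curve generates a maximal cyclic subgroup of $F_{k(v)}$), so by Bass--Serre theory all the vertex and edge groups include into $\pi_1(X)$, as required by the standing hypotheses of Theorem~\ref{t:split}. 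It then remains to verify the three hypotheses \eqref{hyp1}--\eqref{hyp3} of that theorem.

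Hypothesis~\eqref{hyp1} is exactly Theorem~\ref{thm:injcriterion}: the assumption that the girth of $\gam$ is at least six guarantees that $H_1(X_v,\bZ)\to H_1(X,\bZ)$ is injective for every $v$. Hypothesis~\eqref{hyp2} is immediate, since $X_v\cong S^1\times S_v$ with $S_v$ a compact orientable surface with nonempty boundary, so $H_1(X_v,\bZ)\cong\bZ t_v\oplus H_1(S_v,\bZ)$ is free abelian.

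The substance of the argument is hypothesis~\eqref{hyp3}: for an edge $e$ incident to $v$, the image of $H_1(X_e,\bZ)=\bZ^2$ in $H_1(X_v,\bZ)$ is the rank-two subgroup $\langle t_v,b\rangle$ spanned by the fiber class $t_v$ and the homology class $b$ of the boundary curve of $S_v$ associated to $e$ (one of the classes $b_{L,P}$ or $b_{P,L}$ of \S\ref{subsec:class}), and I must show this subgroup is a direct summand. Since $\bZ t_v$ splits off $H_1(X_v,\bZ)$ with complement $H_1(S_v,\bZ)$, it suffices to check that $b$ is a primitive element of $H_1(S_v,\bZ)$. For $v=L$ this follows because $B_L$ is a direct summand of $H_1(S_L,\bZ)$ by \eqref{eq:h1 sv}, and, eliminating any of the extra boundary generators $b^j_L$ (which exist by axiom $(\mX'_3)$) from the relation \eqref{eq:Euler0}, the class $b_{L,P}$ becomes a basis vector of $B_L$. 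For $v=P$ I would instead use the presentation \eqref{eq:h1xp}: eliminating one generator makes $H_1(X_P,\bZ)$ free on $t_P$ and $\abs{\mL_P}-1$ of the classes $b_{P,L}$, and a direct change-of-basis check — exhibiting a $2\times 2$ minor equal to $-1$ in the matrix expressing $t_P$ and the relevant $b_{P,L}$ in this basis — shows that $\langle t_P,b_{P,L}\rangle$ extends to a basis, hence is primitive. With \eqref{hyp1}--\eqref{hyp3} in hand, Theorem~\ref{t:split} yields the split injection $H_1(X_v,\bZ)\to H_1(X,\bZ)$.

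The only step with real content is the primitivity check in hypothesis~\eqref{hyp3} for the $\mP$-vertices, where the nonzero Euler number $k_P$ enters as the coefficient occurring in the relation presenting $H_1(X_P,\bZ)$, and one must confirm that even the class $b_{P,L}$ eliminated by that relation pairs with $t_P$ to form a primitive pair; everything else is either a direct appeal to Theorem~\ref{thm:injcriterion} or a routine consequence of the explicit homology computations already carried out in \S\ref{subsect:MV} and \S\ref{subsect:girth}.
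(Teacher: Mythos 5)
Your proposal is correct, and it is exactly the intended argument: the paper states the corollary immediately after Theorem~\ref{t:split} with no separate proof, leaving the reader to verify hypotheses \eqref{hyp1}--\eqref{hyp3} as you do (injectivity from Theorem~\ref{thm:injcriterion}, torsion-freeness from $X_v\cong S^1\times S_v$, and primitivity of the peripheral rank-two lattices using the relation \eqref{eq:Euler0} with $r(L)\geq 1$ for $\mL$-vertices and the presentation \eqref{eq:h1xp} with a unimodular minor for $\mP$-vertices). The only small remark is that even for $\mL$-vertices with $r(L)=0$ the boundary class $b_{L,P}$ would still be primitive in $B_L$ (since the lone relation has all coefficients $\pm 1$), so the extra boundary generators guaranteed by axiom~\ref{mv1} make the check cleaner rather than being strictly necessary for this particular step.
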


\subsection{Propagating homological injectivity to finite covers}
\label{subsec:propagate}

Let $Z$ have the homotopy type of a finite CW-complex, let $p$ be a prime, and let 
$Z'\to Z$ be the finite cover classified by the natural map 
\begin{equation}
\label{eq:cl map}
\xymatrixcolsep{16pt}
\xymatrix{\pi_1(Z)\ar[r]& H_1(Z,\bZ/p\bZ)}.
\end{equation}
We say this cover is the \emph{$p$-congruence cover} of $Z$. The cover $Z'$ is the 
\emph{torsion-free $p$-congruence cover}\/ of $Z$ if instead we consider the 
natural map 
\begin{equation}
\label{eq:tf h1 z}
\xymatrixcolsep{16pt}
\xymatrix{\pi_1(Z)\ar[r]& (\TF H_1(Z,\bZ))\otimes \bZ/p\bZ}.
\end{equation}

Let $X$ is a graph of spaces with underlying graph $\gam$, and let $\gam'\to\gam$ be 
a finite $p$-cover classified by a surjective homomorphism $\pi_1(\gam)\to G$, where 
$G$ is a finite $p$-group. We say that $X'\to X$ is a \emph{girth-fixing $p$-cover}\/ 
if it is classified by a composition of (surjective) homomorphisms 
\begin{equation}
\label{eq:composition map}
\xymatrixcolsep{14pt}
\xymatrix{\pi_1(X)\ar[r]&\pi_1(\gam)\ar[r]& G},
\end{equation}
where the first map is induced by the collapsing map $\kappa$, and where the 
corresponding cover $\gam'$ of $\gam$ has girth at least six.

\begin{lem}
\label{lem:propagation}
Let $X\in\mX$ with underlying graph of girth at least six, let $X'\to X$ be the 
torsion-free $p$-congruence cover, and let $X''\to X'$ be a girth-fixing $p$-cover. 
Then $X',X''\in\mX$. Furthermore, the natural inclusion $X''_v\to X''$ of a vertex 
space induces a split injection on the level of first integral homology.
\end{lem}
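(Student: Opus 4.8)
The plan is to verify directly the axioms \ref{x1}--\ref{x5} defining $\mX$ for $X'$, and then for $X''$, and to deduce the homological statement from Corollary \ref{cor:split injection}. The key starting point is that, since $X\in\mX$ has girth at least six, Corollary \ref{cor:split injection} gives that $H_1(X_v,\bZ)\to H_1(X,\bZ)$ is a split injection for every vertex space $X_v$; as $H_1(X_v,\bZ)$ is free abelian, this remains a split injection after passing to $(\TF(-))\otimes\bZ/p\bZ$. Hence the restriction to $\pi_1(X_v)$ of the classifying homomorphism $\pi_1(X)\to(\TF H_1(X,\bZ))\otimes\bZ/p\bZ$ of the cover $X'\to X$ has image $H_1(X_v,\bZ/p\bZ)$ and kernel $\ker(\pi_1(X_v)\to H_1(X_v,\bZ/p\bZ))$. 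In other words, the preimage $X'_v$ of each vertex space is connected and is precisely the $p$-congruence cover of $X_v$, and each edge space, being a torus, is covered by tori. Consequently $\gam'$ has a single vertex over each vertex of $\gam$, the coloring lifts, $\gam'$ is connected and bipartite, and vertex degrees can only increase, so \ref{x1} holds for $X'$.

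Next I would run through the remaining axioms for $X'$. Writing $X_v=S^1\times S_v$, the splitting $H_1(X_v,\bZ/p\bZ)=\bZ/p\bZ\oplus H_1(S_v,\bZ/p\bZ)$ is compatible with the product, so $X'_v$ is the product of the $p$-fold cover $S^1\to S^1$ with the $p$-congruence cover $\widetilde S_v\to S_v$; thus $X'_v=S^1\times\widetilde S_v$ is again a trivial circle bundle over an orientable surface with boundary, which is \ref{x2}. Each class $[\partial_i S_v]$ is primitive in $H_1(S_v,\bZ)$, hence nonzero modulo $p$, so each boundary circle of $S_v$ lifts to exactly $N:=\lvert H_1(S_v,\bZ/p\bZ)\rvert/p$ boundary circles of $\widetilde S_v$, each a $p$-fold cover; this uniformity shows that the free/glued dichotomy on boundary tori is preserved, so the $\mL$-vertex and $\mP$-vertex boundary conditions and the inequalities on $m(v)$ persist. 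For the Euler numbers, one checks that each $\mX$-framing curve on the $i$-th boundary torus has the form $a_it_v+[\partial_i S_v]$ in the product basis, with $\sum_ia_i=e(v)$ by the definition of the Euler number; its primitive lift to a boundary torus of $X'_v$ is $a_i\widetilde t_v+(\text{the corresponding boundary circle of }\widetilde S_v)$, and summing over all boundary tori, using that the boundary circles of $\widetilde S_v$ add up to zero, produces the single relation in $H_1(X'_v,\bZ)$ with Euler number $N\cdot e(v)$. Since $N\ge1$, this vanishes exactly when $e(v)$ does, giving \ref{x3} and \ref{x4}. Finally, a lift of a flip is a flip: on an edge torus $\tau$ of $X'$ over an edge space $X_e$, both inclusions $\pi_1(\tau)\hookrightarrow\pi_1(X_e)$ coming from the two adjacent vertex spaces have image $p\cdot\pi_1(X_e)$, so matching the preimages of the fibre and section curves under the original flip shows the gluing is again $\left(\begin{smallmatrix}0&1\\1&0\end{smallmatrix}\right)$ in the lifted framings, which is \ref{x5}. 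Hence $X'\in\mX$.

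The step $X''\in\mX$ is formal. A girth-fixing $p$-cover $X''\to X'$ is classified by a homomorphism factoring through $\pi_1(\gam')$, so it is pulled back from an honest covering of graphs $\gam''\to\gam'$. Therefore each vertex space (resp. edge space) of $X''$ is a homeomorphic copy of one of $X'$ with its $\mX$-framing intact, all gluings are copies of the flips of $X'$, and degrees, the numbers $m(v)$, and the free/glued dichotomy on boundary tori are unchanged; together with $\gam''$ being connected, bipartite, and having every $\mP$-vertex of degree at least two, this gives \ref{x1}--\ref{x5} for $X''$. Moreover $\gam''$ has girth at least six by the definition of a girth-fixing cover, so Corollary \ref{cor:split injection} applies to $X''$ and yields that $H_1(X''_v,\bZ)\to H_1(X'',\bZ)$ is a split injection for every vertex space, which is the last assertion.

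The main obstacle is the Euler-number bookkeeping in the second paragraph: one must identify the restricted cover $X'_v\to X_v$ as a product cover, determine the lifts of the boundary circles of $S_v$ and of the $\mX$-framing curves, and compute the resulting relation in $H_1(X'_v,\bZ)$ to obtain the multiplier $N$; getting the normalization of the section coordinate and the uniformity of the lift count correct is where the care lies. The flip-lifting verification is routine once the edge tori are understood, and everything about $X''$ is essentially bookkeeping.
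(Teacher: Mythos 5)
Your overall strategy matches the paper's: use Corollary \ref{cor:split injection} to identify the restricted covers of the vertex spaces, check the $\mX$-axioms for $X'$ and $X''$, and then invoke Corollary \ref{cor:split injection} again for the final assertion. However, there is a genuine factual error at the very start of the first paragraph: you assert that ``the preimage $X'_v$ of each vertex space is connected and is precisely the $p$-congruence cover of $X_v$,'' and you deduce that ``$\gam'$ has a single vertex over each vertex of $\gam$.'' This is false in general. The cover $X'\to X$ has deck group $Q=(\TF H_1(X,\bZ))\otimes \bZ/p\bZ$, and the preimage of $X_v$ has $[Q:\text{im}(\pi_1(X_v)\to Q)]=p^{\,b_1(X)-b_1(X_v)}$ components. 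What the split injectivity of $H_1(X_v,\bZ)\to H_1(X,\bZ)$ buys you is the correct statement (and the one the paper makes): \emph{each component} of the preimage of $X_v$ is the $p$-congruence cover of $X_v$, since the restriction of the classifying map to $\pi_1(X_v)$ has image $H_1(X_v,\bZ/p\bZ)$ and the expected kernel. In particular $\gam'$ typically has many vertices over each vertex of $\gam$, and in fact it can (and generally will) have parallel edges and small girth---which is precisely why the girth-fixing $p$-cover $X''\to X'$ is introduced in the lemma's statement.

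The error does not propagate to a wrong conclusion: all of your subsequent verifications (that each lifted vertex space is again a trivial circle bundle over an orientable surface with boundary, that boundary circles lift uniformly, that the Euler number is multiplied by a positive integer and hence the zero/nonzero dichotomy persists, that flips lift to flips, and that the $X''$ step is bookkeeping plus Corollary \ref{cor:split injection}) apply verbatim to each component of the preimage. Your Euler-number computation, giving the explicit multiplier $N=\lvert H_1(S_v,\bZ/p\bZ)\rvert/p$, is more detailed than the paper's terse ``replaces $k_P$ by a nonzero integer multiple,'' and is a nice supplement. But you should correct the connectedness claim, replace ``the preimage $X'_v$'' with ``each component of the preimage of $X_v$'' throughout, and drop the inference that $\gam'$ is in bijection with $\gam$ on vertices.
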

\begin{proof}
For the first statement, we need only check that $X'$ and $X''$ satisfy membership 
criteria for $\mX$. Write $\gam'$ and $\gam''$ for the respective underlying graphs 
of the natural pulled back graph of spaces structure. The coloring of the graph $\gam$ 
pulls back to colorings of $\gam'$ and $\gam''$, and the degrees of vertices of 
$\gam'$ and $\gam''$ cannot decrease from those of $\gam$.

If $X_v$ is a vertex space of $X$, then each component $X_v'$ of the preimage of 
$X_v$ in $X'$ is simply the $p$-congruence cover of $X_v$, and similarly for $X''$, 
as follows from Corollary \ref{cor:split injection}. Thus, for each $v\in V(\gam)$, the 
vertex space $X_v$ will be a product of a circle with an orientable surface with boundary.

Let $v=L$. Then it is evident that the zero Euler number relation \eqref{eq:Euler0} 
for $X_L$ pulls back to a zero Euler number relation for $X'_L$, and that $X_L'$ 
will have boundary components which are boundary components of both $X'$ 
and of $X''$. 

Let $v=P$. Then since no boundary component of $X_P$ is a boundary 
component of $X$, the same will be true of $X'_P$. Furthermore, the 
nonzero Euler number relation  \eqref{eq:Euler1}  
simply replaces $k_P$ by a nonzero integer multiple.

The fact that the gluing maps are flips in $X$ immediately implies that the gluing maps 
are flips in $X'$ and $X''$, since the covers of the vertex spaces preserve the circle 
and surface directions. Thus, $X'$ and $X''$ lie in $\mX$.

The second claim of the lemma follows from Corollary \ref{cor:split injection}.
\end{proof}

\subsection{Edge groups are closed in the RFR$p$ topology}
\label{subsec:edges}
Let $X=X_v$ be a trivial circle bundle over a compact, orientable surface with boundary, 
and let $X_e\subset X$ be a boundary component. We identify $\pi_1(X)$ 
with $\Z\times F$, where $F$ is a finitely generated free group 
and where $\Z$ is generated by the circle direction. Furthermore,  
we identify $\pi_1(X_e)$ with a copy of $\Z^2$ inside of $\Z\times F$. 
We claim that this subgroup is always closed in the RFR$p$ topology on 
$\Z\times F$. This fact follows easily from Theorem \ref{thm:separable}, 
but we give a direct argument which is more elementary:

\begin{lem}
\label{l:edge group closed}
Let $\Z^2<\Z\times F$ be a maximal rank two abelian subgroup and let 
$p$ be a prime. Then $\Z^2$ is closed in the RFR$p$ topology on $\Z\times F$.
\end{lem}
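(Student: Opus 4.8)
The plan is to pin down $\Z^2$ as a self-centralizing subgroup of $\Z\times F$ of a very rigid shape, and then separate cosets using commutators. Throughout, write $G=\Z\times F$, let $t$ generate the central $\Z$-factor, and let $H<G$ denote the given maximal rank-two abelian subgroup. Note first that $G$ is RFR$p$, being a finite direct product of the free groups $\Z\cong F_1$ and $F$ (Proposition \ref{prop:free rfrp} and Theorem \ref{t:rfrp groups}); in particular $\bigcap_i G_i=\{1\}$ for the standard RFR$p$ filtration $\{G_i\}$ of $G$, and I will write $\phi_i\colon G\to G/G_i$ for the quotient maps.

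The first step is to analyze $H$. Since $H$ has rank two it is not contained in $\langle t\rangle$, so it contains an element $h_0=t^a w'$ with $1\neq w'\in F$. A direct computation in the direct product gives $C_G(h_0)=\langle t\rangle\times C_F(w')$, and in the free group $F$ the centralizer $C_F(w')$ is the maximal cyclic subgroup $\langle w\rangle$ containing $w'$, where $w$ is not a proper power. Thus $C_G(h_0)=\langle t\rangle\times\langle w\rangle\cong\Z^2$ is a rank-two abelian subgroup containing $H$ (as $H$ is abelian and $h_0\in H$). By maximality of $H$ among rank-two abelian subgroups, $H=\langle t\rangle\times\langle w\rangle$; and since $C_G(H)\subseteq C_G(h_0)=H\subseteq C_G(H)$, the subgroup $H$ is self-centralizing.

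The separation step is then short. Let $g\in G\setminus H=G\setminus C_G(H)$. Since $t$ is central, the only way $g$ can fail to centralize $H$ is that $[g,w]\neq 1$. Because $G$ is RFR$p$, there is an index $i$ with $[g,w]\notin G_i$, so that $[\phi_i(g),\phi_i(w)]\neq 1$ in $G/G_i$; on the other hand $\phi_i(H)$ is abelian and contains $\phi_i(w)$, so $\phi_i(g)\notin\phi_i(H)$. As $g$ was arbitrary, $H$ is closed in the RFR$p$ topology on $G$. There is no serious obstacle here; the one point needing care is the reduction in the second step, where the maximality hypothesis must genuinely be used to force $H$ to coincide with the centralizer $C_G(h_0)$ rather than merely embed in it (the statement does fail without maximality, cf.\ the discussion following Theorem \ref{thm:separable}), together with the standard fact that centralizers of nontrivial elements of a free group are maximal cyclic. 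This also gives the \emph{elementary} argument promised in the statement: the only input beyond group theory in a free group is that $\Z\times F$ is RFR$p$, which is already known, so Theorem \ref{thm:separable} is not invoked.
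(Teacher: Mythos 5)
Your proposal is correct and follows essentially the same route as the paper's proof: both identify $H=\langle t\rangle\times\langle w\rangle$ with $\langle w\rangle$ maximal cyclic in $F$, and then separate any $g\notin H$ by finding a quotient $G/G_i$ in which $[g,w]$ survives, so that $\phi_i(g)$ fails to commute with $\phi_i(w)\in\phi_i(H)$. The only difference is cosmetic: you derive the shape of $H$ via the centralizer $C_G(h_0)$ and package the conclusion as self-centralization, whereas the paper asserts the normal form of $H$ directly and works with $[x,g]$ for $g=t^ny$ with $y\notin\langle x\rangle=C_F(x)$; the underlying mechanism is identical.
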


\begin{proof}
We will write $t$ for a generator of the central copy of $\Z$ in $\Z\times F$. 
It is easy to check that if $\Z^2<\Z\times F$ is maximal then $t\in \Z^2$. 
Thus we may suppose that $\Z^2$ is generated by $t$ and by $x\in F$, 
where $\langle x\rangle$ is a maximal cyclic subgroup of $F$.

Let $G_1=\Z\times F$ and let $\{G_i\}_{i\geq 1}$ be the standard RFR$p$ filtration 
on $\Z\times F$. It suffices to show that if $g\notin \Z^2$ then the image of 
$g$ in $G_1/G_i$ does not coincide with the image of $\Z^2$ in $G_1/G_i$, for $i\gg1$.

We may suppose that $g=t^ny$, where $1\neq y\in F$ is not contained in $\langle x\rangle$. 
Note that $[x,g]\neq 1$, so that there is 
some $i$ such that the image of $[x,g]$ is nontrivial in $G/G_i$. But then the image of 
$g$ in $G/G_i$ does not commute with $x$ and therefore cannot lie in the image of 
the abelian group $\Z^2$.
\end{proof}

\subsection{Graph manifolds of type $\mX$ have the RFR$p$ property}
\label{subsec:endgame}

We are now ready to complete the proof of Theorem \ref{thm:graph manifold} 
from the introduction, stating that the fundamental group of a graph 
manifold which belongs to the class $\mX$ defined in \S\ref{subsec:3mfd} 
is RFR$p$, for all primes $p$. 

\begin{proof}[Proof of Theorem \ref{thm:graph manifold}]
We only need to verify the hypotheses of Theorem \ref{thm:combination}. 
Let $X\in \mX$, and let $\{X_i\}_{i\geq 1}$ be the standard RFR$p$ 
tower of $X$, as usual. We refine the tower $\{X_i\}_{i\geq 1}$ slightly to a tower 
$\{Y_i\}_{i\geq 1}$ by taking an intermediate girth-fixing $p$-cover at each stage. 
Namely, we first let $Y_1\to X_1$ be an arbitrary girth-fixing $p$-cover. In general, 
define $Z_{i+1}\to Y_i$ to be the usual torsion-free $p$-congruence cover of $Y_i$, 
and let $Y_{i+1}\to Z_{i+1}$ be an arbitrary girth-fixing $p$-cover. It is easy to 
check that for $j\gg i$, we have $\pi_1(X_j)<\pi_1(Y_i)$.

By Lemma \ref{lem:propagation}, we have that $Y_i\in\mX$ for each $i$, and for each 
vertex space $Y_{v,i}$ of $Y_i$, the inclusion map induces a split injection on the level 
of homology. But then it follows immediately that $\{Y_{v,i}\}_{i\geq 1}$ is equal to the 
usual RFR$p$ tower for $X_v$, where $X_v$ is a vertex space of $X$ covered by 
each level in the tower $\{Y_{v,i}\}_{i\geq 1}$. We then have that the RFR$p$ 
topology on $\pi_1(X)$ induces the RFR$p$ topology on $\pi_1(X_v)$, for each 
vertex subspace $X_v \subset X$.

Thus, we need only see that the edge spaces are closed in the RFR$p$ topology. 
This follows immediately from the topological description of the edge spaces and 
Lemma \ref{l:edge group closed}.
\end{proof}

\newcommand{\arxiv}[1]
{\texttt{\href{http://arxiv.org/abs/#1}{arXiv:#1}}}
\newcommand{\arxi}[1]
{\texttt{\href{http://arxiv.org/abs/#1}{arxiv:}}
\texttt{\href{http://arxiv.org/abs/#1}{#1}}}
\newcommand{\doi}[1]
{\texttt{\href{http://dx.doi.org/#1}{doi:#1}}}
\renewcommand{\MR}[1]
{\href{http://www.ams.org/mathscinet-getitem?mr=#1}{MR#1}}

\end{document}